\documentclass[11pt]{amsart}

\usepackage[numbered]{bookmark}
\usepackage{changepage} 
\usepackage[T1]{fontenc}
\usepackage[utf8]{inputenc}
\usepackage{graphics, graphicx}
\usepackage{amsmath, amsfonts, amsthm, amssymb, mathrsfs, mathtools, mathabx}
\usepackage{verbatim}
\usepackage{mathrsfs}
\usepackage{enumerate}
\usepackage{xcolor}
\usepackage{cite}
\usepackage{wrapfig}
\usepackage{subcaption}
\usepackage[font=small]{caption}
\usepackage{textcase}

\newtheorem{theorem}{Theorem}[section]
\newtheorem{definition}[theorem]{Definition}
\newtheorem{lemma}[theorem]{Lemma}
\newtheorem{proposition}[theorem]{Proposition}

\newtheorem{remark}{Remark}[section]
\newtheorem*{example}{Example}

\newtheorem{theoremx}{Theorem}

\newtheorem{corollaryx}{Corollary}

\newtheorem{theoremxx}{Theorem}

\numberwithin{equation}{section}

\newcommand{\N}{\mathbb{N}}

\newcommand{\R}{\mathbb{R}}
\newcommand{\C}{\mathbb{C}} 
\newcommand{\T}{\mathbb{T}}
\newcommand{\Z}{\mathbb{Z}}

\newcommand{\Lo}{\ensuremath{\mathcal{L}_\omega}}

\newcommand{\Mell}{\ensuremath{M_\Omega^{\mathrm{ell}}}}
\newcommand{\Mhyp}{\ensuremath{M_\Omega^{\mathrm{hyp}}}}

\newcommand{\Spol}{\ensuremath{\mathcal{S}^{\mathrm{pol},T}}}
\newcommand{\Sexp}{\ensuremath{\mathcal{S}^{\mathrm{exp},T}}}
\newcommand{\dec}{\ensuremath{l}}

\newcommand{\Lell}{\mathcal{L}}

\newcommand{\Function}[5]{\begin{array}{cccc} #1 : & #2 & \rightarrow & #3 \\ & #4 & \mapsto & #5 \end{array}}

\date{}
\author{Donato Scarcella}
\address{Departament de Matemàtiques i Informàtica, Universitat de Barcelona, Gran Via de les Corts Catalanes 585, 08007 Barcelona, Spain}
\email{donato.scarcella@ub.edu}

\author{Frank Trujillo}
\address{Centre de Mathématiques Laurent Schwartz (CMLS), CNRS, École polytechnique, Institut Polytechnique de Paris, Palaiseau, France.}
\email{frank.trujillo-amezquita@cnrs.fr}

\begin{document}
\title[Non-autonomous KAM theory I: Normally elliptic and hyperbolic cases]{Non-autonomous KAM theory for lower dimensional invariant tori (I):
\NoCaseChange{Normally elliptic and hyperbolic cases}}
\maketitle

\begin{abstract}
     Many physical phenomena are naturally modeled by dynamical systems subject to non-autonomous perturbations that decay in time, including the interaction of a laser pulse with a molecule, epidemiological models, nonlinear oscillatory systems, and celestial mechanics. In the present paper, we consider time-dependent perturbations decaying in time of Hamiltonian systems having a lower-dimensional isotropic normally elliptic (resp. hyperbolic) invariant torus with quasiperiodic solutions. Under suitable rates of decay in time of the perturbation, we prove the existence of invariant manifolds in the extended phase space, and determine the asymptotic behavior of the associated transverse dynamics. The above results are obtained for Hölder, smooth, and analytic Hamiltonian systems.
\end{abstract}

\section{Introduction}
Dynamical systems subject to non-autonomous perturbations decaying in time are relevant in describing many physical models, e.g., when considering the effect of a laser pulse
on a molecule~\cite{KSBAJCB07, BdlL11}, in epidemiological studies~\cite{TC23}, in nonlinear oscillatory systems~\cite{S24,S25} and in celestial mechanics~\cite{Sca25}.  For a recent survey on non-autonomous dynamical systems, we refer to~\cite{W25}.

In this work, we consider time-dependent perturbations of Hamiltonians having a lower-dimensional isotropic normally elliptic (resp. hyperbolic) invariant torus with quasiperiodic solutions. Under suitable time decay properties on the perturbations, we prove the existence of an \textit{asymptotic KAM torus} (see Definition \ref{def:Csigma_KAM_torus} below).  Heuristically, it consists of an invariant manifold in the extended phase space where the dynamics converge as time tends to infinity to the quasiperiodic solutions associated with the unperturbed system. Besides proving the existence of asymptotic KAM tori, the main novelty of the present work consists in investigating the solutions of the linearized Hamiltonian vector field along the trajectories contained in the asymptotic KAM torus. We prove that the corresponding cocycle is asymptotically conjugate to an explicit model cocycle displaying elliptic (resp. hyperbolic) behavior, showing that the asymptotic transverse dynamics is of the same type as that of the unperturbed invariant torus. We call these objects \textit{asymptotically elliptic (resp. hyperbolic) KAM tori}, see Definition \ref{def:ell_hyp_par_trasv_dyn_asym_KAM} below. This is the first part of two papers. In the second one~\cite{scarcella_KAMST2}, we consider time-dependent perturbations of Hamiltonians having a lower-dimensional isotropic normally parabolic invariant torus supporting quasiperiodic solutions.

The aim of these works is to develop a non-autonomous KAM theory for lower-dimensional invariant tori. Since the pioneering works of Kolmogorov~\cite{Kol54}, Arnold~\cite{Arn63a} and Moser \cite{M62}, the classical KAM theory shows the persistence of quasiperiodic solutions in nearly integrable Hamiltonian systems.
Over the past seventy years, KAM theory has undergone numerous developments, generalizations, and refinements. Several excellent surveys of the subject are available, including those by Bost~\cite{B86}, Pöschel~\cite{Posc01}, Chierchia~\cite{Chi03}, de la Llave~\cite{dlL01}, Sevryuk~\cite{S03} and Féjoz~\cite{Fej17}. We refer to Dumas's book for a fascinating historical report~\cite{D14}. This theory is interesting especially for its application to stability problems in celestial mechanics~\cite{Arn63b, Fe04, CP11}.

The persistence of lower-dimensional invariant tori has also been extensively investigated. In the normally elliptic case, the result was first announced by Melnikov~\cite{Mel65, Mel68} and later proved by Eliasson~\cite{El88}. For the persistence of normally hyperbolic invariant tori, we refer to the works of Graff~\cite{G74} and Zehnder~\cite{Zeh76}; see also Massetti~\cite{M18,M19} for the non-conservative setting. We also refer to the survey of Rüssmann~\cite{Ru01} for a comprehensive treatment of KAM theory, including both Kolmogorov's theorem on Lagrangian invariant tori and the persistence of lower-dimensional invariant tori.

The main difficulty of the KAM theory lies in the small denominators arising in the perturbation series, which make the convergence argument particularly delicate. The fundamental insight of Kolmogorov was to identify the appropriate arithmetic condition, together with a non-degeneracy assumption, in order to prove the convergence of these series using an iterative scheme based on a Newton algorithm characterized by superlinear convergence. We refer to~\cite{BGGS84} for a complete proof of the result of Kolmogorov. Another interesting approach consists in replacing the above-mentioned iterative method by a Nash-Moser Implicit Function Theorem approach on a suitable scale of Banach spaces. We refer to Zehnder~\cite{Zeh75,Zeh76}, Herman~\cite{B86}, Berti-Bolle~\cite{BB15} and Féjoz~\cite{Fe04}. 

The situation changes substantially when the perturbation depends aperiodically on time and decays as time tends to infinity. In this setting, one cannot generally expect the persistence of invariant tori. Instead, one can prove the existence of the asymptotic KAM tori; see Definition \ref{def:Csigma_KAM_torus}. 
In~\cite{Sca22}, the existence of asymptotic KAM tori is established for time-dependent perturbations that decay polynomially fast in time of Hamiltonians having a Lagrangian invariant torus with quasiperiodic solutions, thereby generalizing the previous works of Canadell-de la Llave~\cite{CdlL15} and Fortunati-Wiggins~\cite{FW14}. 

In the work~\cite{Sca25}, asymptotic dynamics of the planar three-body problem perturbed by a comet in a neighborhood of a Lagrangian invariant torus (whose existence is guaranteed by Arnold~\cite{Arn63b}; see also Féjoz~\cite{Fe02}) is studied. The interaction between the planets and the comet is modeled as a time-dependent perturbation.

In~\cite{Sca22b}, the existence of orbits asymptotic in forward and backward time to (possibly different) quasiperiodic solutions is proved for integrable or near-integrable Hamiltonians subject to time-dependent perturbations decaying polynomially fast in time.

Finally,~\cite{Sca22c} complements~\cite{Sca22} by considering arbitrary, rather than quasiperiodic, dynamics on the Lagrangian invariant torus associated with the unperturbed Hamiltonian, under exponentially decay in time of the perturbations.

The main difference between the non-autonomous and classical KAM theory is the absence of non-degeneracy conditions on the unperturbed Hamiltonian and of arithmetic assumptions on the frequencies of the unperturbed quasiperiodic motion. The latter stems from the absence of small denominators in the homological equations arising in time-dependent problems.

As mentioned before, in the present paper, we consider non-autonomous Hamiltonian vector fields converging as time tends to infinity to Hamiltonian vector fields having a lower-dimensional isotropic normally elliptic (resp. hyperbolic) invariant torus supporting quasiperiodic solutions. We prove the existence of asymptotic KAM tori and asymptotically elliptic (resp. hyperbolic) KAM tori for Hölder (Theorems \ref{Thm:elliptic_Csigma} and \ref{Thm:hyp_Csigma}), $C^\infty$ (Theorem \ref{thm:KAM_smooth}) and analytic (Theorems \ref{Thm:elliptic_analy} and \ref{Thm:hyp_analy}) Hamiltonians. %

In the elliptic case, we prove the existence of asymptotic KAM tori under polynomial decay in time of the perturbation (see Item \ref{thm:A_existence} of Theorem \ref{Thm:elliptic_Csigma}). Concerning the proof, the main challenge consists in reformulating the invariant equation in a functional setting suitable for applying an appropriate quantitative version of the Implicit Function Theorem. This requires a refined analysis of the associated Banach spaces chosen in order to solve the associated linearized problem through some cohomological equations. The presence of normal directions introduces additional nonlinear contributions to the invariance equation that have no counterpart in the Lagrangian case considered in~\cite{Sca22}, making the invariant equation and the associated linearized problem substantially more involved. 
We stress that no smallness on the perturbative terms is assumed. Instead, we prove the existence of asymptotic KAM tori for time sufficiently large. Then, if the flow is well-defined for all $t \in \R$ one could extend the set of definition using the flow (see Definition \ref{def:Csigma_KAM_torus} and~\eqref{eq:asymptotic_KAM_flow} below).

The main novelty of the present work consists in verifying that the above-mentioned asymptotic KAM torus is asymptotically elliptic (see Item \ref{thm:A_transverse} of Theorem \ref{Thm:elliptic_Csigma}). More specifically, under slightly stronger polynomial time decay assumptions, we prove that the transverse solutions of the linearized Hamiltonian vector field along the orbits contained in the asymptotic KAM torus have an asymptotic elliptic character. For this purpose, we prove the existence of a $C^1$ non-autonomous matrix that asymptotically conjugates the cocycle associated with the linearized Hamiltonian vector field on a trajectory contained in the asymptotic KAM torus with the cocycle of a suitable linear model displaying transversal elliptic dynamics. To achieve this, we introduce a nonlinear functional equation whose unknown is a time-dependent matrix defining the conjugacy between the two cocycles. The main difficulty is to identify the correct formulation of this equation so that its linearization can be analyzed in a suitable scale of weighted Banach spaces. This ultimately reduces to solving a system of cohomological equations with precise decay estimates, allowing the application of a quantitative Implicit Function Theorem to obtain the desired asymptotic conjugacy.

In addition, under slightly more restrictive polynomial decay assumptions, we prove that the asymptotic transverse dynamics is completely described by that of the limiting autonomous Hamiltonian system (see Corollary~\ref{cor:ell}). More precisely, assuming that the Hamiltonian converges as time tends to infinity to an autonomous Hamiltonian possessing a normally elliptic isotropic invariant torus with quasiperiodic solutions, we establish a one-to-one asymptotic correspondence between the transverse solutions of the perturbed and limiting linearized Hamiltonian vector fields.

Similar results are established in the hyperbolic case (see Theorem \ref{Thm:hyp_Csigma} and Corollary \ref{cor:hyp}). In this setting, we need exponential decay in time of the perturbation instead of polynomial decay. In particular, in Corollary \ref{cor:hyp}, which is the counterpart of Corollary \ref{cor:ell} in the hyperbolic setting, the correspondence is not one-to-one due to the unstable directions that destroy uniqueness. 

We believe that the analysis and the new methodologies developed in the present paper may be useful beyond the scope of the present work. In particular, they could be applied to the study of lower-dimensional invariant tori in dynamical systems where no small divisors arise. Examples of such systems include the fast-driven systems studied by Chen-Pinzari~\cite{CP21} and the normal form setting considered by Pinzari~\cite{P20}.

The paper is organized as follows. Section \ref{sc:results} contains the statements of the main results of this work. Section \ref{sc:analysis_transversal} studies the asymptotic properties of a suitable linear model displaying elliptic (resp. hyperbolic) transverse dynamics. Section \ref{sc:criteria_asym_dyn} provides sufficient conditions determining the asymptotic character of the transverse dynamics associated with an asymptotic KAM torus. Section \ref{sec:FS_Ref_Ham} establishes estimates for the higher-order terms in the Taylor expansion of the Hamiltonian systems under consideration. The proofs of Theorems \ref{Thm:elliptic_Csigma}, \ref{Thm:elliptic_analy} and Corollary \ref{cor:ell} are given in Section \ref{sec:Proof_Theorem_Ell} whereas those of Theorems \ref{Thm:hyp_Csigma}, \ref{Thm:hyp_analy} and Corollary \ref{cor:hyp} are contained in Section \ref{sec:Proof_Theorem_Hyp}.

\section{Main results}
\label{sc:results}
In this work, we are interested in studying the asymptotic dynamics of time-dependent Hamiltonian vector fields that converge, as time goes to infinity, to a Hamiltonian vector field having a lower-dimensional invariant torus supporting quasiperiodic solutions. We will prove analogous results under different regularity assumptions, namely, for systems that are Hölder continuous, $C^\infty$ or analytic with respect to the phase space variables and that are continuous with respect to the time parameter. For this reason, this section is divided into three parts. We refer to Section \ref{sec:Holder_Set} for the Hölder case, to Section \ref{sec:Smooth_Set} for the $C^\infty$ case, and to Section \ref{sec:Anal_Set} for the analytic case.

Let us introduce some notation that we will use throughout this work without explicit reference. Let $\mathsf{r}, \mathsf{s} \in \N$, we denote by $\mathcal{M}_{\mathsf{r} \times \mathsf{s}}(\R)$ the space of $\mathsf{r} \times \mathsf{s}$ matrices with real coefficients and write $\mathcal{M}_\mathsf{r}(\mathbb{R})$ when $\mathsf{r} = \mathsf{s}$. Given  $\mathsf{r} \in \N$, we define the standard symplectic matrix as
\begin{equation}\label{def:J}
     J_\mathsf{r} = \begin{pmatrix} 0 & \mathrm{Id}_\mathsf{r} \\ -\mathrm{Id}_\mathsf{r} & 0\end{pmatrix},
\end{equation}
where $\mathrm{Id}_\mathsf{r} \in \mathcal{M}_m(\R)$ denotes the $\mathsf{r} \times \mathsf{r}$ identity matrix. 

We will work on the (extended) phase space $\T^n \times \R^n \times \R^{2m} \times \R$, where $n \geq 1, \, m \geq 0$, and whose coordinates we denote by $(q,p,z,t)$. We sometimes denote $z = (x, y) \in \R^m \times \R^m$. Given $T \geq 0$, we introduce the interval $I_T=[T, +\infty) \subset \R$. We endow $\T^n \times \R^n \times \R^{2m}$ with the canonical symplectic form $\sum_{i = 1}^n dq_i\wedge dp_i + \sum_{j = 1}^m dx_i \wedge dy_i$ and denote 
\begin{equation}
    \label{eq:big_symplectic_matrix}
    J = \mathrm{diag}(J_n, J_m) = \begin{pmatrix}
        J_n & 0_{2m \times 2m} \\
        0_{2m \times 2n} & J_m
    \end{pmatrix}
\end{equation}
where refer to~\eqref{def:J} for the definition of $J_n$ and $J_m$.
Notice that for any Hamiltonian $H$ of class $C^1$ on $(q, p, z)$, defined on an open subset of $\T^n \times \R^n \times \R^{2m} \times \R$, the associated Hamiltonian vector field (with respect to the canonical symplectic form), which throughout this work we denote by $X_H$, is given by
\begin{equation}
    X_H = J \nabla H,
\end{equation}
where $J$ is given by \eqref{eq:big_symplectic_matrix}.

Let $B \subset \R^{n+2m}$ be an open ball centered at the origin.  Given $f:\T^n \times B \times I_T \to \R$, we define
\begin{equation}\label{def:f0}
    f_0:\T^n \times I_T \to \R, \quad f_0(q,t) = f(q,0,0,t)
\end{equation}
whereas, for a fixed $t \in I_T$, let
\begin{equation}\label{def:ft}
        f^t:\T^n \times B \to \R, \quad f^t(q,p, z) = f(q,p,z,t).
\end{equation}
The notation extends naturally to real-valued and matrix-valued functions. 
Throughout this article, we will often consider compositions of the form $f(g(\cdot, t), t)$ where $f(\cdot, t)$ and $g(\cdot, t)$ are two (possibly) time-dependent maps. Notice that, using the notation above, this map is formally defined as $(\cdot, t) \mapsto f^t \circ g^t (\cdot)$. However, to avoid cumbersome notation and since we will not consider any kind of time reparametrizations, given $f(\cdot, t)$ and $g(\cdot, t)$ we will denote simply by $f \circ g$ the map $(\cdot, t) \mapsto f(g(\cdot, t), t)$, whenever it is well-defined. In particular, when $g$ is time-independent, the map $f \circ g$ is given by $(\cdot, t) \mapsto f(g(\cdot), t).$

Let $\mathsf{r}, \mathsf{s}\in\N$ and let $M :\R^\mathsf{r}\times \R^\mathsf{s} \to \R$ be a bilinear form. In this work, we denote the action of $M$ on $(v_1, v_2)\in \R^\mathsf{r}\times \R^\mathsf{s}$ by $M \cdot (v_1, v_2)$. When, $\mathsf{r} = \mathsf{s}$ and $v_1 = v_2 = v$, we simply write $M \cdot v^2$. This notation extends naturally to $k$-linear forms.

In addition, we denote by $\pi_q$, $\pi_p$ and $\pi_z$ the projection of a vector in $\R^{2n+2m}$ onto the $q$, $p$ and $z$ components, respectively. More specifically, for every $v = (v_1,\dots, v_{2n+2m})^\top \in \R^{2n+2m}$ we have that 
\begin{equation*}
   \pi_q v =  (v_1,\dots, v_n)^\top, \quad \pi_p v =  (v_{n+1},\dots, v_{2n})^\top, \quad \pi_z v =  (v_{2n+1},\dots, v_{2n+2m})^\top.
\end{equation*}
 
\subsection{Hölder setting}\label{sec:Holder_Set}

  We denote by $C^\sigma$ the class of Hölder functions and by $|\cdot|_{C^\sigma}$ the associated norm (a brief introduction can be found in Appendix \ref{app:Holder}).

Let $\sigma \ge 1$, a positive integer $k \ge 0$ and a vector $\omega \in \R^n$, we consider time-dependent vector fields $X^t, X_0^t \in C^{\sigma+k}\left(\T^n \times B\right)$, for all fixed $t \in I_T$, continuous with respect to $t$, and a $C^\sigma$ embedding $\varphi_0:\T^n \to \T^n \times B$ such that   
\begin{equation}
\begin{aligned}\label{def:hyp_asym_KAM_tori}
    &\lim_{t \to +\infty} \left|X^t - X^t_0\right|_{C^{\sigma+k}} = 0,\\
  &X_0 \circ \varphi_0 = \partial_q\varphi_0 \cdot \omega. 
\end{aligned}
\end{equation}
In words, we are considering a time-dependent vector field $X$ converging as time tends to infinity to the vector field $X_0$. Furthermore, $X_0$ has an invariant torus $\varphi_0$ supporting quasiperiodic solutions of frequency vector $\omega$. We point out that the most natural setting is when $X_0$ is autonomous, but our results hold in this case as well. 

\begin{definition}\label{def:Csigma_KAM_torus}
    We assume that $(X, X_0, \varphi_0)$, defined on $\T^n \times \R^{n + 2m} \times I_T$, satisfy~\eqref{def:hyp_asym_KAM_tori}. A family of embeddings $\varphi:\T^n \times I_{T'} \to \T^n \times B$, with $T' \geq T$, is a $C^\sigma$ \emph{asymptotic KAM torus} associated with $(X, X_0, \varphi_0)$, for some $\sigma \geq 1$, if
    \begin{align}\label{def:cond1_asymKAMtorus}
            &\lim_{t \to +\infty}\left|\varphi^t -\varphi_0\right|_{C^\sigma}=0,\\ \label{def:cond2_asymKAMtorus}
             &X \circ \varphi = \partial_q\varphi \cdot \omega + \partial_t \varphi.
    \end{align}

We say that $\varphi$ is a $C^\infty$ asymptotic KAM torus associated with $(X, X_0, \varphi_0)$ if it is a $C^\sigma$ asymptotic KAM torus associated with $(X, X_0, \varphi_0)$, for all $\sigma \geq 1$.
\end{definition}

Let us point out that \eqref{def:cond2_asymKAMtorus} is equivalent to
    \begin{equation}
    \label{eq:asymptotic_KAM_flow}
        \psi_{t_0, X}^t \circ \varphi^{t_0} (q) = \varphi^t(q + \omega(t-t_0)), \qquad \text{ for any } t_0, t \in I_{T'} \text{ and any } q\in \T^n,
    \end{equation}
where $\psi_{t_0, X}^t$ denotes the flow at time $t$ with initial time $t_0$ associated with $X$.

In particular, the existence of a $C^\sigma$ asymptotic KAM torus provides the existence of orbits converging, as time tends to infinity, to the quasiperiodic solutions associated with the unperturbed system. More precisely, we have the following.

\begin{proposition}\label{prop:esint_solt_KAM_tori}
    Let $\varphi^t$ be a $C^\sigma$ asymptotic KAM torus associated with $(X^t, X_0^t, \varphi)$. Then, for all $x \in \bigcup_{t \ge T^{'}} \varphi^t(\T^n)$, there exists $(q,t_0) \in \T^n \times I_{T^{'}}$ such that 
\begin{equation*}
    \lim_{t \to +\infty}|\psi^t_{t_0, X}(x) - \varphi_0(q + \omega(t-t_0))|=0.
\end{equation*}
\end{proposition}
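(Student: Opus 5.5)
Fix $x \in \bigcup_{t \ge T'} \varphi^t(\T^n)$. By definition there are $t_0 \in I_{T'}$ and $q \in \T^n$ with $x = \varphi^{t_0}(q)$; this pair $(q,t_0)$ is the one we claim works. The proof then has two steps: move the orbit of $x$ onto the torus using the flow identity \eqref{eq:asymptotic_KAM_flow}, and compare $\varphi^t$ with $\varphi_0$ using \eqref{def:cond1_asymKAMtorus}.

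For the first step, apply \eqref{eq:asymptotic_KAM_flow}, which is equivalent to the invariance equation \eqref{def:cond2_asymKAMtorus}. This gives, for every $t \in I_{T'}$,
\begin{equation*}
\psi^t_{t_0,X}(x) = \psi^t_{t_0,X}\circ\varphi^{t_0}(q) = \varphi^t(q+\omega(t-t_0)).
\end{equation*}
For $t \ge t_0$ this is the forward orbit. If one wants to justify \eqref{eq:asymptotic_KAM_flow} directly, the curve $\gamma(t) = \varphi^t(q+\omega(t-t_0))$ satisfies $\dot\gamma(t) = \partial_q\varphi\cdot\omega + \partial_t\varphi = X(\gamma(t),t)$ by \eqref{def:cond2_asymKAMtorus}, and $\gamma(t_0)=x$. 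Since $X^t$ is $C^{\sigma+k}$ with $\sigma\ge1$, hence locally Lipschitz in space and continuous in $t$, uniqueness of solutions gives $\gamma(t)=\psi^t_{t_0,X}(x)$. The orbit also exists for all $t\ge t_0$, because $\gamma$ stays in $\varphi^t(\T^n)\subset \T^n\times B$.

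For the second step, estimate
\begin{equation*}
|\psi^t_{t_0,X}(x) - \varphi_0(q+\omega(t-t_0))| = |\varphi^t(\theta_t) - \varphi_0(\theta_t)| \le |\varphi^t-\varphi_0|_{C^0} \le |\varphi^t - \varphi_0|_{C^\sigma},
\end{equation*}
where $\theta_t = q+\omega(t-t_0)$. The right-hand side is uniform in the angle, so the time-dependent argument $\theta_t$ causes no difficulty. By \eqref{def:cond1_asymKAMtorus} it tends to $0$ as $t\to+\infty$, which proves the claim.

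The only point needing care is the equivalence between \eqref{def:cond2_asymKAMtorus} and \eqref{eq:asymptotic_KAM_flow}, i.e.\ existence and uniqueness of the flow. As argued above, this follows from the spatial regularity and time continuity of $X$. Everything else is immediate.
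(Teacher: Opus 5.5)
Your proof is correct and follows the same route as the paper: write $x=\varphi^{t_0}(q)$, use \eqref{eq:asymptotic_KAM_flow} to identify $\psi^t_{t_0,X}(x)$ with $\varphi^t(q+\omega(t-t_0))$, and conclude from \eqref{def:cond1_asymKAMtorus} via the angle-uniform bound $|\varphi^t-\varphi_0|_{C^0}\le|\varphi^t-\varphi_0|_{C^\sigma}$. The only addition is your ODE-uniqueness justification of the equivalence between \eqref{def:cond2_asymKAMtorus} and \eqref{eq:asymptotic_KAM_flow}, which the paper simply asserts.
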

\begin{proof}
    If $x \in \bigcup_{t \ge T^{'}} \varphi^t(\T^n)$, then there exists $(q,t_0) \in \T^n \times I_{T^{'}}$ such that 
    $x = \varphi^{t_0}(q)$. By \eqref{eq:asymptotic_KAM_flow}, we have
    \begin{equation*}
        \psi^t_{t_0, X}(x) = \psi^t_{t_0, X}\circ \varphi^{t_0}(q) = \varphi^t(q + \omega(t-t_0)).
    \end{equation*}
    Hence, the result follows by \eqref{def:cond1_asymKAMtorus} and the equation above. 
\end{proof}

Roughly speaking, a $C^\sigma$ asymptotic KAM torus is a family of $C^\sigma$ embeddings $\{\varphi^t\}_{t \in [T',+\infty)}$,  describing the evolution by the flow associated to $X$ of the torus $\varphi^{T'}(\T^n) \times \{T'\}$, that converge to the $\psi_{X_0}$-invariant torus $\varphi_0$ as time tends to infinity (see Proposition \ref{prop:esint_solt_KAM_tori}).  Moreover, the dynamics on this family of embeddings converge in time to the quasiperiodic solutions associated with $X_0$ on the invariant torus $\varphi_0$.

In this work, in addition to proving the existence of asymptotic KAM tori, we would like to analyze the asymptotic behaviour of the solutions of the linearized vector field $DX$ along the orbits contained in the asymptotic KAM torus $\varphi^t$, that is, the asymptotic behaviour of the solutions of the time-dependent linear system $\partial_t \eta (t) = DX \circ \varphi^t(q + \omega t) \eta(t)$, with $q \in \T^n$. To describe this, we introduce the following matrices.
Given $\Omega_1,\dots,\Omega_m \in \R_{>0}$, we define $\Omega = \mathrm{diag}(\Omega_1,\dots, \Omega_m)$, and we introduce the following symmetric matrices
\begin{equation}\label{Ms}
    \Mell = \begin{pmatrix} \Omega & 0 \\ 0 & \Omega \end{pmatrix}, \quad \Mhyp = \begin{pmatrix} \Omega & 0 \\ 0 & -\Omega \end{pmatrix}.
\end{equation}

Recall that for autonomous Hamiltonians the archetypical models of lower dimensional invariant tori with a definite behaviour along the transverse variables, namely, \emph{elliptic}, or \emph{hyperbolic} behaviour, are given by the invariant torus $\T^n \times \{0\}$ associated with a Hamiltonian $H: \T^n \times B \subseteq \T^n \times \R^n \times \R^{2m} \to \R$ of the form
\begin{equation} \label{def:H_aut}
H(q, p, z) =  \omega \cdot p + \frac{1}{2} M\cdot z^2  + \mathcal{O}(p^2,pz,z^3), 
\end{equation}
with $M = \Mell$, or $M = \Mhyp$, respectively. We recall that $\frac{1}{2} M\cdot z^2$ denotes for the vector $z$ given twice as an argument of the symmetric bilinear form $\frac{1}{2} M$.  In these scenarios, the linearized system along the solutions contained in the invariant torus is of the form
\begin{equation}\label{def:DXH_aut}
DX_H(q + \omega t , 0, 0) =  \begin{pmatrix} 0_{n \times n} & a_{12}(q + \omega t ) & a_{13}(q + \omega t )^\top\\
                                0_{n \times n} & 0_{n \times n} & 0_{n 
                            \times 2m} \\
                                0_{2m \times n} & a_{13}(q + \omega t ) & J_mM \end{pmatrix}
\end{equation}
we refer to~\eqref{def:J} for the defition of $J_m$. 
In this case, the associated cocycle $\Phi(t; q, t_0)$ given by the fundamental solutions of \eqref{def:DXH_aut} (that is, for any $(\eta_0, t_0) \in \R^{2n + 2m} \times \R$  the map $\eta(t) = \Phi(t; q, t_0) \eta_0$ is the unique solution to the system $\eta'(t) = DX_H(q + \omega t, 0, 0)\eta(t)$ with initial conditions $\eta(t_0) = \eta_0$) has a explicit form displaying elliptic or hyperbolic behaviour, in accordance with the matrix $M$. See Section \ref{sc:analysis_transversal}.

Furthermore, as we shall see also in Section \ref{sc:analysis_transversal}, given a symmetric matrix $M \in \mathcal{M}_{2m}(\R)$ and a family of matrices $A:\T^n \times [1, +\infty) \to \mathcal{M}_{2n+2m}(\R)$ of the form
     \begin{equation}
     \label{def:A}
        A^t(q) =\begin{pmatrix} 0_{n \times n} & a_{12}^t(q) & a_{13}^t(q) \\
                                0_{n \times n} & 0_{n \times n} & 0_{n \times 2m} \\
                                0_{2m \times n} & a_{32}^t(q) & J_mM \end{pmatrix},
    \end{equation}
   with $a_{12}^t:\T^n \to \mathcal{M}_n(\R)$, $a_{13}^t :\T^n \to \mathcal{M}_{n\times 2m}(\R)$, and  $a_{32}^t:\T^n \to \mathcal{M}_{2m\times n}(\R)$, 
   satisfying
    \begin{equation}
    \label{def:cond1_ellhyppar_asymKAMtorus}
        \sup_{t \in I_{T^{''}}}|A^t|_{C^0}<\infty,
    \end{equation}
    the cocycle $\Phi_A(t; q, t_0)$ given by the fundamental solutions of the system
    \begin{equation}
        \label{eq:Trasv_dyn_AA}
        \partial_t \xi(t) = A^t(q + \omega t) \xi(t),
    \end{equation}
    displays, asymptotically, elliptic (resp. hyperbolic) behaviour, in the spirit of the behaviour associated with lower-dimensional invariant tori of autonomous Hamiltonians, if $M = \Mell$ (resp. $M = \Mhyp$).

    \begin{definition}
    \label{def:ell_hyp_par_trasv_dyn_asym_KAM}
    Let $(X, X_0, \varphi_0)$ defined on $\T^n \times B \subseteq \T^n \times \R^{n + 2m}$ satisfying \eqref{def:hyp_asym_KAM_tori} and let $\varphi: \T^n \times [T', +\infty) \to \T^n \times \R^{n + 2m}$ be a $C^\sigma$ asymptotic KAM torus associated with $(X, X_0, \varphi_0)$.
    
    We say that $\varphi$ is asymptotically \emph{elliptic} (resp. \emph{hyperbolic}), if there exist a continuous map $A:\T^n \times [T'', +\infty) \to \mathcal{M}_{2n+2m}(\R)$ of the form \eqref{def:A} with $M = \Mell$ (resp. $M = \Mhyp$) and $T'' \geq T'$, satisfying \eqref{def:cond1_ellhyppar_asymKAMtorus}, such that the cocycles $\Phi(t; q, t_0)$ and $\Phi_A(t; q, t_0)$, given by the fundamental solutions of the systems
    \[
    \begin{array}{ll}
    \dot \eta(t) = DX^t \circ \varphi^t (q + \omega t) \eta(t), & \qquad (q, t) \in \T^n \times [T'', +\infty), \\
    \dot \xi(t) = A^t(q + \omega t) \xi(t), & \qquad (q, t) \in \T^n \times [T'', +\infty),
    \end{array}\]
are \emph{conjugated} by a $C^1$ map $S: \T^n \times [T'', +\infty) \to GL(2n + 2m, \R)$,  that is,
    \begin{equation}
\Phi(t; q, t_0) = S(q + \omega t, t)\Phi_A(t; q, t_0)S(q + \omega t_0, t_0)^{-1},
        \label{eq:conjugated_cocycles}
    \end{equation} 
    and satisfy
    \begin{equation}\label{def:trasv_dyn_limit} 
        \lim_{t \to +\infty}\big|\pi_z \Phi(t; q,t_0) - \pi_z \Phi_A(t; q, t_0)S(q + \omega t_0, t_0)^{-1}\big| = 0.
    \end{equation}
\end{definition}

In order to quantify the regularity of time-dependent smooth functions, we introduce the following Banach spaces. Given $\sigma \ge 0$, a non-negative integer $k$, and parameters $T\ge 1$ and $\ell >0$, we define
\begin{align}\label{def:S}
\Spol_{(\sigma,k),\ell}
    = \left\{
    f : \T^n \times B \times I_T \to \R \;\middle|\;
    \begin{aligned}
        &f^t \in C^{\sigma+k}(\T^n \times B), \text{ for } t \in I_T; \\
        & \sup_{t \in I_T} \bigl(|f^t|_{C^{\sigma+k}}\, t^\ell \bigr) < \infty; \\
        &\partial^i_{(q,p,z)} f \in C(\T^n \times B \times I_T), \text{ for } 0 \le |i| \le k
    \end{aligned}
    \right\}
\end{align}
and endow it with the norm
\begin{equation}\label{def:norm_S}
    |f|^{\mathrm{pol}, T}_{\sigma+k, \ell} = \sup_{t \in I_T}|f^t|_{C^{\sigma+k}}t^\ell,
\end{equation}
where $\partial^{i}_{(q,p,z)}=\partial^{i_1}_{q_1} \cdots\partial^{i_n}_{q_n}\partial_{p_1}^{i_{n+1}}\cdots \partial_{p_1}^{i_{2n}}\partial_{z_1}^{i_{2n+1}}\cdots \partial_{z_{2m}}^{i_{2n+2m}}$ and $|i|= |i|_1,$ for any $i \in \N^{2n + 2m}$ and, as a convention, we adopt $\partial^0_{(q,p,z)}f = f$.

In general, we will use the same notation if the function is defined on $\T^n \times I_T$, as well as for vector-valued or matrix-valued functions. More precisely, we say that a vector-valued or matrix-valued function belongs to $\Spol_{(\sigma, k), \ell}$ if it is the case for each of its components.  The norms of a real-valued function or a matrix are defined as the maximum of those norms of its components. It will be specified by the context. %
When the need arises to specify the codomain of the function, we will denote the space of functions in $\Spol_{(\sigma, k), \ell}$ taking values in $\R^{\mathsf{p}}$ and $\mathrm{Mat}_{{\mathsf{p} \times \mathsf{r}}}(\R)$ by $\Spol_{(\sigma, k), \ell}(\R^\mathsf{p})$ and $\Spol_{(\sigma, k), \ell}(\R^{\mathsf{p} \times \mathsf{r}})$, respectively.

 In the following $\varphi_0$ denotes the trivial embedding
\begin{equation}\label{def:varphi0=(q,0,0)}
    \varphi_0:\T^n \to \T^n \times B, \qquad \varphi_0(q) = (q,0,0)
\end{equation}

The following theorem is proved in Section \ref{sec:Proof_Theorem_Ell}. 

\begin{theoremx}\label{Thm:elliptic_Csigma}

Fix $\sigma \ge 1$, $\ell >1$ and $l \ge 0$. Let $H_0: \T^n  \times B \times [1, +\infty) \to \R$ of the form
\begin{equation}
\label{eq:initial_hamiltonian}
 H_0(q, p, z, t) =  \omega \cdot p + \frac{1}{2}\Mell  \cdot z^2 + O(p^2,pz,z^3), \qquad \partial_{(p,z)}^2 H_0 \in \mathcal{S}_{(\sigma,3),0}^{\mathrm{pol},1},
\end{equation}
and $P: \T^n  \times B \times [1, +\infty) \to \R$ of  the form
\begin{equation}
\label{eq:perturbation_form}
P(q, p, z, t) = a(q, t) + b(q, t) \cdot p + c(q, t) \cdot z + \frac{1}{2}d(q, t)\cdot z^2.
\end{equation}

Then, for $H := H_0 + P$
the following holds.
\begin{enumerate}
    \item \label{thm:A_existence} Suppose that
    \begin{equation*}
        a \in \mathcal{S}^{\mathrm{pol},1}_{(\sigma, 2),0}, \hspace{2mm} \partial_qa \in \mathcal{S}_{(\sigma, 1), \ell+ \dec + 2}^{\mathrm{pol},1}, \hspace{2mm}  b \in \mathcal{S}_{(\sigma, 2), \ell}^{\mathrm{pol},1}, \hspace{2mm}  c \in \mathcal{S}_{(\sigma, 2), \ell+ \dec + 1}^{\mathrm{pol}, 1}, \hspace{2mm} d \in \mathcal{S}_{(\sigma, 2), \ell}^{\mathrm{pol}, 1}.
    \end{equation*}
    Then, there exists a $C^\sigma$ asymptotic KAM torus $\varphi$ associated with $(X_H, X_{H_0}, \varphi_0)$ of the form
    \begin{equation}
        \label{eq:asymp_KAM_torus_form}
    \begin{gathered}
        \varphi:\T^n \times I_T \to \T^n \times \R^n \times \R^{2m}, \qquad \varphi^t = (\textup{id}_{\T^n} + u^t, v^t, w^t), \qquad T \geq 1, \\
        u \in \mathcal{S}_{(\sigma, 0), \ell-1}^{\mathrm{pol},T}, \qquad  v \in \mathcal{S}_{(\sigma, 0), \ell+ \dec + 1}^{\mathrm{pol},T}, \qquad w \in \mathcal{S}_{(\sigma, 0), \ell+ \dec}^{\mathrm{pol},T}.
    \end{gathered}
        \end{equation}
        Moreover, any other $C^\sigma$ asymptotic KAM torus associated with $(X_H, X_{H_0}, \varphi_0)$ of the form \eqref{eq:asymp_KAM_torus_form} coincides with $\varphi$ in the intersection of their domains.

    \medskip
    
    \item \label{thm:A_transverse}  Suppose that $\sigma \ge 2$ and
    \begin{equation}
    \label{eq:decay_ell_2}
        a \in \mathcal{S}^{\mathrm{pol},1}_{(\sigma, 2),0}, \hspace{2mm}   \partial_qa \in \mathcal{S}_{(\sigma, 1), \ell+ \dec + 3}^{\mathrm{pol}, 1}, \hspace{2mm}  b \in \mathcal{S}_{(\sigma, 2), \ell}^{\mathrm{pol}, 1}, \hspace{2mm}  c \in \mathcal{S}_{(\sigma, 2), \ell+\dec+2}^{\mathrm{pol}, 1}, \hspace{2mm}  d \in \mathcal{S}_{(\sigma, 2), \ell+\dec+1}^{\mathrm{pol}, 1}.
    \end{equation}
    Then the $C^\sigma$ asymptotic KAM torus $\varphi$ associated with $(X_H, X_{H_0}, \varphi_0)$ given by the first statement is asymptotically elliptic.

\end{enumerate}
\end{theoremx}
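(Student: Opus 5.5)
We prove both items by writing the relevant invariance or conjugacy condition as a functional equation $\mathcal{F}(x,t)=0$ on a weighted scale $\Spol$ and applying a quantitative Implicit Function Theorem for $T$ large. No smallness of $P$ is assumed: smallness comes from the factor $t^{-\ell}$ on $I_T$, which makes the norms of the perturbative terms in $\mathcal{S}^{\mathrm{pol},T}$ small once $T$ is large.

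\textbf{Item \ref{thm:A_existence}.} We look for $\varphi^t=(\mathrm{id}+u^t,v^t,w^t)$ and substitute into \eqref{def:cond2_asymKAMtorus} with $X=X_H=J\nabla(H_0+P)$. Expanding $H_0$ around $(q,0,0)$, the system becomes
\[
\partial_t u+\partial_q u\,\omega=\partial_p^2H_0\, v+\partial_{pz}H_0\, w+b+\dots,\qquad
\partial_t v+\partial_q v\,\omega=-\partial_q a-(\partial_q b)v-(\partial_q c)w-\dots,
\]
\[
\partial_t w+\partial_q w\,\omega=J_m\bigl(\Mell+d\bigr)w+J_m c+J_m\partial_{zp}H_0\, v+\mathcal{O}(v^2,vw,w^2),
\]
where we have used $\omega\cdot p$ to cancel the leading $q$-velocity. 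The linear operators to invert are the following:
\begin{itemize}
\item $\mathcal{L}_\omega f=\partial_t f+\partial_q f\,\omega$, inverted by integrating along characteristics from $+\infty$, i.e. $f(q,t)=-\int_t^\infty g(q-\omega(s-t),s)\,ds$. This maps $\mathcal{S}_\lambda$ to $\mathcal{S}_{\lambda-1}$ for $\lambda>1$, with no small divisors.
\item $\mathcal{L}_\omega-J_m\Mell$ for $w$. Since $J_m\Mell$ generates a rotation, its exponential is bounded, and the same integral formula with the factor $e^{J_m\Mell(t-s)}$ again loses one power of $t$. This is why $w$ lives in weight $\ell+\dec$ while $c$ has weight $\ell+\dec+1$.
\end{itemize}
The order of solving is triangular: first $v$ from $\partial_qa$ (loss of one power gives weight $\ell+\dec+1$), then $w$, and finally $u$. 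The right-hand side for $u$ is dominated by $b$, of weight $\ell$, giving weight $\ell-1>0$; this is where $\ell>1$ is used. The nonlinear terms are quadratic in $(v,w)$, or carry coefficients that decay. We estimate them through the Taylor-remainder bounds of Section \ref{sec:FS_Ref_Ham} together with the composition and Hölder estimates of Appendix \ref{app:Holder}. The loss of two derivatives in compositions is absorbed by requiring $k=2$ in the hypotheses.

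The fixed-point map is then a contraction for large $T$ on a ball of $\mathcal{S}^{\mathrm{pol},T}_{(\sigma,0),\ell-1}\times\mathcal{S}^{\mathrm{pol},T}_{(\sigma,0),\ell+\dec+1}\times\mathcal{S}^{\mathrm{pol},T}_{(\sigma,0),\ell+\dec}$. This gives existence; uniqueness in this class follows from the uniqueness of decaying solutions of the cohomological equations.

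\textbf{Item \ref{thm:A_transverse}.} Given $\varphi$, we compute $DX_H\circ\varphi$. Its structure is a model matrix $A$ of the form \eqref{def:A} with $M=\Mell$, plus a remainder $R$ whose entries decay. The $pq$ and $pp$ blocks come from $\partial_q^2a$ and from the derivatives of $b,c,d$ evaluated at $(v,w)$. The $zz$ block is $J_m(\Mell+d+\mathcal{O}(v,w))$. Taking $a_{12},a_{13},a_{32}$ as the actual bounded blocks, the remainder $R$ contains the terms $\partial_q\cdot$ in the $q$-columns and the $zz$ perturbation, all with weight at least $\ell+\dec+1$ thanks to the hypotheses \eqref{eq:decay_ell_2}.

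We seek $S=\mathrm{Id}+Y$ satisfying the conjugacy equation
\[
\mathcal{L}_\omega Y=(A+R)(\mathrm{Id}+Y)-(\mathrm{Id}+Y)A,
\]
which is equivalent to \eqref{eq:conjugated_cocycles}. We solve it by a second quantitative IFT in a scale where $Y$ decays. The linearized operator $Y\mapsto\mathcal{L}_\omega Y-[A,Y]$ must be inverted blockwise, exploiting the nilpotent, triangular structure of $A$: the $p$-rows have zero dynamics, the $q$-rows are integrated, and the $z$-rows are rotated by the elliptic flow. This yields decay of $Y$ with a loss of one power per integration.

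The criteria of Section \ref{sc:criteria_asym_dyn} then convert $S\to\mathrm{Id}$, together with the boundedness of $\Phi_A$ from Section \ref{sc:analysis_transversal}, into \eqref{def:trasv_dyn_limit}. We need $\sigma\ge2$ so that $DX\circ\varphi$ is $C^{\sigma-1}$ with $\sigma-1\ge1$.

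\textbf{Main obstacle.} The hard step is the blockwise inversion in Item \ref{thm:A_transverse}. Off-diagonal terms of $A$ couple the blocks, and the $q$-direction is only polynomially bounded, so $\Phi_A$ may grow linearly in $t$. The weights must be chosen so that each block of $Y$ pays one power of $t$ while still remaining summable. My first attempt would assign different weights to the blocks of $Y$, with the $q$-rows weaker and the $z$- and $p$-rows stronger, and solve them in triangular order. The gap of one unit in the hypotheses \eqref{eq:decay_ell_2} relative to Item \ref{thm:A_existence} suggests this is exactly what is needed.
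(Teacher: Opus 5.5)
Your Item~1 is essentially the paper's argument. The paper applies a quantitative Implicit Function Theorem to $\mathcal F^T(P,\varphi)=X_{H_0+P}\circ\varphi-\Lo\varphi$. It inverts $D_\varphi\mathcal F^T(0,0)$ in your triangular order ($v$, then $w$, then $u$) and uses $\|D_\varphi\mathcal F^T(P,\varphi)-D_\varphi\mathcal F^T(0,0)\|=O(T^{-(\ell-1)})$; your contraction is equivalent. One correction: the forcing $X_H\circ\varphi_0$ is only $O(|P|)$ on $I_T$, not small. What becomes small is the Lipschitz constant, so the ball must have radius proportional to $|P|$.

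Item~2, however, does not close as proposed. First, the remainder is not $O(t^{-(\ell+\dec+1)})$. The $(q,q)$ and $(p,p)$ blocks of $DX_H\circ\varphi$ are $\pm\partial_q\partial_pH\circ\varphi$, which contain $\partial_q b\circ(\mathrm{id}+u)$. Since $b\in\mathcal S_{\ell}$ also under \eqref{eq:decay_ell_2}, these blocks are only $O(t^{-\ell})$. They cannot be moved into $A$, whose first $n$ columns and $p$-rows must vanish. Second, once you freeze $a_{12},a_{13},a_{32}$, the triangular solve of $\Lo Y=[A,Y]+R(\mathrm{Id}+Y)$ loses a power of $t$ at every bounded coupling:
\begin{itemize}
\item $Y_{pp}\approx-\Lo^{-1}(\partial_q b)=O(t^{-(\ell-1)})$;
\item then $\Lo Y_{zp}-J_m\Mell Y_{zp}=a_{32}Y_{pp}+\dots$, where $a_{32}\approx J_m\partial_z\partial_pH_0(q,0,0,t)$ is bounded and generically nonzero.
\end{itemize}
No arithmetic condition relates $\omega$ and $\Omega$; a resonance $\omega\cdot k=\Omega_j$ is allowed, so the rotation gives no extra decay. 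A decaying $Y_{zp}$ therefore exists only if $\ell>2$, and then $S_{zp}=Y_{zp}=O(t^{-(\ell-2)})$. The $(q,p)$ block, forced by $a_{13}Y_{zp}$, is worse still. So for $1<\ell\le 2$ there is no decaying $Y$, the IFT in a decaying scale cannot be set up, and the condition $S_{zp}\to0$ required by Proposition~\ref{prop:general_asymp_characterization} fails. Block-dependent weights cannot repair this: with $A$ fixed, $Y$ is uniquely determined among decaying matrices. Note also that $\Phi_A$ is not bounded. Proposition~\ref{prop:trasv_dyn_ell_case} only gives growth $t^2$ in $\xi_1$ and $t$ in $\xi_3,\xi_4$, which is why the criterion asks for $|S_{zq}|t^2\to0$ and $|S_{zz}-\mathrm{Id}|t\to0$.

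The missing idea is to leave the off-diagonal blocks of the model free and build $S$ from the torus itself. The paper takes $S=\boldsymbol S(\varphi,G)$ from Lemma~\ref{lem:embedding_symplectomorphisms_family}, the differential of a symplectomorphism straightening the isotropic torus. Its blocks are $S_{qq}=\mathrm{Id}+\partial_qu$, $S_{zq}=\partial_qw$, $S_{zp}=0$ and $S_{zz}=e^G$, with $G\colon\T^n\times I_T\to\mathfrak{sp}(2m,\R)$. Then $A=S^{-1}(DX_H\circ\varphi\,S-\Lo S)$ is automatically Hamiltonian with vanishing first $n$ columns (differentiate $X_H\circ\varphi=\Lo\varphi$), hence vanishing $p$-rows (Lemma~\ref{lem:matrix_A}). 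The slowly decaying $\partial_q b$ terms are absorbed by the $\partial_q u$ entries of $S$ and by whatever bounded $a_{12},a_{13},a_{32}$ result. The only remaining condition is the $zz$ block, $\boldsymbol\zeta(H,\varphi,G)=\Mell$, an equation for the $2m\times 2m$ matrix $G$ alone. Its forcing is $O(t^{-(\ell+\dec+1)})$, which is where \eqref{eq:decay_ell_2} enters. Its linearization $G\mapsto G^\top\Mell+\Mell G+J_m\Lo G$ splits into scalar equations $\Lo f+i(\Omega_j\pm\Omega_k)f=g$, each inverted with the loss of exactly one power. This gives $G=O(t^{-(\ell+\dec)})$, $S_{zq}=O(t^{-(\ell+\dec+1)})$ and $S_{zp}=0$, exactly the rates required in Proposition~\ref{prop:criteria_non_autonomous}.
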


\begin{remark}
The theorem above shows that \underline{any} perturbation $($not necessarily of the form \eqref{eq:perturbation_form}$)$ of a Hamiltonian with an invariant torus of the form \eqref{eq:initial_hamiltonian} admits an asymptotic elliptic KAM torus provided that the perturbation displays appropriate decrease rates.

Indeed, a general perturbation can be written in the form $P + Q$ where $P$ is of the form \eqref{eq:perturbation_form} and $Q = O(p^2, pz, z^3)$. Thus if $P$ verifies the assumptions of the theorem and $Q$  satisfies $\partial_{(p,z)}^2 Q \in \mathcal{S}_{(\sigma,3),0}^{\mathrm{pol},1}$, then it suffices to apply the theorem above to $H_0 + Q$ instead of $H_0$. 
\end{remark}
\begin{remark}
     The stronger decay assumptions on the perturbation terms $\partial_q a$ and $c$ (obtained by fixing $\ell$ and taking the parameter $\dec$ larger) yield the existence of asymptotic KAM tori whose components $v$ and $w$ exhibit faster decay rates as the parameter $\dec$ increases, while the decay behavior of the component $u$ remains unchanged, namely of order ${t^{-(\ell-1)}}$. 
\end{remark}

In the following corollary, we consider the case where the Hamiltonian $H$ in the statement of Theorem \ref{Thm:elliptic_Csigma} converges, as time tends to infinity, to an autonomous Hamiltonian $H^\infty$ whose zero section $\varphi_0$ in~\eqref{def:varphi0=(q,0,0)} is a normally elliptic invariant torus with quasiperiodic solutions. Under slightly stronger hypotheses than those of Item \ref{thm:A_transverse} of Theorem \ref{Thm:elliptic_Csigma}, we prove a one-to-one correspondence between the solutions, restricted to the transverse $z$-coordinates, of the linearized vector field $DX_H$ along orbits contained in the asymptotic KAM torus $\varphi$ and those associated with $DX_{H^\infty} \circ \varphi_0(q+\omega t)$. For the statement of this result, we consider the following projection \begin{equation}\label{def:Pi_p}
\Pi_p = \left(0_n \quad \mathrm{Id}_n \quad 0_{n \times 2m}\right) \in \mathcal{M}_{n \times 2n+2m}.
\end{equation}
\begin{corollaryx}\label{cor:ell}
    Under the hypotheses of Item \ref{thm:A_transverse} of Theorem \ref{Thm:elliptic_Csigma}, we assume the existence of a $C^2$ autonomous Hamiltonian $H^\infty: \T^n  \times B  \to \R$ of the form~\eqref{def:H_aut}, with $M = \Mell$, such that  
\begin{equation}\label{hyp:cor_ell_decay}
        \int_1^{+\infty} \sup_{q \in \T^n}|\partial_z\partial_pH^t(q,0,0) - \partial_z\partial_pH^\infty(q,0,0)| \, dt < \infty.
\end{equation}
Let $\varphi: \T^n \times [T'', +\infty) \to \T^n \times \R^{n + 2m}$ be the $C^\sigma$ asymptotically elliptic KAM torus associated with $(X_H, X_{H_0}, \varphi_0)$ given by Theorem \ref{Thm:elliptic_Csigma}, $S$ be the map defined in Definition \ref{def:ell_hyp_par_trasv_dyn_asym_KAM} and $\varphi_0$ be the trivial embedding in~\eqref{def:varphi0=(q,0,0)}. 
We consider the following systems 
\begin{align}\label{cor:syst_H}
    \dot \eta(t) = DX_H^t \circ \varphi^t (q + \omega t) \eta(t), & \qquad (q, t) \in \T^n \times [T'', +\infty), \\ \label{cor:syst_Hinfty}
    \dot \xi^\infty(t) = DX_{H^\infty}^t\circ \varphi_0(q + \omega t) \xi^\infty(t), & \qquad (q, t) \in \T^n \times [T'', +\infty).
\end{align}
    Then, if $\ell >2$, for every fixed $q\in\T^n$, $t_0\in I_{T^{''}}$ and $v_0 \in \R^n$ there exists a one-to-one correspondence between the sets 
     $$\left\{ \pi_z\eta \mid \eta \text{ is a solution  of ~\eqref{cor:syst_H}} \text{ with } \pi_p\eta(t_0) = v_0\right\},$$
    $$\left\{ \pi_z\xi^\infty \mid \xi^\infty \text{ is a solution  of ~\eqref{cor:syst_Hinfty}} \text{ with } \pi_p \xi^\infty(t_0) = \Pi_p S^{t_0}(q +\omega t_0)v_0\right\},$$ by the relation 
       \begin{equation*}
          \lim_{t \to +\infty}|\pi_z\eta(t)-\pi_z\xi^\infty(t)|=0.
    \end{equation*}
    \end{corollaryx}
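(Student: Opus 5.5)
Using the conjugacy provided by Item \ref{thm:A_transverse} of Theorem \ref{Thm:elliptic_Csigma}, I would transfer both systems to explicit block-triangular models and compare their $z$-components directly. By Definition \ref{def:ell_hyp_par_trasv_dyn_asym_KAM} there are $A$ of the form \eqref{def:A} with $M=\Mell$ and a $C^1$ map $S$ such that $\Phi(t;q,t_0)=S(q+\omega t,t)\Phi_A(t;q,t_0)S(q+\omega t_0,t_0)^{-1}$ and \eqref{def:trasv_dyn_limit} holds. Any solution of \eqref{cor:syst_H} is therefore $\eta(t)=\Phi(t;q,t_0)\eta_0$. Setting $\xi_0=S^{t_0}(q+\omega t_0)^{-1}\eta_0$, \eqref{def:trasv_dyn_limit} gives $\pi_z\eta(t)-\pi_z\Phi_A(t;q,t_0)\xi_0\to 0$. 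The problem thus reduces to comparing $\pi_z$ of solutions of the model system $\dot\xi=A^t(q+\omega t)\xi$ with $\pi_z$ of solutions of \eqref{cor:syst_Hinfty}.

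Both systems are block triangular, since the $p$-row vanishes in \eqref{def:A} and in \eqref{def:DXH_aut}. Hence $\pi_p\xi$ is constant for both, and the $z$-equations read
\[
\dot\zeta = J_m\Mell\,\zeta + a_{32}^t(q+\omega t)\,\pi_p\xi_0,\qquad
\dot\zeta^\infty = J_m\Mell\,\zeta^\infty + a^\infty_{32}(q+\omega t)\,\pi_p\xi^\infty_0,
\]
where $a^\infty_{32}=J_m\partial_z\partial_p H^\infty(\cdot,0,0)$. The matrix $e^{tJ_m\Mell}$ is a rotation and so uniformly bounded. By variation of constants, $\zeta(t)-\zeta^\infty(t)$ converges to $0$ provided two things hold. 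First, the $p$-data must match, namely $\pi_p\xi_0=\pi_p\xi^\infty_0$. Second, the initial $z$-data must be chosen as
\[
\zeta^\infty(t_0)=\zeta(t_0)+\int_{t_0}^{\infty}e^{-(s-t_0)J_m\Mell}\bigl(a_{32}^s-a^\infty_{32}\bigr)(q+\omega s)\,ds\;\pi_p\xi_0,
\]
which is well defined as long as $\int^\infty|a_{32}^s-a_{32}^\infty|\,ds<\infty$. Since the model is homogeneous and the only constraint is on $\pi_p$, the correspondence $\zeta(t_0)\leftrightarrow\zeta^\infty(t_0)$ is an affine bijection of $\R^{2m}$ for fixed $p$-data. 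Differences of distinct solutions do not tend to zero, because they evolve under a bounded invertible rotation. The limit relation therefore characterizes the partner uniquely, which gives the one-to-one claim.

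Two points need checking. The first is the $p$-matching. The $p$-component of $\xi_0=S^{t_0}(q+\omega t_0)^{-1}\eta_0$ must equal the prescribed value, which in the statement is written $\Pi_pS^{t_0}(q+\omega t_0)v_0$. This means I must inspect the structure of $S$ constructed in the proof of Theorem \ref{Thm:elliptic_Csigma}, expecting it to be block triangular with the $p$-row preserved up to that normalization, and match the convention accordingly. The second is the integrability of $a_{32}^t-a_{32}^\infty$. The map $a_{32}^t$ comes from the construction of $A$ (essentially $J_m\partial_z\partial_pH^t\circ\varphi^t$ conjugated by $S$). I would split
\[
a_{32}^t-a_{32}^\infty=\bigl(a_{32}^t-J_m\partial_z\partial_pH^t\circ\varphi_0\bigr)+J_m\bigl(\partial_z\partial_pH^t-\partial_z\partial_pH^\infty\bigr)\circ\varphi_0 .
\]
The second term is integrable by \eqref{hyp:cor_ell_decay}. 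The first is bounded by $|\partial^3H|\,|\varphi^t-\varphi_0|+|S^t-\mathrm{Id}|$-type terms. By \eqref{eq:asymp_KAM_torus_form}, $|u^t|\lesssim t^{-(\ell-1)}$, which is integrable exactly when $\ell>2$; this is where the hypothesis enters. The $v,w$ terms and the corrections from $S$ decay faster.

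The main obstacle is the first term: controlling the deviation of $a_{32}$ (and of $S$ from the identity in its relevant blocks) with a rate that is integrable in $t$, which requires tracking the explicit decay of the conjugacy from the proof of Item \ref{thm:A_transverse}. I would first try to show $S-\mathrm{Id}\in\Spol_{(\sigma-1,0),\ell-1}$ from the fixed-point construction, and then conclude using $\ell>2$.
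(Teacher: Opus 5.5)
\paragraph{Comparison with the paper.} Your reduction is the one the paper uses. You pass to the model cocycle via \eqref{eq:conjugated_cocycles} and \eqref{def:trasv_dyn_limit}, compare with \eqref{cor:syst_Hinfty} by variation of constants (this is exactly Proposition~\ref{prop:ell_1:1_asym_sol}), and check \eqref{hyp:limit_aut_sys_ell}, with $\ell>2$ entering only through $u\in\Spol_{(\sigma,0),\ell-1}$.

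The step you call the main obstacle is immediate. Nothing from the fixed-point construction is needed beyond the classes $u\in\Spol_{(\sigma,0),\ell-1}$, $v\in\Spol_{(\sigma,0),\ell+\dec+2}$, $w\in\Spol_{(\sigma,0),\ell+\dec+1}$, $G\in\Spol_{(\sigma-1,0),\ell+\dec}$ and the explicit form \eqref{eq:S_formula}. Since $S_{qp}=S_{zp}=0$ and $(S^{-1})_{zp}=0$, the $\Lo S$ term drops out of the $(z,p)$ block. With $U=\mathrm{Id}_n+\partial_q u$ this gives
\[
a_{32}=e^{-G}\bigl(J_m\,\partial_p\partial_zH\circ\varphi-\partial_qw\,U^{-1}\,\partial_p^2H\circ\varphi\bigr)U^{-\top},
\]
whose distance to $J_m\,\partial_p\partial_zH\circ\varphi_0$ is $O(t^{-(\ell-1)})$. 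Note that the correction $U^{-\top}-\mathrm{Id}_n$ decays at the same rate as $u$, not faster. This is the computation in the paper's proof.

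\paragraph{The gap.} The real problem is the step you deferred, the $p$-matching, and it cannot be closed the way you expect. Since $S$ is symplectic, $S^{-1}=-JS^\top J$. The $p$-row of $S^{-1}$ is therefore $\bigl(-(\partial_q v)^\top,\ U^\top,\ (\partial_q w)^\top J_m\bigr)$, so
\[
\pi_p\xi_0=U^\top v_0-(\partial_q v)^\top\pi_q\eta_0+(\partial_q w)^\top J_m\,\pi_z\eta_0 .
\]
This depends on all of $\eta_0$, not only on $v_0$. The prescription $\Pi_pS^{t_0}(q+\omega t_0)v_0$ is not even well-typed for $v_0\in\R^n$.

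Take two solutions of \eqref{cor:syst_Hinfty} with $p$-data $c\neq c'$. Their $z$-parts differ by a solution of $\dot\delta=J_m\Mell\delta+a^\infty_{32}(q+\omega t)(c-c')$. This does not tend to $0$ unless $a^\infty_{32}(q+\omega s)(c-c')\equiv 0$, because the integrand in the variation-of-constants formula is almost periodic. Hence, in general, no $p$-datum determined by $v_0$ alone works for all $\eta$ with $\pi_p\eta(t_0)=v_0$.

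The paper's proof does not address this either: it only verifies \eqref{hyp:limit_aut_sys_ell} and invokes Proposition~\ref{prop:ell_1:1_asym_sol}. What your argument, and the paper's, actually establishes is the correspondence with
\[
\pi_p\xi^\infty(t_0)=\pi_p\bigl(S^{t_0}(q+\omega t_0)^{-1}\eta(t_0)\bigr).
\]
Even then, injectivity on the $\eta$-side holds only modulo functions tending to $0$. Changing the free $q$-datum of $\xi_0$ by $\delta$ changes $\pi_z\eta(t)$ by $\partial_q w^t(q+\omega t)\,\delta$, which is nonzero but decays like $t^{-(\ell+\dec+1)}$.
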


To quantify the regularity and the exponential decay of time-dependent Hölder functions, we introduce the following Banach spaces. Given $\sigma \ge 0$, a non-negative integer $k$, a vector $i \in \N^{2n+2m}$, and parameters $T\ge 0$ and $\lambda \ge 0$, we define
\begin{align}\label{def:Sexp}
\Sexp_{(\sigma,k),\lambda}
    = \left\{
    f : \T^n \times B \times I_T \to \R \;\middle|\;
    \begin{aligned}
        &f^t \in C^{\sigma+k}(\T^n \times B), \text{ for } t \in I_T; \\
        & \sup_{t \in I_T} \bigl(|f^t|_{C^{\sigma+k}}\, e^{\lambda t} \bigr) < \infty; \\
        &\partial^i_{(q,p,z)} f \in C(\T^n \times B \times I_T), \text{ for } 0 \le |i| \le k
    \end{aligned}
    \right\}
\end{align}
with the following norm
\begin{equation}\label{def:norm_Sexp}
    |f|^{\mathrm{exp}, T}_{\sigma+k, \lambda} = \sup_{t \in I_T}|f^t|_{C^{\sigma+k}}e^{\lambda t}.
\end{equation}
We use analogous notation to that introduced for the spaces $\Spol_{(\sigma,k),\ell}$ for functions defined on $\T^m \times I_T$, as well as for vector- and matrix-valued functions.

We prove the following theorem in Section \ref{sec:Proof_Theorem_Hyp}.

\begin{theoremx}\label{Thm:hyp_Csigma}
Fix $\sigma \ge 1$ and $\lambda > 0$. Let $H_0: \T^n  \times B \times [0, +\infty) \to \R$ of the form
\begin{equation}
\label{eq:initial_hamiltonian_hyp}
 H_0(q, p, z, t) =  \omega \cdot p + \frac{1}{2}\Mhyp  \cdot z^2 + O(p^2,pz,z^3), \qquad \partial_{(p,z)}^2 H_0 \in \mathcal{S}_{(\sigma,3),0}^{\mathrm{exp},0},
\end{equation}
and $P: \T^n  \times B \times [0, +\infty) \to \R$ of  the form
\begin{equation}
\begin{gathered} \label{eq:perturbation_form_hyp}
P(q, p, z, t) = a(q, t) + b(q, t) \cdot p + c(q, t) \cdot z + \frac{1}{2}d(q, t)\cdot z^2, \\
a \in \mathcal{S}^{\mathrm{exp},0}_{(\sigma, 2),0}, \quad \partial_q a \in \mathcal{S}_{(\sigma, 1), \lambda}^{\mathrm{exp},0}, \qquad
b \in \mathcal{S}_{(\sigma, 2), \lambda}^{\mathrm{exp},0}, \qquad
c \in \mathcal{S}_{(\sigma, 2), \lambda}^{\mathrm{exp},0}, \quad d \in \mathcal{S}_{(\sigma, 2), \lambda}^{\mathrm{exp}, 0}.
\end{gathered}
\end{equation}
Then, for $H := H_0 + P$ the following holds.
\begin{enumerate}
    \item \label{thm:B_existence} Suppose that
    \begin{equation*}
\lambda > \max_{1 \le i \le m} \Omega_i.
    \end{equation*}
    Then, there exists a $C^\sigma$ asymptotic KAM torus $\varphi$ associated with $(X_H, X_{H_0}, \varphi_0)$ of the form
    \begin{equation}
        \label{eq:asymp_KAM_torus_form_hyp}
    \begin{gathered}
        \varphi:\T^n \times I_T \to \T^n \times \R^n \times \R^{2m}, \qquad \varphi^t = (\textup{id}_{\T^n} + u^t, v^t, w^t), \qquad T \geq 0, \\
        u \in \mathcal{S}_{(\sigma, 0), \lambda}^{\mathrm{exp},T}, \qquad  v \in \mathcal{S}_{(\sigma, 0), \lambda}^{\mathrm{exp},T}, \qquad w \in \mathcal{S}_{(\sigma, 0), \lambda}^{\mathrm{exp},T}.
    \end{gathered}
        \end{equation}
        Moreover, any other $C^\sigma$ asymptotic KAM torus associated with $(X_H, X_{H_0}, \varphi_0)$ of the form \eqref{eq:asymp_KAM_torus_form_hyp} coincides with $\varphi$ in the intersection of their domains.
    \medskip
     \item \label{thm:B_transverse}Suppose that $\sigma \ge 2$ and
    \begin{equation*}
    \lambda > 2\max_{1 \le i \le m} \Omega_i.
    \end{equation*}
    Then the $C^\sigma$ asymptotic KAM torus $\varphi$ associated with $(X_H, X_{H_0}, \varphi_0)$ given by the first statement is asymptotically hyperbolic.
\end{enumerate}
\end{theoremx}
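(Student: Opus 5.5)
We follow the scheme used for Theorem \ref{Thm:elliptic_Csigma}, replacing polynomial weights by exponential ones. For Item \ref{thm:B_existence} we look for $\varphi^t = (q + u^t, v^t, w^t)$ with $(u,v,w)$ in the product of spaces $\mathcal{S}^{\mathrm{exp},T}_{(\sigma,0),\lambda}$, and write \eqref{def:cond2_asymKAMtorus} as $\mathcal{F}(\varphi, P) = 0$. We Taylor expand $H_0$ around $(p,z) = (0,0)$. The remainder $O(p^2,pz,z^3)$ is controlled through $\partial^2_{(p,z)}H_0 \in \mathcal{S}^{\mathrm{exp},0}_{(\sigma,3),0}$ by the composition estimates for higher order terms (Section \ref{sec:FS_Ref_Ham}). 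The invariance equation then splits into the following system:
\begin{align*}
\partial_t u + \partial_q u\cdot\omega &= b + \partial_p^2 H_0\, v + \partial_z\partial_p H_0\, w + \dots,\\
\partial_t v + \partial_q v\cdot\omega &= -\partial_q a - \partial_q b\, v - \partial_q c\, w - \dots,\\
\partial_t w + \partial_q w\cdot\omega - J_m\Mhyp w &= J_m c + J_m d\, w + \dots.
\end{align*}
Here the dots are quadratic or higher in $(v,w)$, or are products of perturbation terms with unknowns. Note that $u$ does not appear linearly on the right, only through compositions $f\circ(q+u)$.

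The key step is to solve the linear cohomological operators with exponential weights. For the $u$ and $v$ equations, $\partial_t f + \partial_q f\cdot\omega = g$ with $g$ of size $e^{-\lambda t}$ is solved by $f(q,t) = -\int_t^\infty g(q - \omega(s-t), s)\,ds$. This gives $|f|_{\sigma,\lambda}\le \lambda^{-1}|g|_{\sigma,\lambda}$, and there are no small divisors. For $w$ we diagonalize $J_m\Mhyp$ into stable and unstable directions with rates $\pm\Omega_i$. The unstable components are integrated backward from $+\infty$, giving the factor $(\lambda + \Omega_i)^{-1}$. The stable components are also integrated from $+\infty$, $f(t) = -\int_t^\infty e^{-\Omega_i(t-s)} g\,ds$, which converges precisely because $\lambda > \Omega_i$ and gives $(\lambda - \Omega_i)^{-1}$. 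This is where the hypothesis $\lambda>\max_i\Omega_i$ enters, and it also yields uniqueness of the bounded solution in the weighted space.

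We then apply the quantitative Implicit Function Theorem, or a contraction on a ball, in these weighted spaces. Smallness is not assumed, so we gain it by taking $T$ large. All perturbative coefficients ($b,c,d,\partial_q a$ and their $q$-derivatives) have weighted norms on $I_T$ that are small relative to the fixed inverse-operator bounds: $|f|^{\exp,T}_{\lambda'} \le e^{-(\lambda-\lambda')T}|f|^{\exp,0}_\lambda$ for a slightly smaller weight. The quadratic terms in $(v,w)$ carry weight $e^{-2\lambda t}$, hence are small on $I_T$. The main technical obstacle is the loss of regularity from composing with $q+u$, which forces working in $C^\sigma$ while assuming $C^{\sigma+2}$ or $C^{\sigma+3}$ data. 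We handle it via the Hölder composition lemmas of Appendix \ref{app:Holder}, as in \cite{Sca22}. The solution then lies in $\mathcal{S}^{\mathrm{exp},T}_{(\sigma,0),\lambda}$, which gives \eqref{def:cond1_asymKAMtorus}. Uniqueness of the fixed point, combined with the flow relation \eqref{eq:asymptotic_KAM_flow}, gives uniqueness on overlapping domains.

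For Item \ref{thm:B_transverse} we compute $DX_H\circ\varphi$ and write it as $A_0 + R$, where $A_0$ has the form \eqref{def:A} with $M=\Mhyp$ and $R$ decays like $e^{-\lambda t}$. We then seek $S = \mathrm{Id} + \Delta$ solving
\[
\partial_t S + \partial_q S\,\omega = DX_H\circ\varphi\, S - S A .
\]
Here $A$ is allowed to absorb the non-decaying blocks $a_{12},a_{13},a_{32}$, so that only the decaying part must be conjugated away. The linearized operator is $\Delta\mapsto \partial_t\Delta+\partial_q\Delta\,\omega - [A_0,\Delta]$. On the $zz$ block its eigenvalues are $\pm\Omega_i\pm\Omega_j$, so the integration-from-infinity argument requires $\lambda > 2\max_i\Omega_i$. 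This is the hardest step: we must choose the block structure of the equation (which entries go into $A$ and which into $\Delta$) so that the coupling between the $q,p$ and $z$ blocks is triangular, and every cohomological equation then loses at most rate $2\Omega_{\max}$. Having $\sigma\ge 2$ ensures $DX_H\circ\varphi$ is $C^{\sigma-1}$, which is enough for the estimates. The IFT again gives $\Delta$ of size $e^{-(\lambda-2\Omega_{\max}-\epsilon)t}$, which yields \eqref{eq:conjugated_cocycles}. Finally, \eqref{def:trasv_dyn_limit} follows by combining $S\to\mathrm{Id}$ exponentially with the growth bound $e^{\Omega_{\max}(t-t_0)}$ of $\pi_z\Phi_A$ from Section \ref{sc:analysis_transversal} and the criteria of Section \ref{sc:criteria_asym_dyn}.
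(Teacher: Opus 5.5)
\textbf{Item \eqref{thm:B_existence}.} Your treatment is essentially the paper's: the functional $X_{H_0+P}\circ\varphi-\Lo\varphi$ on exponentially weighted spaces, the triangular linearization \eqref{proof:Csigma_def_hyp_DF_1} solved by integrating from $+\infty$, and Theorem \ref{thm:QIFT} for $T$ large. The condition $\lambda>\max_i\Omega_i$ enters only through the stable components of $\mathcal{L}^{\mathrm{hyp}}_{\omega,\Omega}$.

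Drop the idea of passing to a smaller weight $\lambda'<\lambda$. The sources $b,\partial_q a,c$ need not be small, because the ball has radius $\rho=2C\bar C r$ proportional to $|P|$. The only smallness used is $\|D_\varphi\mathcal{F}^T(P,\varphi)-D_\varphi\mathcal{F}^T(0,0)\|=O(e^{-\lambda T})$. This holds because each term of that difference contains two factors decaying at rate $\lambda$. Working at weight $\lambda'$ would only give $u,v,w\in\mathcal{S}^{\mathrm{exp},T}_{(\sigma,0),\lambda'}$, which is weaker than \eqref{eq:asymp_KAM_torus_form_hyp}.

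\textbf{Item \eqref{thm:B_transverse}: a different route.} The paper takes the symplectic matrix $S=\boldsymbol{S}(\varphi,G)$ of Lemma \ref{lem:embedding_symplectomorphisms_family}, whose first block column is $\partial_q\varphi$. Invariance of the torus kills the $q$-column of $A$, and $A\in\mathfrak{sp}$ then kills the $p$-row (Lemma \ref{lem:matrix_A}). The only remaining equation is $\boldsymbol{\zeta}=\Mhyp$ for a single unknown $G\in\mathfrak{sp}(2m,\R)$, linearized by $\mathfrak{L}^{\mathrm{hyp}}_{\omega,\Omega}$. Moreover $S_{zq}=\partial_q w$, $S_{zp}=0$ and $S_{zz}=e^G$ come for free.

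Your conjugation $S=\mathrm{Id}+\Delta$ also closes. Put the $(q,p)$, $(q,z)$, $(z,p)$ blocks into $A$; the remaining block equations are then triangular:
\begin{itemize}
\item first $\Lo\Delta_{pq}=R_{pq}+\dots$;
\item then $\Lo\Delta_{pz}+\Delta_{pz}J_m\Mhyp=\dots$ and $\Lo\Delta_{zq}-J_m\Mhyp\Delta_{zq}=\dots$, with rates $\pm\Omega_i$;
\item then $\Delta_{qq}$, $\Delta_{pp}$, and $\Lo\Delta_{zz}-[J_m\Mhyp,\Delta_{zz}]=\dots$, with rates $\pm\Omega_i\pm\Omega_j$.
\end{itemize}
The blocks $a_{12},a_{13},a_{32}$ only couple to blocks already solved. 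This route needs neither isotropy nor the symplectomorphism of Lemma \ref{lem:embedding_symplectomorphism}. The cost is more unknowns and a non-symplectic $S$.

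\textbf{The gap: your final decay rate.} You claim $\Delta=O(e^{-(\lambda-2\Omega_{\max}-\epsilon)t})$ and combine this with the growth of $\Phi_A$. But \eqref{def:trasv_dyn_limit} requires $(\Delta_{zq}\ \ \Delta_{zp}\ \ \Delta_{zz})\,\Phi_A\to 0$. By Proposition \ref{prop:trasv_dyn_hyp_case}, both the $q$- and $z$-components of $\Phi_A$ (not only $\pi_z\Phi_A$) grow like $e^{\varrho t}$, where $\varrho=\max_i\Omega_i$. So $\Delta_{zq}$ and $\Delta_{zz}$ must decay faster than $e^{-\varrho t}$; this is Item (3) of Proposition \ref{prop:general_asymp_characterization}. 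With your rate this requires $\lambda>3\varrho$. For $2\varrho<\lambda\le 3\varrho$, which the theorem allows, your argument does not conclude.

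The fix is that no rate is lost. Each diagonalized component solves $\Lo x=\mu x+g$ with $|\mu|\le 2\varrho$, and
\[
x(q,t)=-\int_t^{+\infty}e^{\mu(t-s)}g(q+\omega(s-t),s)\,ds,\qquad |x|^T_{\sigma,\lambda}\le\frac{|g|^T_{\sigma,\lambda}}{\lambda+\mu}.
\]
So the hypothesis $\lambda>2\varrho$ only controls the constant and uniqueness; compare Lemma \ref{lemma:HE_interm_LoO_exp} and Proposition \ref{prop:Csigma_HEomegaOmegaOmega_hyp}. Run the IFT for $\Delta$ at weight $\lambda$, exactly as in Item \eqref{thm:B_existence}, where the difference of linearizations is again $O(e^{-\lambda T})$ by the product structure. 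This gives $\Delta\in\mathcal{S}^{\mathrm{exp},T}_{\lambda}$ with $\lambda>\varrho$. From there \eqref{def:trasv_dyn_limit} follows, as does invertibility of $S$ for $T$ large.
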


The following corollary is the counterpart of Corollary \ref{cor:ell} in the hyperbolic setting. In contrast with the elliptic case, the correspondence between solutions is no longer one-to-one.
\begin{corollaryx}\label{cor:hyp}
    Under the hypotheses of Item \ref{thm:B_transverse} of Theorem \ref{Thm:hyp_Csigma}, we assume the existence of a $C^2$ autonomous Hamiltonian $H^\infty: \T^n  \times B  \to \R$ of the form~\eqref{def:H_aut}, with $M = \Mhyp$, such that  
\begin{equation}\label{hyp:cor_hyp_decay}
        \lim_{t\to+\infty}e^{-\Omega_j t}\int_1^t e^{\Omega_j \tau} \sup_{q \in \T^n}|\partial_{z_k}\partial_{p_i}H^\tau(q,0,0) - \partial_{z_k}\partial_{p_i}H^\infty(q,0,0)| \, d\tau =0
\end{equation}
for all $1 \le j \le m$, $1 \le k \le 2m$ and $1 \le i \le n$.
Let $\varphi: \T^n \times [T'', +\infty) \to \T^n \times \R^{n + 2m}$ be the $C^\sigma$ asymptotically elliptic KAM torus associated with $(X_H, X_{H_0}, \varphi_0)$ given by Theorem \ref{Thm:hyp_Csigma}, $S$ be the map defined in Definition \ref{def:ell_hyp_par_trasv_dyn_asym_KAM} and $\varphi_0$ be the trivial embedding in~\eqref{def:varphi0=(q,0,0)}. 
We consider the following systems 
\begin{align}\label{cor:syst_H_hyp}
    \dot \eta(t) = DX_H^t \circ \varphi^t (q + \omega t) \eta(t), & \qquad (q, t) \in \T^n \times [T'', +\infty), \\ \label{cor:syst_Hinfty_hyp}
    \dot \xi^\infty(t) = DX_{H^\infty}^t\circ \varphi_0(q + \omega t) \xi^\infty(t), & \qquad (q, t) \in \T^n \times [T'', +\infty).
\end{align}
    Then, for every fixed $q\in\T^n$, $t_0\in I_{T^{''}}$ and $v_0 \in \R^n$ there exists a correspondence between the sets 
     $$\left\{ \pi_z\eta \mid \eta \text{ is a solution  of ~\eqref{cor:syst_H_hyp}} \text{ with } \pi_p\eta(t_0) = v_0\right\},$$
    $$\left\{ \pi_z\xi^\infty \mid \xi^\infty \text{ is a solution  of ~\eqref{cor:syst_Hinfty_hyp}} \text{ with } \pi_p \xi^\infty(t_0) = \Pi_p S^{t_0}(q +\omega t_0)v_0\right\},$$ by the relation 
       \begin{equation*}
          \lim_{t \to +\infty}|\pi_z\eta(t)-\pi_z\xi^\infty(t)|=0,
    \end{equation*}
    we refer to~\eqref{def:Pi_p} for the definition of the projection $\Pi_p$.
\end{corollaryx}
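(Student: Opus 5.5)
We prove Corollary \ref{cor:hyp} by comparing both linear systems with the explicit model cocycle $\Phi_A$ furnished by Item \ref{thm:B_transverse} of Theorem \ref{Thm:hyp_Csigma}. Fix $q\in\T^n$, $t_0\in I_{T''}$ and $v_0\in\R^n$.

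Let $\eta$ solve \eqref{cor:syst_H_hyp} with $\pi_p\eta(t_0)=v_0$, and write $\eta(t)=\Phi(t;q,t_0)\eta_0$. By \eqref{eq:conjugated_cocycles} and \eqref{def:trasv_dyn_limit},
\[
\pi_z\eta(t)-\pi_z\Phi_A(t;q,t_0)S(q+\omega t_0,t_0)^{-1}\eta_0 \to 0 .
\]
So it suffices to compare $\pi_z\Phi_A(t;q,t_0)\zeta_0$, with $\zeta_0=S^{-1}\eta_0$, against $\pi_z$ of solutions of \eqref{cor:syst_Hinfty_hyp}.

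Both $A^t$ and $DX_{H^\infty}\circ\varphi_0$ have the block structure \eqref{def:A}, with the same lower-right block $J_m\Mhyp$.
\begin{enumerate}
\item In each system the $p$-component is constant, since the second block row vanishes.
\item The $z$-component solves $\dot z = J_m\Mhyp z + a_{32}(q+\omega t)\,\pi_p$. By variation of constants,
\[
z(t)=e^{J_m\Mhyp(t-t_0)}z(t_0)+\int_{t_0}^t e^{J_m\Mhyp(t-\tau)}a_{32}(q+\omega\tau)\,d\tau\ \pi_p .
\]
\end{enumerate}
For the model $A$, the block $a_{32}^t$ is built from $\partial_p\partial_z H$ along the torus, plus corrections coming from $\varphi-\varphi_0$ and $S-\mathrm{Id}$. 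These corrections decay like $e^{-\lambda t}$ with $\lambda>2\max\Omega_i$, using the Section \ref{sc:analysis_transversal} description of $A$ and \eqref{eq:asymp_KAM_torus_form_hyp}. For $H^\infty$ the block is $\partial_p\partial_z H^\infty(q+\omega\tau,0,0)$. We take the same $p$-datum $\Pi_p S^{t_0}v_0$ in both systems.

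The difference of the two $z$-solutions is then a homogeneous solution $e^{J_m\Mhyp(t-t_0)}\delta$ plus an integral of $e^{J_m\Mhyp(t-\tau)}$ against the difference of the $a_{32}$ blocks.
\begin{itemize}
\item On the stable directions, the factor $e^{-\Omega_j(t-\tau)}$ paired with hypothesis \eqref{hyp:cor_hyp_decay} makes the integral tend to $0$. The same holds for the exponentially small corrections.
\item On the unstable directions, the integral from $t_0$ to $t$ does not converge to zero. We rewrite it as $-\int_t^\infty$ plus a constant vector, and absorb that constant into the unstable part of $\delta$. So we choose $\xi^\infty(t_0)$ with the same stable component as the model but with its unstable component shifted by $\int_{t_0}^\infty e^{-\Omega_j(\tau-t_0)}(\cdots)\,d\tau$.
\item The tail $\int_t^\infty e^{\Omega_j(t-\tau)}(\cdots)\,d\tau$ must also tend to $0$. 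This does not follow from \eqref{hyp:cor_hyp_decay}; it would follow if the $\sup_q|\cdot|$ integrand tends to $0$.
\end{itemize}
This choice of $\xi^\infty(t_0)$ gives the correspondence. It is not injective because unstable perturbations of the initial data can be traded, which explains why the statement claims only a correspondence, not a bijection.

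The main obstacle is the unstable block. There we must show that the mismatch terms can be cancelled by a choice of initial data, and that the remaining tails vanish. For this we will use the explicit hyperbolic form of $\Phi_A$ from Section \ref{sc:analysis_transversal}, the convergence in \eqref{hyp:cor_hyp_decay}, and the faster rate $\lambda>2\max\Omega_i$ to dominate the terms coming from $S-\mathrm{Id}$ and $\varphi-\varphi_0$. If the tail does not vanish under \eqref{hyp:cor_hyp_decay} as stated, we need an extra condition. The natural one is that the integrand tends to $0$, i.e. $\partial_z\partial_p H^t(q,0,0)\to\partial_z\partial_p H^\infty(q,0,0)$ uniformly in $q$ as $t\to+\infty$.
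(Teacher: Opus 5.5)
Your reduction follows the paper's route. You pass from $\eta$ to the model cocycle via \eqref{eq:conjugated_cocycles} and \eqref{def:trasv_dyn_limit}. You note that the $(z,p)$-block of $A$ differs from that of $DX_{H^\infty}\circ\varphi_0$ by a term controlled by $(\partial_z\partial_pH)_0-(\partial_z\partial_pH^\infty)_0$ plus $O(e^{-\lambda t})$ corrections. Then you rerun, inline, the variation-of-constants argument of Proposition \ref{prop:hyp_1:1_asym_sol}.

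The gap is at the one step where the hypothesis must actually be used. Let $f(\tau)\ge0$ be the $\sup_q$ of the mismatch. You assert that the unstable tail $e^{\Omega_j t}\int_t^\infty e^{-\Omega_j\tau}f(\tau)\,d\tau\to0$ does \emph{not} follow from \eqref{hyp:cor_hyp_decay}, and you propose adding the assumption $f(t)\to0$. That claim is false. The patch would also prove a strictly weaker corollary: \eqref{hyp:cor_hyp_decay} is equivalent to $\int_t^{t+1}f\to0$, which allows spikes of fixed height and shrinking width. As written, your argument does not establish the statement under its stated hypotheses.

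The missing step is elementary. Set $I_j(t)=e^{-\Omega_j t}\int_1^t e^{\Omega_j\tau}f(\tau)\,d\tau$. Since $f\ge0$, for $t\ge1$ we get
\[
\int_t^{t+1}f(\tau)\,d\tau\le e^{\Omega_j}\int_t^{t+1}e^{-\Omega_j(t+1-\tau)}f(\tau)\,d\tau\le e^{\Omega_j}I_j(t+1)\longrightarrow0 .
\]
Splitting $[t,\infty)$ into unit intervals then gives
\[
e^{\Omega_j t}\int_t^{\infty}e^{-\Omega_j\tau}f(\tau)\,d\tau\le\sum_{k\ge0}e^{-\Omega_j k}\int_{t+k}^{t+k+1}f(\tau)\,d\tau\le\frac{1}{1-e^{-\Omega_j}}\sup_{s\ge t}\int_s^{s+1}f(\tau)\,d\tau\longrightarrow0 .
\]
The same bound shows $\int_{t_0}^\infty e^{-\Omega_j\tau}f(\tau)\,d\tau<\infty$, which you need in order to define the shift of the unstable initial datum. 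This is exactly what the paper invokes, without details, in the proof of Proposition \ref{prop:hyp_1:1_asym_sol}. The exponentially small corrections need only $\lambda>0$ at this step; $\lambda>2\max_i\Omega_i$ enters only through the existence of $S$.

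Two smaller points. First, in your own construction the unstable component of $\xi^\infty(t_0)$ is uniquely forced, while the stable component is arbitrary. So the non-injectivity comes from the \emph{stable} directions, not from ``trading unstable perturbations''. Second, the $p$-datum the two solutions must share is the conserved component $\pi_p\big(S(q+\omega t_0,t_0)^{-1}\eta(t_0)\big)$ of the model solution. That is how the datum written in the statement has to be read.
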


\subsection{Smooth setting}\label{sec:Smooth_Set} In this section, we show that Theorems \ref{Thm:elliptic_Csigma} and \ref{Thm:hyp_Csigma} also hold when considering $C^\infty$ perturbations with appropriate decrease rates. More precisely, if we denote
\[ \Spol_{(\infty,k),\ell} = \bigcap_{\sigma \geq 1} \Spol_{(\sigma,k),\ell}, \qquad \Sexp_{(\infty,k),\lambda} = \bigcap_{\sigma \geq 1} \Sexp_{(\sigma,k),\lambda},\]
for any $k \in \N$ and any $T, \ell, \lambda \geq 0$, we have the following.

\begin{theoremx}
\label{thm:KAM_smooth}
    Theorems \ref{Thm:elliptic_Csigma} and \ref{Thm:hyp_Csigma} hold for $\sigma = +\infty$.
\end{theoremx}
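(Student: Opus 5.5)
The natural approach is to deduce Theorem \ref{thm:KAM_smooth} from the finite-regularity results, Theorems \ref{Thm:elliptic_Csigma} and \ref{Thm:hyp_Csigma}, by combining them with their uniqueness statements. Fix the data $H_0$ and $P$ satisfying the hypotheses with $\sigma=+\infty$. Then, for every finite $\sigma\ge 1$ (resp. $\sigma\ge 2$ for the transverse items), the hypotheses hold in the $C^\sigma$ setting, because $\Spol_{(\infty,k),\ell}\subset\Spol_{(\sigma,k),\ell}$ and $\Sexp_{(\infty,k),\lambda}\subset\Sexp_{(\sigma,k),\lambda}$. The decay parameters $\ell,\dec,\lambda$ are independent of $\sigma$. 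Applying the finite-regularity theorem for each $\sigma$ therefore produces a $C^\sigma$ asymptotic KAM torus $\varphi_\sigma$ of the form \eqref{eq:asymp_KAM_torus_form} (resp. \eqref{eq:asymp_KAM_torus_form_hyp}), defined on $\T^n\times I_{T_\sigma}$.

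The key step is to show that these tori patch together into a single object. For $\sigma'\ge\sigma$, the torus $\varphi_{\sigma'}$ is in particular a $C^\sigma$ asymptotic KAM torus of the form \eqref{eq:asymp_KAM_torus_form} with $C^\sigma$ norms. This uses $|\cdot|_{C^\sigma}\le|\cdot|_{C^{\sigma'}}$, so $u,v,w$ lie in the corresponding $\sigma$-spaces with the same weights. By the uniqueness clause, $\varphi_{\sigma'}=\varphi_\sigma$ on $I_{\max(T_\sigma,T_{\sigma'})}$.

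The main obstacle is that $T_\sigma$ may a priori grow to infinity with $\sigma$, so the common domain could be empty. I would handle this with the flow identity \eqref{eq:asymptotic_KAM_flow}. Set $T_*=T_1$ and define $\varphi^t:=\varphi_1^t$ for $t\ge T_1$. For any $\sigma$ and any $t\in[T_1,T_\sigma]$, we can write
\[
\varphi_1^t(q)=\big(\psi^{T_\sigma}_{t,X_H}\big)^{-1}\circ\varphi_\sigma^{T_\sigma}(q+\omega(T_\sigma-t)),
\]
which is valid because $\varphi_1=\varphi_\sigma$ on $I_{T_\sigma}$. Since $H$ is $C^\infty$ in the phase variables, the flow maps between finite times are $C^\infty$ diffeomorphisms wherever they are defined, and this holds along the torus. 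Hence $\varphi^t\in C^\sigma$ for every $\sigma$ and all $t\ge T_1$. Condition \eqref{def:cond1_asymKAMtorus} in every $C^\sigma$ norm only concerns $t\to\infty$, where $\varphi=\varphi_\sigma$, so it holds. The invariance equation \eqref{def:cond2_asymKAMtorus} is inherited from $\varphi_1$. Thus $\varphi$ is a $C^\infty$ asymptotic KAM torus, and its uniqueness follows from the $\sigma=1$ uniqueness.

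For the transverse items, I would use that Definition \ref{def:ell_hyp_par_trasv_dyn_asym_KAM} only involves a $C^1$ conjugacy $S$, a bounded model $A$, and the limit \eqref{def:trasv_dyn_limit}. These are properties of the cocycle of $DX_H\circ\varphi$, which is the same torus for every $\sigma$. It therefore suffices to apply Item \ref{thm:A_transverse} (resp. Item \ref{thm:B_transverse}) once, with $\sigma=2$, to $\varphi=\varphi_2$ on its domain. This shows that the $C^\infty$ torus is asymptotically elliptic (resp. hyperbolic). If one wants the conjugacy to be defined on all of $I_{T_1}$, it can be transported backward by the cocycle as well, but this is not needed for the definition.
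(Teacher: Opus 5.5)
Your proposal is correct and is essentially the paper's argument: uniqueness identifies the low-regularity torus with $\varphi_\sigma$ on $[T_\sigma,+\infty)$, which gives \eqref{def:cond1_asymKAMtorus} in every $C^\sigma$ norm, and the flow identity \eqref{eq:asymptotic_KAM_flow} pulls the $C^\sigma$ regularity of $\varphi_\sigma^{T_\sigma}$ back to every earlier time. The only difference is cosmetic: the paper uses the $\sigma=2$ torus throughout, so that Item \ref{thm:A_transverse} applies directly to the same object, whereas you start from $\sigma=1$ and then match it with $\varphi_2$ on the tail. This is equally valid, since Definition \ref{def:ell_hyp_par_trasv_dyn_asym_KAM} only requires the conjugacy on some $[T'',+\infty)$.
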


Notice that although for $C^\infty$ perturbations Theorems \ref{Thm:elliptic_Csigma}-\ref{Thm:hyp_Csigma} yield immediately the existence of $C^\sigma$ asymptotic KAM tori, for $\sigma \geq 1$ arbitrarily large, we cannot deduce right away Theorem \ref{thm:KAM_smooth} since the domain of definition of these asymptotic KAM tori may depend on $\sigma$. However, we can easily overcome this difficulty by using the uniqueness properties of these asymptotic KAM tori. 

\begin{proof}[Proof of Theorem \ref{thm:KAM_smooth}]
Let us prove that Theorem \ref{Thm:elliptic_Csigma} holds for $\sigma = +\infty$. The proof in the case of Theorem \ref{Thm:hyp_Csigma} is completely analogous.

Notice that, since the first part of Theorem \ref{Thm:elliptic_Csigma} holds for $C^\infty$ perturbations with $\sigma = 2$ (satisfying the corresponding hypotheses) and as the assumptions for the second part of Theorem \ref{Thm:elliptic_Csigma} imply those in the first part, it suffices to show that the $C^2$ asymptotic KAM torus obtained in the first part of Theorem \ref{Thm:elliptic_Csigma} with $\sigma = 2$ is in fact a $C^\infty$ asymptotic KAM torus.

Let $P$ be a $C^\infty$ perturbation satisfying the hypotheses of the first part of Theorem \ref{Thm:elliptic_Csigma}, for any $\sigma \geq 2$, and let us denote by $\varphi_\sigma: \T^n \times [T_\sigma, +\infty) \to \T^n \times B$ the $C^\sigma$ asymptotic KAM torus given by the theorem. We may assume WLOG that $\sigma \mapsto T_\sigma$ is a non-decreasing function. 

We will now fix $\sigma \geq 2$ and show that $\varphi_2$ is in fact a $C^\sigma$ asympotic KAM torus. For this, it suffices to show that $\varphi_2$ satisfies \eqref{def:cond1_asymKAMtorus} and that $\varphi_2^t$ is of class $C^\sigma$, for any $t \in [T_2, +\infty)$.

By the uniqueness of the asymptotic KAM tori, it follows that
\[ \varphi_{2}\mid_{\T^n \times [T_\sigma, +\infty)} = \varphi_\sigma,\]
and thus $\varphi_2$ satisfies \eqref{def:cond1_asymKAMtorus}. Recalling that $\varphi_2$ satisfies \eqref{eq:asymptotic_KAM_flow}, the equation above yields
\[\varphi_2^t(q) = \Phi^t_{T_\sigma, X_H} \circ \varphi_\sigma^{T_\sigma}(q - \omega (t - T_\sigma)), \qquad \text{ for any } q \in \T^n \text{ and any } t \in [T_2, +\infty). \]

Hence, since $\Phi^t_{T_\sigma, X_H}$ is of class $C^\infty$, for any $t \in [T_2, +\infty)$ fixed, and $\varphi_\sigma^{T_\sigma}$ is of class $C^\sigma$, it follows that $\varphi_2^t$ is of class $C^\sigma$, for any $t \in [T_2, +\infty)$. 

As $\sigma \geq 2$ was arbitrary, this shows that $\varphi_2$ is a $C^\infty$ asymptotic KAM torus.
\end{proof}

\subsection{Analytic setting}\label{sec:Anal_Set} This section contains the real analytic version of the results established in Section \ref{sec:Holder_Set}. To this end, for some $\sigma >0$, we introduce the following complex domains
\begin{equation*}
    \T_\sigma^n = \{q \in \C^n /\Z^n \hspace{1mm}:\hspace{1mm} |\mathrm{Im}\, q| \le \sigma\}, \quad B_\sigma =\{(p,z) \in \C^{n+2m}\hspace{1mm}:\hspace{1mm} |(p,z)| \le \sigma\}.
\end{equation*}
Given $f:\T^n_\sigma \times B_\sigma \to \C$, we consider the following norm
\begin{equation}\label{def:norm_analy}
    |f|_\sigma = \sup_{(q,p,z) \in \T^n_\sigma \times B_\sigma}|f(q,p,z)|.
\end{equation}
We will use the same notation for functions defined only on $\T^n_\sigma$, as well as for vector-valued and matrix-valued functions.

Below, we provide the definition of analytic asymptotic KAM torus, which is the analytic version of Definition \ref{def:Csigma_KAM_torus}. %
Given $\sigma >0$ and $\omega \in \R^n$,  we consider time-dependent vector fields $X^t$ and $X_0^t$ real-analytic on $\T^n_\sigma \times B_\sigma$, for all fixed $t \in I_T$, and a real-analytic embedding $\varphi_0:\T^n_\sigma \to \T^n_\sigma \times B_\sigma$ such that 
\begin{equation}
\begin{aligned}\label{def:hyp_asym_KAM_tori_analy}
    &\lim_{t \to +\infty} \left|X^t - X^t_0\right|_{\sigma} = 0,\\
   &X_0 \circ \varphi_0 = \partial_q\varphi_0 \cdot \omega.
\end{aligned}
\end{equation}
\begin{definition}\label{def:analy_KAM_torus}
    We assume that $(X, X_0, \varphi_0)$, defined on $\T^n_\sigma \times B_\sigma \times I_T$, satisfy~\eqref{def:hyp_asym_KAM_tori_analy}. A family of embeddings $\varphi:\T^n \times I_{T'} \to \T^n \times B$, with $T' \geq T$, is an analytic \emph{asymptotic KAM torus} associated with $(X, X_0, \varphi_0)$, if for some $0<\sigma'< \sigma$
    \begin{align}\label{def:cond1_asymKAMtorus_analy}
            &\lim_{t \to +\infty}\left|\varphi^t -\varphi_0\right|_{\sigma'}=0,\\ \label{def:cond2_asymKAMtorus_analy}
             &X \circ \varphi = \partial_q\varphi \cdot \omega + \partial_t \varphi.
    \end{align}
\end{definition}

 Given $\sigma >0$, $T\ge 1$, and $\ell \ge 0$, to describe the regularity of time-dependent analytic functions decaying polynomially fast in time, we introduce the following Banach spaces
\begin{align}\label{def:S_pol_anal}
\mathscr{S}^{\mathrm{pol}, T}_{\sigma,\ell}
    = \left\{
    f : \T^n_\sigma \times B_\sigma \times I_T \to \C \;\middle|\;
    \begin{aligned}
        &f^t \mbox{ is real-analytic on $\T^n_\sigma \times B_\sigma$ for each }  t \in I_T; \\
        & \sup_{t \in I_T} \bigl(|f^t|_{\sigma}\, t^\ell \bigr) < \infty; \\
        & f \in C(\T^n_\sigma \times B_\sigma \times I_T)
    \end{aligned}
    \right\}
\end{align}
endowed with the norm
\begin{equation}\label{def:norm_S_analy}
    |f|^{\mathrm{pol}, T}_{\sigma, \ell} = \sup_{t \in I_T}|f^t|_{\sigma}t^\ell.
\end{equation}
We will use the same notation for functions defined on $\T^n_\sigma \times I_T$, as well as vector-valued or matrix-valued functions. It will be specified by the context. 
\begin{remark}
We stress that the space~\eqref{def:S_pol_anal} is the real-analytic version of  $\Spol_{(\sigma,k),\ell}$ defined in~\eqref{def:S}. 
However, unlike in the Hölder setting, we only require $f$ to be continuous on $\T^n_\sigma \times B_\sigma \times I_T$. Indeed, by Cauchy's estimates, for every $0<\sigma'<\sigma$, all partial derivatives of $f$ with respect to the variables $(q,p,z)$ are continuous on $\T^n_{\sigma'} \times B_{\sigma'} \times I_T$.
\end{remark}

We recall that $\varphi_0$ stands for the trivial embedding defined in~\eqref{def:varphi0=(q,0,0)}. The following theorems are the real-analytic counterparts of Theorems \ref{Thm:elliptic_Csigma} and \ref{Thm:hyp_Csigma}.

\begin{theoremxx}\label{Thm:elliptic_analy}
Fix $\sigma > 0$, $\ell >1$ and $l \ge 0$. Let $H_0 : \T^n_\sigma \times B_\sigma \to \C$ be real analytic of the form \eqref{eq:initial_hamiltonian} satisfying $\partial_{(p,z)}^2 H_0 \in \mathscr{S}_{\sigma,0}^{\mathrm{pol},1}$ and let $P : \T^n_\sigma \times B_\sigma \to \C$ be real analytic of the form~\eqref{eq:perturbation_form}. Then, for $H := H_0 + P$ the following holds.
\begin{enumerate}
    \item \label{thm:A_existence_analy} Suppose that
    \begin{equation*}
        a \in \mathscr{S}^{\mathrm{pol},1}_{\sigma,0}, \hspace{2mm} \partial_qa \in \mathscr{S}_{\sigma, \ell+ \dec + 2}^{\mathrm{pol},1}, \hspace{2mm}  b \in \mathscr{S}_{\sigma, \ell}^{\mathrm{pol},1}, \hspace{2mm}  c \in \mathscr{S}_{\sigma, \ell+ \dec + 1}^{\mathrm{pol}, 1}, \hspace{2mm} d \in \mathscr{S}_{\sigma, \ell}^{\mathrm{pol}, 1}.
    \end{equation*}
    Then, there exists an analytic asymptotic KAM torus $\varphi$ associated with $(X_H, X_{H_0}, \varphi_0)$ of the form
    \begin{equation}
    \begin{gathered}\label{eq:asymp_KAM_torus_form_ell_analy}
        \varphi:\T^n_{\sigma/2} \times I_T \to \T^n_\sigma \times \C^n \times \C^{2m}, \qquad \varphi^t = (\textup{id}_{\T^n} + u^t, v^t, w^t), \qquad T \geq 1, \\
        u \in \mathscr{S}_{{\sigma \over 2}, \ell-1}^{\mathrm{pol},T}, \qquad  v \in \mathscr{S}_{{\sigma \over 2}, \ell+ \dec + 1}^{\mathrm{pol},T}, \qquad w \in \mathscr{S}_{{\sigma \over 2}, \ell+ \dec}^{\mathrm{pol},T}.
    \end{gathered}
        \end{equation}
        Moreover, any other analytic asymptotic KAM torus associated with $(X_H, X_{H_0}, \varphi_0)$ of the form \eqref{eq:asymp_KAM_torus_form_ell_analy} coincides with $\varphi$ in the intersection of their domains.

    \medskip
    
    \item \label{thm:A_transverse_analy}  Suppose that 
    \begin{equation*}
        a \in \mathscr{S}^{\mathrm{pol},1}_{\sigma,0}, \hspace{2mm}   \partial_qa \in \mathscr{S}_{\sigma, \ell+ \dec + 3}^{\mathrm{pol}, 1}, \hspace{2mm}  b \in \mathscr{S}_{\sigma, \ell}^{\mathrm{pol}, 1}, \hspace{2mm}  c \in \mathscr{S}_{\sigma, \ell+\dec+2}^{\mathrm{pol}, 1}, \hspace{2mm}  d \in \mathscr{S}_{\sigma, \ell+\dec+1}^{\mathrm{pol}, 1}.
    \end{equation*}
    Then the analytic asymptotic KAM torus $\varphi$ associated with $(X_H, X_{H_0}, \varphi_0)$ given by the first statement is asymptotically elliptic.
    
\end{enumerate}
\end{theoremxx}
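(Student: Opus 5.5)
We will follow the scheme of the Hölder case, Theorem \ref{Thm:elliptic_Csigma}, with the Hölder spaces $\Spol_{(\sigma,k),\ell}$ replaced by the analytic spaces $\mathscr{S}^{\mathrm{pol},T}_{\sigma,\ell}$. Derivatives are controlled by Cauchy estimates on a strip of width $\sigma/2$ instead of by the $k$ extra Hölder derivatives. This trade is why the hypotheses carry no index $k$ and why the solution lives on $\T^n_{\sigma/2}$.

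\textbf{Existence and uniqueness.} We look for $\varphi^t=(q+u^t,v^t,w^t)$ and write the invariance equation \eqref{def:cond2_asymKAMtorus_analy} for $X_H$ as a functional equation $\mathcal F(u,v,w;P)=0$, where
\[
\mathcal F:\mathscr{S}^{\mathrm{pol},T}_{\sigma/2,\ell-1}\times\mathscr{S}^{\mathrm{pol},T}_{\sigma/2,\ell+\dec+1}\times\mathscr{S}^{\mathrm{pol},T}_{\sigma/2,\ell+\dec}\to \mathcal Z.
\]
The target $\mathcal Z$ is the product of analytic spaces with the decay exponents lowered by one. We Taylor expand $H_0$ around $(p,z)=(0,0)$. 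The remainder $O(p^2,pz,z^3)$ is estimated as in Section \ref{sec:FS_Ref_Ham}, using the sup norm on $B_\sigma$ and Cauchy estimates for $\partial_{(p,z)}^2H_0$. This shows that $\mathcal F$ is $C^1$ between these spaces. Composition with $q+u^t$ stays in the strip $\T^n_\sigma$ as long as $|u|_{\sigma/2}<\sigma/2$, which holds for $T$ large.

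The linearized operator at $0$ is the triangular system of cohomological equations
\begin{align*}
\partial_q u\cdot\omega+\partial_t u &= \partial^2_pH_0\, v+\partial_z\partial_pH_0\, w+\dots,\\
\partial_q w\cdot\omega+\partial_t w - J_m\Mell w &= \dots,\\
\partial_q v\cdot\omega+\partial_t v &= \dots.
\end{align*}
These are solved by integrating along $q+\omega(s-t)$ from $+\infty$:
\[
v^t(q)=-\int_t^\infty g^s(q+\omega(s-t))\,ds.
\]
The $w$ equation is solved in the same way using the rotation $e^{J_m\Mell s}$, which is bounded on the complex strip because the translation $q+\omega s$ with real $\omega$ preserves $|\mathrm{Im}\,q|$. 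These formulas lose one power of $t$ and no analyticity width, which matches the exponents in \eqref{eq:asymp_KAM_torus_form_ell_analy}.

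We then apply the quantitative implicit function theorem. For $T$ large the size of $P$ in these norms, namely $T^{-\epsilon}$ times its norm, becomes small, so we obtain a unique small solution. Uniqueness among all tori of the form \eqref{eq:asymp_KAM_torus_form_ell_analy} follows from the local uniqueness of the implicit function theorem, after shrinking to a common $T$.

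\textbf{Asymptotic ellipticity.} We reuse the criterion of Section \ref{sc:criteria_asym_dyn} and the model analysis of Section \ref{sc:analysis_transversal}. These only involve the $C^0$ in $q$ behaviour along orbits, so they apply verbatim once we set
\[
A^t(q)=DX_H\circ\varphi^t(q)-(\text{decaying part}).
\]
The decaying part must lie in $\mathscr{S}$ spaces with the stronger rates. Its decay follows from the strengthened hypotheses on $\partial_q a$, $c$ and $d$ and from the Cauchy estimate
\[
|\partial_q f|_{\sigma/4}\le C\sigma^{-1}|f|_{\sigma/2}.
\]
The conjugacy $S$ is then obtained by the same nonlinear equation and implicit function argument as in the Hölder case, now on the strip $\sigma/4$.

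\textbf{Main obstacle.} The delicate step is the analytic bookkeeping in the transverse part. The entries of $DX_H\circ\varphi$ involve first derivatives of the torus and second derivatives of $H$, which costs width twice. We must check that one fixed loss from $\sigma$ to $\sigma/2$ to $\sigma/4$ suffices. No iteration is needed, since there are no small divisors, so a finite number of Cauchy losses is enough and no KAM scheme is required.
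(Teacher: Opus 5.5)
Your architecture for Item 1 matches the paper's: the functional $X_H\circ\varphi-\Lo\varphi$, the triangular cohomological system solved by integrating from $+\infty$ with no loss of width, and a quantitative implicit function theorem on the strip $\sigma/2$. However, the step that is supposed to make the implicit function theorem work for large $T$ is false as stated. With the sharp exponents you use (and the theorem asserts), $\mathcal F^T(P,0)=(b_0,\,-\partial_q a_0,\,J_m c_0)$ sits exactly at the critical rates $(\ell,\ \ell+\dec+2,\ \ell+\dec+1)$ of the target. So its norm is comparable to $|P|^T$, which is bounded by $|P|^1$ but need not tend to $0$ (take $b^t\equiv t^{-\ell}\beta$ with $\beta$ a constant vector). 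There is no factor $T^{-\epsilon}$. Since no smallness of $P$ is assumed, the ordinary implicit function theorem at $(0,0)$ only handles small perturbations.

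What does become small is the variation of the linearization. Each term of $D_\varphi\mathcal F^T(P,\varphi)-D_\varphi\mathcal F^T(0,0)$ is a product of two decaying factors (e.g.\ $\partial_q^2a\,\hat u$, $\partial_q b\,\hat u$, $(\overline M\circ\varphi-\overline M_0)\hat v$, $d\,\hat w$). This gives a bound $C\,T^{-(\ell-1)}$ uniformly on balls of fixed, arbitrarily large radius (Lemma \ref{lem:DF_limit}). Together with $\|D_\varphi\mathcal F^T(0,0)^{-1}\|\le\bar C$ and $|\mathcal F^T(P,0)|\le Cr$, both uniform in $T$, the paper applies Theorem \ref{thm:QIFT} on $B(r)\times B(2C\bar Cr)$. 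The resulting torus has size $O(r)$, not small.

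The same issue breaks your uniqueness argument. A competitor of the form \eqref{eq:asymp_KAM_torus_form_ell_analy} has finite but not small norm. Passing to a later $T$ does not make, e.g., $\sup_{t\ge T}|\tilde v^t|_{\sigma/2}\,t^{\ell+\dec+1}$ small, so local uniqueness in a small ball never reaches it. You need injectivity of $\mathcal F^T(P,\cdot)$ on balls of arbitrary radius, which again comes from the derivative estimate.

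Your $T^{-\epsilon}$ idea can be rescued by genuinely working in spaces with all exponents lowered by some $0<\epsilon<\ell-1$. There the data $(\partial_qa,b,c,d)$ and every competitor are $O(T^{-\epsilon})$, and the ordinary implicit function theorem gives existence and uniqueness. You then have to bootstrap through the triangular system ($v$, then $w$, then $u$) to recover the sharp rates.

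Two smaller points. The domain must consist of the graph-norm spaces $\mathscr U^{\mathrm{pol},T}_{\sigma/2,\omega,\cdot}$ and $\mathscr W^{\mathrm{ell},T}_{\sigma/2,\omega,\Omega,\cdot}$; otherwise $\Lo\varphi$ is uncontrolled and $D_\varphi\mathcal F(0,0)$ is not an isomorphism. Also, the target exponents are raised, not lowered, by one.

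For Item 2, the paper never does the analytic width bookkeeping you call the main obstacle. By Cauchy estimates, the analytic hypotheses imply those of Item \eqref{thm:A_transverse} of Theorem \ref{Thm:elliptic_Csigma} on the real domain. Uniqueness then identifies the Hölder torus obtained there with the real trace of the analytic one. Redoing $\mathcal G^T$ analytically is viable but unnecessary. Note that $A$ is $S^{-1}\big((DX_H\circ\varphi)S-\Lo S\big)$, not $DX_H\circ\varphi$ minus a decaying part. Solving $\boldsymbol\zeta(H,\varphi,G)=\Mell$ is what forces its $zz$-block to be exactly $J_m\Mell$.
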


Given $\sigma >0$, $T\ge 0$, and $\lambda \ge 0$, we define the following Banach spaces of time-dependent real-analytic functions decaying exponentially fast in time.
\begin{align}\label{def:S_exp_anal}
\mathscr{S}^{\mathrm{exp}, T}_{\sigma,\lambda}
    = \left\{
    f : \T^n_\sigma \times B_\sigma \times I_T \to \C \;\middle|\;
    \begin{aligned}
        &f^t \mbox{ is real-analytic on $\T^n_\sigma \times B_\sigma$ for each }  t \in I_T; \\
        & \sup_{t \in I_T} \bigl(|f^t|_{\sigma}\, e^{\lambda t} \bigr) < \infty; \\
        & f \in C(\T^n_\sigma \times B_\sigma \times I_T)
    \end{aligned}
    \right\}
\end{align}
endowed with the norm
\begin{equation}\label{def:norm_S_analy_exp}
    |f|^{\mathrm{exp}, T}_{\sigma, \ell} = \sup_{t \in I_T}|f^t|_{\sigma}e^{\lambda t}.
\end{equation}

\begin{theoremxx}\label{Thm:hyp_analy}
Fix $\sigma > 0$. Let $H_0 : \T^n_\sigma \times B_\sigma \to \C$ be real analytic of the form \eqref{eq:initial_hamiltonian_hyp} satisfying $\partial_{(p,z)}^2 H_0 \in \mathscr{S}_{\sigma,0}^{\mathrm{exp},0}$ and let $P : \T^n_\sigma \times B_\sigma \to \C$ be real analytic of the form \eqref{eq:perturbation_form} with
    \begin{equation*}
        a, \partial_qa, b ,c , d \in \mathscr{S}_{\sigma, \lambda}^{\mathrm{exp}, 0}.
    \end{equation*}

Then, for $H := H_0 + P$ the following holds.
\begin{enumerate}
    \item \label{thm:B_existence_analy} Suppose that
    \[ \lambda > \max_{1 \le i \le m} \Omega_i.\]
    Then, there exists an analytic asymptotic KAM torus $\varphi$ associated with $(X_H, X_{H_0}, \varphi_0)$ of the form
    \begin{equation}
    \begin{gathered}\label{eq:asymp_KAM_torus_form_hyp_analy}
        \varphi:\T^n_{\sigma/2} \times I_T \to \T^n_\sigma \times \C^n \times \C^{2m}, \qquad \varphi^t = (\textup{id}_{\T^n} + u^t, v^t, w^t), \qquad T \geq 0, \\
        u, v, w \in \mathscr{S}_{{\sigma \over 2}, \lambda}^{\mathrm{exp},T}.
    \end{gathered}
        \end{equation}
        Moreover, any other analytic asymptotic KAM torus associated with $(X_H, X_{H_0}, \varphi_0)$ of the form \eqref{eq:asymp_KAM_torus_form_hyp_analy} coincides with $\varphi$ in the intersection of their domains.

    \medskip
    
    \item \label{thm:B_transverse_analy}  Suppose that 
    \[ \lambda > 2\max_{1 \le i \le m} \Omega_i.\]
    Then the analytic asymptotic KAM torus $\varphi$ associated with $(X_H, X_{H_0}, \varphi_0)$ given by the first statement is asymptotically hyperbolic.

\end{enumerate}
\end{theoremxx}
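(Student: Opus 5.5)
The proof of Theorem \ref{Thm:hyp_analy} will follow the same scheme as its Hölder counterpart, Theorem \ref{Thm:hyp_Csigma}, with the Hölder spaces $\Sexp_{(\sigma,k),\lambda}$ replaced by the analytic spaces $\mathscr{S}^{\mathrm{exp},T}_{\sigma',\lambda}$ on nested complex strips.

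For Item \ref{thm:B_existence_analy}, I would write $\varphi^t = (\mathrm{id}+u^t, v^t, w^t)$ and reformulate the invariance equation \eqref{def:cond2_asymKAMtorus_analy} as $\mathcal{F}(u,v,w)=0$. Expanding $H_0$ to second order along the zero section, the linearized operator at $(0,0,0)$ decouples into three cohomological equations along the linear flow $q\mapsto q+\omega t$:
\[
\partial_t v + \partial_q v\cdot\omega = g_v, \qquad \partial_t w + \partial_q w\cdot \omega - J_m\Mhyp w = g_w, \qquad \partial_t u + \partial_q u\cdot\omega = g_u.
\]
I would solve each along characteristics, integrating forward from $+\infty$:
\[
v(q,t) = -\int_t^{\infty} g_v\bigl(q+\omega(s-t), s\bigr)\,ds .
\]
For the $u$ and $v$ equations this integral gives an exponential weight $e^{-\lambda t}$ with a loss of a factor $1/\lambda$. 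For $w$ I would diagonalize $J_m\Mhyp$ into stable and unstable directions with rates $\pm\Omega_i$. The stable components can be integrated from $t$ to $+\infty$ against $e^{\Omega_i(s-t)}$, and this converges precisely because $\lambda>\Omega_i$. The unstable components are likewise integrated from $+\infty$. Since the kernels involve only translations $q\mapsto q+\omega s$ with $\omega$ real, they preserve the sup norm on $\T^n_{\sigma'}$. Consequently there are no small divisors and no loss of analyticity width in the linear inverse. The loss of width comes only from the Cauchy estimates needed to control $\partial_q$ of the nonlinear terms, for instance passing from $\sigma$ to $\sigma/2$. Note that the hypothesis on $\partial_q a$ is needed here because $\partial_q a$ enters the $p$-equation.

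To close the existence argument, I would take $T$ large: the inhomogeneous term is small in the $e^{\lambda t}$-weighted norm for $t\ge T$, and the quadratic remainders, estimated by the analytic counterpart of the results of Section \ref{sec:FS_Ref_Ham}, are Lipschitz with small constant on a small ball. A contraction mapping argument, or the quantitative implicit function theorem used in the Hölder case, then applies in the fixed space $\mathscr{S}^{\mathrm{exp},T}_{\sigma/2,\lambda}$. This works without a Nash–Moser scheme because the Cauchy loss occurs only once: the unknowns appear without derivatives in $X_H\circ\varphi$. Uniqueness follows from the contraction on this ball, together with the observation that any torus of the form \eqref{eq:asymp_KAM_torus_form_hyp_analy} lies in the ball for $T$ large.

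For Item \ref{thm:B_transverse_analy}, I would invoke the criteria of Section \ref{sc:criteria_asym_dyn}. Concretely, I would construct a $C^1$ conjugacy $S = \mathrm{Id} + \Delta$, with $\Delta$ decaying, between $DX_H\circ\varphi$ and a model matrix $A$ of the form \eqref{def:A} with $M=\Mhyp$. This again reduces to cohomological equations, now of Sylvester type $\dot\Delta = A\Delta - \Delta A + R$. The conjugation by $J_m\Mhyp$ produces rates $\pm\Omega_i\pm\Omega_j$, so the worst rate is $2\max\Omega_i$, and it is absorbed exactly when $\lambda > 2\max\Omega_i$. The limit \eqref{def:trasv_dyn_limit} then follows from the decay of $\Delta$ and the structure of $\Phi_A$ established in Section \ref{sc:analysis_transversal}.

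I expect the main obstacle to be bookkeeping the exponential weights in the hyperbolic Sylvester equation. In particular, the unstable-by-stable blocks produce growth $e^{2\Omega t}$, and this must be beaten by the decay $e^{-\lambda t}$ of the coefficients of $DX_H\circ\varphi - A$; these coefficients involve $\partial_q$ of $u,v,w$, which accounts for a second Cauchy estimate.
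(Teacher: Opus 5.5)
Your scheme for Item 1 is the paper's. It uses the analytic exponentially weighted spaces and the block-triangular linearization at $(0,0)$: solve $\hat v$, then $\hat w$, then $\hat u$, each by integrating from $+\infty$. The stable/unstable splitting of $J_m\Mhyp$ is what forces $\lambda>\max_i\Omega_i$, and width is lost only in the composition $X_H\circ\varphi$.

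\textbf{The gap is in how you close the argument.} You claim that the forcing $\mathcal{F}^T(P,0)=(b,\,-\partial_q a,\,J_m c)\circ\varphi_0$ becomes small in the $e^{\lambda t}$-weighted norm on $[T,\infty)$, and then you work on a small ball. At the sharp rate $\lambda$ this is false. The quantity $\sup_{t\ge T}|f^t|_\sigma e^{\lambda t}$ is non-increasing in $T$ but need not tend to $0$: take $b=\varepsilon e^{-\lambda t}$. The theorem assumes no smallness of $P$.

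For the same reason, your uniqueness step fails: a competing torus of the form \eqref{eq:asymp_KAM_torus_form_hyp_analy} does not enter a small ball as $T$ grows. Moreover, the terms to control are not only the quadratic remainders of $H_0$. The operator $D_\varphi\mathcal{F}^T(P,\varphi_*)$ also contains terms linear in the increment with coefficients of size $|P|$, such as $\partial_q^2 a\circ\varphi_{*,q}\,\hat u$, $\partial_q b\circ\varphi_{*,q}\,\hat v$ and $d\circ\varphi_{*,q}\,\hat w$. These are not small on a small ball unless $P$ is.

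\textbf{The missing idea} (item (4) of Proposition \ref{prop:F_properties_hyp}, i.e.\ Lemma \ref{lem:DF_limit} with $T^{\ell-1}$ replaced by $e^{\lambda T}$) is that what becomes small is the variation of the differential, uniformly on balls of arbitrary fixed radius. Every term of $D_\varphi\mathcal F^T(P,\varphi_*)-D_\varphi\mathcal F^T(0,0)$ is a product of at least two factors decaying at rate $\lambda$, so it lies in the $2\lambda$-weighted space. Since $|f|^T_{\sigma/2,\lambda}\le e^{-\lambda T}|f|^T_{\sigma/2,2\lambda}$, this gives an operator bound $C(|P|,|\varphi_*|)e^{-\lambda T}$.

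One then takes $\rho=2C\bar C r$, proportional to the possibly large size $r$ of $P$, and applies Theorem \ref{thm:QIFT} for $T\ge T_0(r)$. Uniqueness follows because, by increasing $T$, the contraction ball can be made larger than the norm of any competing torus; relation \eqref{eq:asymptotic_KAM_flow} then propagates coincidence to the common domain. Alternatively, you could lower the weight to $\lambda'\in(\max_i\Omega_i,\lambda)$ to make the forcing small. You would then only get $u,v,w\in\mathscr{S}^{\mathrm{exp},T}_{\sigma/2,\lambda'}$ and would need a bootstrap on the nonlinear terms to recover rate $\lambda$. The same correction is needed in your Item 2, where the forcing $DX_H\circ\varphi-DX_{H_0}\circ\varphi_0$ is also of size $O(|P|)$.

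\textbf{On Item 2, your route differs from the paper's.} The paper builds nothing new. Analytic data satisfy the Hölder hypotheses of Theorem \ref{Thm:hyp_Csigma} on the real domain by Cauchy estimates, and uniqueness identifies the two tori, so Item 2 follows directly. In the Hölder proof the conjugacy is the symplectic $\boldsymbol S(\varphi,G)$ of Lemma \ref{lem:embedding_symplectomorphism}. There the only unknown is $G\in\mathfrak{sp}(2m,\R)$, with linearization $\mathfrak L^{\mathrm{hyp}}_{\omega,\Omega}$, at the price of involving $\partial_q u,\partial_q v,\partial_q w$.

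Your ansatz $S=\mathrm{Id}+\Delta$ is a genuinely different construction and can work with exponential weights. As written, though, it is underdetermined, because the blocks $a_{12},a_{13},a_{32}$ of $A$ are free. You need a normalization such as $\Delta_{qp}=\Delta_{qz}=\Delta_{zp}=0$. After it, the linearized system is block-triangular, solved in the order $pq$, $zq$, $qq$, $pz$, $pp$, $zz$. Only the $zz$ block carries the rates $\pm\Omega_i\pm\Omega_j$ that require $\lambda>2\max_i\Omega_i$. This ansatz involves only $DX_H\circ\varphi$ and no $\partial_q\varphi$. Your remark about a second Cauchy estimate on $\partial_q u,\partial_q v,\partial_q w$ therefore applies to the paper's ansatz, not to yours.
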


\begin{remark}
It follows from the proofs of Theorems \ref{Thm:elliptic_Csigma}, \ref{Thm:hyp_Csigma}, \ref{Thm:elliptic_analy}, and \ref{Thm:hyp_analy} that we have precise control over the regularity of the conjugating map $S$ and the linear cocycle $A$ in Definition \ref{def:ell_hyp_par_trasv_dyn_asym_KAM}. These objects are constructed in each proof, with the help of the Implicit Function Theorem, in order to determine the asymptotic behaviour of the transverse dynamics.

Since our main goal was to establish the asymptotic convergence, when restricted to the normal coordinates, of the solutions of the linearized cocycle associated with the perturbed system along the asymptotic KAM torus to those of the cocycle $A$ (which exhibits either elliptic or hyperbolic  behaviour depending on the setting), namely, Equation \eqref{def:trasv_dyn_limit}, we did not explicitly include these regularity properties in the statements of our theorems. %

However, it follows directly from the proofs that, in Theorems \ref{Thm:elliptic_Csigma} and \ref{Thm:hyp_Csigma}, the maps $A$ and $S$ are of class $C^{\sigma-1}$ with respect to the variables $(q, p, z)$ (while in Theorems \ref{Thm:elliptic_analy} and \ref{Thm:hyp_analy} they are analytic) and of class $C^1$ with respect to $t$.
\end{remark}

\section{Analysis of the transverse dynamics}
\label{sc:analysis_transversal}
In this section, we study the solutions of the time-dependent linear system~\eqref{eq:Trasv_dyn_AA}. The properties established below will be used to characterize the asymptotic dynamics of the linearized Hamiltonian systems along the orbits contained in the asymptotic KAM tori appearing in our main results.

Given $T\ge 1$, a symmetric matrix $M \in \mathcal{M}_{2m}(\R)$ and letting $\xi = (\xi_1, \xi_2, \xi_3, \xi_4) \in \R^n \times \R^n \times \R^m \times \R^m$, we rewrite~\eqref{eq:Trasv_dyn_AA} as
\begin{equation}
    \begin{cases}\label{eq:Trasv_dyn_A_2}
        \partial_t \xi_1 (t) = a^t_{12}(q+\omega t) \xi_2(t) +  a^{t}_{13}(q+\omega t) \begin{pmatrix} \xi_3(t) \\ \xi_4(t) \end{pmatrix},\\
        \partial_t \xi_2(t) = 0,\\
        \begin{pmatrix}\partial_t \xi_3 (t) \\ \partial_t \xi_4(t) \end{pmatrix} = a^{t}_{32}(q+\omega t) \xi_2(t) + J_m M \begin{pmatrix}\xi_3(t) \\ \xi_4(t) \end{pmatrix},
    \end{cases}
\end{equation}
for $(q,t) \in \T^n \times I_{T}$, where we refer to~\eqref{def:J} for the definition of $J_m$, and to~\eqref{def:A} for the terms $a^t_{12}, a^t_{13}$, and $a^t_{32}$. In what follows, we analyze the elliptic, and hyperbolic cases separately. These are treated in Sections \ref{sec:A_ell}, and \ref{sec:A_hyp}, respectively, and correspond to $M=\Mell$, and $\Mhyp$; see~\eqref{Ms} for the definitions of $\Mell$, and $\Mhyp$. Throughout this section, we denote by $C(\cdot)$ a generic positive constant depending on the parameter in brackets and by
\begin{equation}\label{not:proj_Pixy}
    \Pi_x = (\mathrm{Id}_m \quad 0_{m}), \, \Pi_y = (0_m \quad  \mathrm{Id}_m) \in \mathcal{M}_{m \times 2m}(\R)
\end{equation}
the projections onto the first and last $m$ rows.

Each solution $\xi(t;q,t_0)$ of the above system depends on the parameters $q\in\T^n$ and $t_0\in I_{T}$, and the initial condition $\xi(t_0;q)$ depends on $q\in\T^n$. To avoid cumbersome notation, throughout this section we omit these dependencies and simply write $\xi(t)$ and $\xi(t_0)$ whenever the meaning is clear from the context.

\subsection{Elliptic case: $M=\Mell$.}\label{sec:A_ell}
Here, the system~\eqref{eq:Trasv_dyn_A_2} can be written as
\begin{equation}
    \begin{cases}\label{eq:Trasv_dyn_A_2_ell}
        \partial_t \xi_1 (t) = a^t_{12}(q+\omega t) \xi_2(t) +  a^{t}_{13}(q+\omega t) \begin{pmatrix} \xi_3(t) \\ \xi_4(t) \end{pmatrix},\\
        \partial_t \xi_2(t) = 0,\\
        \begin{pmatrix}\partial_t \xi_3 (t) \\ \partial_t \xi_4(t) \end{pmatrix} = a^{t}_{32}(q+\omega t) \xi_2(t) +  \begin{pmatrix}\Omega \xi_4(t) \\ -\Omega\xi_3(t)\end{pmatrix},
    \end{cases}
\end{equation}
where we recall that $\Omega = \mathrm{diag}(\Omega_1,\dots, \Omega_m)$, for $\Omega_1,\dots,\Omega_m \in \R_{>0}$. 

The following proposition provides quantitative control on the solutions of~\eqref{eq:Trasv_dyn_A_2_ell} through growth estimates. 

\begin{proposition}\label{prop:trasv_dyn_ell_case}
    We assume that the non-autonomous linear system~\eqref{eq:Trasv_dyn_A_2_ell} satisfies
    \begin{equation}\label{def:cond1_ellhyppar_asymKAMtorus_components_A}
        a_{12}, \, a_{13}, \, a_{32} \in C^0(\T^n \times I_{T}), \qquad \sup_{t \in I_{T}}|a_{12}^t|_{C^0}, \, \sup_{t \in I_{T}}|a_{13}^t|_{C^0}, \, \sup_{t \in I_{T}}|a_{32}^t|_{C^0},< \infty.
    \end{equation}
    Then, for every fixed $(q,t_0) \in \T^n \times I_{T}$ and any initial condition $\xi(t_0) = (\xi_1(t_0),\xi_2(t_0),\xi_3(t_0),\xi_4(t_0))$ there exists a unique solution $\xi(t) = (\xi_1(t),\xi_2(t),\xi_3(t),\xi_4(t))$ and a positive constant $C$ depending on $|\xi(t_0)|$, $\sup_{t \in I_{T}}|a_{12}^t|_{C^0}, \, \sup_{t \in I_{T}}|a_{13}^t|_{C^0}, \, \sup_{t \in I_{T}}|a_{32}^t|_{C^0}$, $n$, $m$, $\Omega$ and $t_0$ such that 
    \begin{equation}\label{prop:div_xi_ell_A}
        |\xi_1(t)| \le C t^2, \quad |\xi_2(t)| \le C,  \quad |\xi_3(t)|, \,|\xi_4(t)| \le C t,
    \end{equation}
    for all $t \in I_{T}$.
\end{proposition}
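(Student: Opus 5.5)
The plan is to exploit the triangular structure of \eqref{eq:Trasv_dyn_A_2_ell} and integrate the equations in cascade: first $\xi_2$, then $(\xi_3,\xi_4)$, and finally $\xi_1$.

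\textbf{Existence and uniqueness.} The system is linear in $\xi$. For fixed $q$, its coefficients $t\mapsto A^t(q+\omega t)$ are continuous on $I_T$, because $a_{12},a_{13},a_{32}\in C^0(\T^n\times I_T)$. Standard linear ODE theory therefore gives a unique global solution on $I_T$ for any initial datum at $t_0$.

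\textbf{The component $\xi_2$.} The second equation gives $\xi_2(t)=\xi_2(t_0)$ for all $t$. Hence $|\xi_2(t)|\le|\xi(t_0)|$.

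\textbf{The normal components $(\xi_3,\xi_4)$.} Set $\zeta=(\xi_3,\xi_4)$. Then $\dot\zeta=J_m\Mell\,\zeta+g(t)$ with $g(t)=a_{32}^t(q+\omega t)\xi_2(t_0)$. The matrix $J_m\Mell=\begin{pmatrix}0&\Omega\\-\Omega&0\end{pmatrix}$ is block diagonal with $2\times2$ blocks. Its exponential $R(t)=e^{tJ_m\Mell}$ acts as a rotation by angle $\Omega_i t$ in each $(x_i,y_i)$ plane. In particular it is orthogonal, so $|R(t)|=1$ in the Euclidean operator norm, and at most a dimensional constant $C(m)$ in any other fixed norm. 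Variation of constants gives
\[
\zeta(t)=R(t-t_0)\zeta(t_0)+\int_{t_0}^t R(t-s)\,g(s)\,ds .
\]
This yields $|\zeta(t)|\le C(m)\bigl(|\xi(t_0)|+(t-t_0)\sup_{s}|a_{32}^s|_{C^0}|\xi(t_0)|\bigr)$. Since $t\ge T\ge1$, we have $1\le t$ and $t-t_0\le t$. So $|\xi_3(t)|,|\xi_4(t)|\le Ct$.

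\textbf{The component $\xi_1$.} Integrate directly:
\[
\xi_1(t)=\xi_1(t_0)+\int_{t_0}^t\Bigl(a_{12}^s(q+\omega s)\xi_2(t_0)+a_{13}^s(q+\omega s)\zeta(s)\Bigr)ds .
\]
Use the uniform $C^0$ bounds on $a_{12}$ and $a_{13}$ together with the bound $|\zeta(s)|\le Cs$. The integrand is $O(1+s)$, so the integral is $O(t^2)$. Using $t\ge1$ once more, this gives $|\xi_1(t)|\le Ct^2$.

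\textbf{The range $t<t_0$.} The estimates are claimed for all $t\in I_T$, including $T\le t<t_0$. On this compact interval the same formulas hold with $|t-t_0|\le t_0$, and the bounds follow with a constant depending on $t_0$. This is why $C$ is allowed to depend on $t_0$.

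There is no genuine obstacle. The only point needing care is that the autonomous part $J_m\Mell$ generates a bounded, indeed isometric, flow. Without this, the forcing term in the $\zeta$ equation could produce exponential growth rather than the linear growth obtained here.
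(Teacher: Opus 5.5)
Your proposal is correct and follows the paper's proof: you integrate in cascade ($\xi_2$ constant, then variation of constants for $(\xi_3,\xi_4)$, then direct integration for $\xi_1$), and you use that the homogeneous normal flow is isometric. The difference is cosmetic: you use the real rotation $e^{tJ_m\Mell}$ (block diagonal after pairing each $x_i$ with $y_i$), whereas the paper writes $\zeta=\xi_3+i\xi_4$, so the normal equation becomes $\partial_t\zeta=-i\Omega\zeta+F^t$ with propagator $e^{-i\Omega(t-t_0)}$; you also treat existence, uniqueness and the range $T\le t<t_0$ explicitly, which the paper leaves implicit.
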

\begin{proof}
    We observe that $\xi_2(t) = \xi_2(t_0)$ for all $t \in I_{T}$. This proves~\eqref{prop:div_xi_ell_A} for $\xi_2$. Now, we study the last equation of~\eqref{eq:Trasv_dyn_A_2_ell} where $\xi_2(t) = \xi_2(t_0)$ is known. For this purpose, we introduce the following complex notation. For all $t \in I_{T}$, let 
    \begin{align*}
        \zeta (t) &= \xi_3(t) + i\xi_4(t),\\
         F^t (q + \omega t) &=  \Pi_xa^{t}_{32}(q + \omega t)\xi_2(t_0) + i \Pi_y a^{t}_{32}(q+\omega t) \xi_2(t_0),
    \end{align*}
    where we refer to~\eqref{not:proj_Pixy} for the definition of $\Pi_x$ and $\Pi_y$.   We rewrite the last equation of~\eqref{eq:Trasv_dyn_A_2_ell} as
    \begin{equation*}
        \partial_t \zeta(t) = -i \Omega \zeta(t) +   F^t (q + \omega t)
    \end{equation*}
    and the solution of the latter is given by
    \begin{equation*}
        \zeta(t) = e^{-i\Omega(t-t_0)} \zeta(t_0) + \int_{t_0}^t e^{-i\Omega(t-s)}F^s(q+\omega s)ds
    \end{equation*}
    or in components, for every $1 \le j \le m$
     \begin{equation*}
        \zeta_j(t) = e^{-i\Omega_j(t-t_0)} \zeta_j(t_0) + \int_{t_0}^t e^{-i\Omega_j(t-s)}F^s_j(q+\omega s)ds
    \end{equation*}
    where $\xi_3 = \mathrm{Re} \zeta$ and $\xi_4 = \mathrm{Im} \zeta$. By~\eqref{def:cond1_ellhyppar_asymKAMtorus_components_A}, we have that $ \sup_{t \in I_{T}}|F^t|_{C^0}<\infty$ and hence trivial estimations show that $\xi_3(t)$ and $\xi_4(t)$ satisfy~\eqref{prop:div_xi_ell_A}. It remains to analyze the first equation of~\eqref{eq:Trasv_dyn_A_2_ell} where $\xi_2(t)$, $\xi_3(t)$, and $\xi_4(t)$ are known. We have that 
    \begin{equation*}
        \xi_1(t) = \xi_1(t_0) + \int_{t_0}^t a^s_{12}(q+\omega s) \xi_2(t_0) +  a^{s}_{13}(q+\omega s) \begin{pmatrix} \xi_3(s) \\ \xi_4(s) \end{pmatrix} \, ds
    \end{equation*}
    Combining~\eqref{def:cond1_ellhyppar_asymKAMtorus_components_A} and the above estimates proved for $\xi_3(t)$ and $\xi_4(t)$ with the latter, we prove that also $\xi_1(t)$ satisfies~\eqref{prop:div_xi_ell_A}.
\end{proof}
\begin{remark}
    We point out that in Theorem \ref{Thm:elliptic_Csigma} the unperturbed Hamiltonian $H_0$ in~\eqref{eq:initial_hamiltonian} has an invariant torus in $p=z=0$ supporting quasiperiodic solutions of frequency vector $\omega$. Therefore, it is natural to focus on the study of the transverse dynamics in a neighborhood of the invariant torus corresponding to $p=0$. This amounts to considering solutions $\xi(t)$ of system~\eqref{eq:Trasv_dyn_A_2_ell} with initial condition
    \begin{equation*}
        \xi(t_0)=(\xi_1(t_0),0,\xi_3(t_0),\xi_4(t_0)) 
    \end{equation*}
    which provides boundedness with respect to the motions transversal to the invariant torus associated with the unperturbed Hamiltonian $H_0$. These orbits are given by
     \begin{equation*}
    \begin{aligned}
        \xi_3(t) &= \cos \left(\Omega (t-t_0)\right)  \xi_3(t_0) +  \sin \left(\Omega (t - t_0)\right)\xi_4(t_0),\\
        \xi_4(t) &= \cos \left(\Omega (t-t_0)\right) \xi_4(t_0) -  \sin \left(\Omega (t - t_0)\right)\xi_3(t_0),
    \end{aligned}
    \end{equation*}
    where $\cos \big(\Omega (t-t_0)\big) = \mathrm{diag}\Big(\cos \big(\Omega_1 (t-t_0)\big), \dots, \cos \big(\Omega_m (t-t_0)\big)\Big)$ and $\sin \big(\Omega (t-t_0)\big) = \mathrm{diag}\Big(\sin \big(\Omega_1 (t-t_0)\big), \dots, \sin \big(\Omega_m (t-t_0)\big)\Big)$.  
\end{remark}

If we assume $\xi_2(t_0) \ne 0$, also in the autonomous case, as the one described in~\eqref{def:DXH_aut}, the motions transversal to the invariant torus in $p=z=0$ could exhibit an unexpected parabolic behavior. For completeness, in the following sections (see Sections \ref{sec:booundedness_ell_aut} and \ref{sec:booundedness_ell_non_aut}), we briefly analyze the corresponding autonomous setting, and we provide some hypotheses that ensure the existence of bounded transverse dynamics in the non-autonomous case. 
Although one typically associates elliptic dynamics with bounded and oscillatory motions, we shall retain this terminology, following the standard nomenclature in classical KAM theory concerning the persistence of normally elliptic isotropic invariant tori. 

Finally, in Section \ref{sec:limit_aut_ell}, %
we assume that $a_{32}^t$ converge as time tends to infinity to an autonomous matrix $a_{32}^\infty$.
We investigate under which assumption any transversal solution, described by $(\xi_3(t), \xi_4(t))$, of~\eqref{eq:Trasv_dyn_A_2_ell} is asymptotic to a transversal solution of the corresponding limiting autonomous system and vice versa. 
\subsubsection{On the Boundedness of the Transverse Dynamics: autonomous case}\label{sec:booundedness_ell_aut}
First, we need to prove the following technical lemma.
\begin{lemma}\label{lem:lemma_tecnico_ell_aut}
    Let $\sigma \ge 0$, $\Lambda >0$, $T>0$, $\omega \in \R^n$, and $G:\T^n \to \R$. We assume that  $$G \in C^{\sigma + n +1}(\T^n)$$   and we consider the following equation 
    \begin{equation}\label{eq:tec_lemma_aut_ell}
        \partial_t \chi(t) = -i\Lambda \chi(t) + G(q+\omega t)
    \end{equation}
    in the unknown $\chi:I_T \to \C$. 
    In addition, we assume the existence of $\gamma>0$, $\tau \ge 0$ such that, for all $k \in \Z^n$ and $1 \le j \le m$
    \begin{equation*}%
        |\omega \cdot k + \Lambda| \ge {\gamma \over \left(|k|+1 \right)^\tau},
    \end{equation*}
    where $|k| = |k|_1$. Then, for any fixed $(q,t_0) \in \T^n \times I_T$ there exists a unique solution $\chi$ of~\eqref{eq:tec_lemma_aut_ell} and a positive constant $C$ depending on $n$, $\sigma$, $\Lambda$, $\gamma$, $\tau$ and $|G|_{C^{\sigma +n +1}}$ such that 
    \begin{equation*}
        \chi(t_0) = 0, \quad |\chi(t)| \le C,
    \end{equation*}
    for all $t \in I_T$.
\end{lemma}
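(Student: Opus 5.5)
Existence and uniqueness of the solution with $\chi(t_0)=0$ are immediate, since \eqref{eq:tec_lemma_aut_ell} is a scalar linear ODE with continuous forcing. Variation of constants gives
\begin{equation*}
\chi(t) = \int_{t_0}^t e^{-i\Lambda(t-s)} G(q+\omega s)\, ds .
\end{equation*}
All the work is in bounding this integral uniformly in $t$. A crude estimate only gives linear growth, so we must exploit the Diophantine condition through a Fourier expansion, which amounts to solving a cohomological equation for a bounded quasiperiodic particular solution.

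We expand $G(\theta)=\sum_{k\in\Z^n}\hat G_k e^{2\pi i k\cdot\theta}$. We use the convention $e^{2\pi i k\cdot \theta}$; if the paper's normalization of $\T^n$ differs, the factor $2\pi$ is absorbed into the condition on $|\omega\cdot k+\Lambda|$, which we read as the nonresonance condition $|2\pi\omega\cdot k+\Lambda|$ up to this normalization. Since $G\in C^{\sigma+n+1}(\T^n)$, repeated integration by parts (or the standard Hölder Fourier-decay estimate, Appendix \ref{app:Holder}) gives
\begin{equation*}
|\hat G_k|\le C(n)\,|G|_{C^{\sigma+n+1}}\,(|k|+1)^{-(\sigma+n+1)} .
\end{equation*}
If $\sigma$ is not an integer, we use the integer part $\lfloor\sigma\rfloor+n+1$ of derivatives, which suffices.

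Next we look for a particular solution of the form $\chi_p(t)=g(q+\omega t)$, with
\begin{equation*}
g(\theta)=\sum_k \frac{\hat G_k}{i(2\pi\omega\cdot k+\Lambda)} e^{2\pi i k\cdot\theta}.
\end{equation*}
Differentiating term by term shows that $\partial_t\chi_p = -i\Lambda \chi_p + G(q+\omega t)$, since each mode satisfies $2\pi i\,\omega\cdot k\, c_k = -i\Lambda c_k+\hat G_k$. The key estimate is the absolute convergence of this series:
\begin{equation*}
\sum_k \frac{|\hat G_k|}{|2\pi\omega\cdot k+\Lambda|}\le \frac{C}{\gamma}|G|_{C^{\sigma+n+1}}\sum_k (|k|+1)^{\tau-\sigma-n-1}.
\end{equation*}
This is the main obstacle. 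The sum converges only if $\sigma+1>\tau$, since $\#\{|k|=N\}\sim N^{n-1}$. So either the intended regularity already includes the loss $\tau$, that is, one should read $\sigma\ge\tau$ implicitly, or we must pick up $\tau$ extra derivatives. I would make this requirement explicit, namely that $G\in C^{\tau+n+1}$ is what is really needed, and note that the constant depends on $\tau$ and $\gamma$ as stated. Given convergence, $|g|_{C^0}\le C$, and termwise differentiation is justified because the series for $G$ converges uniformly.

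Finally, the general solution is $\chi(t)=\chi_p(t)+e^{-i\Lambda(t-t_0)}\bigl(\chi(t_0)-\chi_p(t_0)\bigr)$. Imposing $\chi(t_0)=0$ gives
\begin{equation*}
\chi(t)=g(q+\omega t)-e^{-i\Lambda(t-t_0)}g(q+\omega t_0),
\end{equation*}
and hence $|\chi(t)|\le 2|g|_{C^0}\le C$ for all $t\in I_T$. Uniqueness follows because two solutions with the same initial value differ by a solution of the homogeneous equation vanishing at $t_0$, and such a solution is identically zero.
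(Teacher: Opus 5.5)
Your argument is the paper's argument. Both solve $\omega\cdot\partial_q g+i\Lambda g=G$ by Fourier series with divisors $i(\omega\cdot k+\Lambda)$, set $\chi(t)=g(q+\omega t)-e^{-i\Lambda(t-t_0)}g(q+\omega t_0)$, and conclude $|\chi|\le 2|g|_{C^0}$. The only difference is the bound on $g$. The paper does not estimate the series; it quotes Lemma~27 of \cite{Fe04} for $|g|_{C^\sigma}\le C|G|_{C^{\sigma+n+1}}$, a loss of $n+1$ derivatives that does not depend on $\tau$. You estimate $\sum_k|\hat g_k|$ by hand and find that this requires $\sigma+1>\tau$.

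Your caveat is correct, and it points to a defect in the statement rather than in your proof. On near-resonant modes the multiplier $1/(\omega\cdot k+\Lambda)$ has size up to $(|k|+1)^{\tau}/\gamma$, so no $\tau$-independent loss can work in general.

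For a counterexample, take $n\ge 2$ and choose $(\omega,\Lambda)$ satisfying the hypothesis but having infinitely many $k_j$ with $|\omega\cdot k_j+\Lambda|\le C(|k_j|+1)^{-\tau}$. Take $|k_j|$ growing fast, and let $G=\sum_j|k_j|^{-a}\cos(k_j\cdot\theta)$ with $\sigma+n+1<a<\tau$. Then $G\in C^{\sigma+n+1}$, yet the $k_j$-mode of $\int_{t_0}^t e^{-i\Lambda(t-s)}G(q+\omega s)\,ds$ has modulus of order $|k_j|^{\tau-a}\to\infty$ at $t-t_0=\pi/|\omega\cdot k_j+\Lambda|$. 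So the lemma needs a $\tau$-dependent regularity, which is exactly the version you prove. The same applies to the hypothesis $a_{32}\in C^{\sigma+n+1}$ in Propositions~\ref{prop:bound_trasv_dyn_ell_aut} and~\ref{prop:bound_trasv_dyn_ell_non_aut}.

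You can sharpen your threshold. By Cauchy--Schwarz and Parseval, $\sum_k|\hat g_k|\le\gamma^{-1}\|G\|_{H^{\tau+s'}}\bigl(\sum_k(|k|+1)^{-2s'}\bigr)^{1/2}$ for any $s'>n/2$. Hence $G\in C^{s}$ with $s>\tau+n/2$ suffices, and the statement as written does hold whenever $\tau<\sigma+1+n/2$.

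Two minor points:
\begin{itemize}
\item For non-integer Hölder exponents, use the decay $|\hat G_k|\le C|G|_{C^s}(|k|+1)^{-s}$ directly instead of rounding $\sigma$ down. Rounding costs you the stronger condition $\lfloor\sigma\rfloor+1>\tau$.
\item Termwise differentiation of $g(q+\omega t)$ needs more than uniform convergence of the series for $G$. In your normalization, $2\pi i(\omega\cdot k)\hat g_k=\hat G_k-i\Lambda\hat g_k$, so the differentiated series converges absolutely once both $\sum_k|\hat G_k|$ and $\sum_k|\hat g_k|$ do.
\end{itemize}
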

\begin{proof}
    In order to study~\eqref{eq:tec_lemma_aut_ell}, we first look for a particular solution of the form $\chi (t) = g(q + \omega t )$. Replacing it into~\eqref{eq:tec_lemma_aut_ell}, we obtain 
\begin{equation}\label{syst:trasv_ell_z_aut}
    \partial_q g(q + \omega t)\omega + i \Lambda g(q + \omega t) = G(q + \omega t).
\end{equation}

By Lemma $27$ in~\cite{Fe04} there exists a unique solution $g$ of~\eqref{syst:trasv_ell_z_aut} and a positive constant $C(n, \sigma, \Lambda)$ depending on $n$, $\sigma$, $\gamma$, $\tau$ and $\Lambda$ such that 
\begin{equation*}
    g (q + \omega t) = \sum_{k \in \Z^n} {G^{[k]} \over i \left(\omega \cdot k +  \Lambda\right)} e^{ik \cdot \left(q + \omega t\right)}, \qquad |g|_{C^\sigma} \le C(n, \sigma, \Lambda)|G|_{C^{\sigma + n +1}}
\end{equation*}
where $G^{[k]}$ stands for the $k$-th Fourier coefficient of $G$. Thus, the solution of equation~\eqref{eq:tec_lemma_aut_ell} with initial condition $\chi(t_0) = 0$ is given by
\begin{equation*}
    \chi(t) = g(q + \omega t) - e^{-i\Lambda(t-t_0)}g(q + \omega t_0). 
\end{equation*}
This concludes the proof of this lemma. 
\end{proof}
Given $T>0$, for $(q,t,t_0) \in \T^n \times I_T \times I_T$, we consider the following system 
\begin{equation}
    \begin{cases}\label{eq:Trasv_dyn_A_1_ell_aut}
        \partial_t \xi_1 (t) = a_{12}(q+\omega t) \xi_2(t) +  a_{13}(q+\omega t) \begin{pmatrix} \xi_3(t) \\ \xi_4(t) \end{pmatrix},\\
        \partial_t \xi_2(t) = 0,\\
        \begin{pmatrix}\partial_t \xi_3 (t) \\ \partial_t \xi_4(t) \end{pmatrix} = a_{32}(q+\omega t) \xi_2(t) +  \begin{pmatrix}\Omega \xi_4(t) \\ -\Omega\xi_3(t) \end{pmatrix},
    \end{cases}
\end{equation}
where now $a_{12}:\T^n \to \mathcal{M}_n(\R)$, $a_{13} :\T^n \to \mathcal{M}_{n\times 2m}(\R)$, and  $a_{32}:\T^n \to \mathcal{M}_{2m\times n}(\R)$  are functions that do not depend explicitly on time. 

\begin{proposition}\label{prop:bound_trasv_dyn_ell_aut} 
    Given $\sigma \ge 0$, we consider the autonomous linear system~\eqref{eq:Trasv_dyn_A_1_ell_aut} satisfying
    \begin{equation}\label{eq:Trasv_dyn_A_1_ell_aut_reg}
    a_{12}, \, a_{13} \in C^0(\T^n), \qquad  a_{32} \in C^{\sigma + n + 1}(\T^n).
\end{equation}
  We assume that there exist $\gamma>0$, $\tau \ge 0$ such that, for all $k \in \Z^n$ and $1 \le j \le m$
    \begin{equation}\label{diop_ass_ell_aut}
        |\omega \cdot k + \Omega_j| \ge {\gamma \over \left(|k|+1 \right)^\tau}
    \end{equation}
    where $|k| = |k|_1$. Then, for every fixed $(q,t_0) \in \T^n \times I_T$ and any initial condition $\xi(t_0)$ there exists a unique solution $\xi(t)$ of~\eqref{eq:Trasv_dyn_A_1_ell_aut} and a positive constant $C$ depending on $|\xi(t_0)|$, $|a_{12}|_{C^0}, \, |a_{13}|_{C^0}, \, |a_{32}|_{C^{\sigma + n+1}}$, $\sigma$, $n$, $m$, $\Omega$, $\gamma$, $\tau$ and $t_0$ such that 
    \begin{equation}\label{prop:div_xi_ell_aut_A}
        |\xi_1(t)| \le C t, \quad |\xi_2(t)|, \, |\xi_3(t)|, \,|\xi_4(t)| \le C 
    \end{equation}
    for all $t \in I_T$.
\end{proposition}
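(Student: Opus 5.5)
The plan follows the proof of Proposition \ref{prop:trasv_dyn_ell_case}, replacing the crude integral estimate for the transverse block by Lemma \ref{lem:lemma_tecnico_ell_aut}. Existence and uniqueness of $\xi$ are immediate, since \eqref{eq:Trasv_dyn_A_1_ell_aut} is a linear system with continuous, bounded coefficients. The second equation gives $\xi_2(t)=\xi_2(t_0)$ for all $t$, which yields the bound for $\xi_2$ in \eqref{prop:div_xi_ell_aut_A}.

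For the transverse block I use the complex notation $\zeta=\xi_3+i\xi_4$ and set
\[
G(q)=\Pi_x a_{32}(q)\xi_2(t_0)+i\,\Pi_y a_{32}(q)\xi_2(t_0)\in\C^m .
\]
With this notation the last equation becomes, componentwise, $\partial_t\zeta_j=-i\Omega_j\zeta_j+G_j(q+\omega t)$ for $1\le j\le m$. I split each component as $\zeta_j=e^{-i\Omega_j(t-t_0)}\zeta_j(t_0)+\chi_j(t)$, where $\chi_j$ solves \eqref{eq:tec_lemma_aut_ell} with $\Lambda=\Omega_j$ and $\chi_j(t_0)=0$. The first term has modulus $|\zeta_j(t_0)|$. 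For the second term I apply Lemma \ref{lem:lemma_tecnico_ell_aut} separately to the real and imaginary parts of $G_j$, which are real functions. This is legitimate because $a_{32}\in C^{\sigma+n+1}(\T^n)$ implies $G_j\in C^{\sigma+n+1}$ with $|G_j|_{C^{\sigma+n+1}}\le C(m,n)\,|a_{32}|_{C^{\sigma+n+1}}|\xi_2(t_0)|$, and the Diophantine condition \eqref{diop_ass_ell_aut} is exactly the hypothesis of the lemma for $\Lambda=\Omega_j$. By linearity, the lemma gives $|\chi_j(t)|\le C$ uniformly in $t\in I_T$, and hence $|\xi_3(t)|,|\xi_4(t)|\le C$.

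Finally, $\xi_1$ is recovered by integration:
\[
\xi_1(t)=\xi_1(t_0)+\int_{t_0}^t \Bigl(a_{12}(q+\omega s)\xi_2(t_0)+a_{13}(q+\omega s)\begin{pmatrix}\xi_3(s)\\ \xi_4(s)\end{pmatrix}\Bigr)ds .
\]
Using $a_{12},a_{13}\in C^0$ together with the bounds already obtained for $\xi_2,\xi_3,\xi_4$, this gives
\[
|\xi_1(t)|\le |\xi_1(t_0)|+C(t-t_0)\le Ct .
\]
The last inequality uses $t\ge T>0$; the constant $C$ depends on $t_0$ and $T$, which the statement allows.

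The only nontrivial step is the bounded solution of the forced oscillator, and it is supplied by Lemma \ref{lem:lemma_tecnico_ell_aut}. The one point needing care there is that the lemma is stated for real $G$ while our $G_j$ is complex, which is why I apply it to the real and imaginary parts separately and recombine by linearity.
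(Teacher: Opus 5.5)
Your proposal is correct and follows the paper's proof step by step. You use that $\xi_2$ is constant, rewrite the transverse block as $\partial_t\zeta=-i\Omega\zeta+F(q+\omega t)$, split $\zeta$ into the free oscillation $e^{-i\Omega(t-t_0)}\zeta(t_0)$ plus the bounded particular solution vanishing at $t_0$ from Lemma \ref{lem:lemma_tecnico_ell_aut}, and recover $\xi_1$ by integration; your extra step of applying the lemma separately to the real and imaginary parts of $G_j$, to match its real-valued hypothesis, is a harmless bit of care (by $\C$-linearity) that the paper skips.
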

\begin{proof}
    For all $t \in I_T$, we have that $\xi_2(t) = \xi_2(t_0)$. Following the lines of the proof of Proposition \ref{prop:trasv_dyn_ell_case}, for all $t \in I_T$, we introduce the following complex notation
\begin{align*}
        \zeta (t) &= \xi_3(t) + i\xi_4(t),\\
         F (q + \omega t) &=  \Pi_xa_{32}(q+ \omega t) \xi_2(t_0) + i  \Pi_y a_{32}(q+ \omega t) \xi_2(t_0),
\end{align*}
and we rewrite the last line of~\eqref{eq:Trasv_dyn_A_1_ell_aut}, where now $\xi_2(t)$ is known, as
\begin{equation}\label{syst:trasv_dyn_aut_case_ell_2}
    \partial_t \zeta(t) = -i\Omega \zeta(t) + F(q + \omega t),
\end{equation}
or, in components, for every $1 \le j \le m$
\begin{equation}\label{syst:trasv_dyn_aut_case_ell_2_comp}
    \partial_t \zeta_j(t) = -i\Omega_j \zeta(t) + F_j(q + \omega t). 
\end{equation}
By Lemma \ref{lem:lemma_tecnico_ell_aut}, there exists a unique solution $\zeta_{\mathrm{part}}(t)$ of~\eqref{syst:trasv_dyn_aut_case_ell_2} satisfying
\begin{equation*}
    \zeta_{\mathrm{part}}(t_0) = 0, \qquad |\zeta_{\mathrm{part}}(t)| \le C(\sigma, n, m, \Omega, \gamma, \tau, |F|_{C^{\sigma +n +1}}),
\end{equation*}
for all $t \in I_T$, where $C(\sigma, n, \Omega, |F|_{C^{\sigma +n +1}})$ is a constant depending on $\sigma$, $n$, $m$ $\Omega$, $\gamma$, $\tau$, and $|F|_{C^{\sigma +n +1}}$. More specifically, the solution of equation~\eqref{syst:trasv_dyn_aut_case_ell_2} with initial condition $\zeta(t_0) = \xi_3(t_0) + i \xi_4(t_0)$ is given by
\begin{equation*}
    \zeta(t) = \zeta_{\mathrm{part}}(t) + e^{-i\Omega(t-t_0)}\zeta(t_0),
\end{equation*}
where $\xi_3(t) = \mathrm{Re} \zeta (t)$ and $\xi_4(t) = \mathrm{Im} \zeta(t)$. This proves~\eqref{prop:div_xi_ell_aut_A} for $\xi_3(t)$ and $\xi_4(t)$. Similarly to the proof of Proposition \ref{prop:trasv_dyn_ell_case}, one can verify~\eqref{prop:div_xi_ell_aut_A} also for $\xi_1(t)$.

\end{proof}
We point out that if the arithmetic condition~\eqref{diop_ass_ell_aut} on the frequencies $\omega$ and $\Omega$ fails, we can construct examples of autonomous linear systems as~\eqref{eq:Trasv_dyn_A_1_ell_aut} where the dynamics associated with $\xi_3(t)$ and $\xi_4(t)$ exhibit a parabolic behavior.
\begin{example}
     We consider the linear autonomous system in~\eqref{eq:Trasv_dyn_A_1_ell_aut}. For simplicity, we consider $m=1$. In this case, $\Omega \in \R_{>0}$. We assume the existence of $k^*  \in \Z^n$  such that 
    \begin{equation}\label{ex:resonance}
        \omega \cdot k^* + \Omega = 0. 
    \end{equation}
    For  fixed $(q,t_0) \in \T^n \times I_T$, we introduce the following notation
\begin{align*}
        \zeta (t) &= \xi_3(t) + i\xi_4(t),\\
        F(q + \omega(t-t_0)) &= \Pi_x a_{32}(q+ \omega t)\xi_2(t_0) + i  \Pi_y a_{32}(q+ \omega t) \xi_2(t_0).
\end{align*}
 We also assume that 
    \begin{equation}\label{ex:Fj*}
         F(q+ \omega t) = e^{i k^* \cdot (q+ \omega t)}.
    \end{equation}
     We can rewrite the last line of~\eqref{eq:Trasv_dyn_A_1_ell_aut}, where $\xi_2(t) = \xi_2(t_0) \ne 0$ is known, as
\begin{equation*}
    \partial_t \zeta(t) = -i\Omega \zeta(t) + F(q + \omega t).
\end{equation*}
 Using~\eqref{ex:resonance} and~\eqref{ex:Fj*}, a straightforward computation shows that the solution of the latter is given by  
\begin{equation*}
    \zeta (t) = e^{-i\Omega(t-t_0)}\left(\zeta(t_0) + e^{ik^* \cdot q}(t-t_0)\right)
\end{equation*}
and hence, using $\xi_3(t) = \mathrm{Re} \zeta (t)$ and $\xi_4(t) = \mathrm{Im} \zeta(t)$, we have that 
\begin{align*}
    \xi_3(t) &= \cos \left(\Omega(t-t_0)\right) \xi_3(t_0) + \sin \left(\Omega(t-t_0)\right) \xi_4(t_0) \\
    &+ (t-t_0)\left[\cos \left(\Omega(t-t_0)\right) \cos \left(k^*\cdot q\right) + \sin \left(\Omega(t-t_0)\right) \sin \left(k^*\cdot q\right)\right],\\
    \xi_4(t) &= \cos \left(\Omega(t-t_0)\right) \xi_4(t_0) - \sin \left(\Omega(t-t_0)\right) \xi_3(t_0) \\
    &+ (t-t_0)\left[\cos \left(\Omega(t-t_0)\right) \sin \left(k^*\cdot q\right) - \sin \left(\Omega(t-t_0)\right) \cos \left(k^*\cdot q\right)\right].
\end{align*}
\end{example}
\subsubsection{On the Boundedness of the Transverse Dynamics: non-autonomous case}\label{sec:booundedness_ell_non_aut}

Concerning the non-autonomous linear system~\eqref{eq:Trasv_dyn_A_2_ell}, we will show that, as in the autonomous case (see Proposition \ref{prop:bound_trasv_dyn_ell_aut}), suitable arithmetic and decay assumptions ensure that the components $\xi_3(t)$ and $\xi_4(t)$ remain bounded for all $t \in I_{T}$.
\begin{proposition}
\label{prop:bound_trasv_dyn_ell_non_aut} 
    Given $\sigma \ge 0$, we consider the non-autonomous linear system~\eqref{eq:Trasv_dyn_A_2_ell} satisfying
    \begin{equation}\label{eq:Trasv_dyn_A_1_ell_non_aut_reg}
      a_{12}, \, a_{13}, \, a_{32} \in C^0(\T^n \times I_{T}), \qquad \sup_{t \in I_{T}}|a_{12}^t|_{C^0}, \, \sup_{t \in I_{T}}|a_{13}^t|_{C^0}, \, \sup_{t \in I_{T}}|a_{32}^t|_{C^0},< \infty.
\end{equation}
     We assume that the pair $(\omega, \Omega)$ satisfy the Diophantine condition \eqref{diop_ass_ell_aut}, for some $\gamma > 0$ and $\tau \ge 0$. %
    In addition, we assume the existence of $a_{32}^\infty: \T^n \to \mathcal{M}_{2m\times n}(\R)$ such that 
    \begin{equation}\label{decay_ass_ell_non_aut}
       a_{32}^\infty \in C^{\sigma+n+1}(\T^n), \qquad  \int_{T}^{+\infty} |a_{32}^s - a_{32}^\infty|_{C^0} \,ds < \infty.
    \end{equation}
    Then, for fixed $(q,t_0) \in \T^n \times I_{T}$ and any initial condition $\xi(t_0)$ there exists a unique solution $\xi(t)$ of~\eqref{eq:Trasv_dyn_A_1_ell_aut} and a positive constant $C$ depending on $|\xi(t_0)|$, $\sup_{t \in I_{T}}|a_{12}^t|_{C^0}, \, \sup_{t \in I_{T}}|a_{13}^t|_{C^0}$, $|a_{32}^\infty|_{C^{\sigma+n+1}}$, $\int_{T}^\infty |a_{32}^s - a_{32}^\infty|_{C^0}$, $n$, $m$, $\sigma$, $\Omega$, $\gamma$, $\tau$ and $t_0$ such that 
    \begin{equation}\label{prop:div_xi_ell}
        |\xi_1(t)| \le C t, \quad |\xi_2(t)|, \, |\xi_3(t)|, \,|\xi_4(t)| \le C
    \end{equation}
    for all $t \in I_{T}$.
\end{proposition}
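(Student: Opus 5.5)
The plan is to follow the scheme of the proofs of Propositions \ref{prop:trasv_dyn_ell_case} and \ref{prop:bound_trasv_dyn_ell_aut}. We write the $(\xi_3,\xi_4)$ equation as a perturbation of the autonomous one with coefficient $a_{32}^\infty$, and treat the difference $a_{32}^t - a_{32}^\infty$ as an integrable forcing.

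The $\xi_2$ component is immediate: $\xi_2(t)=\xi_2(t_0)$ for all $t$. Existence and uniqueness of $\xi$ follow from the linear ODE theory with continuous bounded coefficients, exactly as in Proposition \ref{prop:trasv_dyn_ell_case}.

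Next we treat the transverse components. As before, set $\zeta=\xi_3+i\xi_4$ and define
\[
F^t(q+\omega t) = \Pi_x a_{32}^t(q+\omega t)\xi_2(t_0) + i\,\Pi_y a_{32}^t(q+\omega t)\xi_2(t_0).
\]
Let $F^\infty$ be the analogous expression with $a_{32}^\infty$ in place of $a_{32}^t$. Componentwise the equation reads
\[
\partial_t\zeta_j = -i\Omega_j\zeta_j + F^\infty_j(q+\omega t) + R_j(t), \qquad R_j(t) = F_j^t(q+\omega t)-F_j^\infty(q+\omega t).
\]
By linearity we split $\zeta_j = e^{-i\Omega_j(t-t_0)}\zeta_j(t_0) + \chi_j + \rho_j$, where:
\begin{itemize}
\item[(i)] $\chi_j$ solves $\partial_t\chi_j = -i\Omega_j\chi_j + F_j^\infty(q+\omega t)$ with $\chi_j(t_0)=0$;
\item[(ii)] $\rho_j$ solves $\partial_t\rho_j = -i\Omega_j\rho_j + R_j(t)$ with $\rho_j(t_0)=0$.
\end{itemize}
For $\chi_j$ we apply Lemma \ref{lem:lemma_tecnico_ell_aut} with $\Lambda=\Omega_j$ and $G=F_j^\infty\in C^{\sigma+n+1}(\T^n)$. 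The Diophantine condition \eqref{diop_ass_ell_aut} is exactly its hypothesis, so $|\chi_j(t)|\le C$ with $C$ depending on $|a_{32}^\infty|_{C^{\sigma+n+1}}$, $|\xi_2(t_0)|$, $\sigma,n,\Omega_j,\gamma,\tau$. For $\rho_j$, Duhamel's formula gives
\[
\rho_j(t)=\int_{t_0}^t e^{-i\Omega_j(t-s)}R_j(s)\,ds .
\]
Since $|e^{-i\Omega_j(t-s)}|=1$, this yields
\[
|\rho_j(t)|\le |\xi_2(t_0)|\,C(n,m)\int_T^{+\infty}|a_{32}^s-a_{32}^\infty|_{C^0}\,ds<\infty
\]
by \eqref{decay_ass_ell_non_aut}. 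Together these give the uniform bound on $\xi_3=\mathrm{Re}\,\zeta$ and $\xi_4=\mathrm{Im}\,\zeta$.

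Finally, integrating the first equation gives
\[
\xi_1(t)=\xi_1(t_0)+\int_{t_0}^t \big(a_{12}^s\xi_2(t_0)+a_{13}^s(\xi_3,\xi_4)^\top\big)\,ds .
\]
Using \eqref{eq:Trasv_dyn_A_1_ell_non_aut_reg} and the boundedness of $\xi_2,\xi_3,\xi_4$ just proved, the integrand is bounded. Hence $|\xi_1(t)|\le |\xi_1(t_0)|+C(t-t_0)\le Ct$ for $t\ge T\ge 1$, with the constant absorbing $t_0$.

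The only genuinely nontrivial point is that the forcing is non-autonomous, so Lemma \ref{lem:lemma_tecnico_ell_aut} cannot be applied to $F^t$ directly. We expect the decomposition into an autonomous quasiperiodic part, handled via small divisors, and an $L^1$-in-time remainder, handled by the unimodular propagator, to resolve this cleanly. One must only check that $F^\infty$ inherits the $C^{\sigma+n+1}$ regularity from $a_{32}^\infty$, which is immediate since $\xi_2(t_0)$ is a constant vector.
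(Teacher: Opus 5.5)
Your proposal is correct and follows essentially the same route as the paper: the forcing is split into the autonomous part $F^\infty$, which is handled by Lemma~\ref{lem:lemma_tecnico_ell_aut} with zero initial datum, and the $L^1$-in-time remainder $R^t$, which is handled by Duhamel's formula with the unimodular propagator; the $\xi_1$ equation is then integrated. The only cosmetic difference is that you write the free evolution $e^{-i\Omega_j(t-t_0)}\zeta_j(t_0)$ as a separate term, whereas the paper puts it inside $\zeta_{\mathrm{rest}}$.
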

\begin{proof}
    For all $t \in I_{T}$, we have that $\xi_2(t) = \xi_2(t_0)$. The analysis of the last equation of~\eqref{eq:Trasv_dyn_A_2_ell} is more subtle. We introduce the following complex notation  
    \begin{align*}
        \zeta(t) &= \xi_3(t) + i\xi_4(t),\\
        F^{\infty}(q + \omega t) &= \Pi_x a^\infty_{32}(q+ \omega t)\xi_2(t_0) + i \Pi_y a^\infty_{32}(q+ \omega t)  \xi_2(t_0),\\
        F^t(q + \omega t) &= \Pi_x  a^t_{32}(q+ \omega t)\xi_2(t_0) + i \Pi_y a^t_{32}(q+ \omega t) \xi_2(t_0),\\
        R^t(q + \omega t) &= F^t(q+\omega t) - F^{\infty}(q + \omega t) 
    \end{align*}
    for all $t \in I_{T}$. Moreover, by~\eqref{decay_ass_ell_non_aut}, we have that $R^t$ satisfies
    \begin{equation}\label{property:Frest_ell_non_aut}
        \int_{T}^{+\infty} |R^s|_{C^0}\, ds <\infty.
    \end{equation}
    With the above notation, we rewrite the last equation of~\eqref{eq:Trasv_dyn_A_1_ell_aut}, where $\xi_2(t)$ is known, as
\begin{equation}\label{syst:trasv_dyn_non_aut_case_ell_2}
    \partial_t \zeta(t) = -i\Omega \zeta(t) + F^{\infty}(q + \omega t) + R^t(q + \omega t). 
\end{equation}
In order to study the solutions of the latter, first, we look for a particular solution $\zeta^\infty(t)$ of the following unperturbed autonomous system
\begin{equation}\label{syst:trasv_dyn_non_aut_case_ell_2_unpert}
    \partial_t \zeta^\infty(t) = -i\Omega \zeta^\infty(t) + F^{\infty}(q + \omega t) ,
\end{equation}
or, in components, for every $1 \le j \le m$
\begin{equation*}
    \partial_t \zeta^\infty_j(t) = -i\Omega \zeta^\infty_j(t) + F^{\infty}_j(q + \omega t).
\end{equation*}
By Lemma \ref{lem:lemma_tecnico_ell_aut}, there exists a unique solution $\zeta^\infty(t)$ of~\eqref{syst:trasv_dyn_non_aut_case_ell_2_unpert} such that 
\begin{equation}\label{zeta_infty_prop_ell_non_aut}
    \zeta^\infty(t_0)=0, \qquad |\zeta^\infty(t)| \le C(\sigma, n, \Omega, |F^\infty|_{C^{\sigma+n+1}}) ,
\end{equation}
for all $t \in I_{T}$. We now look for solutions of~\eqref{syst:trasv_dyn_non_aut_case_ell_2} of the form 
\begin{equation}\label{sol:zeta*zetaInft*zetaRest_ell_non_aut}
    \zeta(t) = \zeta^\infty(t) + \zeta_{\mathrm{rest}}(t),
\end{equation}
where $\zeta^\infty(t)$ is the solution of~\eqref{syst:trasv_dyn_non_aut_case_ell_2_unpert} satisfying~\eqref{zeta_infty_prop_ell_non_aut}. Replacing~\eqref{sol:zeta*zetaInft*zetaRest_ell_non_aut} into~\eqref{syst:trasv_dyn_non_aut_case_ell_2} we obtain that 
\begin{equation*}%
    \partial_t \zeta_{\mathrm{rest}}(t) = -i\Omega \zeta_{\mathrm{rest}}(t) + R^t(q + \omega(t-t_0)). 
\end{equation*}
The solution of the above equation with initial condition $\zeta_{\mathrm{rest}}(t_0) = \zeta(t_0) = \xi_3(t_0) + i \xi_4(t_0)$ is given by
\begin{equation*}
    \zeta_{\mathrm{rest}}(t) = e^{-i\Omega(t-t_0)} \zeta(t_0) + \int_{t_0}^te^{-i\Omega(t-s)}R^s(q + \omega s) \, ds.
\end{equation*}
Furthermore, the solution $\zeta(t)$ of~\eqref{syst:trasv_dyn_non_aut_case_ell_2} with initial condition $\zeta(t_0) = \xi_3(t_0) + i \xi_4(t_0)$ is given by
\begin{equation*}
    \zeta(t) = \zeta^\infty(t) + e^{-i\Omega(t-t_0)} \zeta(t_0) + \int_{t_0}^te^{-i\Omega(t-s)}R^s(q + \omega s) \, ds,
\end{equation*}
where we recall that $\zeta^\infty(t)$ is the solution of~\eqref{syst:trasv_dyn_non_aut_case_ell_2_unpert} satisfying~\eqref{zeta_infty_prop_ell_non_aut}. Thus, using~\eqref{property:Frest_ell_non_aut},~\eqref{zeta_infty_prop_ell_non_aut}, and $\xi_3(t) = \mathrm{Re}\zeta(t)$ and $\xi_4(t) = \mathrm{Im}\zeta(t)$, we prove~\eqref{prop:div_xi_ell} for $\xi_3(t)$ and $\xi_4(t)$. Similarly to the proof of Proposition \ref{prop:trasv_dyn_ell_case}, one can verify~\eqref{prop:div_xi_ell} also for $\xi_1(t)$.
\end{proof}

\subsubsection{Asymptotic correspondence with the limiting autonomous system}\label{sec:limit_aut_ell} In this section, we consider the following autonomous system 
\begin{equation}
    \begin{cases}\label{eq:Trasv_dyn_A_2_ell_syst_infty}
        \partial_t \xi^\infty_1 (t) = a^\infty_{12}(q+\omega t) \xi^\infty_2(t) +  a^\infty_{13}(q+\omega t) \begin{pmatrix} \xi^\infty_3(t) \\ \xi^\infty_4(t) \end{pmatrix},\\
        \partial_t \xi^\infty_2(t) = 0,\\
        \begin{pmatrix}\partial_t \xi^\infty_3 (t) \\ \partial_t \xi^\infty_4(t) \end{pmatrix} = a^\infty_{32}(q+\omega t) \xi^\infty_2(t) +  \begin{pmatrix}\Omega \xi^\infty_4(t) \\ -\Omega\xi^\infty_3(t) \end{pmatrix},
    \end{cases}
\end{equation}
for all $(q,t) \in \T^n \times I_T$, where $a^\infty_{12}:\T^n \to \mathcal{M}_n(\R)$, $a^\infty_{13} :\T^n \to \mathcal{M}_{n\times 2m}(\R)$, and  $a^\infty_{32}:\T^n \to \mathcal{M}_{2m\times n}(\R)$  do not depend explicitly on time.

In the following proposition, we provide some hypotheses ensuring a one-to-one correspondence between the transverse solutions of systems~\eqref{eq:Trasv_dyn_A_2_ell} and~\eqref{eq:Trasv_dyn_A_2_ell_syst_infty} and prove that the corresponding orbits are asymptotic as time tends to infinity. We recall that the parameter $T$ is defined in~\eqref{eq:Trasv_dyn_A_2} and $\pi_{z}$ stands for the projection onto the $z$-components.  

\begin{proposition}\label{prop:ell_1:1_asym_sol}
    We consider the systems~\eqref{eq:Trasv_dyn_A_2_ell} and~\eqref{eq:Trasv_dyn_A_2_ell_syst_infty} and we assume that 
    \begin{equation}\label{hyp:limit_aut_sys_ell}
        \int_{T}^{+\infty} |a_{32}^s - a_{32}^\infty|_{C^0} \, ds < \infty.
    \end{equation}
  Then, for every fixed $q\in\T^n$, $t_0\in I_{T}$ and $\xi_2^0 \in \R^n$ there exists a one-to-one correspondence between the sets $$\{ \pi_z\xi^\infty \mid \xi^\infty \text{ is a solution  of ~\eqref{eq:Trasv_dyn_A_2_ell_syst_infty}} \text{ with } \xi_2^\infty(t_0) = \xi_2^0\},$$ $$\{ \pi_z\xi \mid \xi \text{ is a solution  of ~\eqref{eq:Trasv_dyn_A_2_ell}} \text{ with } \xi_2(t_0) = \xi_2^0\},$$ by the relation 
       \begin{equation*}
        \lim_{t \to +\infty}|\pi_z\xi^\infty(t)-\pi_z\xi(t)|=0.
    \end{equation*}
\end{proposition}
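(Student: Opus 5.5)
The plan is to work, as in the proofs of Propositions \ref{prop:trasv_dyn_ell_case} and \ref{prop:bound_trasv_dyn_ell_non_aut}, with the complex variable $\zeta = \xi_3 + i\xi_4$ and its limiting counterpart $\zeta^\infty = \xi^\infty_3 + i\xi^\infty_4$. Since $\xi_2(t) = \xi_2(t_0) = \xi_2^0 = \xi_2^\infty(t)$ for all $t$, both transverse equations are forced linear equations with the \emph{same} known forcing parameter $\xi_2^0$. Set
\[
F^t = \Pi_x a^t_{32}\xi_2^0 + i\Pi_y a^t_{32}\xi_2^0, \qquad F^\infty = \Pi_x a^\infty_{32}\xi_2^0 + i\Pi_y a^\infty_{32}\xi_2^0, \qquad R^t = F^t - F^\infty .
\]
Then
\[
\partial_t\zeta = -i\Omega\zeta + F^\infty(q+\omega t) + R^t(q+\omega t), \qquad \partial_t\zeta^\infty = -i\Omega\zeta^\infty + F^\infty(q+\omega t),
\]
and by \eqref{hyp:limit_aut_sys_ell} we have $\int_T^{+\infty}|R^s|_{C^0}\,ds \le |\xi_2^0|\int_T^{+\infty}|a^s_{32}-a^\infty_{32}|_{C^0}\,ds<\infty$. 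Note that $\pi_z\xi$ is determined by $\zeta$ alone, with $\xi_1$ playing no role. So the sets in question are parametrized by the initial values $\zeta(t_0)\in\C^m$ and $\zeta^\infty(t_0)\in\C^m$ respectively.

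Next I would compare the two solutions through the difference $\delta = \zeta - \zeta^\infty$. It satisfies $\partial_t\delta = -i\Omega\delta + R^t(q+\omega t)$, so
\[
\delta(t) = e^{-i\Omega(t-t_0)}\Big(\delta(t_0) + \int_{t_0}^t e^{i\Omega(s-t_0)}R^s(q+\omega s)\,ds\Big).
\]
Since $e^{-i\Omega(t-t_0)}$ is unitary, $|\delta(t)|\to 0$ holds if and only if
\[
\delta(t_0) = -\int_{t_0}^{+\infty} e^{i\Omega(s-t_0)}R^s(q+\omega s)\,ds =: -I,
\]
and the integral $I$ converges absolutely by the integrability of $|R^s|_{C^0}$. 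Indeed, with this choice $|\delta(t)| \le \int_t^{+\infty}|R^s|_{C^0}\,ds \to 0$. Conversely, for any other choice $|\delta(t)|$ tends to $|\delta(t_0)+I|\neq 0$.

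The correspondence is therefore $\zeta(t_0) = \zeta^\infty(t_0) - I$. This is an affine bijection of $\C^m$, that is, a translation by a vector $I$ depending only on $(q,t_0,\xi_2^0)$. Combined with uniqueness of solutions of the linear ODEs for given initial data, this gives a bijection between the two sets of transverse components, and each matched pair satisfies $|\pi_z\xi(t)-\pi_z\xi^\infty(t)| = |\delta(t)| \to 0$. Injectivity of the relation in the stronger sense, namely that each $\pi_z\xi$ is asymptotic to exactly one $\pi_z\xi^\infty$, follows from the same computation: two distinct $\zeta^\infty$ solutions differ by $e^{-i\Omega(t-t_0)}c$ with $c\ne0$, whose modulus is constant, so they cannot both be asymptotic to the same $\zeta$.

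No step is a real obstacle; the only point needing care is this last observation. Unlike in Proposition \ref{prop:bound_trasv_dyn_ell_non_aut}, no Diophantine condition is needed, because the resonant forcing $F^\infty$ cancels exactly in the difference.
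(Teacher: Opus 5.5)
Your proposal is correct and follows essentially the same route as the paper. Both use the complex variable $\zeta=\xi_3+i\xi_4$ and the variation-of-constants formula for the difference $\zeta-\zeta^\infty$, and both characterize asymptotic pairs by $\zeta(t_0)=\zeta^\infty(t_0)-\int_{t_0}^{+\infty}e^{i\Omega(s-t_0)}R^s(q+\omega s)\,ds$. Your extra remarks, that $|\delta(t)|\to|\delta(t_0)+I|$ and that the bijection is a translation of $\C^m$, spell out what the paper leaves implicit.
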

\begin{proof}
    For all $t \in \times I_{T}$, we introduce the following complex notation
    \begin{align*}
        \zeta(t) &= \xi_3(t) + i \xi_4(t), \qquad \zeta^\infty(t) = \xi^\infty_3(t) + i \xi^\infty_4(t),\\
        F^{\infty}(q + \omega t) &= \Pi_x  a^\infty_{32}(q+ \omega t) \xi_2(t_0) + i \Pi_y a^\infty_{32}(q+ \omega t)\xi_2(t_0),\\
        F^t(q + \omega t) &= \Pi_x  a^t_{32}(q+ \omega t) \xi_2(t_0) + i \Pi_y a^t_{32}(q+ \omega t)\xi_2(t_0),\\
        R^t(q + \omega t) &=  F^t(q + \omega t) - F^{\infty}(q + \omega t),\\
    \end{align*}
    where~\eqref{hyp:limit_aut_sys_ell} implies
    \begin{equation}\label{hyp:limit_aut_sys_ell_F_rest}
        \int_{T}^{+\infty} |R^s|_{C^0}\, ds <\infty.
    \end{equation}
    Remembering that we are assuming $\xi_2(t_0) = \xi^\infty_2(t_0)$, we rewrite the last equation of~\eqref{eq:Trasv_dyn_A_2_ell} and~\eqref{eq:Trasv_dyn_A_2_ell_syst_infty} as
    \begin{align*}
        \partial_t \zeta(t) &= -i\Omega \zeta(t) + F^{\infty}(q + \omega t) + R^t(q + \omega t),\\
        \partial_t \zeta^\infty(t) &= -i\Omega \zeta^\infty(t) + F^{\infty}(q + \omega t).
    \end{align*}
    A straightforward computation shows that the difference of the formal solutions of the latter is equal to 
    \begin{align*}
        \zeta(t) - \zeta^\infty(t) &= e^{-i\Omega(t-t_0)}\left(\zeta(t_0) - \zeta^\infty(t_0) + \int_{t_0}^{+\infty}e^{-i\Omega(t_0-s)} R^s(q + \omega s)\, ds \right) \\
        &- \int_t^{+\infty} e^{-i\Omega(t-s)} R^s(q + \omega s)\, ds
    \end{align*}
By~\eqref{hyp:limit_aut_sys_ell_F_rest}, we have that $\int_{t_0}^{+\infty}e^{-i\Omega(t_0-s)} R^s(q + \omega s)\, ds$ is well defined and 
\begin{align*}
    &\lim_{t \to +\infty} \left|\int_t^{+\infty} e^{-i\Omega(t-s)} R^s(q + \omega s)\, ds\right| \le C(m, \Omega)\lim_{t \to +\infty}\int_t^{+\infty} | R^s|_{C^0}\, ds = 0.
\end{align*}
Therefore, 
\begin{equation*}
    \lim_{t \to +\infty}|\zeta(t) - \zeta^\infty(t)|=0 \hspace{5mm} \mbox{if and only if} \hspace{5mm} \zeta(t_0) = \zeta^\infty(t_0) - \int_{t_0}^{+\infty}e^{-i\Omega(t_0-s)} R^s(q + \omega s)\, ds. 
\end{equation*}
Remembering that $\pi_z \xi(t) = (\mathrm{Re}\zeta(t), \mathrm{Im}\zeta(t))$ and $\pi_z \xi^\infty(t) = (\mathrm{Re}\zeta^\infty(t), \mathrm{Im}\zeta^\infty(t))$, we conclude the proof of this proposition. 
\end{proof}

\subsection{Hyperbolic case: $M=\Mhyp$.}\label{sec:A_hyp}
In this case, the system~\eqref{eq:Trasv_dyn_A_2} can be written as
\begin{equation}
    \begin{cases}\label{eq:Trasv_dyn_A_2_hyp}
        \partial_t \xi_1 (t) = a^t_{12}(q+\omega  t) \xi_2(t) +  a^{t}_{13}(q+\omega t ) \begin{pmatrix} \xi_3(t) \\ \xi_4(t) \end{pmatrix},\\
        \partial_t \xi_2(t) = 0,\\
        \begin{pmatrix}\partial_t \xi_3 (t) \\ \partial_t \xi_4(t) \end{pmatrix} = a^{t}_{32}(q+\omega t) \xi_2(t) -  \begin{pmatrix}\Omega \xi_4(t) \\ \Omega\xi_3(t) \end{pmatrix},
    \end{cases}
\end{equation}
for all $(q,t) \in \T^n \times I_T$, where we recall that $\Omega = \mathrm{diag}(\Omega_1,\dots, \Omega_m)$, for $\Omega_1,\dots,\Omega_m \in \R_{>0}$ and the parameter $T$ is defined in~\eqref{eq:Trasv_dyn_A_2}.

This section is divided into two parts. First, in the following proposition, we provide growth estimates for the solutions of the system~\eqref{eq:Trasv_dyn_A_2_hyp}. Finally, in Proposition \ref{prop:hyp_1:1_asym_sol}, we assume that $a_{32}^t$ converge in time to an autonomous matrix $a^\infty_{32}$. We analyze under which hypotheses there is an asymptotic correspondence between the transversal solutions of~\eqref{eq:Trasv_dyn_A_2_hyp} and those of the limiting system.
\begin{proposition}\label{prop:trasv_dyn_hyp_case} 
    We assume that the non-autonomous linear system~\eqref{eq:Trasv_dyn_A_2_hyp} satisfies
    \begin{equation}\label{def:cond1_ellhyppar_asymKAMtorus_components}
        a_{12}, \, a_{13}, \, a_{32} \in C^0(\T^n \times I_{T}), \qquad \sup_{t \in I_{T}}|a_{12}^t|_{C^0}, \, \sup_{t \in I_{T}}|a_{13}^t|_{C^0}, \, \sup_{t \in I_{T}}|a_{32}^t|_{C^0},< \infty.
    \end{equation}
    Then, for every fixed $(q,t_0) \in \T^n \times I_{T}$ and any initial condition $\xi(t_0) = (\xi_1(t_0),\xi_2(t_0),\xi_3(t_0),\xi_4(t_0))$ there exists a unique solution $\xi(t) = (\xi_1(t),\xi_2(t),\xi_3(t),\xi_4(t))$ and a positive constant $C$ depending on $|\xi(t_0)|$, $\sup_{t \in I_{T}}|a_{12}^t|_{C^0}, \, \sup_{t \in I_{T}}|a_{13}^t|_{C^0}, \, \sup_{t \in I_{T}}|a_{32}^t|_{C^0}$, $n$, $m$, $\Omega$ and $t_0$ such that 
    \begin{equation}\label{prop:div_xi_hyp}
        |\xi_1(t)| \le C e^{(\max_{1 \le j \le m}\Omega_j)t}, \quad |\xi_2(t)| \le C,  \quad |\xi_3(t)|, \,|\xi_4(t)| \le C e^{(\max_{1 \le j \le m}\Omega_j)t},
    \end{equation}
    for all $t \in I_{T}$.
\end{proposition}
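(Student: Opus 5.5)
We follow the scheme of the proof of Proposition \ref{prop:trasv_dyn_ell_case}, solving the triangular system~\eqref{eq:Trasv_dyn_A_2_hyp} from the bottom up. Existence and uniqueness of the solution for the given initial condition is standard for linear systems with continuous, bounded coefficients, so the content lies in the growth estimates. Throughout we write $\Omega_{\max} = \max_{1\le j\le m}\Omega_j$.

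First, $\xi_2(t) = \xi_2(t_0)$ for all $t\in I_T$, which gives the bound on $\xi_2$. Next we decouple the transverse block. The matrix $-\begin{pmatrix} 0 & \Omega\\ \Omega & 0\end{pmatrix}$ is diagonalized by the change of variables $u = \xi_3+\xi_4$, $w = \xi_3-\xi_4$ (componentwise in $j$). With $F^t = a_{32}^t(q+\omega t)\xi_2(t_0)$, this yields
\[
\partial_t u = -\Omega u + (\Pi_x+\Pi_y)F^t, \qquad \partial_t w = \Omega w + (\Pi_x-\Pi_y)F^t .
\]
By variation of constants, for each $1 \le j \le m$,
\[
u_j(t) = e^{-\Omega_j(t-t_0)}u_j(t_0) + \int_{t_0}^t e^{-\Omega_j(t-s)}(\cdot)\,ds, \qquad w_j(t) = e^{\Omega_j(t-t_0)}w_j(t_0) + \int_{t_0}^t e^{\Omega_j(t-s)}(\cdot)\,ds .
\]
Using $\sup_t|a_{32}^t|_{C^0}<\infty$, the integrals are bounded by $C\Omega_j^{-1}$ and $C\Omega_j^{-1}e^{\Omega_j(t-t_0)}$ respectively. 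Hence $|u(t)|\le C$ and $|w(t)|\le Ce^{\Omega_{\max} t}$, where the factor $e^{-\Omega_j t_0}$ is absorbed into $C$, which is allowed to depend on $t_0$. Recovering $\xi_3 = (u+w)/2$ and $\xi_4 = (u-w)/2$ gives the stated bound for $\xi_3$ and $\xi_4$.

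Finally, integrating the first equation gives
\[
\xi_1(t) = \xi_1(t_0) + \int_{t_0}^t \left( a_{12}^s\,\xi_2(t_0) + a_{13}^s\,(\xi_3,\xi_4)^\top \right) ds .
\]
The $a_{12}$ term contributes $C(t-t_0)$. The $a_{13}$ term contributes $\int_{t_0}^t Ce^{\Omega_{\max}s}\,ds\le C\Omega_{\max}^{-1}e^{\Omega_{\max}t}$. Since $t \le C'e^{\Omega_{\max}t}$ for $t \ge T \ge 1$ and $\Omega_{\max}>0$, the linear term is absorbed, which yields $|\xi_1(t)|\le Ce^{\Omega_{\max}t}$.

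None of these steps is a genuine obstacle. The only point requiring some care is the bookkeeping of constants: they depend on $t_0$ through factors $e^{\pm\Omega_j t_0}$, and on $\Omega$ through $\Omega_j^{-1}$. Both dependencies are permitted by the statement. The case $m=0$ is trivial.
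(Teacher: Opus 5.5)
Your proof is correct and takes essentially the same route as the paper, which also sets $\zeta_\pm = \xi_3 \pm \xi_4$ to split the transverse block into a contracting and an expanding scalar equation, solves both by variation of constants, and then integrates the first equation to bound $\xi_1$. Your constant bookkeeping (for instance, absorbing the linear $a_{12}$ contribution into $e^{\Omega_{\max}t}$) is more explicit than the paper's; the one detail left implicit, in your argument and in the paper's, is the compact interval $t\in[T,t_0)$, where boundedness by a $t_0$-dependent constant is immediate.
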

\begin{proof}
    We have $\xi_2(t) = \xi_2(t_0)$ for all $t \in I_{T}$. Hence,~\eqref{prop:div_xi_hyp} is satisfied for $\xi_2(t)$.  In order to analyze the last equation of~\eqref{eq:Trasv_dyn_A_2_hyp}, we introduce the following notation
    \begin{align*}
        \zeta_\pm(t) &= \xi_3(t) \pm \xi_4(t),\\
        F_\pm^t(q +\omega t) &= \Pi_x a_{32}(q+ \omega t)\xi_2(t_0) \pm  \Pi_y a_{32}(q+ \omega t)\xi_2(t_0),
    \end{align*}
    where we refer to~\eqref{not:proj_Pixy} for the definition of $\Pi_x$ and $\Pi_y$. 
    We rewrite the last equation of~\eqref{eq:Trasv_dyn_A_2_hyp} as
    \begin{equation*}
        \partial_t \zeta_+(t) = -\Omega \zeta_{+}(t) + F^t_+(q + \omega t), \qquad \partial_t \zeta_-(t) = \Omega \zeta_{-}(t) + F^t_-(q + \omega t)
    \end{equation*}
    or in components, for all $1 \le j \le m$
    \begin{equation*}
        \partial_t \zeta_{+,j}(t) = -\Omega_j \zeta_{+,j}(t) + F^t_{+,j}(q + \omega t), \qquad \partial_t \zeta_{+,j}(t) = \Omega_j \zeta_{+,j}(t) + F^t_{+,j}(q + \omega t).
    \end{equation*}
    For every initial condition $\zeta_\pm(t_0) = \xi_3(t_0) \pm \xi_4(t_0)$ there exists a unique solution of the latter given by
    \begin{equation*}
        \zeta_{\pm,j}(t) = e^{\mp\Omega_j(t-t_0)} \zeta_{\pm,j}(t_0) + \int_{t_0}^t e^{\mp \Omega(t-s)} F^s_{\pm,j}(q +\omega s) ds. 
    \end{equation*}
    for all $1 \le j \le m$. By~\eqref{eq:Trasv_dyn_A_2_hyp}, we have that $\sup_{t \in I_{T}}|F^t_\pm|_{C^0} < \infty$. Since $\xi_3(t) = {\zeta_+(t)  + \zeta_-(t) \over 2}$ and $\xi_4(t) = {\zeta_+(t)  - \zeta_-(t)\over 2}$, trivial estimates shows that  $\xi_3(t)$ and $\xi_4(t)$ satisfy~\eqref{prop:div_xi_hyp}. Similarly to the final part of the proof of Proposition~\ref{prop:trasv_dyn_ell_case}, the above estimates imply that \eqref{prop:div_xi_hyp} also holds for $\xi_1(t)$. 
\end{proof}
\begin{remark}
    We recall that in Theorem \ref{Thm:hyp_Csigma} the unperturbed Hamiltonian $H_0$ in~\eqref{eq:initial_hamiltonian_hyp} has an invariant torus in $p=z=0$ supporting quasiperiodic solutions. If we analyze transverse trajectories in a neighborhood of the invariant torus corresponding to $p=0$, that correspond to considering initial conditions $\xi(t_0)$ such that $\xi_2(t_0) =0$, we obtain the classical transversal hyperbolic orbits
     \begin{equation*}
    \begin{aligned}
        \xi_3(t) &= \cosh \left(\Omega (t-t_0)\right)  \xi_3(t_0) -  \sinh \left(\Omega (t - t_0)\right)\xi_4(t_0),\\
        \xi_4(t) &= \cosh \left(\Omega (t-t_0)\right) \xi_4(t_0) -  \sinh \left(\Omega (t - t_0)\right)\xi_3(t_0),
    \end{aligned}
    \end{equation*}
    where $\cosh \big(\Omega (t-t_0)\big) = \mathrm{diag}\Big(\cosh \big(\Omega_1 (t-t_0)\big), \dots, \cosh \big(\Omega_m (t-t_0)\big)\Big)$ and $\sinh \big(\Omega (t-t_0)\big) = \mathrm{diag}\Big(\sinh \big(\Omega_1 (t-t_0)\big), \dots, \sinh \big(\Omega_m (t-t_0)\big)\Big)$.  
\end{remark}

In the second part of this section, we consider the following autonomous system for all $(q,t) \in \T^n \times I_T$
\begin{equation}
    \begin{cases}\label{eq:Trasv_dyn_A_2_hyp_syst_infty}
        \partial_t \xi^\infty_1 (t) = a^\infty_{12}(q+\omega t) \xi^\infty_2(t) +  a^\infty_{13}(q+\omega t) \begin{pmatrix} \xi^\infty_3(t) \\ \xi^\infty_4(t) \end{pmatrix},\\
        \partial_t \xi^\infty_2(t) = 0,\\
        \begin{pmatrix}\partial_t \xi^\infty_3 (t) \\ \partial_t \xi^\infty_4(t) \end{pmatrix} = a^\infty_{32}(q+\omega t) \xi^\infty_2(t) -  \begin{pmatrix}\Omega \xi^\infty_4(t) \\ \Omega\xi^\infty_3(t) \end{pmatrix},
    \end{cases}
\end{equation}
 with $a^\infty_{12}:\T^n \to \mathcal{M}_n(\R)$, $a^\infty_{13} :\T^n \to \mathcal{M}_{n\times 2m}(\R)$, and  $a^\infty_{32}:\T^n \to \mathcal{M}_{2m\times n}(\R)$.

The following proposition is the counterpart of Proposition \ref{prop:ell_1:1_asym_sol} in the hyperbolic setting. Unlike the elliptic case, the correspondence between solutions is not one-to-one.
\begin{proposition}\label{prop:hyp_1:1_asym_sol}
     We consider the systems~\eqref{eq:Trasv_dyn_A_2_hyp} and~\eqref{eq:Trasv_dyn_A_2_hyp_syst_infty} and we assume that 
    \begin{equation}\label{hyp:limit_aut_sys_hyp}
        \lim_{t \to +\infty} e^{-\Omega_jt} \int_T^t e^{\Omega_j s}|a_{32}^s - a_{32}^\infty|_{C^0}\,ds = 0,
    \end{equation}
    for all $1 \le j \le m$. 
   Then, for every fixed $q\in\T^n$, $t_0\in I_{T}$ and $\xi_2^0 \in \R^n$ there exists a correspondence between the sets $$\{ \pi_z\xi^\infty \mid \xi^\infty \text{ is a solution  of ~\eqref{eq:Trasv_dyn_A_2_hyp_syst_infty}} \text{ with } \xi_2^\infty(t_0) = \xi_2^0\},$$ $$\{ \pi_z\xi \mid \xi \text{ is a solution  of ~\eqref{eq:Trasv_dyn_A_2_hyp}} \text{ with } \xi_2(t_0) = \xi_2^0\},$$ by the relation 
       \begin{equation*}
        \lim_{t \to +\infty}|\pi_z\xi^\infty(t)-\pi_z\xi(t)|=0.
    \end{equation*}
\end{proposition}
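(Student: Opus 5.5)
We follow the proof of Proposition \ref{prop:ell_1:1_asym_sol}, replacing the complex variable by the hyperbolic coordinates used in Proposition \ref{prop:trasv_dyn_hyp_case}. Since $\xi_2(t)=\xi_2^\infty(t)=\xi_2^0$ for all $t$, only the last equations of \eqref{eq:Trasv_dyn_A_2_hyp} and \eqref{eq:Trasv_dyn_A_2_hyp_syst_infty} matter. We set $\zeta_\pm=\xi_3\pm\xi_4$ and $\zeta^\infty_\pm=\xi^\infty_3\pm\xi^\infty_4$. We also define $F^t_\pm$, $F^\infty_\pm$ as in Proposition \ref{prop:trasv_dyn_hyp_case}, with $a^t_{32}$ and $a^\infty_{32}$ respectively, and put $R^t_\pm=F^t_\pm-F^\infty_\pm$. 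Then \eqref{hyp:limit_aut_sys_hyp} gives, componentwise, $e^{-\Omega_j t}\int_T^t e^{\Omega_j s}|R^s_{\pm,j}|_{C^0}\,ds\to 0$. The differences $\delta_\pm=\zeta_\pm-\zeta^\infty_\pm$ satisfy the decoupled scalar equations
\[
\partial_t\delta_{+,j}=-\Omega_j\delta_{+,j}+R^t_{+,j}(q+\omega t),\qquad \partial_t\delta_{-,j}=\Omega_j\delta_{-,j}+R^t_{-,j}(q+\omega t).
\]

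For the stable component we use the variation of constants formula:
\[
\delta_{+,j}(t)=e^{-\Omega_j(t-t_0)}\delta_{+,j}(t_0)+e^{-\Omega_j t}\int_{t_0}^t e^{\Omega_j s}R^s_{+,j}(q+\omega s)\,ds.
\]
The first term tends to zero because $\Omega_j>0$. The second tends to zero by hypothesis \eqref{hyp:limit_aut_sys_hyp}. So $\delta_{+,j}(t)\to 0$ for \emph{any} choice of the initial values $\zeta_{+,j}(t_0)$ and $\zeta^\infty_{+,j}(t_0)$. This is the source of non-uniqueness: the stable coordinates impose no matching condition.

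For the unstable component we first show that $I_j=\int_{t_0}^{+\infty}e^{-\Omega_j(s-t_0)}R^s_{-,j}(q+\omega s)\,ds$ converges. This is the step needing care. The hypothesis forces $|R^s|$ to be small in an averaged sense, which I expect gives integrability of $e^{-\Omega_j s}|R^s|$. Concretely, set $G(t)=\int_T^t e^{\Omega_j s}|R^s|\,ds$, so that $G(t)=o(e^{\Omega_j t})$. Integrating $\int e^{-2\Omega_j s}\,dG(s)$ by parts gives $\int_T^{t}e^{-\Omega_j s}|R^s|\,ds=e^{-2\Omega_j t}G(t)+2\Omega_j\int_T^t e^{-2\Omega_j s}G(s)\,ds$. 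This is bounded because $e^{-2\Omega_j s}G(s)=o(e^{-\Omega_j s})$ is integrable.

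We then write
\[
\delta_{-,j}(t)=e^{\Omega_j(t-t_0)}\Bigl(\delta_{-,j}(t_0)+I_j\Bigr)-e^{\Omega_j t}\int_t^{+\infty}e^{-\Omega_j s}R^s_{-,j}(q+\omega s)\,ds.
\]
The last term tends to zero: the same integration by parts on $[t,\infty)$ bounds it by $e^{\Omega_j t}\bigl(e^{-2\Omega_j t}|G(t)|+2\Omega_j\int_t^\infty e^{-2\Omega_j s}|G(s)|\,ds\bigr)=o(1)$, using $G(s)=o(e^{\Omega_j s})$. Hence $\delta_{-,j}(t)\to0$ if and only if $\zeta_{-,j}(t_0)=\zeta^\infty_{-,j}(t_0)-I_j$.

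The correspondence is therefore defined by the following rule. Given $\xi^\infty$ (resp. $\xi$) with $\xi_2^\infty(t_0)=\xi_2^0$, take any solution with the same $\xi_2(t_0)$ whose unstable data are $\zeta_-(t_0)=\zeta^\infty_-(t_0)-I$ (resp. $\zeta^\infty_-(t_0)=\zeta_-(t_0)+I$), with $\zeta_+(t_0)$ arbitrary. Such a solution exists, so every element of each set is related to at least one element of the other. Returning to $\pi_z\xi=\bigl((\zeta_++\zeta_-)/2,(\zeta_+-\zeta_-)/2\bigr)$ yields $|\pi_z\xi(t)-\pi_z\xi^\infty(t)|\to0$. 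The relation is not one-to-one, because of the freedom in the stable coordinates.
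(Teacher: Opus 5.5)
Your proposal is correct and follows the paper's proof essentially step for step. Both use the splitting into stable and unstable coordinates $\zeta_\pm=\xi_3\pm\xi_4$, show that the $\zeta_+$-differences tend to zero for any initial data, and impose the matching condition $\zeta_{-,j}(t_0)=\zeta^\infty_{-,j}(t_0)-I_j$ on the unstable ones. Your integration-by-parts argument with $G(t)=\int_T^t e^{\Omega_j s}|R^s|_{C^0}\,ds=o(e^{\Omega_j t})$ supplies the details that the paper only asserts (``one can prove''): the convergence of $\int_{t_0}^{+\infty}e^{-\Omega_j s}R^s_{-,j}\,ds$ and the vanishing of the tail term.
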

\begin{proof}
For all $t \in  I_T$, we introduce the following notation    
\begin{align*}
        \zeta_\pm(t) &= \xi_3(t) \pm  \xi_4(t), \qquad \zeta_\pm^\infty(t) = \xi^\infty_3(t) \pm  \xi^\infty_4(t),\\
        F^{\infty}_\pm(q + \omega t) &= \Pi_x a^\infty_{32}(q+ \omega t) \xi_2(t_0) \pm  \Pi_y a^\infty_{32}(q+ \omega t)\xi_2(t_0),\\
        F^{t}_\pm(q + \omega t) &= \Pi_x a^t_{32}(q+ \omega t) \xi_2(t_0) \pm  \Pi_y a^t_{32}(q+ \omega t)\xi_2(t_0),\\
        R^t_\pm(q + \omega t) &= F^{t}_\pm(q + \omega t) -F^{\infty}_\pm(q + \omega t).
    \end{align*}
    In particular,~\eqref{hyp:limit_aut_sys_hyp} implies that 
    \begin{equation}\label{proof_hyp:limit_aut_sys_hyp}
        \lim_{t \to +\infty} e^{-\Omega_jt} \int_T^t e^{\Omega_j s}|R^s_{ \pm}|_{C^0}\,ds = 0. 
    \end{equation}
    With the above notation, and using that $\xi_2(t_0) = \xi_2^\infty(t_0)$, we can rewrite the last equation of systems~\eqref{eq:Trasv_dyn_A_2_hyp} and~\eqref{eq:Trasv_dyn_A_2_hyp_syst_infty} as
    \begin{align*}
        \partial_t \zeta_\pm(t) &=  \mp \Omega \zeta_\pm(t) + F_\pm^\infty(q +\omega t) + R^t_{\pm}(q + \omega t),\\
        \partial_t \zeta^\infty_\pm(t) &=  \mp \Omega \zeta^\infty_\pm(t) + F_\pm^\infty(q +\omega t). 
    \end{align*}
    The difference between the corresponding formal solutions of the above equations can be expressed componentwise as
    \begin{equation}\label{proof:diff_sol_hyp_asym}
        \zeta_{\pm,j} (t) - \zeta^\infty_{\pm,j}(t) = e^{\mp \Omega_j t} \left[e^{\pm \Omega_j t_0}\left(\zeta_{\pm, j}(t_0) - \zeta^\infty_{\pm, j}(t_0)\right) + \int_{t_0}^t e^{\pm\Omega_j s} R^s_{\pm, j}(q +\omega s) \, ds \right]
    \end{equation}
    for all $1 \le j \le m$. We now consider the two cases separately. First, we observe that, for all $1 \le j \le m$
    \begin{equation*}
        |\zeta_{+,j} (t) - \zeta^\infty_{+,j}(t)| \le e^{-\Omega_j (t-t_0)} |\zeta_{+,j} (t_0) - \zeta^\infty_{+,j}(t_0)| + e^{- \Omega_j t}\int_{t_0}^t e^{\Omega_j s} |R^s_{+}|_{C^0} \, ds
    \end{equation*}
    and hence, by~\eqref{proof_hyp:limit_aut_sys_hyp}, 
    \begin{equation}\label{proof_hyp_asy:conclu_1}
        \lim_{t \to +\infty}|\zeta_{+,j} (t) - \zeta^\infty_{+,j}(t)| = 0
    \end{equation}
    for each choice of the initial condition $\zeta_{+,j} (t_0)$ and $\zeta^\infty_{+,j}(t_0)$. It remains to consider the other case. We rewrite the difference~\eqref{proof:diff_sol_hyp_asym} as
    \begin{align*}
        \zeta_{-,j} (t) - \zeta^\infty_{-,j}(t) &= e^{\Omega_j t} \left[e^{- \Omega_j t_0}\left(\zeta_{-, j}(t_0) - \zeta^\infty_{-, j}(t_0)\right) + \int_{t_0}^{+\infty} e^{-\Omega_j s} R^s_{-, j}(q +\omega s) \, ds \right]\\
        &-e^{\Omega_j t}\int_{t}^{+\infty} e^{-\Omega_j s} R^s_{-, j}(q +\omega s) \, ds,
    \end{align*}
    for all $1 \le j \le m$. We point out that, by~\eqref{proof_hyp:limit_aut_sys_hyp}, one can prove that, for all $1 \le j \le m$
    \begin{align*}
        &\int_{t_0}^{+\infty} e^{-\Omega_j s} R^s_{-, j}(q + \omega s) \, ds \qquad \mbox{is well defined},\\
        &\lim_{t\to+\infty}\left|e^{\Omega_j t}\int_{t}^{+\infty} e^{-\Omega_j s} R^s_{-, j}(q +\omega s) \, ds\right| \le C e^{\Omega_j t}\int_{t}^{+\infty} e^{-\Omega_j s} \left|R^s_{-}\right| \, ds = 0.
    \end{align*}
    This implies that 
    \begin{equation}
    \begin{aligned}\label{proof_hyp_asy:conclu_2}
    &\lim_{t \to +\infty}|\zeta_{-,j}(t) - \zeta^\infty_{-,j}(t)|=0 \hspace{3mm} \mbox{if and only if} \\
    &  e^{- \Omega_j t_0}\left(\zeta_{-, j}(t_0) - \zeta^\infty_{-, j}(t_0)\right) + \int_{t_0}^{+\infty} e^{-\Omega_j s} R^s_{-, j}(q +\omega s) \, ds=0.
    \end{aligned}
\end{equation}
Noticing that $\xi_3(t)={\zeta_+(t) + \zeta_-(t) \over 2}$, $\xi_4(t)={\zeta_+(t) - \zeta_-(t) \over 2}$ and using~\eqref{proof_hyp_asy:conclu_1} and~\eqref{proof_hyp_asy:conclu_2}, we conclude the proof of this proposition. 
\end{proof}
\begin{remark}\label{rmk:cond_corr_hyp}
    We note that a sufficient condition for hypothesis~\eqref{hyp:limit_aut_sys_hyp} to hold is the existence of a continuous function $f:I_T \to \R_{\ge 0}$ such that 
    \begin{equation*}
    \lim_{t \to +\infty} f(t) = 0,\quad\mbox{and} \quad |a_{32}^t - a_{32}^\infty|_{C^0} \le f(t) \quad \mbox{for all $t \in I_T$}.
    \end{equation*}
\end{remark}

\section{Criteria for transverse dynamics}\label{sc:criteria_asym_dyn}

 In this section we provide sufficient conditions for a $C^\sigma$-asymptotic KAM torus (see Definition \ref{def:Csigma_KAM_torus}) to be asymptotically elliptic/hyperbolic, in the sense of Definition \ref{def:ell_hyp_par_trasv_dyn_asym_KAM}. To simplify the notation, in the following, given $\omega 
\in \R^n$ we will denote by $\Lo$ the directional derivative

\begin{equation}
    \label{def:LoO}
 \Lo  = \partial_t + \omega \cdot \partial_q = \partial_t + \sum_{i = 1}^n \omega_i\partial_{q_i}.
\end{equation}

We start by pointing out that %
\eqref{eq:conjugated_cocycles} in Definition \ref{def:ell_hyp_par_trasv_dyn_asym_KAM} is equivalent to
\begin{equation}
    \label{eq:conjugated_cocycles_infinitesimal}
    DX \circ \varphi^t (q)  S(q, t)=  S(q, t)A(q, t)+ \Lo S(q, t).
\end{equation}
Indeed, denoting by $\tilde \Phi^t_{t_0}(q)$ the RHS of Equation \eqref{eq:conjugated_cocycles}, namely, $$\tilde \Phi^t_{t_0}(q) = S(q + \omega t, t)\Phi_A(t; q, t_0)S(q + \omega t_0, t_0)^{-1},$$ a direct calculation shows that its derivative with respect to $t$ is given by
\begin{align*}
     \frac{d}{dt} \tilde \Phi^t_{t_0}(q) & = \big(S(q + \omega t, t)A(q + \omega t) + \Lo S(q + \omega t, t) \big)\Phi_A(t; q, t_0)S(q + \omega t_0, t_0)^{-1} \\
     & = \big(S(q + \omega t, t)A(q + \omega t)  + \Lo S(q + \omega t, t) \big)S(q + \omega t, t)^{-1} \tilde \Phi^t_{t_0}(q),
\end{align*}
and thus Equation \eqref{eq:conjugated_cocycles} is satisfied (that is, $ \tilde \Phi^t_{t_0}$ is the fundamental solution of the system $ \dot \eta(t) = DX^t \circ \varphi^t (q + \omega t) \eta(t)$) if and only if Equation \eqref{eq:conjugated_cocycles_infinitesimal} is satisfied.

Using Equation \eqref{eq:conjugated_cocycles_infinitesimal}, we can rephrase Definition \ref{def:ell_hyp_par_trasv_dyn_asym_KAM} as the existence of a $C^1$ map $S: \T^n \times [T'', +\infty) \to GL(2n + 2m, \R)$ such that the transformation
\begin{equation}
\label{eq:A_as_function_of_S}
A(q, t) = S(q, t)^{-1} \big( DX \circ \varphi^t (q) S(q, t) + \Lo S(q, t) \big)
\end{equation}
is of the form \eqref{def:A}, with $M$ equal to either $\Mell$ or $\Mhyp$ (corresponding to the elliptic and hyperbolic scenarios, respectively) and satisfying \eqref{def:cond1_ellhyppar_asymKAMtorus} and \eqref{def:trasv_dyn_limit}.

Applying the arguments above to Definition \ref{def:ell_hyp_par_trasv_dyn_asym_KAM}, together with the analysis of the transversal asymptotic dynamics done in Section \ref{sc:analysis_transversal}, easily yields the following criterion.

\begin{proposition}
\label{prop:general_asymp_characterization}
    Let $(X, X_0, \varphi_0)$ defined on $\T^n \times B \times I_T\subseteq \T^n \times \R^{n + 2m}$ satisfying \eqref{def:hyp_asym_KAM_tori}, for some $\omega \in \R^n$, and let $\varphi: \T^n \times [T', +\infty) \to \T^n \times \R^{n + 2m}$ be a $C^\sigma$ asymptotic KAM torus associated with $(X, X_0, \varphi_0)$, for some $T' \geq T$ and $\sigma \geq 1$.

    Assume that there exists a $C^1$ map $S: \T^n \times [T'', +\infty) \to GL(2n + 2m, \R)$,
    \[ S = \begin{pmatrix}
        S_{qq} & S_{qp} & S_{qz} \\
        S_{pq} & S_{pp} & S_{pz} \\
        S_{zq} & S_{zp} & S_{zz} 
    \end{pmatrix}, \qquad S_{zz}  = \begin{pmatrix}
             S_{zx} &         S_{zy}
    \end{pmatrix},\]for some $T'' \geq T'$, satisfying
    \begin{equation}
    \label{eq:decrease_zp}
        \lim_{t \to +\infty} |S_{zp}^t| = 0,
    \end{equation}
    such that the transformation $A:  \T^n \times [T'', +\infty) \to \mathcal{M}_{2n + 2m}(\R)$ given by \eqref{eq:A_as_function_of_S} is of the form \eqref{def:A}, for some $M \in  \mathcal{M}_{2m}(\R)$, and satisfies  \eqref{def:cond1_ellhyppar_asymKAMtorus}.

    Let $\Omega_1,\dots,\Omega_m \in \R_{>0}$, $\varrho = \max_{1 \leq i \leq m} \Omega_i$ and we denote $\Omega = \mathrm{diag}(\Omega_1,\dots, \Omega_m)$. Then, the following holds.
    
    \begin{enumerate}
        \item If $M = \Mell$ and 
        \[ \lim_{t \to +\infty} \max \left\{ |S^t_{zq}|t^{2},   |S^t_{zz} - \mathrm{Id}_{2m}|t \right\} = 0, \]
        then $\varphi$ is an asymptotically elliptic KAM torus.
        \item If $M = \Mell$,  %
        $(\omega, \Omega)$ satisfy \eqref{diop_ass_ell_aut} and
\[ \lim_{t \to +\infty} \max \left\{ |S^t_{zq}|t,  |S^t_{zz} - \mathrm{Id}_{2m}| \right\} = 0,\]
        then $\varphi$ is an asymptotically elliptic KAM torus.
        \item If $M = \Mhyp$ and %
        \[ \lim_{t \to +\infty} \max \left\{ |S^t_{zq}|e^{\varrho t},  |S^t_{zz} - \mathrm{Id}_{2m}|e^{\varrho t} \right\} = 0, \]
        then $\varphi$ is an asymptotically hyperbolic KAM torus.
    \end{enumerate}
\end{proposition}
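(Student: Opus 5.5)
Since $A$ is given by \eqref{eq:A_as_function_of_S}, the identity \eqref{eq:conjugated_cocycles_infinitesimal} holds, and by the computation preceding the statement the cocycles satisfy \eqref{eq:conjugated_cocycles}. Also $A$ has the form \eqref{def:A} and satisfies \eqref{def:cond1_ellhyppar_asymKAMtorus}. So the only thing left to verify is the limit \eqref{def:trasv_dyn_limit}. Fix $q$ and $t_0$, and let $\xi(t)=\Phi_A(t;q,t_0)S(q+\omega t_0,t_0)^{-1}\eta_0$ for an arbitrary $\eta_0$. Then $\xi$ solves \eqref{eq:Trasv_dyn_AA} with the fixed initial datum $\xi(t_0)=S^{-1}\eta_0$, and $\eta(t)=\Phi(t;q,t_0)\eta_0=S(q+\omega t,t)\xi(t)$. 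Taking the $z$-rows gives
\[
\pi_z\eta(t)-\pi_z\xi(t)=S^t_{zq}\,\xi_1(t)+S^t_{zp}\,\xi_2(t)+\bigl(S^t_{zz}-\mathrm{Id}_{2m}\bigr)\begin{pmatrix}\xi_3(t)\\ \xi_4(t)\end{pmatrix},
\]
with $S$ evaluated at $q+\omega t$. It therefore suffices to show that each of the three terms tends to $0$. Working with a fixed initial vector is enough, because the matrix statement follows by applying the argument to a basis of initial vectors, or directly to matrix columns.

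The key step is to combine the decay hypotheses on $S$ with the growth estimates of Section~\ref{sc:analysis_transversal}. In every case $\xi_2(t)=\xi_2(t_0)$ is constant, so the middle term vanishes by \eqref{eq:decrease_zp}.

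\emph{Case (1).} Proposition~\ref{prop:trasv_dyn_ell_case} gives $|\xi_1|\le Ct^2$ and $|\xi_3|,|\xi_4|\le Ct$. The hypothesis $|S_{zq}^t|t^2\to0$ and $|S_{zz}^t-\mathrm{Id}|t\to0$ then gives the result immediately.

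\emph{Case (2).} The Diophantine condition \eqref{diop_ass_ell_aut} is meant to improve the bounds to $|\xi_1|\le Ct$ and $|\xi_3|,|\xi_4|\le C$, as in Proposition~\ref{prop:bound_trasv_dyn_ell_non_aut}. The weaker decay hypotheses then suffice.

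\emph{Case (3).} Proposition~\ref{prop:trasv_dyn_hyp_case} gives bounds of order $e^{\varrho t}$ on $\xi_1$, $\xi_3$ and $\xi_4$, and these are compensated by the hypotheses $|S^t_{zq}|e^{\varrho t}\to0$ and $|S^t_{zz}-\mathrm{Id}|e^{\varrho t}\to0$.

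The main obstacle is case (2). Proposition~\ref{prop:bound_trasv_dyn_ell_non_aut} needs more than \eqref{def:cond1_ellhyppar_asymKAMtorus} and the Diophantine condition: it also requires a regular limit $a_{32}^\infty$ with integrable convergence \eqref{decay_ass_ell_non_aut}, and none of this is stated explicitly in item (2). My first attempt would be to read item (2) as implicitly including these hypotheses on the $a_{32}$ block of $A$, which the constructions in the later sections will guarantee, and to invoke Proposition~\ref{prop:bound_trasv_dyn_ell_non_aut} directly.

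If I want to avoid that extra assumption, the fallback is to restrict to initial data with $\xi_2(t_0)=0$. Then \eqref{eq:Trasv_dyn_A_2_ell} decouples: $(\xi_3,\xi_4)$ is a pure rotation, hence bounded, and $\xi_1$ grows at most linearly. The one subtle point is the dependence of the constants on $t_0$ and on $|\xi(t_0)|$. This is harmless, since $q$, $t_0$ and $\eta_0$ are fixed throughout and only the limit $t\to+\infty$ matters.
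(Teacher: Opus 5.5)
Your treatment of items (1) and (3) is exactly the paper's proof: reduce \eqref{def:trasv_dyn_limit} to the $z$-rows of $(S-\mathrm{Id})\Phi_A$, and pair the decay of $S_{zq}$, $S_{zp}$, $S_{zz}-\mathrm{Id}_{2m}$ with the growth bounds of Propositions~\ref{prop:trasv_dyn_ell_case} and~\ref{prop:trasv_dyn_hyp_case}. For item (2) the paper also just cites Proposition~\ref{prop:bound_trasv_dyn_ell_non_aut}, so your ``first attempt'' is the paper's argument. You are right that this does not prove (2) as stated: that proposition needs a limit $a_{32}^\infty\in C^{\sigma+n+1}(\T^n)$ satisfying \eqref{decay_ass_ell_non_aut}, and item (2) assumes neither. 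Your fallback does not repair this. Condition \eqref{def:trasv_dyn_limit} is about the whole matrix, hence about every initial vector, including those with $\xi_2(t_0)=\pi_pS(q+\omega t_0,t_0)^{-1}\eta_0\neq0$. Dropping them breaks your own reduction to a basis of columns. These are exactly the columns where a bounded but explicitly time-dependent $a_{32}^t$ can force $(\xi_3,\xi_4)$ resonantly; the Diophantine condition only controls forcing through $q+\omega t$. So $|\xi_3|+|\xi_4|$ can grow like $t$, and $|S^t_{zz}-\mathrm{Id}_{2m}|\to0$ with no rate does not kill the product.

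In fact the gap cannot be closed, even if you allow a different pair $(A',S')$ in Definition~\ref{def:ell_hyp_par_trasv_dyn_asym_KAM}. Take $n=m=1$, $\omega=1$ and $\Omega_1=\sqrt2$, so \eqref{diop_ass_ell_aut} holds with $\tau=0$. Let $\mathcal{R}_t=e^{J_1\Mell t}$ (a rotation) and $v=(1,0)^\top$. Let $A$ be of the form \eqref{def:A} with $a_{12}=a_{13}=0$ and $a^t_{32}=\mathcal{R}_tv$, and let $S=\mathrm{diag}(1,1,(1+\epsilon)\mathrm{Id}_2)$ with $\epsilon(t)=t^{-1/2}\sin t$, $t\ge4$. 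Put $B=(SA+\dot S)S^{-1}$, which is bounded with vanishing $q$-column, and set $X=X_0=(\omega,0,0)^\top+B(t)(0,p,z)^\top$ and $\varphi=\varphi_0$. Then \eqref{eq:conjugated_cocycles_infinitesimal} holds (this is the relation the proof actually uses; the sign of $\Lo S$ in \eqref{eq:A_as_function_of_S} is a slip), and all hypotheses of (2) are met. Suppose $(A',S')$ were admissible, and let $R=S(t_*)^{-1}S'(t_*)$. Then \eqref{def:trasv_dyn_limit} at $t_0=t_*$ says $\pi_zS(t)\Phi_A(t;t_*)R-\pi_z\Phi_{A'}(t;t_*)\to0$. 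Here $\pi_zS\Phi_A=(1+\epsilon)\bigl(0,\ (t-t_*)\mathcal{R}_tv,\ \mathcal{R}_{t-t_*}\bigr)$ and $\pi_z\Phi_{A'}=\bigl(0,\ \int_{t_*}^t\mathcal{R}_{t-s}a'_{32}(s)\,ds,\ \mathcal{R}_{t-t_*}\bigr)$ with $a'_{32}$ bounded. The $q$- and $z$-columns force $R_{pq}=0$, $R_{zq}=0$, $R_{pz}=0$ and $R_{zz}=\mathrm{Id}_2$, hence $R_{pp}\neq0$. The $p$-column then forces $\epsilon(t)(t-t_*)R_{pp}v$ to stay within $o(1)$ of a function with bounded derivative. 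That is impossible, since it swings by about $2\sqrt t\,|R_{pp}v|$ over intervals of length $\pi$. So item (2) is false as written. The correct statement is your first option made explicit: add \eqref{decay_ass_ell_non_aut} for the $a_{32}$ block of $A$, and then your argument goes through verbatim. The main theorems are unaffected, since they only use items (1) and (3).
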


\begin{proof}
    By the arguments explained at the beginning of this section, the assumptions on the maps $A$ and $S$ imply that \eqref{eq:conjugated_cocycles} in Definition \ref{def:ell_hyp_par_trasv_dyn_asym_KAM} is satisfied. Thus, it suffices to check that in each of the cases above Equation \eqref{def:trasv_dyn_limit} holds. 

    Notice that, by \eqref{eq:conjugated_cocycles}, we have
    \begin{align*}
    \big|\pi_z & \Phi(t; q,t_0) -  \pi_z \Phi_A(t; q, t_0)S(q + \omega t_0, t_0)^{-1}\big| = \big|\pi_z (S(q + \omega t, t) - I)\Phi_A(t; q, t_0)S(q + \omega t_0, t_0)^{-1}\big| \\
    & \leq \big|(S_{zq}^t(q + \omega t) \quad S_{zp}^t(q + \omega t) \quad S_{zz}^t(q + \omega t) - \mathrm{Id}_{2m})   \Phi_A(t; q, t_0) \big| \big| S(q + \omega t_0, t_0)^{-1} \big|.
    \end{align*}

    By Propositions \ref{prop:trasv_dyn_ell_case}, \ref{prop:bound_trasv_dyn_ell_non_aut}, and \ref{prop:trasv_dyn_hyp_case} in each scenario, we can bound the growth rate of the norm of each entry of $\Phi_A(t; q, t_0)$ as $t$ goes to infinity. Then, in each case, these estimates together with the equation above and the assumptions on the decay rate of the different components of $S$ show that Equation \eqref{def:trasv_dyn_limit} is satisfied. This finishes the proof of the proposition.
\end{proof}

Therefore, to establish the asymptotically elliptic (resp. hyperbolic) behaviour of a given asymptotic KAM torus it suffices to construct a family of invertible matrices $S$ for which $S_{zq}^t$, $S_{zp}^t$ and $S_{zz} - \mathrm{Id}_{2m}$ have an appropriate decay (as $t$ goes to infinity) and for which the associated transformation \eqref{eq:A_as_function_of_S} is of the form \eqref{def:A} with $M = \Mell$ (resp. $M = \Mhyp$).

 In the remainder of this section, we describe an explicit way to construct such matrices and, as a consequence of Proposition \ref{prop:general_asymp_characterization}, obtain different criteria concerning the asymptotic character of the transverse dynamics along an asymptotic KAM torus.  
 
 The starting point will be the following lemma, which shows that given an isotropic torus in $\mathcal{T} \subseteq \T^n \times \R^{n + 2m}$ that is a graph over $\T^n \times \{ 0 \}$ there exists a symplectomorphism $\Psi$ sending the `zero section' $\T^n \times \{0\}$ to $\mathcal{T}$. Moreover, as we shall see below, denoting by $\Psi = (\Psi_q, \Psi_p, \Psi_z)$ the projections of $\Psi$ to the $q, p$ and $z$ coordinates, we can prescribe $\partial_z \Psi_z \circ \varphi_0: \T^n \to \mathrm{Sp}(2n + 2m, \R, J)$, where $\varphi_0: \T^n  \to \T^n \times \R^{n + 2m}$ is the trivial embedding $\varphi(q) = (q, 0)$ and $\mathrm{Sp}(2n + 2m, \R, J)$ denotes the space of symplectic matrices associated with $J$ as in \eqref{eq:big_symplectic_matrix}, namely,
 \[ \mathrm{Sp}(2n + 2m, \R, J) = \{ S \in \mathcal{M}_{2n + 2m}(\R) \mid S^T J S = J\}.\]

 In the following, we denote by $\textup{Sp}(2m, \R)$ the space of symplectic matrices associated to $J_m$ and by $\mathfrak{sp}(2m, \R)$ its associated Lie algebra, namely, 
\[ \mathfrak{sp}(2m, \R) := \{ G \in \mathcal{M}_{2m}(\R) \mid J_m G + G^{\top}J_m = 0 \}. \]
Recall that $e^G \in \textup{Sp}(2m, \R)$, for any $G \in  \mathfrak{sp}(2m, \R)$.

\begin{lemma}
\label{lem:embedding_symplectomorphism}
Fix $r \geq 1$. Let $\varphi = (\mathrm{Id} + u, v, w): \T^n \to \T^n \times \R^n \times \R^{2m}$ and $G: \T^n \to \mathfrak{sp}(2m, \R)$ of class $C^{r + 1}$. Assume that $\mathcal{T} := \varphi(\T^n)$ is isotropic and $\mathrm{id} + u \in \mathrm{Diff}^{r + 1}(\T^n)$.
 
Then, there exists a $C^{r}$ symplectomorphism $\Psi: \T^n \times \R^{n + 2m} \to \T^n \times \R^{n + 2m}$, preserving the symplectic form $J$ given by \eqref{eq:big_symplectic_matrix},  such that
\[\Psi(q, 0) = \varphi(q), \qquad \text{ for any } q \in \T^n,\]
and  $D \Psi \circ \varphi_0 =\boldsymbol{S}(\varphi, G)$, where $\varphi_0: \T^n  \to \T^n \times \R^{n + 2m}$ is the trivial embedding $\varphi_0(q) = (q, 0)$ and $\boldsymbol{S}(\varphi, G): \T^n \to \mathrm{Sp}(2n + 2m, \R, J)$ is a $C^r$ map given by 
\begin{equation}
    \label{eq:S_formula_autonomous}
    \boldsymbol{S}(\varphi, G) :=   \left(
\begin{array}{ccc}
\mathrm{Id}_n +\partial_q  u & 0 & 0 \\
\partial_q v & (\mathrm{Id}_n +\partial_q  u )^{-\top} & -(\mathrm{Id}_n +\partial_q  u )^{-\top} \partial_q w^\top J_m e^G \\
\partial_q w & 0 & e^G 
\end{array}
\right).
\end{equation}

Furthermore, if $G$ is of class $C^r$ (with $u, v, w$ still of class $C^{r + 1}$), the map $\boldsymbol{S}(\varphi, G): \T^n \to \mathrm{Sp}(2n + 2m, \R, J)$ above is still well-defined map and of class $C^r$.
\end{lemma}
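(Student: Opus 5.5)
The plan is to write down $\Psi$ explicitly, as a fibred correction of the cotangent lift of the diffeomorphism $h := \mathrm{id}+u$, and then to check symplecticity through the Liouville-type one-form. Concretely, I would look for $\Psi(q,p,z) = (Q,P,Z)$ with
\[
Q = h(q), \qquad Z = w(q) + e^{G(q)}z, \qquad P = v(q) + Dh(q)^{-\top}\Big(p - \partial_q w(q)^\top J_m e^{G(q)} z - \beta(q,z)\Big),
\]
where $\beta(q,\cdot)$ is a correction that is quadratic in $z$, to be determined. The first two components and the linear part of $P$ are dictated by the requirement $D\Psi\circ\varphi_0 = \boldsymbol{S}(\varphi,G)$. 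Indeed, at $(p,z)=(0,0)$ all terms in which $p$ or $z$ multiplies a derivative of a coefficient vanish. Differentiating the formula above at $p=z=0$ therefore gives exactly the three block rows of \eqref{eq:S_formula_autonomous}, provided $\beta$ is $O(z^2)$. Moreover, $\Psi(q,0)=(h(q),v(q),w(q))=\varphi(q)$ holds by construction.

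For symplecticity I would compute $\Psi^*\big(P\cdot dQ + \tfrac12 Z^\top J_m\, dZ\big)$, whose exterior derivative is (up to sign conventions) the pulled-back symplectic form, and then sort the result by degree in $(p,z)$.
\begin{itemize}
\item The $p$-part gives $p\cdot dq$, because $Dh^{-\top}p\cdot Dh\,dq = p\cdot dq$.
\item The degree-$0$ part is $\varphi^*\lambda$. Its differential is $\varphi^*$ of the symplectic form, which vanishes because $\mathcal{T}$ is isotropic.
\item The degree-$2$ part in $dz$ is $\tfrac12 z^\top (e^G)^\top J_m e^G\, dz = \tfrac12 z^\top J_m\, dz$, since $e^G\in\mathrm{Sp}(2m,\R)$ for $G\in\mathfrak{sp}(2m,\R)$.
\item The terms linear in $z$ (mixed $z$/$dq$ and $w$/$dz$ terms) come from $\tfrac12 Z^\top J_m dZ$. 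Up to an exact form, they combine with $-(\partial_q w^\top J_m e^G z)\cdot dq$ to give zero. This is the reason for the off-diagonal entry of $\boldsymbol{S}$, and it follows from a short integration by parts: $d\big(w^\top J_m e^G z\big)$ absorbs the asymmetric piece.
\item The remaining term is the $q$-one-form quadratic in $z$,
\[
\alpha = \tfrac12\sum_i z^\top (e^G)^\top J_m\,\partial_{q_i}(e^G)\,z\,dq_i .
\]
I choose $\beta$ to have $i$-th component equal to the coefficient of $dq_i$ in $\alpha$, so that $-\beta\cdot dq$ cancels $\alpha$ exactly.
\end{itemize}
After these cancellations the pulled-back form is $p\cdot dq + \tfrac12 z^\top J_m dz$ plus a closed form, so $\Psi$ preserves the symplectic form associated with $J$.

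Regularity count: $Dh^{-\top}$ and $\partial_q w$ are $C^r$ when $u,w\in C^{r+1}$, and $\beta$ involves $\partial_q e^G$, which is $C^r$ when $G\in C^{r+1}$. Hence $\Psi$ is $C^r$. $\Psi$ is a diffeomorphism because its inverse is explicit: invert $h$, then solve for $z$ (using that $e^G$ is invertible), then solve for $p$, since $Dh^{-\top}$ is invertible. For the final claim, note that $\boldsymbol{S}(\varphi,G)$ contains only $\partial_q u$, $\partial_q v$, $\partial_q w$ and $e^G$, and no derivative of $G$. So it is $C^r$ as soon as $G\in C^r$ and $u,v,w\in C^{r+1}$. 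Its symplecticity for general $G$ can be checked directly from $\boldsymbol{S}^\top J\boldsymbol{S}=J$: the block computation uses $e^{G\top}J_m e^G=J_m$ and the isotropy relation $\partial_q u^\top$-, $\partial_q v$-, $\partial_q w$-identity obtained from $\varphi^*\omega=0$.

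The main obstacle is the bookkeeping of the mixed degree-$1$ terms, i.e. verifying that the single correction $-Dh^{-\top}\partial_q w^\top J_m e^G z$ cancels all $z\,dq$ cross terms modulo exact forms. I would do this first in the case $h=\mathrm{id}$, where the cotangent-lift factor disappears, and then conjugate by the cotangent lift of $h$, which is itself symplectic.
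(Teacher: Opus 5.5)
Your strategy is a valid alternative to the paper's. You write $\Psi$ in closed form and check symplecticity through a primitive one-form, sorted by degree in $(p,z)$. The paper instead builds $\Psi=\Psi_1\circ\Psi_2\circ\Psi_3\circ\Psi_4$ from manifestly symplectic pieces. These are the time-one maps of $-J_mW(q)\cdot z$ and $-\tfrac12 J_mG(q)z\cdot z$, where $W=w\circ U^{-1}$ and $U=\mathrm{id}+u$; a fibre translation by $\tilde V$, which is symplectic because $\Psi_1^{-1}(\mathcal T)$ is an isotropic graph; and the cotangent lift of $U$. Multiplying out gives an explicit $\Psi$ of exactly your shape. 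The second part follows from $\boldsymbol S(\varphi,G)^\top J\boldsymbol S(\varphi,G)=\boldsymbol S(\varphi,0)^\top J\boldsymbol S(\varphi,0)$ and the first part at $G=0$.

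The factorization gives symplecticity without sign bookkeeping, and explains the quadratic correction as coming from the $q$-dependent rotation $e^{G(q)}$. Your one-form computation is more direct and shows exactly where isotropy and the off-diagonal block of $\boldsymbol S$ enter. It also handles general $h$ at once: the factor $Dh^{-\top}$ cancels against $dQ=Dh\,dq$ in $P\cdot dQ$, so no separate conjugation step is needed. As written, however, it contains a sign error that makes your $\Psi$ non-symplectic.

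The form $P\cdot dQ+\tfrac12 Z^\top J_m\,dZ$ has differential $-dQ\wedge dP+dX\wedge dY$. This is not $\pm\alpha$, where $\alpha=dq\wedge dp+dx\wedge dy$. A correct primitive is $\theta=P\cdot dQ-\tfrac12 Z^\top J_m\,dZ$, with $d\theta=-\alpha$. You must show that $\Psi^*\theta-\theta_0$ is closed, where $\theta_0=p\cdot dq-\tfrac12 z^\top J_m\,dz$.

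With your form, the degree-one terms sum to $\tfrac12\,d(w^\top J_m e^G z)-2\sum_i(\partial_{q_i}w)^\top J_m e^G z\,dq_i$. This is not closed, so the cancellation you assert fails for the form you wrote. With $\theta$ they sum to exactly $-\tfrac12\,d(w^\top J_m e^G z)$, as you intended.

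The same sign reverses the quadratic correction. The leftover degree-two term of $\Psi^*\theta$ is $-\sum_i\big(\beta_i+\tfrac12 z^\top E_i z\big)\,dq_i$. Here $E_i:=e^{G^\top}J_m\partial_{q_i}(e^G)$ is symmetric and equals the paper's $\int_0^1 e^{sG^\top}J_m\partial_{q_i}G\,e^{sG}\,ds$. So you need $\beta_i=-\tfrac12 z^\top E_i z$, that is, $P=v+Dh^{-\top}\big(p-\partial_q w^\top J_m e^G z+\tfrac12(z^\top E_i z)_{i=1}^n\big)$, which is precisely the paper's formula. With your choice $\beta_i=+\tfrac12 z^\top E_i z$, the form $\Psi^*\theta-\theta_0$ differs from a closed form by $-\sum_i z^\top E_i z\,dq_i$. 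Its differential contains the term $-2\sum_i z^\top E_i\,dz\wedge dq_i$, which is nonzero as soon as $e^G$ depends on $q$.

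Everything else survives. Since $\beta=O(z^2)$, the identities $\Psi(q,0)=\varphi(q)$ and $D\Psi\circ\varphi_0=\boldsymbol S(\varphi,G)$ are unaffected, as are the regularity count and the explicit inverse. Your direct check of $\boldsymbol S^\top J\boldsymbol S=J$ for $G\in C^r$ is fine. It is also immediate from $\boldsymbol S(\varphi,G)=\boldsymbol S(\varphi,0)\,\mathrm{diag}(\mathrm{Id}_{2n},e^G)$.
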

\begin{proof}
    Let $u, v, w, G$ of class $C^{r + 1}$ as in the statement. Define $U = Id + u$, $V = v \circ U^{-1},$ $W = w\circ U^{-1}$. We will construct the symplectomorphism $\Psi$ as the composition of the following (symplectic) maps
    \begin{align*}
    \Psi_1(q, p, z) & = (q, p - \partial_q W(q)^{\top} J_m \big(z + \tfrac{1}{2
}W(q)\big) , z + W(q)),\\
     \Psi_2(q, p, z) &= (q, p + \tilde V(q), z), \qquad \tilde V(q) = V(q) + \tfrac{1}{2}\partial_q W(q)^{\top} J_m W(q),\\
     \Psi_3(q, p, z) &= (U(q), \partial_qU^{-T}(q)p, z),\\
     \Psi_4(q, p, z) &= \left(q, p + \tfrac{1}{2}\big( z^\top E_i(q) z\big)_{i = 1}^m, e^{G(q)}z
     \right), \qquad E_i(q) = \int_0^1 e^{sG(q)^\top}J_m\partial_{q_i}G(q)e^{sG(q)}ds.
     \end{align*}
     
Indeed, set $\Psi := \Psi_1 \circ \Psi_2 \circ \Psi_3 \circ \Psi_4$ and, assuming that the transformations above are well-defined $C^r$ symplectic diffeomorphisms, let us check that $\Psi$ verifies the desired properties.

Notice that 
\begin{gather*}
\Psi_1^{-1}(\mathcal{T}) =\{(q, \tilde V(q), 0, 0) \mid q \in \T^n\}, \qquad  (\Psi_1 \circ \Psi_2)^{-1}(\mathcal{T}) = \T^n \times \{0\},\\
\Psi_3^{-1}(\T^n \times \{0\}) = \T^n \times \{0\} = \Psi_4^{-1}(\T^n \times \{0\}).
\end{gather*}
Hence
\[\Psi^{-1}(\mathcal{T}) = \T^n \times \{0 \}.\]

Moreover, since 
\[\pi_q \circ \Psi (q, p, z) = U(q)\qquad \text{ for any } (q, p, z) \in \T^n \times \R^n \times \R^{2m},\] where $\pi_q$ is the projection onto the $q$ components introduced in Section \ref{sc:results}, 
we have
\[\Psi(q, 0) = (U(q), v(q), w(q)) = \varphi(q), \qquad \text{ for any } q \in \T^n.\]

A direct calculation then shows that
\begin{equation}
    \label{eq:explicit_sympectomorphism}
    \Psi(q, p, z) = \Big(U(q),   v(q) + \partial_qU(q)^{-T}\left(p - \partial_q w(q)^\top J_m e^{G(q)}z +  \tfrac{1}{2}\big( z^\top E_i(q) z\big)_{i = 1}^m \right), e^{G(q)}z + w(q)\Big),
\end{equation} 
and \eqref{eq:S_formula_autonomous} readily follows by taking derivatives in the above equation and evaluating at $(q, 0, 0)$.

Therefore, to show the first part of the lemma, it suffices to show that the maps $\Psi_1, \Psi_2, \Psi_3, \Psi_4$ defined above are indeed well-defined $C^r$ symplectic diffeomorphisms.  

Notice that $\Psi_1$ is the time one map of the Hamiltonian $F_1(q, z) =  -J_mW(q) \cdot z.$ %
Indeed, the associated Hamiltonian system is given by
\[ \left\{ \begin{array}{l} q'(t) = 0, \\ p'(t) = -\partial_q W(q(t))^T J_mz(t), \\ z'(t) = W(q(t)), \end{array} \right.\]
which, for initial conditions $(q^*, p^*, z^*)$, has an explicit solution given by
\[ \left\{ \begin{array}{l} q(t) = q^*, \\ p(t) = p^* - \partial_q W(q^*)^T J_m\big(tz^* + \frac{t^2}{2}W(q^*)\big), \\ z(t) = z^* + tW(q^*). \end{array} \right.\]
In particular, $\Psi_1$ is a symplectic map.

Letting $\alpha = dq \wedge dp + dx \wedge dy =  \sum_{i=1}^n dq_i \wedge dp_i + \sum_{j=1}^m dx_j \wedge dy_j$ be the standard symplectic form on $\T^n \times \R^{n+2m}$,  since $\Psi_1^{-1}(\mathcal T) = \{(q, \tilde V(q), 0, 0) \mid q \in \T^n\}$ is an isotropic torus,
\[ \Psi_2^*\alpha - \alpha = \sum_{i = 1}^ndq_i \wedge d\tilde V_i(q) = 0,\]
which shows that $\Psi_2$ is symplectic. 

A straightforward computation shows that $\Psi_3$ is symplectic. 

Finally, $\Psi_4$ can be realized as the time one map of the Hamiltonian $F_4(q, z) = -\frac{1}{2} J_m G(q)z \cdot z .$ %
Indeed, notice that since $G$ takes values in $\mathfrak{sp}(2m, \R)$ the matrix $J_m G(q)$ is symmetric, for any $q \in \T^n$. Thus the associated Hamiltonian system is given by
\[ \left\{ \begin{array}{l} q'(t) = 0, \\ p_i'(t) = \frac{1}{2}z(t)^\top J_m \partial_{q_i} G(q(t))z(t), \qquad 1 \leq i \leq m, \\ z'(t) = G(q(t))z, \end{array} \right.\]
which, for initial conditions $(q^*, p^*, z^*)$, has an explicit solution given by
\[ \left\{ \begin{array}{l} q(t) = q^*, \\ p_i(t) = p^*_i + \frac{1}{2}{z^*}^\top \left(\int_0^te^{sG(q^*)^\top} J_m \partial_{q_i}G(q^*)e^{sG(q^*)}ds \right) z^*,  \qquad 1 \leq i \leq m, \\ z(t) = e^{tG(q^*)}z^*. \end{array} \right.\]
In particular, $\Psi_4$ is a symplectic map.

To prove the second part of the lemma, let $\varphi = (\mathrm{Id} + u, v, w)$ as before (of class $C^{r + 1}$) and let us assume that $G$ is of class $C^r$. It follows directly from \eqref{lem:embedding_symplectomorphism} that $\boldsymbol{S}(\varphi, G)$ is of class $C^r$, so it suffices to check that it takes values in $\mathrm{Sp}(2n + 2m, \R, J)$.

A direct calculation shows that 
\[\boldsymbol{S}(\varphi, G)^\top J  \boldsymbol{S}(\varphi, G) = \boldsymbol{S}(\varphi, 0_{m \times m})^\top J  \boldsymbol{S}(\varphi,  0_{m \times m}).\]

Since the RHS of the equation above is constant and equal to $J$ by the first part of the lemma, it follows that $\boldsymbol{S}(\varphi, G)$ takes values in  $\mathrm{Sp}(2n + 2m, \R, J)$.
\end{proof}

We note that, under fairly general conditions, the tori in the one-parameter family associated with an asymptotic KAM torus are isotropic.

\begin{proposition}
    Let $\varphi^t$ be a $C^\sigma$ asymptotic KAM torus associated with $(X^t, X_0^t, \varphi_0)$. If $\varphi_0$ is isotropic then $\varphi^t$ is isotropic for all $t \in I_{T^{'}}$ 
\end{proposition}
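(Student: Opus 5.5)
The plan is to exploit the fact that isotropy is preserved by symplectic flows, together with the invariance relation \eqref{eq:asymptotic_KAM_flow} and the convergence \eqref{def:cond1_asymKAMtorus}. Throughout, $X$ is understood to be a (time-dependent) Hamiltonian vector field $X_H$, as in all the applications of the paper. Its flow maps $\psi^t_{t_0,X}$ are then symplectic for every pair $t_0,t$, so that $(\psi^t_{t_0,X})^*\alpha=\alpha$, where $\alpha=dq\wedge dp+dx\wedge dy$.

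Fix $t_0\in I_{T'}$ and consider the pulled-back two-form
\[
\beta_t:=(\varphi^t)^*\alpha \quad\text{on } \T^n .
\]
First, by \eqref{eq:asymptotic_KAM_flow} we have $\varphi^t=\psi^t_{t_0,X}\circ\varphi^{t_0}\circ R_{-\omega(t-t_0)}$, where $R_a(q)=q+a$ denotes translation on $\T^n$. Using that the flow is symplectic, this gives
\[
\beta_t=R_{-\omega(t-t_0)}^*\,\beta_{t_0}.
\]
The coefficients of $\beta_t$ are bilinear expressions in the first derivatives of $\varphi^t$, namely $\partial_q\varphi^t{}^\top J\,\partial_q\varphi^t$. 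Since $\sigma\ge1$ and $|\varphi^t-\varphi_0|_{C^\sigma}\to0$, these coefficients converge uniformly on $\T^n$ to those of $\varphi_0^*\alpha$. That limit vanishes because $\varphi_0$ is isotropic. Hence $\sup_{q}|\beta_t(q)|\to0$ as $t\to+\infty$.

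Now combine the two facts. Translations preserve the sup norm of the coefficients of a two-form on $\T^n$, since their differential is the identity. Therefore
\[
\sup_q|\beta_{t_0}(q)|=\sup_q|\beta_t(q)| \quad\text{for every } t\ge t_0 .
\]
Letting $t\to+\infty$ yields $\beta_{t_0}\equiv0$, that is, $\varphi^{t_0}$ is isotropic. Since $t_0\in I_{T'}$ was arbitrary, this proves the claim.

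The main point requiring care is the symplecticity of the non-autonomous flow $\psi^t_{t_0,X}$ on phase space. This is standard for time-dependent Hamiltonian vector fields of class $C^1$ in $(q,p,z)$: one differentiates $(\psi^t)^*\alpha$ in $t$ and uses Cartan's formula with $d\iota_{X_H}\alpha=-d\,dH^t=0$. The same computation also handles the regularity needed to pull back along $\varphi^{t_0}$, since $\varphi^{t_0}$ is at least $C^1$.
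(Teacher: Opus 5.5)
Your proof is correct and follows the paper's argument. Both use that the flow $\psi^t_{t_0,X}$ is symplectic (the paper, like you, tacitly takes $X$ to be Hamiltonian), then use \eqref{eq:asymptotic_KAM_flow} to write $(\varphi^{t_0})^*\alpha$ as a translate of $(\varphi^{t})^*\alpha$, and conclude from the $C^1$ convergence $\varphi^t\to\varphi_0$ together with the isotropy of $\varphi_0$; the only cosmetic difference is that the paper bounds the difference with $\varphi_0^*\alpha$ pointwise by $|\varphi^{t_0+t}-\varphi_0|_{C^1}+|\varphi^{t_0+t}-\varphi_0|_{C^1}^2$, whereas you use sup norms of coefficients that are invariant under translations.
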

\begin{proof}
 Recall that we denote by $\alpha = dq \wedge dp + dx \wedge dy =  \sum_{i=1}^n dq_i \wedge dp_i + \sum_{j=1}^m dx_j \wedge dy_j$ the standard symplectic form on $\T^n \times \R^{n+2m}$ and by $\psi^{t_0+t}_{t_0, \omega} (q) = q +\omega t$ the flow associated with the constant vector field $\omega$, for all $(q, t_0) \in \T^n \times I_{T'}$ and $t \ge 0$. Let $\psi_{t_0, X}^{t}$ be the flow at time $t$ with initial time $t_0$ of the vector field $X^t$. Using that $\psi_{t_0, X}^{t_0+t}$ is a symplectomorphism for all fixed $t_0 \in I_{T^{'}}$ and $t \ge 0$, we have that
    \begin{equation*}
        \left(\psi_{t_0,X}^{t_0+t}\circ \varphi^{t_0}\right)^*\alpha = \left(\varphi^{t_0}\right)^*\alpha
    \end{equation*}
    where here $^*$ denotes the pullback. By the latter and~\eqref{eq:asymptotic_KAM_flow}, we obtain that 
    \begin{equation}\label{proof_isotrop:eq_1}
        \left(\varphi^{t_0}\right)^*\alpha = \left(\psi^{t_0+t}_{t_0, \omega}\right)^*\left(\varphi^{t_0+t}\right)^*\alpha
    \end{equation}
    for all $t_0 \in I_{T^{'}}$ and $t \ge 0$. We will verify that $\left(\left(\varphi^{t_0}\right)^*\alpha\right)_q=0$ for all $q \in \T^n$ and $t_0 \in I_{T^{'}}$, where $\left(\left(\varphi^{t_0}\right)^*\alpha\right)_q$ stands for the symplectic form evaluated at $q\in\T^n$. To this end, we will prove that the RHS of~\eqref{proof_isotrop:eq_1} converges to zero when $t \to +\infty$. 

    We introduce the following notation, given the normed spaces $\left(X, |\cdot|_X\right)$, $\left(Y, |\cdot|_Y\right)$, and $\left(Z, |\cdot|_Z\right)$, and a continuous bilinear map $B:X \times Y \to Z$, we consider the following norm
    \begin{equation*}
        \|B\| = \sup_{|x|_X \le 1, \, |y|_Y \le 1} |B(x,y)|_Z.
    \end{equation*}
    Using that $\varphi_0$ is isotropic,  for all $q \in \T^n$ and $v_1, \, v_2 \in \R^n$, we have that
    
    \begin{align*}
        \left(\left(\varphi_0\right)^*\alpha\right)_q (v_1, v_2) = 0.
    \end{align*}
    Thanks to the latter and~\eqref{proof_isotrop:eq_1}, for all $(q, t_0) \in \T^n \times I_{T^{'}}$, $t \ge 0$ and $v_1, \, v_2 \in \R^n$, we obtain that 
    
     \begin{align*}
        \left|\left(\left(\varphi^{t_0}\right)^*\alpha\right)_q (v_1, v_2)\right| &= \left|\left(\left(\psi^{t_0+t}_{t_0, \omega}\right)^*\left(\varphi^{t_0+t}\right)^*\alpha\right)_q (v_1, v_2)\right|\\
        & = \left|\left(\left(\psi^{t_0+t}_{t_0, \omega}\right)^*\left(\varphi^{t_0+t}\right)^*\alpha\right)_q (v_1, v_2) - \left(\left(\varphi_0\right)^*\alpha\right)_q (v_1, v_2)\right|\\
        & \le \|dq\wedge dp + dx \wedge dy\| \left(|\varphi^{t_0 + t} - \varphi_0|_{C^1} + |\varphi^{t_0 + t} - \varphi_0|_{C^1}^2\right)|v_1| |v_2|.
    \end{align*}
    Using~\eqref{def:cond1_asymKAMtorus}, this concludes the proof of this proposition. 
\end{proof}

In the following, let us denote $\Pi_{xy} = \begin{pmatrix} 0_{2n\times 2m} \\ \mathrm{Id}_{2m}\end{pmatrix}.$

\subsection{A criterion for invariant KAM tori of autonomous Hamiltonians} 
Lemma \ref{lem:embedding_symplectomorphism} yields the following ellipticity/hyperbolicity criterion. 

\begin{proposition}
\label{prop:criteria_autonomous}
Fix $n, m \geq 1$, $\sigma \geq 2$ and $\omega \in \R^n$. Let $H: \T^n \times B \subseteq \T^n \times \R^{n + 2m}  \to \R$ be a Hamiltonian of class $C^\sigma$, where $B$ is an open ball centered at the origin. Assume that $\varphi = (\mathrm{id} + u, v, w): \T^n \to \T^n \times \R^n \times \R^{2m}$ parametrizes a $C^2$ invariant torus for $H$ with frequency $\omega$, that is, satisfies $X_H \circ \varphi = \partial_\omega \varphi$. 

Fix $G: \T^n \to \mathfrak{sp}(2, \R)$ of class $C^2$ and let $\Psi = \boldsymbol{\Psi}(\varphi, G)$ and $S = \boldsymbol{S}(\varphi, G)$ given by Lemma \ref{lem:embedding_symplectomorphism}. Then $H \circ \Psi$ is well defined on an open neighbourhood of $\T^n \times \{0\}$ and is of the form
\begin{equation}
    \label{eq:normal_form_hamiltonian_inv_torus}
    H \circ \Psi = cst. + \omega \cdot p  + \frac{1}{2} \zeta(q) \cdot z^2 + O(p^2,pz,z^3),
\end{equation} 
where $\zeta : \T^n \to \mathrm{Sym}(2m, \R)$ is given by
\begin{equation}
    \label{eq:formula_zeta_autonomous}
\zeta = \boldsymbol{\zeta}(H, \varphi, G) :=  \Pi_{xy}^{\top} S^{\top}[((D^2 H)\circ \varphi) S +J \partial_\omega S] \Pi_{xy}.
\end{equation}

In particular, given $\Omega_1,\dots,\Omega_m \in \R_{>0}$,  and denoting $\Omega = \mathrm{diag}(\Omega_1,\dots, \Omega_m)$,  the following holds.

\begin{enumerate}
    \item   If $\zeta = \Mell$, then $\varphi$ defines an elliptic invariant torus for $H$ with normal frequencies $\Omega_1, \dots, \Omega_m$.
    \item   If $\zeta = \Mhyp$, then $\varphi$ defines a hyperbolic invariant torus for $H$ with normal frequencies $\Omega_1, \dots, \Omega_m$.
\end{enumerate}

\end{proposition}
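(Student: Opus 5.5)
We expand $K := H\circ\Psi$ to second order at the zero section, with $\Psi=\boldsymbol{\Psi}(\varphi,G)$ the symplectomorphism built in Lemma \ref{lem:embedding_symplectomorphism}. Since $\Psi$ is symplectic, $X_K = D\Psi^{-1} X_H\circ\Psi$. Because $\Psi$ is a diffeomorphism near $\T^n\times\{0\}$ with $\Psi(\T^n\times\{0\})=\varphi(\T^n)\subset B$, $K$ is well defined on a neighbourhood of the zero section. The plan is to read off the value, first derivatives and the $z$-Hessian of $K$ on $\T^n\times\{0\}$, and to show that the coefficients of $1$, $p$ and $z$ are exactly $cst.$, $\omega$ and $0$. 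What remains is then $O(p^2,pz,z^3)$ plus the quadratic form $\frac12 \zeta(q)\cdot z^2$.

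\textbf{Step 1 (first order).} By the explicit formula \eqref{eq:explicit_sympectomorphism}, $\pi_q\Psi(q,p,z)=U(q)$ with $U=\mathrm{id}+u$ independent of $(p,z)$, and $\Psi(q,0)=\varphi(q)$. Invariance $X_H\circ\varphi=\partial_\omega\varphi$ then gives
\[
X_K(q,0)=D\Psi(q,0)^{-1}\partial_\omega\varphi(q)=D\Psi(q,0)^{-1}D\Psi(q,0)(\omega,0,0)=(\omega,0,0).
\]
Using $X_K=J\nabla K$, this yields $\partial_q K(q,0)=0$, $\partial_p K(q,0)=\omega$ and $\partial_z K(q,0)=0$. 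Hence $K(q,0)$ is a constant and the linear part in $(p,z)$ is exactly $\omega\cdot p$. Consequently
\[
K=cst.+\omega\cdot p+\tfrac12 \partial_z^2K(q,0)\cdot z^2+O(p^2,pz,z^3).
\]

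\textbf{Step 2 (the $z$-Hessian).} Differentiate twice in $z$ at $(q,0)$:
\[
\partial_z^2 K=\Pi_{xy}^\top S^\top (D^2H\circ\varphi) S\,\Pi_{xy}+\partial_z^2\Psi\text{-term},
\]
where $S=D\Psi\circ\varphi_0$ and the last term is $\sum_k \partial_k H(\varphi)\,\partial_z^2\Psi_k$. Only the $p$-component of $\Psi$ is quadratic in $z$; its $z$-Hessian is $\partial_qU^{-\top}E_i$. For the $p$-component, $\partial_p H\circ\varphi=-\partial_\omega\pi_q\varphi=-\partial_\omega U$. So this term equals $-(\partial_\omega U)^\top\partial_qU^{-\top}(E_i)_i$ contracted against $z$, which is $-\sum_i\omega_i E_i = -\int_0^1 e^{sG^\top}J_m(\partial_\omega G)e^{sG}ds$.

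The main obstacle is to show that this equals the symmetrized $\Pi_{xy}^\top S^\top J\partial_\omega S\,\Pi_{xy}$. The $z$-columns of $S$ are $(0,\,-\partial_qU^{-\top}\partial_qw^\top J_me^G,\,e^G)$. A direct computation gives
\[
\Pi_{xy}^\top S^\top J\partial_\omega S\Pi_{xy}=e^{G^\top}J_m\partial_\omega(e^G)+\text{terms from the }q,p\text{ blocks}.
\]
The $q$-row of these columns vanishes, so the cross terms coming from $J_n$ drop out. I would check that $e^{G^\top}J_m\partial_\omega e^{G}=\int_0^1 e^{G^\top}J_m e^{sG}(\partial_\omega G)e^{(1-s)G}ds$, which after using $e^{G^\top}J_m=J_me^{-G}$ equals $J_m\int_0^1 e^{-(1-s)G}\partial_\omega G\,e^{(1-s)G}ds$. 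That in turn matches $\int_0^1 e^{sG^\top}J_m\partial_\omega G\,e^{sG}ds$ up to sign, using the same identity once more. If signs or symmetrization cause trouble, I would avoid computing the second derivative of $\Psi$ altogether. Instead I would differentiate the identity $D\Psi\,X_K=X_H\circ\Psi$ once more in $z$ at $(q,0)$ and read off $J\partial_z^2K=S^{-1}J(D^2H\circ\varphi)S-S^{-1}\partial_\omega S$ restricted to $z$-columns. Multiplying by $S^\top J^{\top}$ and using $S^\top JS=J$ from Lemma \ref{lem:embedding_symplectomorphism} then gives \eqref{eq:formula_zeta_autonomous} directly. I would make this route primary, since it is cleaner; one still has to justify that the $p$-rows of the $z$-columns of $X_K$ do not enter, which follows from $\Pi_{xy}^\top$.

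\textbf{Step 3 (items 1 and 2).} If $\zeta=\Mell$ (resp. $\Mhyp$), then $K$ has exactly the form \eqref{def:H_aut}. Since $\Psi$ conjugates flows and maps $\T^n\times\{0\}$ onto $\varphi(\T^n)$ with the same dynamics $q\mapsto q+\omega t$, the torus $\varphi$ is elliptic (resp. hyperbolic) with normal frequencies $\Omega_j$, by the definition recalled around \eqref{def:H_aut}--\eqref{def:DXH_aut}.
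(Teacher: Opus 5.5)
Your primary route is correct and is essentially the paper's proof. Step 1 is exactly the paper's computation $X_{H\circ\Psi}(q,0)=D\Psi(q,0)^{-1}\partial_\omega\varphi=(\omega,0,0)^\top$, and differentiating $D\Psi\,X_{H\circ\Psi}=X_H\circ\Psi$ at $(q,0)$ gives the paper's identity $DX_{H\circ\Psi}(q,0)=S^{-1}[(DX_H\circ\varphi)S-\partial_\omega S]$, from which $\zeta$ follows via $D^2(H\circ\Psi)=-J\,DX_{H\circ\Psi}$ and $S^\top JS=J$; your fallback direct computation is unnecessary and contains a sign slip: since $\dot q=\partial_pH$, one has $\partial_pH\circ\varphi=+\partial_\omega U$, so the extra term is $+\sum_i\omega_iE_i$, which equals $e^{G^\top}J_m\partial_\omega e^{G}=\Pi_{xy}^\top S^\top J\partial_\omega S\,\Pi_{xy}$ exactly (not merely up to sign), consistent with the primary route.
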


\begin{proof}[Proof of Proposition \ref{prop:criteria_autonomous}]
    Fix  $G: \T^n \to \mathfrak{sp}(2, \R)$ and $\Psi = \boldsymbol{\Psi}(\varphi, G)$ and $S = \boldsymbol{S}(\varphi, G)$ as in Lemma \ref{lem:embedding_symplectomorphism}. Recall that $X_{H \circ \Psi} = (D\Psi) ^{-1}(X_H \circ \Psi)$. Then
    \begin{equation*}
    \label{eq:vector_field_inv_torus}
    X_{H \circ \Psi}(q, 0) = D\Psi(q, 0)^{-1} X_H \circ \varphi =   D\Psi(q, 0)^{-1} \partial_\omega \varphi = \begin{pmatrix}
        \omega & 0 & 0
    \end{pmatrix}^\top.
    \end{equation*}
     In particular,
        \begin{equation}
    \label{eq:normal_form_inv_torus}
   \nabla (H \circ \Psi)(q, 0) = \begin{pmatrix}
        0 & \omega & 0
    \end{pmatrix}^\top.
    \end{equation}
    
     A direct calculation shows that
\begin{equation}
    \label{eq:derivative_conjugated_field}
    DX_{H \circ \Psi}(q, 0) = D\Psi^{-1}(q, 0)[(DX_{H}\circ \Psi(q, 0)) D\Psi(q, 0) - \partial_\omega D\Psi(q, 0)].
\end{equation}

By taking derivatives with respect to $q$ in $X_H \circ \varphi = \partial_\omega \varphi$, it follows that the first column of $(DX_{H}\circ \Psi(q, 0)) D\Psi(q,0) - \partial_\omega D\Psi(q, 0)$ (and hence of $DX_{H \circ \Psi}(q, 0)$) has all entries equal to zero. 

In particular, since $D^2(H \circ\Psi) = -J DX_{H \circ \Psi}$ is symmetric, it follows that $D^2 (H \circ \Psi)(q, 0)$ is of the form
\begin{equation}
\label{eq:normal_form_derivative_inv_torus}
D^2 (H \circ \Psi)(q, 0)  = \left(
\begin{array}{cccc}
0 & 0 & 0 & 0 \\
0 & * & * & * \\
0 & * & \multicolumn{2}{c}{\raisebox{-3pt}[1.2em][0pt]{$\zeta(q)$}} \\
0 & * &  &
\end{array}
\right),
\end{equation}
for some $\zeta: \T^n \to \textup{Sym}(2m, \R)$ smooth. 
Notice that $\zeta$ is given by,

\begin{align*}
\zeta(q) &= -\Pi_{xy}^{\top} J D\Psi^{-1}(q, 0)[(DX_{H}\circ \Psi(q, 0)) D\Psi(q, 0) - \partial_\omega D\Psi(q, 0)] \Pi_{xy}\\
& =\Pi_{xy}^{\top} D\Psi^{T}(q, 0)[((D^2 H)\circ \Psi(q, 0)) D\Psi(q, 0) +J \partial_\omega D\Psi(q, 0)] \Pi_{xy} \\
& = \Pi_{xy}^{\top} S^{\top}(q)[((D^2 H)\circ \varphi(q)) S(q) +J \partial_\omega S(q)] \Pi_{xy}.
\end{align*}

By \eqref{eq:normal_form_inv_torus} and \eqref{eq:normal_form_derivative_inv_torus}, it follows that $H \circ \Psi$ is of the form \eqref{eq:normal_form_hamiltonian_inv_torus}.
\end{proof}

\begin{remark}
    The map $\zeta = \boldsymbol{\zeta}(H, \varphi, G): \T^n \to \mathrm{Sym}(2m, \R)$ given by \eqref{eq:formula_zeta_autonomous} can be expressed explicitly as 
    \[\zeta = B^\top \partial_p^2 H\circ \varphi B + K^{\top} \partial_p\partial_zH\circ \varphi B + B^{\top} (\partial_p\partial_zH\circ \varphi)^\top K + K^{\top} \partial_z^2 H\circ \varphi K + K^{\top} J_m \partial_\omega K,\]
where $K = \exp(G)$,  $B = -(\mathrm{Id}_{n} +\partial_q  u )^{-T} \partial_q w^\top J_m K$ and $\partial_p\partial_zH\circ \varphi$ stands for the $2m \times n$ matrix having components $\partial_{p_i}\partial_{z_j}H\circ \varphi$ for $1 \le i \le n$ and $1 \le j \le 2m$.
\end{remark}

\subsection{A criterion for asymptotic KAM tori of non-autonomous Hamiltonians}

 In this section, we provide a series a criteria to describe the transverse dynamics of the linearized system associated with an asymptotic KAM torus.

Applying Lemma \ref{lem:embedding_symplectomorphism} to families of isotropic torus embeddings immediately yields the following.

\begin{lemma}
\label{lem:embedding_symplectomorphisms_family}
Fix $\sigma \geq 2$ and $T > 0$. Let $\varphi = (\mathrm{Id} + u, v, w): \T^n \times I_T\to \T^n \times \R^n \times \R^{2m}$ and $G: \T^n \times I_T \to \mathfrak{sp}(2m, \R)$ such that $u, v, w \in \mathcal{S}^{\mathrm{pol}, T}_{(\sigma, 0), 0}$ and $G \in  \mathcal{S}^{\mathrm{pol}, T}_{(\sigma - 1, 0), 0}$. Assume that $\varphi^t(\T^n)$ is an isotropic torus and $\mathrm{id} + u^t \in \mathrm{Diff}^\sigma(\T^n)$, for any $t \in I_T$.

Then 
\begin{equation}
\label{eq:S_formula}
\boldsymbol{S}(\varphi, G)  = \left(
\begin{array}{ccc}
\mathrm{Id}_n +\partial_q  u & 0 & 0 \\
\partial_q v & (\mathrm{Id}_n +\partial_q  u )^{-\top} & -(\mathrm{Id}_n +\partial_q  u )^{-\top} \partial_q w^\top J_m e^G\\
\partial_q w & 0 & e^G
\end{array}
\right),
\end{equation}
defines a map $\boldsymbol{S}(\varphi, G) : \T^n \times I_T \to \mathrm{Sp}(2n + 2m,\R, J)$ of class $C^{\sigma - 1}$ on the first variable and continuous on the second variable.

Moreover, if $\partial_q u, \partial_q v, \partial_q w$ and $G$ are of class $C^1$, then $\boldsymbol{S}(\varphi, G)$ is of class $C^1$.

\end{lemma}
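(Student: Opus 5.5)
The plan is to apply Lemma \ref{lem:embedding_symplectomorphism} slice by slice in $t$ and then read off the regularity in $(q,t)$ directly from the explicit formula \eqref{eq:S_formula}.

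\emph{Pointwise statement.} Fix $t \in I_T$. Since $u,v,w \in \mathcal{S}^{\mathrm{pol},T}_{(\sigma,0),0}$, the maps $u^t, v^t, w^t$ are of class $C^\sigma$, while $G^t$ is of class $C^{\sigma-1}$. By hypothesis $\varphi^t(\T^n)$ is isotropic and $\mathrm{id}+u^t \in \mathrm{Diff}^\sigma(\T^n)$. We therefore apply the second part of Lemma \ref{lem:embedding_symplectomorphism} with $r = \sigma - 1 \geq 1$: here $u,v,w$ are $C^{r+1}$ and $G$ is $C^r$. This gives that $\boldsymbol{S}(\varphi^t, G^t)$ is a well-defined $C^{\sigma-1}$ map with values in $\mathrm{Sp}(2n+2m,\R,J)$, and it coincides with \eqref{eq:S_formula} at time $t$. (For non-integer $\sigma$ the same reading applies with Hölder exponents.)

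\emph{Continuity in $t$.} Each entry of \eqref{eq:S_formula} is built from $\partial_q u$, $\partial_q v$, $\partial_q w$ and $G$ using products, matrix inversion and the exponential. The definition of $\mathcal{S}^{\mathrm{pol},T}_{(\sigma,0),0}$ only guarantees continuity in $(q,t)$ of the function itself (the case $k=0$). I will read the hypothesis as also giving joint continuity of $\partial_q u, \partial_q v, \partial_q w$; this is what is needed to make the conclusion meaningful. In fact, when $\sigma \ge 2$, a uniform $C^\sigma$ bound in $q$ combined with continuity in $(q,t)$ yields continuity of the first $q$-derivatives in $(q,t)$. The argument is by interpolation: on the compact torus, uniform $C^\sigma$ bounds together with $C^0$ convergence give $C^1$ convergence. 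This step is the main (mild) subtlety.

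Given that continuity, the rest is routine. The matrix $\mathrm{Id}_n + \partial_q u^t(q)$ is invertible for every $(q,t)$, because $\mathrm{id}+u^t$ is a diffeomorphism. Inversion is continuous on $GL_n$, and $\exp$ is smooth. Hence $\boldsymbol{S}(\varphi,G)$ is continuous in $(q,t)$.

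\emph{$C^1$ statement.} If $\partial_q u$, $\partial_q v$, $\partial_q w$ and $G$ are $C^1$ jointly in $(q,t)$, then every entry of \eqref{eq:S_formula} is a composition of $C^1$ maps with smooth operations: $A \mapsto A^{-1}$ on $GL_n$, $\exp$, matrix products and transposes. Therefore $\boldsymbol{S}(\varphi,G)$ is $C^1$ by the chain rule. No estimates are needed; the only place where any care is required is the joint continuity issue discussed in the previous paragraph.
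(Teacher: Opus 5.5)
Your proposal is correct and follows the paper's route: the paper's proof is the one-line observation that the claim follows from the second part of Lemma~\ref{lem:embedding_symplectomorphism}, applied for each fixed $t$ with $r=\sigma-1$, together with the explicit formula \eqref{eq:explicit_sympectomorphism} and the regularity of $u,v,w,G$. Your interpolation argument for the joint continuity of $\partial_q u,\partial_q v,\partial_q w$ in $(q,t)$, which membership in the $k=0$ spaces does not give directly, supplies a detail the paper leaves implicit.
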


\begin{proof}
    This follows directly from the second part of Lemma \ref{lem:embedding_symplectomorphism}, from Equation \eqref{eq:explicit_sympectomorphism}, and by the regularity assumptions on $u, v, w, G$.

\end{proof}

Applying the lemma above to asymptotic KAM tori, we can prove the following.

\begin{lemma}
\label{lem:matrix_A}
Fix $n, m \geq 1$, $T > 0$, $\sigma \geq 2$, $\omega \in \R^n$.  Let $H_0: \T^n \times B \times I_T  \to \R$ given by
\[ H_0(q, p, z, t) = \omega \cdot p +O^2(p, z), \qquad \partial_{(p,z)}^2H_0 \in \mathcal{S}^{\mathrm{pol}, T}_{(\sigma, 2), 0},\]
where $B \subseteq \R^{n + 2m}$ is an open ball centered at the origin.

Let $H: \T^n \times B \times I_T  \to \R$ such that $H^t$ is of class $C^\sigma$, for each $t \in I_T$, and 
\[ \lim_{t \to +\infty} |X_{H}^t - X_{H_0}^t|_{C^1} = 0. \]

Assume that $\varphi = (\mathrm{id} + u, v, w): \T^n \times I_T \to \T^n \times \R^n \times \R^{2m}$ is a $C^\sigma$ asymptotic KAM torus associated with $(X_H, X_{H_0}, \varphi_0)$, satisfying $u, v, w \in \mathcal{S}^{\mathrm{pol}, T}_{(\sigma, 0), 0}$.

Fix $G: \T^n \times I_T \to \mathfrak{sp}(2, \R)$ in $\mathcal{S}^{\mathrm{pol}, T}_{(\sigma - 1, 0), 0}$. Let $S = \boldsymbol{S}(\varphi, G): \T^n \times I_T \to \mathrm{Sp}(2n + 2m, \R)$ as in \eqref{eq:S_formula} and denote by $S^{-1}$ the map $(q, t) \mapsto S(q, t)^{-1}$.

 Assume that $\Lo S$ is well-defined. Then, the map $A = \boldsymbol{A}(H, \varphi, G): \T^n \times I_T \to \mathfrak{sp}(2n + 2m, \R)$ given by %
\begin{equation}
\label{eq:def_A_Hamiltonian}
A = S^{-1}\big( (DX_H \circ \varphi )S - \Lo S),
\end{equation}
is a well-defined uniformly bounded family of matrices of the form
     \begin{equation}
     \label{eq:form_A}
        A =\begin{pmatrix} 0_{n \times n} & * & * \\
                                0_{n \times n} & 0_{n \times n} & 0_{n \times 2m} \\
                                0_{2m \times n} & * & *\end{pmatrix}.
    \end{equation}

\end{lemma}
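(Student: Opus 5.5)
The plan is to check four properties of $A$ in turn: it is well defined, it is Hamiltonian (takes values in $\mathfrak{sp}(2n+2m,\R)$), its first block column vanishes, and its $p$-row vanishes. Uniform boundedness is handled last.

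\emph{Well-definedness and the Lie-algebra property.} By Lemma \ref{lem:embedding_symplectomorphisms_family}, $S=\boldsymbol{S}(\varphi,G)$ takes values in $\mathrm{Sp}(2n+2m,\R,J)$. Hence $S^{-1}=-JS^\top J$ exists and has the same regularity as $S$. Since $\Lo S$ is assumed to exist, $A$ is well defined.

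For the Hamiltonian structure, I would first note that $DX_H=J\,D^2H$ with $D^2H$ symmetric, so $DX_H\circ\varphi\in\mathfrak{sp}$. Conjugating by the symplectic matrix $S$ preserves $\mathfrak{sp}$, so $S^{-1}(DX_H\circ\varphi)S\in\mathfrak{sp}$. Next, applying $\Lo$ to $S^\top JS=J$ gives $(\Lo S)^\top JS+S^\top J\Lo S=0$. This says exactly that $S^{-1}\Lo S\in\mathfrak{sp}$. Therefore $A\in\mathfrak{sp}(2n+2m,\R)$, and we can write $A=J\Sigma$ with $\Sigma=-JA$ symmetric.

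\emph{Vanishing first column.} The first block column of $S$ is $\partial_q\varphi=(\mathrm{Id}+\partial_qu,\partial_qv,\partial_qw)^\top$. Differentiating the invariance equation \eqref{def:cond2_asymKAMtorus}, $X_H\circ\varphi=\partial_q\varphi\,\omega+\partial_t\varphi$, with respect to $q$ gives
\[
(DX_H\circ\varphi)\,\partial_q\varphi=\Lo(\partial_q\varphi).
\]
Consequently the first block column of $(DX_H\circ\varphi)S-\Lo S$ is zero, and so is that of $A$.

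\emph{Vanishing $p$-row.} Since $A=J\Sigma$ and $J$ is invertible, $\Sigma$ also has zero first block column. By symmetry, its first block row vanishes as well. With $J=\mathrm{diag}(J_n,J_m)$, the $p$-row of $A=J\Sigma$ equals minus the $q$-row of $\Sigma$, which is zero. This yields the shape \eqref{eq:form_A}.

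\emph{Uniform boundedness.} This is the step I expect to be the main obstacle. Since $\varphi$ is an asymptotic KAM torus, $|u^t|_{C^\sigma}\to0$. So for $t$ large $(\mathrm{Id}+\partial_qu)^{-1}$ exists and is uniformly bounded by a Neumann series, and $S$, $S^{-1}$ are bounded by the hypotheses on $u,v,w,G$. The term $DX_H\circ\varphi$ is bounded, since $\partial^2_{(p,z)}H_0$ is bounded, $\varphi$ stays near the zero section, and $|X_H^t-X_{H_0}^t|_{C^1}\to0$.

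For $\Lo S$, the first column equals $(DX_H\circ\varphi)\partial_q\varphi$ and is bounded. The remaining entries involve $\Lo(\partial_qu)$, $\Lo(\partial_qw)$ and $\Lo e^G$. The first two are bounded via the same differentiated invariance equation. The term $\Lo e^G$ needs a bound on $\Lo G$. I would first try to extract it from the assumption that $\Lo S$ is well defined, taking boundedness of $\Lo G$ as implicit in that hypothesis. Failing that, I would enlarge the time $T$ and state boundedness on $\T^n\times I_{T'}$ for each compact time range, adding the bound on $\Lo G$ explicitly where the lemma is later applied.
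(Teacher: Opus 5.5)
Your structural argument is the paper's own proof: $A\in\mathfrak{sp}(2n+2m,\R)$, the vanishing first block column obtained by differentiating the invariance equation in $q$, and the vanishing $p$-row from the symmetry of $JA$. Your derivation of the symmetry of $S^\top J\Lo S$ directly from $\Lo(S^\top JS)=0$ is slightly more direct than the paper's route through $\Lo(SS^{-1})=0$.

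\textbf{Uniform boundedness.} Your suspicion here is justified. The paper's proof has the same gap: it cites only the boundedness of $DX_H\circ\varphi$, $S$ and $S^{-1}$, and never addresses $\Lo S$.

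Your first fallback cannot work, because well-definedness of $\Lo S$ carries no quantitative information. In fact the assertion is false as stated. Take $H=H_0=\omega\cdot p+\frac12\Mell\cdot z^2$, $\varphi=\varphi_0$, and $G(q,t)=\epsilon\sin(t^2)G_0$ with $G_0\in\mathfrak{sp}(2m,\R)$ constant. Every hypothesis holds and $S=\mathrm{diag}(\mathrm{Id}_n,\mathrm{Id}_n,e^G)$. The $zz$ block of $A$ is $e^{-G}J_m\Mell e^{G}-2\epsilon t\cos(t^2)G_0$, which is unbounded.

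The correct repair is to add the hypothesis $\sup_t|\Lo G^t|_{C^0}<\infty$. Drop the ``compact time ranges'' variant, since it does not give uniform boundedness.

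The bookkeeping is also simpler than you expect. Set $U=\mathrm{Id}_n+\partial_qu$, $W=\partial_qw$, $K=e^G$. From $S^{-1}=-JS^\top J$, the $q$- and $z$-rows of $S^{-1}$ are $(U^{-1}\quad 0\quad 0)$ and $(-K^{-1}WU^{-1}\quad 0\quad K^{-1})$. Hence, in the blocks $a_{12}$, $a_{13}$, $a_{32}$ and the $zz$ block, the only contribution of $\Lo S$ is $-K^{-1}\Lo K=-\mathcal{A}(G,\Lo G)$. This term is bounded by Lemma \ref{lem:exp_expansion_bounds} once $\Lo G$ is. Your bounds on $\Lo\partial_qu$ and $\Lo\partial_qw$ are correct but unnecessary.

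Where the lemma is used, nothing breaks. In Proposition \ref{prop:criteria_non_autonomous} the $zz$ block equals $J_m\zeta$ with $\zeta$ constant. Moreover, the maps $G$ produced in the main proofs lie in $\mathcal{S}p^{\mathrm{ell},T}_{\sigma-1,\omega,\Omega,\ell+\dec}$ (resp.\ the hyperbolic analogue), which already forces $\Lo G$ to be bounded, e.g.\ $J_m\Lo G=\mathfrak{L}^{\mathrm{ell}}_{\omega,\Omega}G-G^\top\Mell-\Mell G$.
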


\begin{proof}
  Fix  $G: \T^n \times I_T \to \mathfrak{sp}(2, \R)$ and let $\Psi = \boldsymbol{\Psi}(\varphi, G)$, $S = \boldsymbol{S}(\varphi, G)$ as in Lemma \ref{lem:embedding_symplectomorphisms_family}. 

Let us start by checking that $A$ defines a family of matrices in $\mathfrak{sp}(2n + 2m, \R)$. Notice that this is equivalent to $JA$ taking values in $\mathrm{Sym}(2m + 2n, \R)$.
  
  Recalling that $J^\top = - J$ and that $S$ is a family of symplectic matrices (and thus satisfies $S^\top J S = J$), we have
  \begin{align*}
  JA & = JS^{-1}((DX_H \circ  \varphi) S - \Lo S) = S^T J(J(D^2H \circ \varphi) S - \Lo S) \\
  & = -S^T((D^2H \circ \varphi) S - J\Lo S).
  \end{align*}  
Hence, it suffices to check that $S^\top J\Lo S$ is a symmetric matrix. Applying $\Lo$ to $S S^{-1} = \mathrm{Id}_{2m + 2n}$ (notice that $\Lo S^{-1}$ is well-defined as $S^{-1} = - J S^{\top}J$), it easily follows that 
\[ (\Lo S)  S^{-1} = -S (\Lo S^{-1}).\]
Hence, 
\begin{align*}
 (S^{\top} J_m \Lo S)^{\top} & = - (\Lo S^{\top})J_m S = - J_m(\Lo S^{-1}) S = J_m S^{-1} \Lo S = S^{\top} J_m \Lo S,
\end{align*} 
and thus $JA$ is symmetric.

  Let us check now that $A$ is indeed of the form \eqref{eq:form_A}. Notice that since $JA$ is symmetric, it suffices to check that the first $n$ columns of $A$ have all entries equal to $0$. 
  
  Recall that $\varphi$ satisfies 
  \[X_H \circ \varphi = \Lo  \varphi.\]
  
  Since $\Psi(\cdot, 0) = \varphi(\cdot)$ and $S(\cdot) = D\Psi(\cdot, 0)$, by taking derivatives in the above equation, it follows that  the first $n$ columns of $(DX_{H}\circ \varphi) S - \Lo S$, (and thus of $A$, by \eqref{eq:def_A_Hamiltonian}), have all entries equal to zero. Therefore, $A$ is of the form \eqref{eq:form_A}.

Finally, noticing that $DX_H \circ \varphi$, $S$ and $S^{-1}$ are uniformly bounded, it follows that $A$ is also uniformly bounded.

\end{proof}

Similarly to the previous section, applying Proposition \ref{prop:general_asymp_characterization} and Lemmas \ref{lem:embedding_symplectomorphisms_family}, \ref{lem:matrix_A}, we can deduce the following criterion.

\begin{proposition}
\label{prop:criteria_non_autonomous}
Let $H_0, H : \T^n \times B \times I_T \to \R$ and  $\varphi = (\mathrm{Id} + u, v, w): \T^n \times I_T \to \T^n \times \R^{n} \times \R^{2m}$ as in Lemma \ref{lem:matrix_A}, with $\sigma \geq 2$. Fix $T' > T$ and $G: \T^n \times I_T \to \mathfrak{sp}(2, \R)$ in $\mathcal{S}^{\mathrm{pol}, T'}_{(\sigma - 1, 0), 0}$. Let $S = \boldsymbol{S}(\varphi, G) :  \T^n \times I_{T'} \to \mathrm{Sp}(2n + 2m, \R, J)$ as in \eqref{eq:S_formula}, assume that $S$ is of class $C^1$,  and let $\zeta = \boldsymbol{\zeta}(H, \varphi, G): \T^n \times I_{T'} \to \mathrm{Sym}(2m, \R)$ be given by
\begin{equation}
    \label{eq:formula_zeta}
    \boldsymbol{\zeta}(H, \varphi, G) = \Pi_{xy}^\top S^\top\big( (D^2H \circ \varphi)  S + J \Lo S\big) \Pi_{xy}.
\end{equation}
    Let $\Omega_1,\dots,\Omega_m \in \R_{>0}$, $\ell > 0$, $\lambda > \max_{1 \leq i \leq m} \Omega_i,$ and we denote $\Omega = \mathrm{diag}(\Omega_1,\dots, \Omega_m)$. Then, the following holds.
    
  \begin{enumerate}
        \item If $\zeta = \Mell$ and 
        \[  w \in \mathcal{S}^{\mathrm{pol}, T}_{(\sigma, 0), \ell + 1}, \qquad G \in \mathcal{S}^{\mathrm{pol}, T'}_{(\sigma - 1, 0), \ell},\]
        then $\varphi$ is an asymptotically elliptic KAM torus.
        \item If $\zeta = \Mell$, $(\omega, \Omega)$ satisfy \eqref{diop_ass_ell_aut} and
        \[  w \in \mathcal{S}^{\mathrm{pol}, T}_{(\sigma, 0), \ell}, \qquad G \in \mathcal{S}^{\mathrm{pol}, T'}_{(\sigma - 1, 0), \ell - 1},\]
        then $\varphi$ is an asymptotically elliptic KAM torus.
        \item If $\zeta = \Mhyp$ and 
                \[   w \in \mathcal{S}^{\mathrm{exp}, T}_{(\sigma, 0), \lambda}, \qquad G \in \mathcal{S}^{\mathrm{exp}, T'}_{(\sigma - 1, 0), \lambda},\]
        then $\varphi$ is an asymptotically hyperbolic KAM torus.
    \end{enumerate}
\end{proposition}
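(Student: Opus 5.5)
The plan is to reduce everything to Proposition \ref{prop:general_asymp_characterization}, using $S=\boldsymbol{S}(\varphi,G)$ as the conjugating map. Lemma \ref{lem:embedding_symplectomorphisms_family} says $S$ takes values in $\mathrm{Sp}(2n+2m,\R,J)$, so it is invertible, and by assumption it is $C^1$. Lemma \ref{lem:matrix_A} says that $A=S^{-1}((DX_H\circ\varphi)S-\Lo S)$ is uniformly bounded, lies in $\mathfrak{sp}(2n+2m,\R)$ and has the block form \eqref{eq:form_A}. This $A$ matches \eqref{eq:A_as_function_of_S} up to the sign convention for $\Lo S$ in \eqref{eq:conjugated_cocycles_infinitesimal}; I will use the sign for which the derivation at the start of Section \ref{sc:criteria_asym_dyn} goes through.

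\textbf{Step 1: identify $A$ as \eqref{def:A} with $M=\zeta$.} Since $JA$ is symmetric, I will write
\[
JA=-S^\top\big((D^2H\circ\varphi)S-J\Lo S\big)
\]
as in the proof of Lemma \ref{lem:matrix_A}. Its lower-right $2m\times 2m$ block is $\Pi_{xy}^\top JA\,\Pi_{xy}$, and up to the same sign convention this equals $\zeta$ from \eqref{eq:formula_zeta}. The $zz$ block of $JA$ is $J_m A_{zz}$ (here $J=\mathrm{diag}(J_n,J_m)$), so $A_{zz}=J_m\zeta$ up to the sign fixed by $J_m^{-1}=-J_m$. I will track this sign against the convention in \eqref{def:A}, where the block is $J_mM$. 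Given $\zeta=\Mell$ or $\Mhyp$, $A$ then has the form \eqref{def:A}, and \eqref{def:cond1_ellhyppar_asymKAMtorus} holds by uniform boundedness.

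\textbf{Step 2: read off the decay of the $z$-row of $S$ from \eqref{eq:S_formula}.} The entries are $S_{zq}=\partial_q w$, $S_{zp}=0$ and $S_{zz}=e^G$. So \eqref{eq:decrease_zp} holds trivially. For the other two entries, $|S^t_{zq}|\le|w^t|_{C^\sigma}$ because $\sigma\ge 2$. Also $|e^{G}-\mathrm{Id}_{2m}|\le C|G^t|_{C^0}$ once $G^t$ is small, which follows from the decay assumption on $G$.

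\textbf{Step 3: check the three cases of Proposition \ref{prop:general_asymp_characterization}.}
\begin{itemize}
\item In case (1), the assumptions give $|S_{zq}^t|t^2\le Ct^{1-\ell}$ and $|S^t_{zz}-\mathrm{Id}_{2m}|t\le Ct^{1-\ell}$. These tend to zero when $\ell>1$. The stated hypothesis only has $\ell>0$, so the exponents in the statement may need $\ell$ read as $\ell>1$, or a slight refinement is required; I will check this against Propositions \ref{prop:trasv_dyn_ell_case} and \ref{prop:general_asymp_characterization}, which fix the growth rates $t^2$ and $t$.
\item In case (2), the Diophantine condition lets us use item (2) of Proposition \ref{prop:general_asymp_characterization}. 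There we need $|S_{zq}^t|t\le Ct^{1-\ell}\to0$ and $|G^t|\le Ct^{1-\ell}\to 0$, which again requires $\ell>1$.
\item In case (3), $|S_{zq}^t|e^{\varrho t}\le Ce^{(\varrho-\lambda)t}\to0$ and similarly for $e^G-\mathrm{Id}_{2m}$, since $\lambda>\varrho$.
\end{itemize}

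\textbf{Main obstacle.} No genuine analysis is needed. The main obstacle is the bookkeeping in Step 1, namely confirming that the $zz$ block of $A$ is exactly $J_m\zeta$ with the correct sign conventions. I will do this by writing $S^{-1}=-JS^\top J$ and computing $\Pi_{xy}^\top A\,\Pi_{xy}=-J_m\Pi_{xy}^\top JA\,\Pi_{xy}$ up to sign. The second point to get right is matching the decay exponents to the growth bounds of Section \ref{sc:analysis_transversal}.
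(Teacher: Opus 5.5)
Your proposal is the paper's proof. You reduce to Proposition~\ref{prop:general_asymp_characterization} with $A=S^{-1}\big((DX_H\circ\varphi)S-\Lo S\big)$ from Lemma~\ref{lem:matrix_A}, identify $A_{zz}=J_m\zeta$, and read off $S_{zq}=\partial_q w$, $S_{zp}=0$, $S_{zz}=e^G$ from \eqref{eq:S_formula}. The sign bookkeeping you worry about closes exactly: the correct identity is $JA=-S^\top\big((D^2H\circ\varphi)S+J\Lo S\big)$ (the $-J\Lo S$ in the proof of Lemma~\ref{lem:matrix_A} is a typo), so $\Pi_{xy}^\top JA\,\Pi_{xy}=-\zeta$ and $A_{zz}=-J_m^{-1}\zeta=J_m\zeta$.

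Your doubt about $\ell$ is also justified. The paper only asserts that the decay rates follow, but cases (1) and (2) need $\ell>1$. For $\ell\le 1$ the quantities $|S^t_{zq}|t^2$ and $|S^t_{zz}-\mathrm{Id}_{2m}|t$ in case (1), and $|S^t_{zq}|t$ in case (2), are only $O(t^{1-\ell})$, and in case (2) $G$ need not even decay. This costs nothing, because the proposition is only used, in the proof of Proposition~\ref{prop:A_transverse}, with $\ell+\dec>1$ in the role of $\ell$.
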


\begin{proof}
Let  $A = \boldsymbol{A}(H, \varphi, G): \T^n \times I_{T'} \to \mathfrak{sp}(2n + 2m, \R)$ as in Lemma \ref{lem:matrix_A}, that is,  given by \eqref{eq:def_A_Hamiltonian}. We will prove the result by checking that, in each case, the matrices $A$ and $S$ fulfill the assumptions of Proposition \ref{prop:general_asymp_characterization}.

First, let us point out that, by definition of $A$, Equation \eqref{eq:A_as_function_of_S} (with $X = X_H$) is satisfied and that, by \eqref{eq:S_formula}, Equation \eqref{eq:decrease_zp} holds trivially.

 Notice that $\zeta$ is given by $-\Pi_{xy}^{\top} J A \Pi_{xy}$ and, since $JA$ takes values in the symmetric matrices, it is a well-defined family of symmetric matrices.  Indeed, we have
\begin{align*}
\Pi_{xy}^{\top} J A \Pi_{xy} & = \Pi_{xy}^{\top} J S^{-1}[((DX_{H})\circ \varphi) S -\Lo S] \Pi_{xy}\\
 &= \Pi_{xy}^{\top} S^\top J[((J D^2 H)\circ \varphi) S -\Lo S] \Pi_{xy}\\
& = \Pi_{xy}^{\top} S^\top[((D^2 H)\circ \varphi) S +J\Lo S] \Pi_{xy}\\
& = -\zeta.
\end{align*}

Thus, by Lemma \ref{lem:matrix_A}, it follows that $A$  satisfies  \eqref{def:cond1_ellhyppar_asymKAMtorus} and is of the form
\[                              A =\begin{pmatrix} 0_{n \times n} & * & * \\
                                0_{n \times n} & 0_{n \times n} & 0_{n \times 2m} \\
                                0_{2m \times n} & * & J_m\zeta\end{pmatrix}.
\]

Therefore, to prove the result, it suffices to show that in each case the assumptions on $w$ and $G$ imply the decay rates in the corresponding case of Proposition \ref{prop:general_asymp_characterization}. 

Noticing that, by \eqref{eq:S_formula}, $S_{zq} = \partial_q w$ and $S_{zz} = e^G$, the decay rates on $S_{zq}$ follow immediately from the assumptions on $w$, while the decay rates on $S_{zz} - \mathrm{Id}_{2m} = e^G - \mathrm{Id}_{2m}$ follow from the assumptions on $G$ together with Lemmas \ref{lem:exp_expansion_bounds}, and \ref{lem:exp_exp_bounds}, which will be proven in later sections.
\end{proof}

\begin{remark}
\label{rmk:formula_zeta}
    The map $\zeta = \boldsymbol{\zeta}(H, \varphi, G): \T^n \to \mathrm{Sym}(2m, \R)$ given by \eqref{eq:formula_zeta} can be expressed explicitly as 
    \[\zeta = B^\top \partial_p^2 H\circ \varphi B + K^{\top} \partial_p\partial_zH\circ \varphi B + B^{\top} (\partial_p\partial_zH\circ \varphi)^\top K + K^{\top} \partial_z^2 H\circ \varphi K + K^{\top} J_m \Lo K,\]
where $K = \exp(G)$,  $B = -(\mathrm{Id}_{n} +\partial_q  u )^{-T} \partial_q w^\top J_m K$ and $\partial_p\partial_zH\circ \varphi$ stands for the $2m \times n$ matrix having components $\partial_{p_i}\partial_{z_j}H\circ \varphi$ for $1 \le i \le n$ and $1 \le j \le 2m$.
\end{remark}

\section{Higher order terms of the unperturbed Hamiltonian}\label{sec:FS_Ref_Ham}

In this section, we obtain bounds (on the norms in appropriate Banach spaces) for the higher-order terms (denoted by $R$) in the Taylor expansion of the unperturbed Hamiltonian $H_0$ in the different  Theorems stated in Section \ref{sc:results}. %

For this purpose, we can express  $H_0 : \T^n \times B \times I_0 \longrightarrow \R$ in~\eqref{eq:initial_hamiltonian}, and~\eqref{eq:initial_hamiltonian_hyp}, as 
\begin{equation}
\label{eq:initial_Hamiltonian_NR}
        H_0(q,p,z,t) = N(p,z) + R(q,p,z,t), 
\end{equation}
where 
\begin{equation*}
\begin{array}{rcl}   N(p,z) &= & \omega \cdot p + {1 \over 2} M^* \cdot z^2, \qquad  M^*=\Mell,\ \text{or }\Mhyp\\
    R(q,p,z,t) &= &M(q,p,z,t) \cdot p^2 + m(q,z,t) \cdot (p,z) + L(q,z,t) \cdot z^3,
\end{array}
\end{equation*}
 and
\begin{equation}
\label{def:abcdMmL_terms}
\begin{aligned}
    M(q,p,z,t) &= \int_0^1 (1 -\tau) \partial_p^2 H_0(q, \tau p, z,t) d\tau ,\\
    m(q,z,t) &= \int_0^1 \partial_p\partial_z H_0(q, 0, \tau z,t) d\tau,\\
    L(q,z,t) &=  {1 \over 2} \int_0^1 (1-\tau)^2 \partial_z^3 H_0(q,0,\tau z, t) d\tau. 
\end{aligned}
\end{equation}
Notice that the RHS in each of the equalities in \eqref{def:abcdMmL_terms} remains the same if we change $H_0$ by $R$.

Let us recall that $M \cdot z^2$ denotes the symmetric bilinear form $M$ evaluated twice on the vector $z$. The same convention applies to the terms  $M(q,p,z,t) \cdot p^2$ and $m(q,z,t) \cdot (p,z)$. In particular, $m(q,z,t) \cdot (p,z)$ stands for the vectors $p$ and $z$ given as arguments of the bilinear form $m(q,z,t)$. Finally, $L(q,z,t) \cdot z^3$ means the symmetric trilinear form $L(q,z,t)$ evaluated three times on $z$.

\begin{lemma}\label{lemma:def_bar_M_m_L}
    For all $(q,p,z,t) \in \T^n \times B \times I_0$, we define
    \begin{equation}
    \begin{aligned}\label{def:barMmL}
       \overline{M}(q,p,z,t) &= \int_0^1 \partial^2_p H_0(q, \tau p , z, t) d \tau, \hspace{10mm} \overline{\overline{M}}(q,p,z,t) = \partial_p^2 H_0(q,p,z,t),\\
       \overline{m}(q,z,t) &=  \partial_p\partial_z H_0(q, 0 , z, t), \hspace{18mm} \overline{L}(q,z,t) = \int_0^1 (1 -\tau)\partial_z^3 H_0(q, 0 , \tau z, t) d \tau\\
       \overline{\overline{{L}}}(q, z, t)  &=  \int_0^1 \partial_z^3 H_0(q, 0 , \tau z, t) d \tau.
       \end{aligned}
    \end{equation}
    Then,
    \begin{equation}\label{eq:barMmL}
        \begin{aligned}
            &\overline{M}(q,p,z,t)\cdot p = \partial_p\left(M(q,p,z,t) \cdot p^2\right), \hspace{10mm} \overline{\overline{M}}(q,p,z,t) = \partial_p \left(\overline{M}(q,p,z,t)\cdot p\right)\\
            &\overline{m}(q,z,t)\cdot p = \partial_z\left(m(q,z,t) \cdot (p,z)\right),\hspace{15mm} \overline{L}(q,z,t)\cdot z^2 = \partial_z\left(L(q,z,t) \cdot z^3\right),\\
            &\overline{\overline{{L}}}(q, z, t) \cdot z = \partial_z\left(\overline{L}(q,z,t)\cdot z^2\right)
        \end{aligned}
    \end{equation}
    for all $(q,p,z,t) \in \T^n \times B \times I_{0}$. Moreover, let $\sigma \ge 0$, $k \in \mathbb{Z}_{\ge 1}$, and $* \in \{\mathrm{pol}, \mathrm{exp}\}$. If $\partial_{(p,z)}^2 R \in \mathcal{S}_{(\sigma,k),0}^{*,T}$, where $T \ge 1$ when $*=\mathrm{pol}$ and $T \ge 0$ when $*=\mathrm{exp}$, then $\overline{M} ,\ \overline{\overline{M}} ,\ \overline{m} \in \mathcal{S}_{(\sigma,k),0}^{*,T}$, and $\overline{L} ,\ \overline{\overline{L}} \in \mathcal{S}_{(\sigma,k-1),0}^{*,T}$ with 
    \begin{equation}\label{est:MmLbar_Holder}
        |\overline{M}|^{*, T}_{\sigma+k, 0} ,\ |\overline{\overline{M}}|^{*, T}_{\sigma+k, 0} ,\ |\overline{m}|^{*, T}_{\sigma+k, 0} ,\ |\overline{L}|^{*, T}_{\sigma+k-1, 0} ,\ |\overline{\overline{L}}|^{*, T}_{\sigma+k-1, 0} \le |\partial_{(p,z)}^2 H_0|^{*, T}_{\sigma+k, 0}.
    \end{equation}
    In addition, letting $0<\sigma'<\sigma$, if $\partial_{(p,z)}^2 R \in \mathscr{S}_{\sigma,0}^{*,T}$ then $\overline{M} ,\ \overline{\overline{M}} ,\ \overline{m} \in \mathscr{S}_{\sigma,0}^{*,T}$, and $\overline{L} ,\ \overline{\overline{L}} \in \mathscr{S}_{\sigma',0}^{*,T}$ with
     \begin{equation}\label{est:MmLbar_analy}
        |\overline{M}|^{*, T}_{\sigma, 0} ,\ |\overline{\overline{M}}|^{*, T}_{\sigma, 0} ,\ |\overline{m}|^{*, T}_{\sigma, 0} \le |\partial_{(p,z)}^2 H_0|^{*, T}_{\sigma, 0}, \qquad 
        |\overline{L}|^{*, T}_{\sigma', 0} ,\ |\overline{\overline{L}}|^{*, T}_{\sigma', 0} \le {1 \over(\sigma-\sigma')}|\partial_{(p,z)}^2 H_0|^{*, T}_{\sigma, 0}.
    \end{equation}
\end{lemma}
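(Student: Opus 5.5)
The proof splits into two parts: the algebraic identities \eqref{eq:barMmL}, obtained by differentiating under the integral sign, and the norm estimates, obtained by bounding integrands uniformly in $\tau$. For the identities, fix $(q,t)$. Write $g(\tau)=H_0(q,\tau p,z,t)$. Taylor's formula with integral remainder gives $M\cdot p^2=\int_0^1(1-\tau)g''(\tau)\,d\tau$, and this equals $g(1)-g(0)-g'(0)$, that is, $H_0(q,p,z,t)-H_0(q,0,z,t)-\partial_pH_0(q,0,z,t)\cdot p$. Differentiating in $p$ gives $\partial_pH_0(q,p,z,t)-\partial_pH_0(q,0,z,t)$, which equals $\int_0^1\partial_p^2H_0(q,\tau p,z,t)\,d\tau\cdot p=\overline M\cdot p$. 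A second derivative in $p$ returns $\partial_p^2H_0=\overline{\overline M}$.

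The $z$-identities work the same way. The map $\tau\mapsto\partial_pH_0(q,0,\tau z,t)$ yields $m\cdot(p,z)=\big(\partial_pH_0(q,0,z,t)-\partial_pH_0(q,0,0,t)\big)\cdot p$, whose $z$-derivative is $\overline m\cdot p$. Taylor's formula for $h(\tau)=H_0(q,0,\tau z,t)$ to third order gives $L\cdot z^3=h(1)-h(0)-h'(0)-\tfrac12h''(0)$. Its $z$-gradient is $\partial_zH_0(q,0,z,t)-\partial_zH_0(q,0,0,t)-\partial_z^2H_0(q,0,0,t)z$, which is the second-order Taylor remainder of $\partial_zH_0$, namely $\int_0^1(1-\tau)\partial_z^3H_0(q,0,\tau z,t)\,d\tau\cdot z^2=\overline L\cdot z^2$. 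Differentiating once more gives $\overline{\overline L}\cdot z$. These are routine computations, using the symmetry of the multilinear forms.

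For the Hölder estimates, each of $\overline M,\overline{\overline M},\overline m$ is an average over $\tau\in[0,1]$, or a plain restriction, of second $(p,z)$-derivatives of $H_0$ composed with the linear contractions $(q,p,z)\mapsto(q,\tau p,z)$ or $(q,0,\tau z)$. These maps send $B$ into $B$ and have Lipschitz constant at most $1$, so composing with them does not increase $C^{\sigma+k}$ norms. Minkowski's inequality under the $\tau$-integral then gives the bound by $|\partial^2_{(p,z)}H_0^t|_{C^{\sigma+k}}$ for each $t$; the weights $t^\ell$ with $\ell=0$, or $e^{\lambda t}$ with $\lambda=0$, are trivial. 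For $\overline L$ and $\overline{\overline L}$ one derivative is spent, since $\partial_z^3H_0=\partial_z(\partial_z^2H_0)$, so they lie in $C^{\sigma+k-1}$; this is why $k\ge1$ is needed. The weights $\int_0^1(1-\tau)\,d\tau\le1$ keep the constant at $1$. Continuity of the $(q,p,z)$-derivatives in $t$ up to the required order passes through the integral by uniform continuity on compact pieces.

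In the analytic case the same averaging gives the bounds for $\overline M,\overline{\overline M},\overline m$ on $\T^n_\sigma\times B_\sigma$ by the sup norm, since $\tau p$ and $\tau z$ stay in $B_\sigma$. For $\overline L$ and $\overline{\overline L}$, a Cauchy estimate on the extra $z$-derivative, shrinking the domain from $\sigma$ to $\sigma'$, gives the factor $(\sigma-\sigma')^{-1}$. The only delicate points I anticipate are bookkeeping: the statement phrases the hypothesis through $R$ but the bound through $H_0$. These agree because $\partial^2_{(p,z)}$ of $N$ is constant and the differences cancel in the relevant forms, as the remark after \eqref{def:abcdMmL_terms} indicates. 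One also needs the Hölder composition bound to have constant $1$ for contractions; if necessary, a harmless universal constant can be absorbed there.
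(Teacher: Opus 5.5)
Your argument is correct and is essentially the paper's proof: you obtain the identities \eqref{eq:barMmL} from Taylor's formula with integral remainder, differentiated in $p$ or $z$, and the estimates from the definitions by bounding the $\tau$-averages, with a Cauchy estimate on the extra $z$-derivative giving the factor $(\sigma-\sigma')^{-1}$ in the analytic case. You do not need to absorb any constant in the composition step: for the maps $(q,p,z)\mapsto(q,\tau p,z)$ and $(q,z)\mapsto(q,0,\tau z)$, the chain rule expresses each derivative of the composition as a power $\tau^{j}$, with $j\ge 0$ the number of scaled derivatives, times the corresponding derivative of $H_0$ composed with the same map, so the constant $1$ in \eqref{est:MmLbar_Holder} holds exactly (a Lipschitz bound alone would not give this for the componentwise norm).
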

\begin{proof}
    We prove the first equality in~\eqref{eq:barMmL}. Expanding the Hamiltonian $H_0$ in~\eqref{eq:initial_Hamiltonian_NR} and its derivative $\partial_p H_0  $ in a neighborhood of $p=0$, we have that
    \begin{align}\label{proof:lemma_bar_H}
        &H_0(q,p,z,t) = H_0(q,0,z,t) + \partial_p H_0(q,0,z,t)p + \int_0^1 (1-\tau)\partial_p^2 H_0(q, \tau p, z, t) d\tau \cdot p^2,\\ \label{proof:lemma_bar_partialH}
        &\partial_p H_0(q,p,z,t) = \partial_p H_0(q,0,z,t) + \int_0^1 \partial^2_p H_0(q, \tau p, z, t) d\tau \cdot p.
    \end{align}
    Differentiating~\eqref{proof:lemma_bar_H} with respect to $p$ and combining it with~\eqref{proof:lemma_bar_partialH} we obtain that 
    \begin{equation*}
        \int_0^1 \partial^2_p H_0(q, \tau p, z, t) d\tau \cdot p = \partial_p \left(\int_0^1 (1-\tau)\partial_p^2 H_0(q, \tau p, z, t) d\tau \cdot p^2\right)
    \end{equation*}
    which concludes the proof of the first equality of~\eqref{eq:barMmL}. To prove the second equality in~\eqref{eq:barMmL}, we perform a Taylor expansion of $\partial_p H_0$ around $p=0$, which reads as in~\eqref{proof:lemma_bar_partialH}. Differentiating with respect to $p$ both sides of~\eqref{proof:lemma_bar_partialH} we have that 
    \begin{equation*}
        \partial_p^2 H_0(q,p,z,t) = \partial_p \left(\int_0^1 \partial_p^2 H_0(q,\tau p,z,t) d\tau \cdot p \right),
    \end{equation*}
    that concludes the proof of the second equality of~\eqref{eq:barMmL}.
    Similarly, one can verify the others. 
    The estimates~\eqref{est:MmLbar_Holder} and~\eqref{est:MmLbar_analy} follow directly from the definitions in~\eqref{def:barMmL}; in the analytic case, the bounds for $\overline L$ and $\overline{\overline L}$ are obtained by Cauchy's estimates.
\end{proof}

\section{Asymptotically Elliptic Case}%
\label{sec:Proof_Theorem_Ell}

In this section we prove Theorems \ref{Thm:elliptic_Csigma}, \ref{Thm:elliptic_analy}  and Corollary \ref{cor:ell}.

Theorem \ref{Thm:elliptic_analy}, which is simply the version of Theorem \ref{Thm:elliptic_Csigma} adapted to real analytic Hamiltonians, can be proved along the same lines as Theorem \ref{Thm:elliptic_Csigma}, with some minor modifications. Moreover, since Theorem \ref{Thm:elliptic_Csigma} applies under the hypotheses of Theorem \ref{Thm:elliptic_analy}, it suffices to prove only the first part of Theorem \ref{Thm:elliptic_analy}. The proof of Corollary \ref{cor:ell} is instead obtained by combining Theorem \ref{Thm:elliptic_Csigma} with the asymptotic correspondence established in Proposition \ref{prop:ell_1:1_asym_sol}.
For this reason, except for Subsection \ref{sc:proof_analytic_ell}, where we discuss the modifications needed to adapt the proof to the analytic setting, we will focus mostly on Theorem \ref{Thm:elliptic_Csigma} and on Hamiltonians with Hölder regularity. The proof of Corollary \ref{cor:ell} is given separately in Subsection \ref{sec:proof_cor_ell}.

Theorem \ref{Thm:elliptic_Csigma} consists of two statements, namely, the existence of a $C^\sigma$ asymptotic KAM torus and the existence of asymptotic elliptic transversal dynamics (for the asymptotic KAM torus given by the first part of the theorem, under stronger assumptions on the perturbation), which we prove separately in Sections \ref{sec:proof_thm_a_ell_1} and \ref{sec:proof_thm_a_ell_2}, respectively. In Section \ref{sec:proof_thm_a_ell_1}, we will consider $\sigma \ge 1$, whereas, in Section \ref{sec:proof_thm_a_ell_2}, we will assume $\sigma \ge 2$.

The proof of Theorem \ref{Thm:elliptic_Csigma} relies on applying twice the Implicit Function Theorem on appropriately defined Banach spaces of time-dependent functions exhibiting appropriate polynomial decay in time. We introduce these spaces in the next subsection. 

In order to verify that the hypotheses of the Implicit Function Theorem are fulfilled in our setting, we will be naturally led to consider the existence of solutions to several (cohomological) equations, namely,
\begin{equation}
\label{eq:coh_eqs_ell}
\left\{ \begin{array}{lll}
    \Lo \chi := (\partial_t + \omega \cdot \partial_q) \chi = g, & & g: \T^n \times I_T \to \R,\\
    \mathcal{L}^{\mathrm{ell}}_{\omega, \Omega} \boldsymbol{\chi} :=  J_mM^{\mathrm{ell}}_\Omega \boldsymbol{\chi} - \Lo \boldsymbol{\chi}  = \boldsymbol{g}, & & \boldsymbol{g}: \T^n \times I_T \to \R^{2m}, \\
    \mathfrak{L}^{\mathrm{ell}}_{\omega, \Omega} X := X^{\top}M^{\mathrm{ell}}_\Omega + M^{\mathrm{ell}}_\Omega X + J_m \Lo X =  G, & & G: \T^n \times I_T \to \mathcal{M}_{2m}(\R),
    \end{array} \right.
\end{equation}
in the unknowns $\chi: \T^n \times I_T \to \R$, $\boldsymbol{\chi}: \T^n \times I_T \to \R^{2m}$ and $X: \T^n \times I_T \to \mathcal{M}_{2m}(\R)$ (restricted to appropriate Banach spaces of functions with polynomial decay in time),  where $\omega \in \R^n$, $\Omega = \mathrm{diag}(\Omega_1,\dots,\Omega_m)$ with $\Omega_1,\dots,\Omega_m \in \R_{> 0}$, and the matrices $M^{\mathrm{ell}}_\Omega$ and $J_m$ are as in \eqref{Ms} and \eqref{def:J}, respectively. Recall that  $I_T = [T, + \infty)$, for any $T \geq 1$.

 The remainder of this section is structured as follows. In Section \ref{sc:functional_setting_ell}, we introduce several Banach spaces of functions with polynomial decay in time. In Section \ref{sc:initial_ham_ell} we write the unperturbed Hamiltonian of Theorem \ref{Thm:elliptic_Csigma} in a suitable form and set some notations. In Sections \ref{sec:proof_thm_a_ell_1} and \ref{sec:proof_thm_a_ell_2}, we prove items \eqref{thm:A_existence} and \eqref{thm:A_transverse} of Theorem \ref{Thm:elliptic_Csigma}, respectively, assuming several results concerning the cohomological equations mentioned above which, for the sake of clarity of exposition, we prove later in Section \ref{sc:cohom_eqs_ell}. In Section \ref{sc:proof_analytic_ell} we discuss the proof of Theorem \ref{Thm:elliptic_analy}. Section \ref{sec:proof_cor_ell} is dedicated to the proof of Corollary \ref{cor:ell}. Finally, Section \ref{sc:norm_properties_ell} contains several technical results about the norms and spaces introduced in Sections \ref{sc:functional_setting_ell} and \ref{sc:proof_analytic_ell}, which will often be referenced throughout this section.

\subsection{Functional setting}\label{sc:functional_setting_ell} Let us introduce the function spaces that will help us describe the perturbations considered in Theorem \ref{Thm:elliptic_Csigma} and the families of torus embeddings to which the asymptotic KAM tori will belong. Notice that all of the spaces we introduce will be Banach spaces with their respective norm.

In the following, we let $\omega \in \R^n$, $\Omega = \mathrm{diag}(\Omega_1,\dots,\Omega_m)$ with $\Omega_1,\dots,\Omega_m \in \R_{> 0}$, $\sigma \geq 0$, $k \in \Z_{\geq0}$, $\ell > 0$ and $T \geq 1$, and we use the notations in \eqref{eq:coh_eqs_ell} for the operators $\Lo, \mathcal{L}^{\mathrm{ell}}_{\omega, \Omega}$ and $\mathfrak{L}^{\mathrm{ell}}_{\omega, \Omega}.$

\subsubsection{Real-valued  functions with polynomial decay}\label{sc:real_pol_decay} Recall that the Banach space $\mathcal{S}^{\mathrm{pol}, T}_{(\sigma, k), \ell}$ is defined in~\eqref{def:S} and the associated norm $|\cdot|^{\mathrm{pol}, T}_{\sigma+k, \ell}$ in~\eqref{def:norm_S}. 

Letting $k \ge 1$, in order to control the regularity of the term in the perturbation independent of $p$ and $z$, we introduce the space
\begin{equation}\label{def: S0ell}
    \mathcal{S}^{\mathrm{pol},T}_{(\sigma,k), (0,\ell)} = \left\{f:\T^n \times B \times I_T \to \R  \hspace{1mm} \left| \hspace{1mm} f \in \mathcal{S}^{\mathrm{pol}, T}_{(\sigma,k), 0}, \hspace{2mm} \partial_q f \in \mathcal{S}^{\mathrm{pol}, T}_{(\sigma, k-1), \ell}\right\}\right.,
\end{equation}
endowed with the norm
\begin{equation}\label{def:norm_S0ell}
    |f|^{\mathrm{pol}, T}_{(\sigma,k), (0,\ell)} = \max\{|f|^{\mathrm{pol}, T}_{\sigma+k, 0}, |\partial_qf|^{\mathrm{pol}, T}_{\sigma+k-1, \ell}\},
\end{equation}

To quantify the regularity of the components of the $C^\sigma$  asymptotic KAM torus, we define 
\begin{equation}\label{def:U}
    \mathcal{U}^{\mathrm{pol}, T}_{\sigma, \omega, \ell} = \left\{u:\T^n \times I_T \to \R^n \hspace{1mm} \left| \hspace{1mm} u \in \mathcal{S}^{\mathrm{pol},T}_{(\sigma,0), \ell}, \hspace{2mm}\Lo u \in \mathcal{S}^{\mathrm{pol}, T}_{(\sigma,0), \ell+1}\right\}\right.,
\end{equation}
endowed with the norm
\begin{equation}\label{def:norm_U}
    |u|^{\mathrm{pol}, T}_{\sigma, \omega, \ell} = \max \left\{|u|^{\mathrm{pol}, T}_{\sigma, \ell}, |\Lo u|^{\mathrm{pol}, T}_{\sigma, \ell +1}\right\}.
\end{equation}

Furthermore, we consider 
\begin{equation}\label{def:W}
    \mathcal{W}^{\mathrm{ell}, T}_{\sigma, \omega, \Omega, \ell} = \left\{w:\T^n \times I_T \to \R^{2m} \hspace{1mm} \left| \hspace{1mm} w \in \mathcal{S}^{\mathrm{pol}, T}_{(\sigma,0), \ell}, \hspace{2mm}\mathcal{L}^{\mathrm{ell}}_{\omega,\Omega} w \in \mathcal{S}^{\mathrm{pol}, T}_{(\sigma,0), \ell+1}\right\}\right.,
\end{equation}
endowed with the norm
\begin{equation}\label{def:norm_W}
    |w|^{\mathrm{pol}, T}_{\sigma, \omega, \Omega, \ell} = \max \left\{|w|^{\mathrm{pol}, T}_{\sigma, \ell}, |\mathcal{L}^{\mathrm{ell}}_{\omega,\Omega} w|^{\mathrm{pol}, T}_{\sigma, \ell +1}\right\}.
\end{equation}

\begin{remark}\label{rmk:extra_regularity_ell}
    If $\sigma \geq 1$ then any $u \in \mathcal{U}^{\mathrm{pol}, T}_{\sigma, \omega, \ell}$ (resp. $w \in \mathcal{W}^{\mathrm{ell}, T}_{\sigma, \omega, \Omega, \ell}$) is of class $C^1$. Moreover, as we shall see in Proposition \ref{prop:Csigma_HEomega} (resp. Proposition \ref{prop:Csigma_HEomegaOmega}), if $\sigma \geq 2$ then $\partial_q u \in  \mathcal{U}^{\mathrm{pol}, T}_{\sigma - 1, \omega, \ell}$ (resp.  $\partial_q w \in \mathcal{W}^{\mathrm{ell}, T}_{\sigma, \omega, \Omega, \ell}$) and, in particular, $\partial_q u$ (resp. $\partial_q w$) is of class $C^1$.
\end{remark}

\subsubsection{Matrix-valued functions with polynomial decay}
\label{sc:matrix_spaces_ell} 
We also introduce spaces of functions taking values in $\mathcal{M}_{2m}(\R)$. These spaces will be useful for describing the normal coordinates of the asymptotic KAM torus. 

Let
\begin{equation}
\begin{aligned}\label{def:BS_ell_M_Sym}
     \mathcal{M}^{\mathrm{pol}, T}_{\sigma, \ell} &= %
     \mathcal{S}^{\mathrm{pol}, T}_{(\sigma,0), \ell}(\R^{2m \times 2m}), \\
          \mathcal{S}ym^{\mathrm{pol}, T}_{\sigma, \ell} &= \left\{M:\T^n \times I_T \to \textup{Sym}(2m, \R) \hspace{1mm} \left| \hspace{1mm}  M \in \mathcal{M}^{\mathrm{pol}, T}_{\sigma, \ell} \right\}\right.,
    \end{aligned}
\end{equation}      
endowed with the norm,
\[ |M|^{\mathrm{pol}, T}_{\sigma, \ell} := \max_{1 \leq i, j \leq N} |M_{ij}|^{\mathrm{pol}, T}_{\sigma, \ell},\]
 and
 \begin{align}\label{def:BS_ell_Sp}
 \mathcal{S}p^{\mathrm{ell}, T}_{\sigma, \omega, \Omega, \ell} %
    &  = \left\{M:\T^n \times I_T \to \mathfrak{sp}(2m, \R) \hspace{1mm} \left| \hspace{1mm} M \in \mathcal{M}^{\mathrm{pol}, T}_{\sigma, \ell}, \quad \mathfrak{L}_{\omega, \Omega}^\mathrm{ell} M \in \mathcal{M}^{\mathrm{pol}, T}_{\sigma, \ell + 1} \right\} \right.,
\end{align}
endowed with the norm
\[ |M|^{\mathrm{pol}, T}_{\sigma, \omega, \Omega, \ell} = \max\left \{ |M|_{\sigma, \ell}^{\mathrm{pol}, T} ,|\mathfrak{L}_{\omega, \Omega}^\mathrm{ell} M|^{\mathrm{pol}, T}_{\sigma, \ell + 1} \right\},\]
where $\textup{Sym}(2m, \R)$ denotes the space of $2m \times 2m$ symmetric matrices and $\mathfrak{sp}(2m, \R)$ denotes the Lie algebra associated with the space of $2m \times 2m$ symplectic matrices $\textup{Sp}(2m, \R),$ namely, 
\[ \mathfrak{sp}(2m, \R) := \{ G \in \mathcal{M}_{2m}(\R) \mid J_m G + G^{\top}J_m = 0 \}. \]

Notice that if $\sigma \geq 1$ then any $M \in  \mathcal{S}p^{\mathrm{ell}, T}_{\sigma, \omega, \Omega, \ell}$ is of class $C^1$. 

\subsubsection{Perturbations with polynomial decay}
\label{sc:perturbation_ell}
To describe the space of perturbations considered in Theorem \ref{Thm:elliptic_Csigma}, given $\Lell = (\ell_1,\ell_2, \ell_3, \ell_4) \in \left(\R_{\ge 0}\right)^4$ we define
\begin{equation}\label{def:space_P_ell}
\mathcal{P}^{\mathrm{ell}, T}_{\sigma, \Lell} = \mathcal{S}^{\mathrm{pol},T}_{(\sigma,2), (0,\ell_1)} \times \mathcal{S}^{\mathrm{pol},T}_{(\sigma,2), \ell_2}(\R^n)  \times \mathcal{S}^{\mathrm{pol},T}_{(\sigma,2), \ell_3}(\R^{2m})  \times \mathcal{S}^{\mathrm{pol},T}_{(\sigma,2), \ell_4}(\R^{2m \times 2m}),
\end{equation}
endowed with the sum norm associated with the product, which we denote by $| \cdot |^{\mathrm{ell}, T}_{\sigma, \Lell}$.  As an abuse of notation, we denote the elements of this product space simply as $$P = (a, b, c, d) \in \mathcal{P}^{\mathrm{ell}, T}_{\sigma, \Lell},$$ and use the same letter $P$ to denote the associated Hamiltonian given by \eqref{eq:perturbation_form}, namely,
\[ P(q,p,z,t) = a(q,t) + b(q,t) \cdot p + c(q,t) \cdot z + {1 \over 2}d(q,t)\cdot z^2.\]
Given $\ell \geq 0$ and $\dec \ge 0$, we denote
\begin{equation}
    \label{eq:index_regularity_perturbations}
    \mathcal{L}(\ell, \dec) =  (\ell + \dec + 2, \ell, \ell + \dec + 1, \ell), \qquad \mathcal{L}^*(\ell, \dec) = (\ell + \dec + 3, \ell , \ell + \dec + 2, \ell + 1).
\end{equation}

With these notations, the spaces of perturbations associated with Items \eqref{thm:A_existence} and \eqref{thm:A_transverse} of Theorem \ref{Thm:elliptic_Csigma} are given by  $\mathcal{P}^{\mathrm{ell}, 1}_{\sigma, \Lell(\ell, \dec)}$ and $\mathcal{P}^{\mathrm{ell}, 1}_{\sigma, \Lell^*(\ell, \dec)}$, with $\sigma \geq 1$ and $\sigma \geq 2$, respectively.%

\subsubsection{Torus embeddings with polynomial decay} 
\label{sc:embeddings_ell} 
Similarly to how we parametrized the perturbations space, to describe the space of torus embeddings (to which the asymptotic KAM tori given by the theorem will belong), for any $\mathcal{K}= (k_1, k_2, k_3) \in \left(\R_{\ge 0}\right)^3$  we define
\[   \mathcal{E}^{\mathrm{ell}, T}_{\sigma, \mathcal{K}} = \mathcal{U}^{\mathrm{pol},T}_{\sigma, \omega, k_1} \times \mathcal{U}^{\mathrm{pol},T}_{\sigma, \omega, k_2} \times \mathcal{W}^{\mathrm{ell},T}_{\sigma, \omega, \Omega, k_3},
\]
endowed with the sum norm associated with the product, which we denote by $| \cdot |^{\mathrm{ell}, T}_{\sigma, \mathcal{K}}$. As an abuse of notation, we denote the elements of this product space simply as 
$$\varphi = (u, v, w) \in   \mathcal{E}^{\mathrm{ell}, T}_{\sigma, \mathcal{K}},$$
and use the same letter $\varphi$ to denote the associated family of torus embeddings, namely,
\begin{equation}
\label{eq:formula_torus_embeddings}
\Function{\varphi}{\T^n \times I_T}{\T^n \times \R^{n} \times \R^{2m}}{(q,t)}{(q + u(q, t), v(q, t), w(q, t))}.
\end{equation}

In the following, for any $k \geq 1$ and $\mathsf{k} \ge 0$, we denote
\begin{equation}
\label{eq:index_regularity_tori} 
    \mathcal{K}(k, \mathsf{k}) := (k - 1, k + \mathsf{k} + 1, k + \mathsf{k}).
\end{equation}
As we shall see in Section \ref{sec:proof_thm_a_ell_1}, for any $P \in  \mathcal{P}^{\mathrm{ell}, 1}_{\sigma, \Lell(\ell, \dec)}$ with $\sigma \geq 1$, $\ell > 1$ and $\dec > 0$ there exists a $C^\sigma$ asymptotic KAM torus in $ \mathcal{E}^{\mathrm{ell}, T}_{\sigma, \mathcal{K}(\ell, \dec)}$ associated with $(X_{H_0 + P}, X_{H_0}, \varphi_0)$, for some $T \geq 1$.

\subsection{Initial Hamiltonian and conventions in notation}\label{sc:initial_ham_ell} For the remainder of this section, we fix $\Omega_1,\dots,\Omega_m \in \R_{> 0}$,  $\omega \in \R^n$, $\sigma \geq 1$ and $H_0: \T^n \times B \times I_1 \to \R$ of the form \eqref{eq:initial_hamiltonian} as in the statement of Theorem \ref{Thm:elliptic_Csigma}, where $B \subseteq \R^{n +2m}$ is a ball centred at $0$ and $I_T = [T, + \infty)$, for any $T \geq 1$. For the sake of simplicity, let us assume that $B = B_{\R^{n + 2m}}(1)$ is the unit ball in $\R^{n + 2m}$. We denote $\Omega = \mathrm{diag}(\Omega_1,\dots,\Omega_m)$.

Since all the norms considered in this section concern (products of) function spaces with polynomial decay, for the sake of clarity and to simplify the notation, we omit the superscripts $\mathrm{ell}$ and $\mathrm{pol}$ from all the norms whenever there is no risk of confusion.

Using the notations introduced in Section \ref{sec:FS_Ref_Ham},  we can express $H_0$ as in \eqref{eq:initial_Hamiltonian_NR}, namely,
\[         H_0(q,p,z,t) = N(p,z) + R(q,p,z,t), \]
where 
\begin{equation*}
\begin{array}{rcl}   N(p,z) &= & \omega \cdot p + {1 \over 2} \Mell \cdot z^2, \\
    R(q,p,z,t) &= &M(q,p,z,t) \cdot p^2 + m(q,z,t) \cdot (p,z) + L(q,z,t) \cdot z^3,
\end{array}
\end{equation*}
$\Mell$ is given by \eqref{Ms}, $M, m, L$ are given by \eqref{def:abcdMmL_terms} and satisfy
\[ |M|^1_{\sigma + 3, 0}, |m|_{\sigma + 3, 0}^1, |L|_{\sigma + 2, 0}^1 \le|\partial_{(p,z)}^2 H_0|^1_{\sigma+3, 0}. \] 
By assumption $\partial_{(p,z)}^2 H_0 \in \mathcal{S}_{(\sigma,3),0}^{\mathrm{pol},1}.$ Let us fix $\Upsilon > 0$ such that 
\begin{equation}\label{def:const_Upsilon_ell}
|\partial_{(p,z)}^2 H_0|^1_{\sigma+3, 0} \le \Upsilon.
\end{equation}

We recall that the trivial embedding $\varphi_0: \T^n \to \T^n \times \R^{n + 2m}$, given by \eqref{def:varphi0=(q,0,0)}, defines an invariant torus for $H_0$ supporting quasiperiodic solutions with frequency vector $\omega$.

Finally, for the sake of simplicity, we fix  $\ell > 1$ and $\dec \ge 0$, and denote $\mathcal{L}(\ell, \dec)$,  $\mathcal{L}^*(\ell, \dec)$ and $\mathcal{K}(\ell, \dec)$ (as in \eqref{eq:index_regularity_perturbations} and \eqref{eq:index_regularity_tori}) simply by $\mathcal{L}$, $\mathcal{L}^*$ and $\mathcal{K}$, respectively.

\subsection{Proof of Item \eqref{thm:A_existence} of Theorem \ref{Thm:elliptic_Csigma}: Construction of $C^\sigma$ asymptotic KAM tori}\label{sec:proof_thm_a_ell_1}

In the following, we will use freely the notations introduced at the beginning of this section and in the previous two subsections.

Recall that the space of perturbations associated with Item \eqref{thm:A_existence} of Theorem \ref{Thm:elliptic_Csigma} is given by  $\mathcal{P}^{\mathrm{ell}, 1}_{\sigma, \Lell}$ (see Section \ref{sc:perturbation_ell}). As we shall see in the proposition below, for any perturbation $P \in \mathcal{P}^{\mathrm{ell}, 1}_{\sigma, \Lell} $, we can define an asymptotic KAM torus $\boldsymbol{\varphi}^T(P) \in  \mathcal{E}^{\mathrm{ell}, T}_{\sigma, \mathcal{K}}$ associated with $(X_{H_0 + P}, X_{H_0}, \varphi_0)$, for some $T \geq 1$. For this purpose, given $r, \rho>0$ and $T \geq 1$ we denote by $B_{\mathcal{P}^{\mathrm{ell}, T}_{\sigma, \Lell}}(r)  \subseteq  \mathcal{P}^{\mathrm{ell}, T}_{\sigma, \Lell}$ and $B_{ \mathcal{E}^{\mathrm{ell}, T}_{\sigma, \mathcal{K}}}(\rho) \subseteq \mathcal{E}^{\mathrm{ell}, T}_{\sigma, \mathcal{K}}$ the open balls centered at the origin with radius $r$ and $\rho$, respectively.

We will show the following

\begin{proposition}
\label{prop:A_existence}
There exists $C_0 > 1$ such that for any $r > 0$ there exists $T_0 \geq 1$ satisfying the following. For any $T \geq T_0$, there exists a $C^1$-map
\[ \boldsymbol{\varphi}^T: B_{\mathcal{P}^{\mathrm{ell}, T}_{\sigma, \Lell}}(r)  \subseteq  \mathcal{P}^{\mathrm{ell}, T}_{\sigma, \Lell}  \to  B_{ \mathcal{E}^{\mathrm{ell}, T}_{\sigma, \mathcal{K}}}(C_0r) \subseteq \mathcal{E}^{\mathrm{ell}, T}_{\sigma, \mathcal{K}}\]
such that $\boldsymbol{\varphi}^T(P)$ defines an asymptotic KAM torus associated with $(X_{H_0 + P}, X_{H_0}, \varphi_0)$,  for any $P \in  \mathcal{P}^{\mathrm{ell}, T}_{\sigma, \Lell} $ with $|P|_{\sigma, \Lell}^T < r$.
\end{proposition}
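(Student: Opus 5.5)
We will obtain $\boldsymbol{\varphi}^T$ from a quantitative Implicit Function Theorem applied to the invariance equation \eqref{def:cond2_asymKAMtorus}. For $P\in\mathcal{P}^{\mathrm{ell},T}_{\sigma,\Lell}$ and $\varphi=(u,v,w)\in\mathcal{E}^{\mathrm{ell},T}_{\sigma,\mathcal{K}}$ we define
\[
\mathcal{F}(P,\varphi)=X_{H_0+P}\circ\varphi-\Lo\varphi ,
\]
with $\varphi$ understood as the embedding \eqref{eq:formula_torus_embeddings}. We will use the decomposition $H_0=N+R$ of Section \ref{sec:FS_Ref_Ham} together with \eqref{eq:perturbation_form}. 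The three components of $\mathcal{F}$ are then:
\begin{align*}
\mathcal{F}_q&=\omega+\partial_pR\circ\varphi+b\circ(\mathrm{id}+u)-\omega-\Lo u,\\
\mathcal{F}_p&=-\partial_qR\circ\varphi-\partial_q a\circ(\mathrm{id}+u)-\partial_q b\circ(\mathrm{id}+u)\,v-\dots-\Lo v,\\
\mathcal{F}_z&=J_m\Mell w+J_m\partial_zR\circ\varphi+J_m\big(c+d\,w\big)\circ(\mathrm{id}+u)-\Lo w .
\end{align*}
The linear parts in $\varphi$ are $-\Lo u$, $-\Lo v$ and $\mathcal{L}^{\mathrm{ell}}_{\omega,\Omega}w$. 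This motivates the target space
\[
\mathcal{Z}^T=\mathcal{S}^{\mathrm{pol},T}_{(\sigma,0),\ell}\times\mathcal{S}^{\mathrm{pol},T}_{(\sigma,0),\ell+\dec+2}\times\mathcal{S}^{\mathrm{pol},T}_{(\sigma,0),\ell+\dec+1}.
\]
The exponents are the $\mathcal{K}$-exponents shifted by one, matching the spaces $\mathcal{U}$ and $\mathcal{W}$. We will first check that $\mathcal{F}:\mathcal{P}\times\mathcal{E}\to\mathcal{Z}^T$ is well defined and $C^1$. This uses the composition and product estimates of Section \ref{sc:norm_properties_ell} in Hölder norms, with the bounds \eqref{est:MmLbar_Holder} on $\overline M,\overline m,\overline L$ (controlled by $\Upsilon$) and the identities \eqref{eq:barMmL}. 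For example, $\partial_pR\circ\varphi=\overline M\cdot v+\overline m\cdot w$ decays like $t^{-(\ell+\dec)}$, which is at least as fast as $t^{-\ell}$. Likewise $\partial_zR\circ\varphi$ contains $\overline m\cdot v$ and $\overline L\cdot w^2$, and both decay faster than $t^{-(\ell+\dec+1)}$ because $\ell>1$. The two derivatives spent on $P$ (the index $k=2$ in $\mathcal{P}$) pay for the loss of one derivative in composition and one in the $C^1$ dependence. The three derivatives on $H_0$ are used in the same way.

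Next we need $\mathcal{F}(0,0)=0$, which holds since $\varphi_0$ is invariant. We then compute $D_\varphi\mathcal{F}(0,0)$. At $(P,\varphi)=(0,0)$ the derivatives of $R$ vanish to the relevant order, so
\[
D_\varphi\mathcal{F}(0,0)(\hat u,\hat v,\hat w)=\big(\partial_p^2R_0\,\hat v+\partial_p\partial_zR_0\,\hat w-\Lo\hat u,\ -\Lo\hat v,\ \mathcal{L}^{\mathrm{ell}}_{\omega,\Omega}\hat w+J_m(\partial_z\partial_pR_0)\hat v\big),
\]
which is block upper-triangular. We invert it in three steps:
\begin{enumerate}
\item Solve $\Lo\hat v=g_2$ with decay $t^{-(\ell+\dec+2)}$ by integrating along characteristics from $+\infty$, $\hat v(q,t)=-\int_t^\infty g_2(q-\omega(t-s),s)\,ds$. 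This gives $\hat v\in\mathcal{U}_{\ell+\dec+1}$.
\item Solve $\mathcal{L}^{\mathrm{ell}}_{\omega,\Omega}\hat w=\tilde g_3$ by the analogous variation-of-constants formula with the rotation $e^{J_m\Mell(t-s)}$, which is bounded because we are in the elliptic case. This gives $\hat w\in\mathcal{W}_{\ell+\dec}$.
\item Solve $\Lo\hat u=\tilde g_1\in t^{-\ell}$, which gives $\hat u\in\mathcal{U}_{\ell-1}$. This step is where $\ell>1$ is needed.
\end{enumerate}
These are the cohomological estimates announced for Section \ref{sc:cohom_eqs_ell}. Because no small divisors arise, the constants are uniform in $T$.

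The main obstacle is that $P$ is not assumed small; only $|P|<r$ is, with $r$ arbitrary. To handle this we take $T$ large. The norms with weight $t^{\ell_i}$ restricted to $I_T$ satisfy
\[
|f|^{T}_{\sigma,\ell'}\le T^{-(\ell_i-\ell')}\,|f|^{T}_{\sigma,\ell_i}.
\]
Since $\mathcal{F}(P,0)$ gains one power of $t$ relative to $\mathcal{Z}^T$, this makes it of size $C r$ times a factor that can be absorbed. The factor $T^{-1}$ also makes the remaining nonlinear and cross terms of $D_\varphi\mathcal{F}(P,\varphi)-D_\varphi\mathcal{F}(0,0)$ small for $T\ge T_0(r)$. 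The coupling term $\partial_z\partial_pR_0\,\hat v$ in particular gains decay from $\hat v$.

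We will then run a contraction (Newton--Kantorovich) on a ball of radius $C_0r$, with $C_0$ the norm of the inverse linear operator. This yields a unique $\varphi=\boldsymbol{\varphi}^T(P)$, depending $C^1$ on $P$. Here the delicate point is bounding the Lipschitz constant of $D_\varphi\mathcal{F}$ on the ball by something that tends to $0$ as $T\to\infty$, uniformly in $P$. Finally, the decay of $u,v,w$ gives \eqref{def:cond1_asymKAMtorus}, and $\mathcal{F}=0$ gives \eqref{def:cond2_asymKAMtorus}. For large $T$, $\mathrm{id}+u^t$ is a diffeomorphism, so $\varphi^t$ is an embedding.
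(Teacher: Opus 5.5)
Your proposal follows the paper's proof essentially step for step: the same functional $\mathcal{F}^T(P,\varphi)=X_{H_0+P}\circ\varphi-\Lo\varphi$ on the same weighted spaces, and the same triangular inversion of $D_\varphi\mathcal{F}^T(0,0)$ (first $\hat v$, then $\hat w$, then $\hat u$, with bounds uniform in $T$). It also uses large $T$ in the same way, to make $\|D_\varphi\mathcal{F}^T(P,\varphi)-D_\varphi\mathcal{F}^T(0,0)\|$ small uniformly on balls of radius proportional to $r$ (the paper obtains $O(T^{-(\ell-1)})$ rather than $O(T^{-1})$), and then applies a quantitative Implicit Function Theorem. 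One correction: $\mathcal{F}^T(P,0)=(b,-\partial_q a,J_m c)$ gains no decay relative to the target space, so it is only bounded by $Cr$, and the radius must be $2C\bar C r$ with $\bar C$ the bound on $D_\varphi\mathcal{F}^T(0,0)^{-1}$; this is exactly what the paper does, and it still gives $C_0=2C\bar C$ independent of $r$ and $T$.
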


Notice that Item \eqref{thm:A_existence} of Theorem \ref{Thm:elliptic_Csigma} follows directly from the proposition above since
\begin{equation*}
    \big|P\big|_{\sigma, \Lell}^{T} \leq |P|_{\sigma, \Lell}^{1}, \qquad \text{ for any } T \geq 1\,  \text{ and any } \, P \in \mathcal{P}^{\mathrm{ell}, 1}_{\sigma, \Lell},
\end{equation*}
and by definition of $\mathcal{K}$ the asymptotic KAM torus has the desired decay rates.

The proof of Proposition \ref{prop:A_existence}, which we provide in \S \ref{sc:thm_A_existence_proof}, will be a consequence of an appropriate quantitative version of the Implicit Function Theorem (see Theorem \ref{thm:QIFT}) applied to a functional characterizing asymptotic KAM tori for the system $H_0$ and that we define in \S \ref{sc:functional_existence}. 

Let us point out that the choices for $\Lell$, $\mathcal{K}$ and the spaces $\mathcal{P}^{\mathrm{ell}, T}_{\sigma, \Lell},$  $ \mathcal{E}^{\mathrm{ell}, T}_{\sigma, \mathcal{K}}$ appear as natural restrictions for the solvability of certain cohomological equations (moreover, for their invertibility when seen as linear operators that will arise naturally in the following sections (see Lemmas \ref{lemma:DF_invertible}, \ref{lemma:DG_invertible} and Propositions  \ref{prop:Csigma_HEomega}, \ref{lemma:HE_interm_LoO}, \ref{prop:Csigma_HEomegaOmega}).

\subsubsection{Functionals characterizing asymptotic KAM tori}\label{sc:functional_existence} We start by defining a functional $\mathcal{F}^T$, where $T \geq 1$, taking as arguments the perturbation $P  \in \mathcal{P}^{\mathrm{ell}, T}_{\sigma, \Lell} $ and the family of torus embeddings $\varphi \in \mathcal{E}^{\mathrm{ell}, T}_{\sigma, \mathcal{K}}$ such that $\mathcal{F}^T(P, \varphi) = 0$ if and only if $\varphi$ defines an asymptotic KAM torus for $H_0 + P$. 

Recalling that $\varphi$ is a $C^\sigma$ asymptotic KAM torus associated with $(X_{H_0 + P}, X_{H_0}, \varphi_0)$ if and only if \eqref{def:cond1_asymKAMtorus} and \eqref{def:cond2_asymKAMtorus} are satisfied, for any $T \geq 1$, let 
\begin{equation}\label{proof:Csigma_def_F_ell_1}
    \Function{\mathcal{F}^T}{ \mathcal{P}^{\mathrm{ell}, T}_{\sigma,\Lell} \times \mathcal{E}^{\mathrm{ell}, T}_{\sigma, \mathcal{K}}}{ \mathcal{S}^{\mathrm{pol},T}_{(\sigma,0), \ell}(\R^n) \times \mathcal{S}^{\mathrm{pol},T}_{(\sigma,0), \ell+\dec+2}(\R^n) \times \mathcal{S}^{\mathrm{pol},T}_{(\sigma,0), \ell+\dec+1}(\R^{2m})}{(P, \varphi)}{X_{H_0 + P} \circ \varphi - \Lo \varphi },
\end{equation}
and recall that, for the sake of simplicity in the notation, we write $f \circ g$ for the map $(\cdot,t)\mapsto f(g(\cdot,t),t)$. We endow the codomain of $\mathcal{F}^T$ with the sum norm associated with the product, which we denote by $|\cdot|_{\boldsymbol{{\sigma, \ell, \dec}}}^T$ (where we used bold symbols to differentiate this norm from the norm $|\cdot|^T_{\sigma, \ell}$).

Notice that, with these definitions, any $\varphi \in \mathcal{E}^{\mathrm{ell}, T}_{\sigma,\mathcal{K}}$ satisfies \eqref{def:cond1_asymKAMtorus}, while $\mathcal{F}^T(P, \varphi) = 0$ if and only if \eqref{def:cond2_asymKAMtorus} is satisfied. Therefore,
\begin{equation}
\label{eq:KAM_torus_characterization}
    \mathcal{F}^T(P, \varphi) = 0 \quad \mbox{if and only if $\quad \varphi$ defines a $C^\sigma$ asymptotic KAM torus for $H_0 + P$.}
\end{equation}

Let us point out that we will need to restrict the domain of $\mathcal{F}^T$ for it to be well-defined (otherwise the composition $X_{H_0 + P} \circ \varphi$ might be undefined). This will be formalized in Lemma \ref{lemma:F_well_defined}.

\subsubsection{Proof of the existence of an asymptotic KAM torus} \label{sc:thm_A_existence_proof} In this section we will provide a proof of Proposition \ref{prop:A_existence} assuming the following properties of the functional $\mathcal{F}^T$, which  we will prove in the next section. Here and in the following, $D_\varphi \mathcal{F}^T$ denotes the differential of the functional $\mathcal{F}^T$ with respect to $\varphi = (u,v,w)$.

\begin{proposition}
\label{prop:F_properties}
    The operator $\mathcal{F}^T$ given by \eqref{proof:Csigma_def_F_ell_1} satisfies the following.
    \begin{enumerate}
    \item \label{prop:F_well_defined} For any $T \geq 1$, there exists a neighbourhood $\mathcal{U}^T \subseteq \mathcal{P}^{\mathrm{ell}, T}_{\sigma,\Lell} \times \mathcal{E}^{\mathrm{ell}, T}_{\sigma, \mathcal{K}}$ of $(0, 0)$ such that $\mathcal{F}^T\mid_{\mathcal{U}^T}$ and $D_\varphi\mathcal{F}^T\mid_{\mathcal{U}^T}$ are well-defined continuous maps. 
    
    \noindent Moreover, for any $r, \rho \geq 0$ there exists $T_0 \geq 1$ such that, for any $T \geq T_0$,  
    \[B_{\mathcal{P}^{\mathrm{ell}, T}_{\sigma,\Lell}}(r) \times B_{\mathcal{E}^{\mathrm{ell}, T}_{\sigma, \mathcal{K}}}(\rho) \subseteq \mathcal{U}^T.\]

         \item \label{prop:F_bounded}There exists a constant $C$,
     such that for any $r > 0$ and any $T \geq 1$,
\[ \sup \left\{ |\mathcal{F}^T(P, 0)|_{\boldsymbol{\sigma, \ell, \dec}}^T \,\left|\, P \in B_{\mathcal{P}^{\mathrm{ell}, T}_{\sigma,\Lell}}(r); \,\, (P, 0) \in \mathcal{U}^T \right\}\right. \leq Cr.\] %

    \item \label{prop:DF_invertible} $D_\varphi \mathcal{F}^T(0, 0)$ is invertible, for any $T \geq 1$. Moreover, there exists a constant $\bar C$, independent of $T$, such that $\|D_\varphi \mathcal{F}^T(0, 0)^{-1}\| \leq \bar C$, where $\| \cdot\|$ denotes the operator norm. %

        \item \label{prop:DF_limit} Let $r, \rho > 0$. Then  
        \[\lim_{T \to +\infty} \| D_\varphi\mathcal{F}^T(P, \varphi) - D_\varphi\mathcal{F}^T(0, 0) \| = 0, \qquad \text{uniformly on } \quad B_{\mathcal{P}^{\mathrm{ell}, T}_{\sigma,\Lell}}(r) \times B_{\mathcal{E}^{\mathrm{ell}, T}_{\sigma, \mathcal{K}}}(\rho),\]%
where $\| \cdot\|$ denotes the operator norm. %
    \end{enumerate}
\end{proposition}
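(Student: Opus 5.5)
The proof rests on computing $\mathcal{F}^T$ and $D_\varphi\mathcal{F}^T$ explicitly, using the splitting $H_0=N+R$ of Section \ref{sc:initial_ham_ell}, and then checking the weights component by component. For $\varphi=(u,v,w)$ and $P=(a,b,c,d)$, writing everything evaluated at $(q+u,v,w,t)$, one gets
\begin{align*}
\pi_q\mathcal{F}^T(P,\varphi)&=\partial_pR\circ\varphi+b\circ\varphi-\Lo u,\\
\pi_p\mathcal{F}^T(P,\varphi)&=-\partial_qR\circ\varphi-\partial_q a\circ\varphi-(\partial_q b\circ\varphi)^\top v-(\partial_q c\circ\varphi)^\top w-\tfrac12\partial_q d\circ\varphi\cdot w^2-\Lo v,\\
\pi_z\mathcal{F}^T(P,\varphi)&=\mathcal{L}^{\mathrm{ell}}_{\omega,\Omega}w+J_m\big(\partial_zR\circ\varphi+c\circ\varphi+(d\circ\varphi)w\big).
\end{align*}

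\emph{Item \eqref{prop:F_well_defined}.} Since $|v^t|_{C^0}\le\rho\,t^{-(\ell+\dec+1)}$ and $|w^t|_{C^0}\le\rho\,t^{-(\ell+\dec)}$, with $\ell+\dec>1$, the embedding $\varphi^t$ lands in $B_{\R^{n+2m}}(1)$ for all $t\ge T$ once $T\ge T_0(\rho)$. The neighbourhood $\mathcal{U}^T$ is the set of $(P,\varphi)$ with $\sup_t|(v^t,w^t)|_{C^0}<1$, and this gives the inclusion of balls.

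Continuity of $\mathcal{F}^T$ and of $D_\varphi\mathcal{F}^T$ follows from standard Hölder composition estimates (Appendix \ref{app:Holder}). These need two extra derivatives of $P$ and of $\partial^2_{(p,z)}H_0$, which is why the indices $(\sigma,2)$ and $(\sigma,3)$ appear.

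The main work is the weight bookkeeping, showing each component lands in its target space:
\begin{itemize}
\item[\textbullet] \emph{$q$-component (weight $\ell$).} We have $\partial_pR\circ\varphi=O(v,w)$, which decays like $t^{-(\ell+\dec)}$. The term $b$ has weight $\ell$, and $\Lo u$ has weight $\ell$ by definition of $\mathcal{U}^{\mathrm{pol},T}_{\sigma,\omega,\ell-1}$.
\item[\textbullet] \emph{$p$-component (weight $\ell+\dec+2$).} Because $\partial_q R$ vanishes to second order on the zero section, $\partial_qR\circ\varphi=O(v^2,vw,w^3)$. Its decay is at least $\min\{2\ell+2\dec+1,\,3(\ell+\dec)\}\ge \ell+\dec+2$, using $\ell+\dec\ge1$. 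Next, $\partial_qa$ has weight $\ell+\dec+2$. The term $\partial_q b\cdot v$ has weight $2\ell+\dec+1\ge\ell+\dec+2$, and this is exactly where $\ell>1$ enters. The terms $\partial_q c\cdot w$ and $\partial_q d\cdot w^2$ are similar.
\item[\textbullet] \emph{$z$-component (weight $\ell+\dec+1$).} The term $\mathcal{L}^{\mathrm{ell}}_{\omega,\Omega}w$ has this weight by definition of $\mathcal{W}^{\mathrm{ell},T}_{\sigma,\omega,\Omega,\ell+\dec}$. Also $c$ has weight $\ell+\dec+1$, $d\,w$ has weight $2\ell+\dec$, and $\partial_zR\circ\varphi=O(v,w^2)$.
\end{itemize}

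\emph{Item \eqref{prop:F_bounded}.} At $\varphi=0$ the function $R$ contributes nothing, so $\mathcal{F}^T(P,0)=(b,-\partial_qa,J_mc)$ evaluated on the zero section. Its norm is bounded by $|P|^T_{\sigma,\Lell}$ with a constant independent of $T$.

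\emph{Item \eqref{prop:DF_invertible}.} Linearizing at $(0,0)$ and using $\partial_qR=\partial_q\partial_{(p,z)}R=\partial_z^2R=0$ on $\T^n\times\{0\}$ gives a block-triangular operator:
\begin{align*}
D_\varphi\mathcal{F}^T(0,0)(\hat u,\hat v,\hat w)=\Big(\,&\partial_p^2R_0\,\hat v+\partial_p\partial_zR_0\,\hat w-\Lo\hat u,\\
&-\Lo\hat v,\\
&\mathcal{L}^{\mathrm{ell}}_{\omega,\Omega}\hat w+J_m\partial_z\partial_pR_0\,\hat v\Big),
\end{align*}
where $R_0$ denotes $R$ restricted to $p=z=0$. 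Given $(g_1,g_2,g_3)$, I solve the system in order:
\begin{enumerate}
\item $\Lo\hat v=-g_2$, with $g_2$ of weight $\ell+\dec+2$. Integrating from $+\infty$ along characteristics (Proposition \ref{prop:Csigma_HEomega}) gives the unique solution in $\mathcal{U}_{\ell+\dec+1}$.
\item $\mathcal{L}^{\mathrm{ell}}_{\omega,\Omega}\hat w=g_3-J_m\partial_z\partial_pR_0\hat v$, with right-hand side of weight $\ell+\dec+1$. Proposition \ref{prop:Csigma_HEomegaOmega}, i.e.\ variation of constants with the bounded rotation $e^{J_m\Mell t}$, gives $\hat w\in\mathcal{W}_{\ell+\dec}$.
\item $\Lo\hat u=\partial_p^2R_0\hat v+\partial_p\partial_zR_0\hat w-g_1$, with right-hand side of weight $\ell$, gives $\hat u\in\mathcal{U}_{\ell-1}$.
\end{enumerate}
Solutions decaying at infinity are unique, so the operator is bijective. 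The constants in the estimate $\int_t^\infty s^{-k-1}\,ds\le C t^{-k}$ do not depend on $T$, which yields the uniform bound $\bar C$.

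\emph{Item \eqref{prop:DF_limit}.} I expect this to be the main obstacle. The difference $D_\varphi\mathcal{F}^T(P,\varphi)-D_\varphi\mathcal{F}^T(0,0)$ consists of terms such as $(\partial^2R\circ\varphi-\partial^2R\circ\varphi_0)\hat v$, $\partial_q^2a\circ\varphi\,\hat u$, $\partial_qb\,\hat v$ and $d\,\hat w$. Each of them, estimated by the mean value theorem in Hölder norms, carries an extra factor decaying like $t^{-\delta}$ with $\delta=\min\{\ell-1,\ell+\dec\}>0$ relative to the target weight. For example, $\partial_q^2a\cdot\hat u$ has weight $(\ell+\dec+2)+(\ell-1)$ against a required $\ell+\dec+2$. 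Since $t\ge T$, each such term has operator norm at most $C(r,\rho)\,T^{-\delta}\to0$, uniformly on the given balls. The delicate points are the loss of one derivative in the composition estimates, which is absorbed by the index $k=2$, and a careful check that every cross term indeed gains a positive power, including the $\hat u$-dependence hidden inside $q+u$.
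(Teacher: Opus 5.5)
Your proposal is correct and follows essentially the same route as the paper. It writes out the three components explicitly, checks the weights through the product and composition estimates, solves the block-triangular system in the order $\hat v$, $\hat w$, $\hat u$ via Propositions~\ref{prop:Csigma_HEomega} and~\ref{prop:Csigma_HEomegaOmega}, and gains a factor $T^{-(\ell-1)}$ on every term of $D_\varphi\mathcal{F}^T(P,\varphi)-D_\varphi\mathcal{F}^T(0,0)$; your $\delta$ is simply $\ell-1$. The only cosmetic difference is your neighbourhood $\mathcal{U}^T$, defined by a sup-norm condition on $(v,w)$, where the paper uses $\mathcal{P}^{\mathrm{ell},T}_{\sigma,\Lell}\times B_{\mathcal{E}^{\mathrm{ell},T}_{\sigma,\mathcal{K}}}(T^{\ell-1})$; both serve the same purpose.
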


Let us point out that Properties \eqref{prop:F_well_defined}, \eqref{prop:F_bounded} and \eqref{prop:DF_limit} follow from straightforward calculations and are not very difficult to show. The heart of the proof of Proposition \ref{prop:F_properties} is to show Property \eqref{prop:DF_invertible}, that is, that the derivative $D_\varphi\mathcal{F}^T(0, 0)$ is invertible. This will require solving certain cohomological equations arising naturally when studying this invertibility problem. Moreover, taking into account the cohomological equations and the spaces of functions where the associated solutions are defined, the spaces considered in the definition of $\mathcal{F}^T$ appear as a natural choice to guarantee that, besides being well-defined, the linear operator $D_\varphi\mathcal{F}^T(0, 0)$ is invertible.

 Notice that from Properties \eqref{prop:F_well_defined}, \eqref{prop:DF_invertible} in  Proposition \ref{prop:F_properties}, and since $\mathcal{F}^T(0, 0) = 0 $, by the Implicit Function Theorem it follows that Item \eqref{thm:A_existence} of Theorem \ref{Thm:elliptic_Csigma} holds for perturbations with sufficiently small $|\cdot |^1_{\sigma, \Lell}$-norm, or, equivalently,  Proposition \ref{prop:A_existence}  holds for sufficiently small $r$. %
 
 However, to prove the result for any perturbation in $\mathcal{P}^{\mathrm{ell}, 1}_{\sigma,\Lell}$, we will show that the domain where we can apply the Implicit Function Theorem gets larger as $T$ increases. This will rely on a quantitative version of the Implicit Function Theorem (see Theorem \ref{thm:QIFT}) where Item \eqref{prop:DF_limit} in Proposition \ref{prop:F_properties} will play an essential role.

\begin{proof}[Proof of Proposition \ref{prop:A_existence}]
Let $r > 0$ and define $\rho = 2C\bar C r,$ where $C$ and $\bar C$ are constants given by Items \eqref{prop:F_bounded}, and \eqref{prop:DF_invertible} of Proposition \ref{prop:F_properties}.

By Items \eqref{prop:F_well_defined} and \eqref{prop:DF_limit} of Proposition \ref{prop:F_properties} there exists $T_0 \geq 1$ such that, for any $T \geq T_0$,  $B_{\mathcal{P}^{\mathrm{ell}, T}_{\sigma,\Lell}}(r) \times B_{\mathcal{E}^{\mathrm{ell}, T}_{\sigma, \mathcal{K}}}(\rho)$ is contained in the domain of definition of $\mathcal{F}^T$ and 
\[  \| D\mathcal{F}^T(P, \varphi) - D\mathcal{F}^T(0, 0) \| < \frac{1}{2\bar C}, \qquad \text{ for } \quad |P|_{\sigma, \Lell}^T < r, \quad |\varphi|_{\sigma, \omega, \Omega, \mathcal{K}}^T < \rho.\]

Let $T \geq T_0$. By Items \eqref{prop:F_bounded} and \eqref{prop:DF_invertible} of Proposition \ref{prop:F_properties},
\[\|D_\varphi \mathcal{F}^T(0, 0)^{-1}\| \leq \bar C, \qquad \sup \left\{ |\mathcal{F}^T(P, 0)|_{\sigma, \Lell}^T \,\left|\, P \in B_{\mathcal{P}^{\mathrm{ell}, T}_{\sigma,\Lell}}(r)  \right\}\right. \leq \frac{\rho}{2\bar C}.\]

Noticing that $\mathcal{F}^T(0, 0) = 0$ and recalling \eqref{eq:KAM_torus_characterization}, Proposition \ref{prop:A_existence} follows by Theorem \ref{thm:QIFT}. 
\end{proof}

The remainder of this subsection concerns the proof of Proposition \ref{prop:F_properties}.

\subsubsection{Properties of the functional $\mathcal{F}^T$} For the sake of clarity,  we split the proof of Proposition \ref{prop:F_properties} into four lemmas (Lemmas \ref{lemma:F_well_defined}, \ref{lemma:F_bounded}, \ref{lemma:DF_invertible} and \ref{lem:DF_limit}), one for each property in the statement.

\begin{remark}
  We point out that Lemmas \ref{lemma:F_well_defined}, \ref{lemma:F_bounded}, and \ref{lemma:DF_invertible} can be proved under the slightly weaker assumption $d \in \mathcal{S}^{\mathrm{pol},1}_{(\sigma, 2), 1}$ instead of $d \in \mathcal{S}^{\mathrm{pol},1}_{(\sigma, 2), \ell}$. On the other hand, the stronger assumption $d \in \mathcal{S}^{\mathrm{pol},1}_{(\sigma, 2), \ell}$ is needed to prove Lemma \ref{lem:DF_limit}.
\end{remark}

We start by checking that $\mathcal{F}^T$ as in \eqref{proof:Csigma_def_F_ell_1} is well-defined.

\begin{lemma}\label{lemma:F_well_defined}
   Let $T \geq 1$. The functional $\mathcal{F}^T$ given by \eqref{proof:Csigma_def_F_ell_1} and its differential $D_\varphi \mathcal{F}^T$ are well-defined continuous maps on 
   \begin{equation}
   \label{eq:domain_F_ell}
   \mathcal{U}^T =  \mathcal{P}^{\mathrm{ell}, T}_{\sigma,\Lell} \times B_{\mathcal{E}^{\mathrm{ell}, T}_{\sigma, \mathcal{K}}}(T^{\ell - 1}).
   \end{equation}
\end{lemma}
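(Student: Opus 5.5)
The plan is to write $\mathcal{F}^T(P,\varphi)$ out explicitly in components and check directly that each component lands in the claimed target space, with norms controlled by continuous functions of $|P|$ and $|\varphi|$. The key observation is that on $B_{\mathcal{E}^{\mathrm{ell},T}_{\sigma,\mathcal{K}}}(T^{\ell-1})$ the embedding stays inside $\T^n\times B$. Indeed, $|v^t|_{C^0}\le |v|^T_{\sigma,\ell+\dec+1}t^{-(\ell+\dec+1)}< T^{\ell-1}T^{-(\ell+\dec+1)}\le T^{-2-\dec}\le 1$, and similarly $|w^t|_{C^0}<T^{-1-\dec}\le 1$ for $t\ge T\ge 1$. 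Shrinking by a harmless constant (or using $B$ the unit ball with the sum-norm convention), $(v^t,w^t)(q)\in B$, so the composition $X_{H_0+P}\circ\varphi$ makes sense pointwise. The $u$-component needs no restriction, since $q+u$ is taken mod $\Z^n$.

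Next I would compute, using $H_0=N+R$ and the form \eqref{eq:perturbation_form}:
\begin{align*}
\pi_q\mathcal{F}^T&=\omega+\partial_pR\circ\varphi+b\circ\varphi-\omega-\Lo u=\partial_pR\circ\varphi+b\circ\tilde\varphi-\Lo u,\\
\pi_p\mathcal{F}^T&=-\partial_qR\circ\varphi-\partial_qa\circ\tilde\varphi-\partial_qb\circ\tilde\varphi\, v-\partial_qc\circ\tilde\varphi\, w-\tfrac12\partial_qd\circ\tilde\varphi\cdot w^2-\Lo v,\\
\pi_z\mathcal{F}^T&=J_m\big(\Mell w+\partial_zR\circ\varphi+c\circ\tilde\varphi+d\circ\tilde\varphi\, w\big)-\Lo w,
\end{align*}
where $\tilde\varphi=\mathrm{id}+u$. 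I would then use Lemma \ref{lemma:def_bar_M_m_L} to write $\partial_pR$, $\partial_zR$, $\partial_qR$ as products of the bounded coefficients $\overline M,\overline m,\overline L,\ldots$ (all in $\mathcal{S}_{(\sigma,\cdot),0}$ by \eqref{est:MmLbar_Holder}) with at least one factor of $v$ or $w$. For $\partial_qR$ the product has at least two such factors, e.g.\ $\partial_qM\cdot v^2$, $\partial_q m\cdot(v,w)$, $\partial_qL\cdot w^3$. Each term then decays as required, using the standard Hölder composition and product estimates (from the appendix / Section \ref{sc:norm_properties_ell}), $|f\circ g|_{C^\sigma}\le C|f|_{C^{\sigma}}(1+|g|_{C^\sigma})^\sigma$ with $|g|_{C^\sigma}$ bounded on the ball:
\begin{itemize}
\item In the $q$-row: $b$ and $\Lo u$ decay like $t^{-\ell}$, and $\partial_pR\circ\varphi\sim v,w$ decays faster.
\item In the $p$-row: $\partial_qa$ decays like $t^{-(\ell+\dec+2)}$, $\partial_qb\, v$ like $t^{-(2\ell+\dec+1)}$, $\partial_qc\, w$ faster, $\partial_qR\sim v^2,vw,w^3$ faster, and $\Lo v\in\mathcal S_{\ell+\dec+2}$ by definition of $\mathcal U$.
\item In the $z$-row: $J_m\Mell w-\Lo w=\mathcal L^{\mathrm{ell}}_{\omega,\Omega}w\in\mathcal S_{\ell+\dec+1}$ by definition of $\mathcal W$, $c$ decays like $t^{-(\ell+\dec+1)}$, $d\,w$ like $t^{-(2\ell+\dec)}$, and $\partial_zR\sim v,\,w^2$ decays like $t^{-(\ell+\dec+1)}$ at worst, since $w^2\sim t^{-2(\ell+\dec)}$ and $\ell>1$.
\end{itemize}

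The main obstacle is the derivative-loss bookkeeping. Composition $f\circ\varphi$ in $C^\sigma$ requires $f\in C^{\sigma}$ (and for $D_\varphi\mathcal F$, $Df\in C^\sigma$), and $\partial_q R$, $\partial_q a$ cost one derivative, $D_\varphi$ another. This is exactly why the hypotheses carry the extra indices $(\sigma,2)$ and $(\sigma,3)$. I would check that $X_{H_0+P}$ lies in $C^{\sigma+1}$ in $(q,p,z)$ with bounds uniform in $t$, so that both the map and its first differential, $D_\varphi\mathcal F^T(P,\varphi)[\delta\varphi]=DX_{H_0+P}\circ\varphi\,\delta\varphi-\Lo\delta\varphi$, are controlled. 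In the differential, the same decay accounting applies with one factor replaced by $\delta u,\delta v,\delta w$; the $\delta u$ terms multiply $\partial_q$ of the other coefficients, which decay at least as fast as needed.

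Continuity in $(P,\varphi)$ would then follow from the standard continuity of the composition operator $C^{\sigma+1}\times C^\sigma\to C^\sigma$, applied uniformly in $t$, together with the continuity assumptions in $t$ built into the definition of $\mathcal S$. The weighted sup-norm estimates pass through because every bound is pointwise in $t$ with the stated powers of $t$.
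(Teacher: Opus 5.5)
Your proposal is correct and follows the paper's proof essentially step by step. You use the ball of radius $T^{\ell-1}$ to get $|u|_{C^0}<1$, $|v|_{C^0}<T^{-2-\dec}$ and $|w|_{C^0}<T^{-1-\dec}$, so that the composition is defined. You then expand the three components through $\overline{M},\overline{m},\overline{L}$ as in \eqref{eq:terms_F_ell_1}, and read off the decay rates from the product and composition estimates of Proposition~\ref{prop:Csigma_prop_norms_pol}. Your term-by-term bookkeeping is more explicit than the paper's, which simply calls this a straightforward computation.

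The one point that neither you nor the paper actually justifies is continuity of $D_\varphi\mathcal{F}^T$. The hypotheses only place $DX_{H_0+P}$ in $C^\sigma$ (e.g.\ via $\partial_q^2 a$). The composition-operator continuity you invoke needs the outer function in $C^{\sigma+1}$, so it handles $\mathcal{F}^T$ but not its differential.
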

\begin{proof}
Fix $P \in \mathcal{P}^{\mathrm{ell}, T}_{\sigma,\Lell}$ and $\varphi = (u, v, w) \in \mathcal{E}^{\mathrm{ell}, T}_{\sigma, \mathcal{K}}$ with $| \varphi |_{\sigma, \mathcal{K}}^T \leq T^{\ell - 1}$. It follows that 
\[|u|^T_{\sigma, \omega, \ell - 1}, |v|^T_{\sigma, \omega, \ell + \dec + 1}, |w|^T_{\sigma, \omega, \Omega, \ell + \dec} < T^{\ell - 1}.\]
In particular,
\[ |u|_{C^0} < 1, \qquad |v|_{C^0} < \frac{1}{T^{2 + \dec}}, \qquad |w|_{C^0} < \frac{1}{T^{1 + \dec}}, \]
and thus the composition $X_{H_0 + P} \circ \varphi$ (and therefore $\mathcal{F}^T(P, \varphi)$) is well-defined. 

The three components of $\mathcal{F}^T(P, \varphi)$ can be explicitly written as
\begin{equation}
\label{eq:formula_F_ell_1}
    \mathcal{F}^T(P, \varphi)  = \begin{pmatrix}\partial_p  H \circ \varphi - \omega -\Lo u\\
    -\partial_q H \circ \varphi - \Lo v\\
    J_m\partial_z H \circ \varphi - \Lo w\end{pmatrix},
\end{equation}
or, more precisely,
\begin{equation}
\begin{aligned}\label{eq:terms_F_ell_1}
    \partial_p H \circ \varphi - \omega -\Lo u &= b\circ \varphi_q + \overline{M} \circ \varphi \cdot  v + m \circ \varphi_{qz}\cdot w - \Lo u,\\
    -\partial_q H\circ \varphi - \Lo v &= -\partial_q a \circ \varphi_q - \partial_q b\circ \varphi_q\cdot v -  \partial_q c\circ \varphi_q \cdot w\\
    &- {1 \over 2}\partial_q d\circ \varphi_q \cdot (w)^2 - \partial_q M\circ \varphi
    \cdot (v)^2 - \partial_q m \circ \varphi_{qz}\cdot (v,w)\\
    &- \partial_q L \circ \varphi_{qz} \cdot (w)^3 - \Lo v,\\
    J_m\partial_z H\circ \varphi - \Lo w &= J_mM^{\mathrm{ell}}_\Omega \cdot w + J_mc \circ \varphi_q + J_m d\circ \varphi_q \cdot w\\
    &+ J_m\partial_z M\circ \varphi \cdot (v)^2 + J_m\overline{m} \circ \varphi_{qz}\cdot v + J_m \overline{L}\circ \varphi_{qz} \cdot (w)^2 - \Lo w,
\end{aligned}
\end{equation}
where the matrices $M^{\mathrm{ell}}_\Omega$ and $J_m$ are defined in~\eqref{Ms} and~\eqref{def:J}, respectively, and $\overline{M}$, $\overline{m}$, and $\overline{L}$ in Lemma \ref{lemma:def_bar_M_m_L}. In the equation above, we denoted by an index the projection of $\varphi$ on the $q$, $p$, and $z$ components. 

Now, by \eqref{eq:terms_F_ell_1} and using the properties contained in Proposition \ref{prop:Csigma_prop_norms_pol}, a straightforward computation shows that $\mathcal{F}(P, \varphi) \in \mathcal{S}^{\mathrm{pol},T}_{(\sigma,0), \ell}(\R^n) \times \mathcal{S}^{\mathrm{pol},T}_{(\sigma,0), \ell+\dec+2}(\R^n) \times \mathcal{S}^{\mathrm{pol},T}_{(\sigma,0), \ell+\dec+1}(\R^{2m})$.

The regularity of $\mathcal{F}^T$ and $D_\varphi \mathcal{F}^T$ follows directly from~\eqref{eq:terms_F_ell_1}, together with the regularity assumptions on the perturbative terms  $P=(a,b,c,d) \in \mathcal{P}^{\mathrm{ell}, T}_{\sigma,\mathcal{L}}$ and the components of the $C^\sigma$ asymptotic KAM torus $\varphi = (u,v,w) \in \mathcal{E}^{\mathrm{ell}, T}_{\sigma, \mathcal{K}}$.
\end{proof}

\begin{lemma}
    \label{lemma:F_bounded}
   There exists a constant $C$, such that for any $r > 0$ and any $T \geq 1$,
\[ \sup \left\{ |\mathcal{F}^T(P, 0)|_{\boldsymbol{\sigma, \ell, \dec}}^T \mid P \in \mathcal{P}^{\mathrm{ell}, T}_{\sigma,\Lell}, \, |P|_{\sigma, \Lell}^T < r \right\} < Cr .\]
\end{lemma}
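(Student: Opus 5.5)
Setting $\varphi = 0$ in \eqref{eq:terms_F_ell_1} should kill almost every term, because $\varphi$ then reduces to the trivial embedding $\varphi_0(q,t) = (q,0,0)$. With $u = v = w = 0$ all terms containing $v$ or $w$ as a factor vanish, and so do $\Lo u$, $\Lo v$, $\Lo w$. This leaves
\[
\mathcal{F}^T(P,0) = \begin{pmatrix} b \circ \varphi_{0,q} \\ -\partial_q a \circ \varphi_{0,q} \\ J_m c \circ \varphi_{0,q} \end{pmatrix} = \begin{pmatrix} b(\cdot,t) \\ -\partial_q a(\cdot,t) \\ J_m c(\cdot,t) \end{pmatrix}.
\]
Note that $(P,0)$ lies in $\mathcal{U}^T$ of Lemma \ref{lemma:F_well_defined}, so $\mathcal{F}^T(P,0)$ is defined.

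Next I would estimate each component in the corresponding factor of the codomain norm $|\cdot|^T_{\boldsymbol{\sigma,\ell,\dec}}$, using the fact that the restriction to $p = z = 0$ does not increase the Hölder norm. For the first component, $|b^t|_{C^\sigma} \le |b^t|_{C^{\sigma+2}}$, so $|b|^T_{\sigma,\ell} \le |b|^T_{\sigma+2,\ell}$, where the latter is the norm of $b$ in $\mathcal{S}^{\mathrm{pol},T}_{(\sigma,2),\ell_2}$ with $\ell_2 = \ell$. For the second component, $|\partial_q a|^T_{\sigma,\ell+\dec+2} \le |\partial_q a|^T_{\sigma+1,\ell+\dec+2} \le |a|^T_{(\sigma,2),(0,\ell_1)}$, where $\ell_1 = \ell+\dec+2$ and the last inequality is the definition \eqref{def:norm_S0ell}. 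For the third component, multiplication by the constant matrix $J_m$ only permutes entries and changes signs, so it preserves the max-of-components norm. Hence $|J_m c|^T_{\sigma,\ell+\dec+1} \le |c|^T_{\sigma+2,\ell+\dec+1}$, which is the norm of $c$ with $\ell_3 = \ell+\dec+1$.

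Adding the three bounds gives $|\mathcal{F}^T(P,0)|^T_{\boldsymbol{\sigma,\ell,\dec}} \le |P|^T_{\sigma,\Lell} < r$, because $|\cdot|^T_{\sigma,\Lell}$ is the sum norm and the $d$-component only contributes nonnegatively. So the constant $C = 1$ works, or $C = 2$ if a strict inequality with room is preferred, and it is independent of $r$ and $T$. The only point I expect to need care is the Hölder convention: that evaluating at $p = z = 0$ and lowering the regularity index ($C^{\sigma+k} \to C^\sigma$) do not increase the norm, with constant $1$. This should follow from the norm definitions in Appendix \ref{app:Holder}; if some constant appears there instead, it depends only on $\sigma, n, m$ and can be absorbed into $C$.
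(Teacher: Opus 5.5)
Your proposal is correct and follows the same route as the paper. The paper only observes that $\mathcal{F}^T(P,0)$ is the vector field of $H_0+P$ evaluated along the trivial embedding $\varphi_0$ (minus $\Lo\varphi_0$) and asserts a $T$-independent bound. Your explicit identification $\mathcal{F}^T(P,0)=(b,\,-\partial_q a,\,J_m c)$, followed by the componentwise comparison with the sum norm $|P|^T_{\sigma,\Lell}$ (up to a Hölder comparison constant depending only on $\sigma,n,m$), fills in the details the paper leaves implicit.
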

\begin{proof}
Since $\mathcal{F}^T(P, 0) = X_{H_0 + P} \circ \varphi_0$ (where $\varphi_0$ is the trivial embedding \eqref{def:varphi0=(q,0,0)}), for any $P \in \mathcal{P}^{\mathrm{ell}, T}_{\sigma, \Lell}$, it follows that there exists a constant $C$, independent of $T$, such that
\[ \sup \left\{ |\mathcal{F}^T(P, 0)|_{\boldsymbol{\sigma, \ell, \dec}}^T \mid P \in \mathcal{P}^{\mathrm{ell}, T}_{\sigma,\Lell}, \, |P|_{\sigma, \Lell}^T < r \right\} < Cr .\]

\end{proof}

We will now consider the differential of $\mathcal{F}^T$ with respect to $\varphi = (u, v, w)$ evaluated at $(0,0)$. Observe that $\mathcal{F}^T(0, 0) = 0$. A direct calculation shows that 
\begin{equation}
\label{proof:Csigma_def_ell_DF_1}
\Function{D_\varphi\mathcal{F}^T(0,0)}{\mathcal{E}^{\mathrm{ell}, T}_{\sigma, \mathcal{K}}}{ \mathcal{S}^{\mathrm{pol},T}_{(\sigma,0), \ell}(\R^n) \times \mathcal{S}^{\mathrm{pol},T}_{(\sigma,0), \ell+\dec+2}(\R^n) \times \mathcal{S}^{\mathrm{pol},T}_{(\sigma,0), \ell+\dec+1}(\R^{2m})}{(\hat u, \hat v, \hat w) }{\begin{pmatrix} \overline{M}_0 \cdot \hat v + m_0 \cdot \hat w - \Lo \hat u\\
    -\Lo \hat v\\
    \mathcal{L}^{\mathrm{ell}}_{\omega, \Omega} \hat w + J_m\overline{m}_0 \cdot \hat v
    \end{pmatrix}},
\end{equation}
where we used the notation (subscript $0$) introduced in ~\eqref{def:f0} and the functions $\bar m, \bar M$ are given by Lemma \ref{lemma:def_bar_M_m_L}. We refer to~\eqref{eq:coh_eqs_ell} for the definition of the linear operators $\Lo$ and $\mathcal{L}^{\mathrm{ell}}_{\omega, \Omega}$.

We recall that $\Upsilon$ is the positive constant defined by~\eqref{def:const_Upsilon_ell}. We further recall that, throughout this work, $C(\cdot)$ stands for a generic positive constant depending on the parameter in brackets.
\begin{lemma}\label{lemma:DF_invertible}
    The linear operator $D_{\varphi} \mathcal{F}^T(0,0)$ given by ~\eqref{proof:Csigma_def_ell_DF_1} is invertible. Moreover, there exits a constant $\bar C$, depending on $\sigma,$ $\ell$, $\dec$ and $\Upsilon$, such that
    \[ \| D_{\varphi} \mathcal{F}^T(0,0)^{-1}\| < \bar C(\sigma, \ell,\dec,\Upsilon),\]
    where $\| \cdot \|$ denotes the operator norm.
\end{lemma}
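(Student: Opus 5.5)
The operator $D_\varphi\mathcal{F}^T(0,0)$ in \eqref{proof:Csigma_def_ell_DF_1} is block triangular. The plan is to invert it by solving the system $D_\varphi\mathcal{F}^T(0,0)(\hat u,\hat v,\hat w)=(g_1,g_2,g_3)$ in the order $\hat v$, then $\hat w$, then $\hat u$. At each step we use a cohomological equation from \eqref{eq:coh_eqs_ell} with explicit inverse and $T$-independent bounds. These facts are Propositions \ref{prop:Csigma_HEomega}, \ref{lemma:HE_interm_LoO} and \ref{prop:Csigma_HEomegaOmega}, which I take as given.

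\begin{enumerate}
\item \textbf{Solving for $\hat v$.} The second equation is $-\Lo\hat v=g_2$ with $g_2\in\mathcal{S}^{\mathrm{pol},T}_{(\sigma,0),\ell+\dec+2}$. Since $\ell+\dec+2>1$, the unique solution decaying at infinity is
\[
\hat v(q,t)=\int_t^{+\infty}g_2\big(q-\omega(s-t),s\big)\,ds .
\]
It satisfies $|\hat v|_{\sigma,\ell+\dec+1}\le C\,|g_2|_{\sigma,\ell+\dec+2}$. Composing with the translation $q\mapsto q-\omega(s-t)$ preserves $C^\sigma$ norms, and integrating the decay $s^{-(\ell+\dec+2)}$ gives $t^{-(\ell+\dec+1)}/(\ell+\dec+1)$. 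Moreover $\Lo\hat v=-g_2$ has decay rate $\ell+\dec+2$, so $\hat v\in\mathcal{U}^{\mathrm{pol},T}_{\sigma,\omega,\ell+\dec+1}$. Uniqueness holds because any solution of $\Lo\chi=0$ is constant along the lines $(q+\omega s,s)$ and decays, hence vanishes.

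\item \textbf{Solving for $\hat w$.} The third equation is
\[
\mathcal{L}^{\mathrm{ell}}_{\omega,\Omega}\hat w=g_3-J_m\overline m_0\,\hat v .
\]
The right-hand side lies in $\mathcal{S}_{\ell+\dec+1}$, since $\overline m_0$ is bounded by $\Upsilon$ via Lemma \ref{lemma:def_bar_M_m_L} and $\hat v$ has rate $\ell+\dec+1$. In complex coordinates $\zeta=\hat w_x+i\hat w_y$ the equation diagonalises into scalar equations $\Lo\zeta_j+i\Omega_j\zeta_j=h_j$, up to sign conventions. The solution decaying at infinity is
\[
\zeta_j(q,t)=-\int_t^{+\infty}e^{i\Omega_j(s-t)}h_j\big(q-\omega(s-t),s\big)\,ds .
\]
The oscillating factor has modulus one, so the same integration gives $\hat w\in\mathcal{S}_{\ell+\dec}$. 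The equation itself then places $\mathcal{L}^{\mathrm{ell}}_{\omega,\Omega}\hat w$ in $\mathcal{S}_{\ell+\dec+1}$. No small divisors appear, and uniqueness follows as in the previous step, since the homogeneous solutions have constant modulus along characteristics.

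\item \textbf{Solving for $\hat u$.} The first equation is $\Lo\hat u=\overline M_0\hat v+m_0\hat w-g_1$. Here $\overline M_0\hat v$ and $m_0\hat w$ decay faster than $t^{-\ell}$ because $\dec\ge0$, and $g_1\in\mathcal{S}_\ell$ with $\ell>1$. Integrating from $+\infty$ as in the first step gives $\hat u\in\mathcal{U}^{\mathrm{pol},T}_{\sigma,\omega,\ell-1}$.
\end{enumerate}

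These three steps give existence and uniqueness of the preimage, so $D_\varphi\mathcal{F}^T(0,0)$ is bijective. Chaining the estimates yields
\[
|(\hat u,\hat v,\hat w)|_{\sigma,\mathcal{K}}\le \bar C\,|(g_1,g_2,g_3)|_{\boldsymbol{\sigma,\ell,\dec}},
\]
with $\bar C$ depending only on $\sigma,\ell,\dec$ and $\Upsilon$. The estimate is uniform in $T\ge1$ because the integrals $\int_t^\infty s^{-k}\,ds$ and the product bounds of Proposition \ref{prop:Csigma_prop_norms_pol} do not depend on $T$. The bound on the inverse can also be read off directly from this construction, without appealing to the open mapping theorem.

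The main obstacle is the $C^\sigma$-in-$q$ bound for these integral operators when $\sigma$ is non-integer. It requires the Hölder seminorm to commute with integration in $s$ and to be invariant under the translation $q\mapsto q-\omega(s-t)$. I would verify this directly from the definition of the Hölder norm, which shows $|\int f_s\,ds|_{C^\sigma}\le\int|f_s|_{C^\sigma}\,ds$. A secondary point is continuity of the solutions in $t$, which the spaces require; it follows from dominated convergence.
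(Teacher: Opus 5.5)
Your proposal is correct and is essentially the paper's proof. It exploits the triangular structure, solving for $\hat v$, then $\hat w$, then $\hat u$ via Propositions \ref{prop:Csigma_HEomega} and \ref{prop:Csigma_HEomegaOmega}, and chains the $T$-independent estimates using Proposition \ref{prop:Csigma_prop_norms_pol} and Lemma \ref{lemma:def_bar_M_m_L}.

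One small slip: your characteristic integrals should be evaluated at $q+\omega(s-t)$ rather than $q-\omega(s-t)$ (compare \eqref{eq:proof_inv_sol_Lo}). With the minus sign the formula solves $\partial_t\chi-\omega\cdot\partial_q\chi=\cdot$ instead, although none of the norm estimates are affected.
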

\begin{proof}
    Let $g = (g_1, g_2, g_3) \in \mathcal{S}^{\mathrm{pol},T}_{(\sigma,0), \ell}(\R^n) \times \mathcal{S}^{\mathrm{pol},T}_{(\sigma,0), \ell+\dec+2}(\R^n) \times \mathcal{S}^{\mathrm{pol},T}_{(\sigma,0), \ell+\dec+1}(\R^{2m})$. To show that $D_{\varphi} \mathcal{F}^T(0,0)$ is invertible it suffices to show that $D_{\varphi} \mathcal{F}^T(0,0)(\hat u, \hat v, \hat w) = (g_1, g_2, g_3)$ admits a unique solution. We can rewrite this problem in terms of finding a unique solution $(\hat u, \hat v, \hat w) \in \mathcal{E}^{\mathrm{ell}, T}_{\sigma, \mathcal{K}} = \mathcal{U}^{\mathrm{pol},T}_{\sigma, \omega, \ell-1} \times \mathcal{U}^{\mathrm{pol},T}_{\sigma, \omega, \ell+\dec+1} \times \mathcal{W}^{\mathrm{ell},T}_{\sigma, \omega, \Omega, \ell+\dec}$ of the following system
    \begin{equation}
    \begin{cases}\label{proof:lemma_Csigma_invDF_eq}
        &\overline{M}_0 \cdot \hat v + m_0 \cdot \hat w - \Lo \hat u = g_1,\\ 
    &-\Lo \hat v=g_2,\\ 
    & \mathcal{L}^{\mathrm{ell}}_{\omega, \Omega} \hat w + J_m\overline{m}_0 \cdot \hat v = g_3.
    \end{cases}
    \end{equation} 
    Using Proposition \ref{prop:Csigma_HEomega}, a unique solution $\hat v \in \mathcal{U}^{\mathrm{pol},T}_{\sigma, \omega, \ell+\dec+1}$ of the second equation in~\eqref{proof:lemma_Csigma_invDF_eq} exists and satisfies
    \begin{equation}\label{est:Csigma_proof_inv_DF_sol_hatv}
        |\hat v|^T_{\sigma,\omega, \ell+\dec+1} \le {1 \over \ell+\dec+1} |g_2|^T_{\sigma, \ell+\dec+2}. 
    \end{equation}
    Now, in the third equation of~\eqref{proof:lemma_Csigma_invDF_eq}, $\hat v$ is known. Hence, thanks to Proposition \ref{prop:Csigma_HEomegaOmega}, there exists a unique solution $\hat w \in \mathcal{W}^{\mathrm{ell},T}_{\sigma, \omega, \Omega, \ell+\dec}$ of the third equation of~\eqref{proof:lemma_Csigma_invDF_eq}. Moreover, using the properties contained in Proposition \ref{prop:Csigma_prop_norms_pol},~\eqref{est:Csigma_proof_inv_DF_sol_hatv}, %
    Lemma \ref{lemma:def_bar_M_m_L} and recalling the definition of the norms $|\cdot |^T_{\sigma, \omega, \ell}$ and $|\cdot |^T_{\sigma, \omega, \Omega, \ell}$ in~\eqref{def:norm_U} and~\eqref{def:norm_W}, respectively, we have that  
    \begin{align}
        |\hat w|^T_{\sigma,\omega, \Omega, \ell+\dec} &\le {1 \over \ell+\dec}|g_3 -  J_m\overline{m}_0 \cdot \hat v|^T_{\sigma, \ell+\dec+1} \le C(\sigma, \ell, \dec)  \left(|g_3|^T_{\sigma, \ell+\dec+1} + \Upsilon  |\hat v|^T_{\sigma, \ell+\dec+1}\right)\nonumber\\ \label{est:Csigma_proof_inv_DF_sol_hatw}
        &\le C(\sigma, \ell, \dec, \Upsilon) \left(|g_3|^T_{\sigma, \ell+\dec+1} +  |g_2|^T_{\sigma, \ell+\dec+2}\right). 
    \end{align}

    It remains to analyze the first equation of~\eqref{proof:lemma_Csigma_invDF_eq} where $\hat v$ and $\hat w$ are known. Using again Proposition \ref{prop:Csigma_HEomega} a unique solution $\hat u \in \mathcal{U}^{\mathrm{pol},T}_{\sigma, \omega, \ell-1}$ of the first equation of~\eqref{proof:lemma_Csigma_invDF_eq} exists and, similarly to the previous case, using the properties contained in Proposition \ref{prop:Csigma_prop_norms_pol}, Lemma \ref{lemma:def_bar_M_m_L}, %
     ~\eqref{est:Csigma_proof_inv_DF_sol_hatv}, \,~\eqref{est:Csigma_proof_inv_DF_sol_hatw} and the definition of the norms~\eqref{def:norm_U} and~\eqref{def:norm_W}, we can verify that 
    \begin{align*}
        |\hat u|^T_{\sigma,\omega, \ell-1} &\le  {1 \over \ell-1}|g_1 - \overline{M}_0 \cdot \hat v - m_0 \cdot \hat w|^T_{\sigma, \ell} \\
        &\le C(\sigma, \ell) \left(|g_1|^T_{\sigma, \ell} + \Upsilon |\hat v|^T_{\sigma, \ell+\dec+1} + \Upsilon|\hat w|^T_{\sigma, \ell+\dec}\right) \\
        &\le C(\sigma,\ell, \dec, \Upsilon) \left(|g_1|^T_{\sigma, \ell} +  |g_2|^T_{\sigma, \ell+\dec+2} + |g_3|^T_{\sigma, \ell+\dec+1}\right)\\
        &=  C(\sigma,\ell, \dec, \Upsilon) |g|_{\boldsymbol{\sigma, \ell, \dec}}^T.
    \end{align*}
    This concludes the proof of this lemma. 
\end{proof}

Now we will study the dependence of $\| D_\varphi\mathcal{F}^T(P, \varphi) - D_\varphi\mathcal{F}^T(0, 0) \|$ on $P$ and $\varphi$. 

\begin{lemma}
    \label{lem:DF_limit}
    Let $P = (a, b, c, d) \in  \mathcal{P}^{\mathrm{ell}, T}_{\sigma,\mathcal{L}}$ and $\varphi_* = (u_*,v_*,w_*) \in \mathcal{E}^{\mathrm{ell}, T}_{\sigma, \mathcal{K}}$ with $| \varphi_*|_{\sigma, \mathcal{K}}^T < T^{\ell - 1}$. Then there exists a constant $C$, depending continuously on $\Upsilon, |P|^T_{\sigma, \mathcal{L}},$ and $| \varphi_*|_{\sigma,\mathcal{K}}^T$, such that the operator
    \[  D_\varphi \mathcal{F}^T(P, \varphi_*) - D_\varphi \mathcal{F}^T(0, 0):   \mathcal{E}^{\mathrm{ell}, T}_{\sigma, \mathcal{K}} \to \mathcal{S}^{\mathrm{pol},T}_{(\sigma,0), \ell}(\R^n) \times \mathcal{S}^{\mathrm{pol},T}_{(\sigma,0), \ell+\dec+2}(\R^n) \times \mathcal{S}^{\mathrm{pol},T}_{(\sigma,0), \ell+\dec+1}(\R^{2m}) \]
    satisfies
\begin{equation}
\label{eq:DF_difference_bound}
\| D\mathcal{F}^T(0, 0) -  D\mathcal{F}^T(P, \varphi_*) \| \leq \frac{C}{T^{\ell - 1}},
\end{equation}
where $\| \cdot\|$ denotes the operator norm. 

In particular, Item \eqref{prop:DF_limit} of Proposition \ref{prop:F_properties} holds.
\end{lemma}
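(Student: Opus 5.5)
We compute $D_\varphi\mathcal{F}^T(P,\varphi_*)$ explicitly by differentiating the three components in \eqref{eq:terms_F_ell_1} with respect to $\varphi=(u,v,w)$ at $\varphi_*$, in the direction $\hat\varphi=(\hat u,\hat v,\hat w)$. The terms $-\Lo\hat u$, $-\Lo\hat v$ and $J_m\Mell\hat w-\Lo\hat w$ are identical in both operators and cancel. The remaining terms of $D_\varphi\mathcal{F}^T(P,\varphi_*)\hat\varphi$ are of two kinds:
\begin{itemize}
\item[(i)] terms coming from $P$, such as $\partial_q b\circ\varphi_{*,q}\cdot\hat u$, $\partial_q^2 a\circ\varphi_{*,q}\cdot\hat u$, $\partial_q b\circ\varphi_{*,q}\cdot\hat v$, $\partial_q c\circ\varphi_{*,q}\cdot\hat w$ and $J_m d\circ\varphi_{*,q}\cdot\hat w$;
\item[(ii)] terms coming from $R$, such as $\overline M\circ\varphi_*\cdot\hat v$, $m\circ\varphi_{*,qz}\cdot\hat w$ and $J_m\overline m\circ\varphi_{*,qz}\cdot\hat v$, together with terms containing $\partial_q$ or $\partial_{(p,z)}$ of $\overline M,m,\overline m,M,L$ multiplied by components of $\varphi_*$ and of $\hat\varphi$.
\end{itemize}
The type (ii) terms are compared with $\overline M_0\cdot\hat v+m_0\cdot\hat w$ and $J_m\overline m_0\cdot\hat v$ via the mean value theorem, which gives differences such as $(\overline M\circ\varphi_*-\overline M_0)\cdot\hat v$. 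We then bound each term, component by component, in the target norms $|\cdot|^T_{\sigma,\ell}$, $|\cdot|^T_{\sigma,\ell+\dec+2}$ and $|\cdot|^T_{\sigma,\ell+\dec+1}$. For this we use the product and composition estimates of Proposition \ref{prop:Csigma_prop_norms_pol}, the bounds of Lemma \ref{lemma:def_bar_M_m_L} (in terms of $\Upsilon$), and $|\varphi_*|^T_{\sigma,\mathcal K}<T^{\ell-1}$, which keeps the compositions in $B$ as in Lemma \ref{lemma:F_well_defined}.

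The bookkeeping goes as follows. A product $f\cdot g$ with $f\in\mathcal{S}_{\ell_f}$ and $g\in\mathcal{S}_{\ell_g}$ lies in $\mathcal{S}_{\ell_f+\ell_g}$. Whenever the total decay exceeds the target exponent by $\delta>0$, we gain a factor $T^{-\delta}$, since $\sup_{t\ge T}t^{-\delta}=T^{-\delta}$. We check this for each component.

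\textbf{First component.} The target exponent is $\ell$.
\begin{itemize}
\item $\partial_q b\circ\varphi_{*,q}\cdot\hat u$ has decay $\ell+(\ell-1)$, an excess of $\ell-1$.
\item $(\overline M\circ\varphi_*-\overline M_0)\hat v$ has decay at least $(\ell-1)+(\ell+\dec+1)$, since the difference is $O(|u_*|+|v_*|+|w_*|)$ in $C^\sigma$, at a cost of one derivative, which is available because $\partial^2_{(p,z)}H_0\in\mathcal S_{(\sigma,3),0}$.
\item The $\hat u$-derivative of $\overline M_0(q+u_*)$ times $v_*$ has decay $(\ell+\dec+1)+(\ell-1)$.
\end{itemize}

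\textbf{Second component.} The target exponent is $\ell+\dec+2$.
\begin{itemize}
\item The critical term is $\partial_q^2a\circ\varphi_{*,q}\cdot\hat u$. Since $\partial_q a\in\mathcal S_{(\sigma,1),\ell+\dec+2}$ and $\hat u$ decays like $t^{-(\ell-1)}$, the excess is $\ell-1$.
\item $\partial_q b\cdot\hat v$ has excess $\ell-1$.
\item $\partial_q c\cdot\hat w$ has excess $\ell-1$.
\item The quadratic terms such as $\partial_q M\cdot v_*\hat v$ carry even more decay.
\end{itemize}

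\textbf{Third component.} The target exponent is $\ell+\dec+1$.
\begin{itemize}
\item $J_m d\cdot\hat w$ has decay $\ell+\ell+\dec$, an excess of $\ell-1$. This is exactly where $d\in\mathcal S_{\ell}$ (rather than $\mathcal S_1$) is used.
\item $\partial_q c\cdot\hat u$ has decay $(\ell+\dec+1)+(\ell-1)$.
\item The $\overline m$ and $\overline L$ differences likewise have an excess of at least $\ell-1$.
\end{itemize}
Collecting these bounds gives \eqref{eq:DF_difference_bound}, with $C$ polynomial, hence continuous, in $\Upsilon$, $|P|^T_{\sigma,\mathcal L}$ and $|\varphi_*|^T_{\sigma,\mathcal K}$.

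\textbf{Main obstacle.} The delicate step is the regularity budget. Terms like $\partial_q^2a\circ(\mathrm{id}+u_*)\cdot\hat u$ and $\partial_q b\circ(\mathrm{id}+u_*)\cdot\hat u$ must be estimated in $C^\sigma$, which requires $\partial_q a\in C^{\sigma+1}$ and $b\in C^{\sigma+1}$. This is guaranteed by the extra derivatives ($k=2$, and $k=1$ for $\partial_qa$) built into $\mathcal P^{\mathrm{ell},T}_{\sigma,\mathcal L}$, combined with the composition estimate of Proposition \ref{prop:Csigma_prop_norms_pol}. I would first verify that the composition bound $|f\circ(\mathrm{id}+u_*)|_{C^\sigma}\le C|f|_{C^\sigma}(1+|u_*|_{C^\sigma})^\sigma$ is uniform because $|u_*|_{C^\sigma}\le T^{\ell-1}T^{-(\ell-1)}=1$.

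Finally, given $r,\rho$, every $(P,\varphi_*)$ in the product of balls satisfies $C\le C(\Upsilon,r,\rho)$ once $T^{\ell-1}>\rho$. Since $\ell>1$, the right-hand side of \eqref{eq:DF_difference_bound} tends to zero uniformly, which yields Item \eqref{prop:DF_limit} of Proposition \ref{prop:F_properties}.
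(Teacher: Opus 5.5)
Your proposal is correct and takes essentially the same route as the paper. You linearize the explicit formula \eqref{eq:terms_F_ell_1} at $\varphi_*$ and cancel the common $\Lo$ and $J_m\Mell$ terms. You then check term by term, via Proposition \ref{prop:Csigma_prop_norms_pol}, Lemma \ref{lemma:def_bar_M_m_L} and the mean value formula for $\overline M\circ\varphi_*-\overline M_0$ and $m\circ\varphi_{*,qz}-m_0$, that the three components of the difference lie in $\mathcal{S}^{\mathrm{pol},T}_{(\sigma,0),2\ell-1}$, $\mathcal{S}^{\mathrm{pol},T}_{(\sigma,0),2\ell+\dec+1}$ and $\mathcal{S}^{\mathrm{pol},T}_{(\sigma,0),2\ell+\dec}$. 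This is an excess of at least $\ell-1$ over the target rates, which is traded for the factor $T^{-(\ell-1)}$ exactly as in the paper.

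You correctly single out $d\in\mathcal{S}_\ell$ and $\partial_q a\in\mathcal{S}_{(\sigma,1),\ell+\dec+2}$ as the critical hypotheses, in agreement with the paper's remark. Your uniformity argument for Item \eqref{prop:DF_limit} is slightly more explicit than the paper's one-line deduction.
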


\begin{proof}
Let $\varphi = (u, v, w) \in \mathcal{E}^{\mathrm{ell}, T}_{\sigma, \mathcal{K}}$. We have
\begin{equation*}
    \left(D_\varphi\mathcal{F}^T(0,0) - D_\varphi\mathcal{F}^T(P, \varphi_*)\right)\varphi = \begin{pmatrix}
        I_1 & I_2 & I_3
    \end{pmatrix}^\top
\end{equation*}
with
\begin{align*}
    I_1 &= -\partial_q b \circ \varphi_{*, q} u - \partial_q \overline{M} \circ \varphi_* \cdot (v_*, u) - \partial_q m \circ \varphi_{*,qz} \cdot (w_*, u) - \left(\overline{M}\circ \varphi_* - \overline{M}_0\right)v\\
    &- \partial_p \overline{M}\circ \varphi_*\cdot (v_*, v) - \partial_z \overline{M} \circ \varphi_* \cdot (v_*, w) - \left(m \circ \varphi_{*,qz} - m_0\right)w - \partial_z m \circ \varphi_{*,qz}\cdot (w_*, w)\\
    I_2 &= \partial_q^2 a \circ \varphi_{*, q} u + \partial_q^2 b\circ \varphi_{*, q} \cdot (v_*, u) + \partial_q^2 c \circ \varphi_{*, q} \cdot (w_*, u) + {1 \over 2}\partial_q^2 d \circ \varphi_{*, q} \cdot (w_*, w_*, u)\\
    &+\partial_q^2 M \circ \varphi_*\cdot(v_*, v_*, u) + \partial_q^2 m \circ \varphi_{*, qz} \cdot (v_*, w_*, u) + \partial_q^2 L \circ \varphi_{*, qz}\cdot (w_*, w_*, w_*, u)\\
    &+ \partial_q b \circ \varphi_{*, q} v + 2\partial_q M \circ \varphi_* \cdot (v_*, v) + \partial_p \partial_q M \circ \varphi_* \cdot (v_*, v_*, v) + \partial_q m \circ \varphi_{*, qz}\cdot (w_*, v)\\
    &+ \partial_q c \circ \varphi_{*, q} w + \partial_q d \circ \varphi_{*, q} \cdot (w_*, w) + \partial_z \partial_q M \circ \varphi_* \cdot (v_*, v_*, w) + \partial_z \partial_q m \circ \varphi_{*, qz}\cdot (v_*, w_*, w)\\
    &+\partial_q m \circ \varphi_{*, qz}\cdot(v_*, w) + \partial_z \partial_q L \circ \varphi_{*, qz} \cdot (w_*, w_*, w_*, w) +3\partial_q L\circ \varphi_{*, qz}\cdot (w_*, w_*, w)\\
    I_3 &= - J_m\partial_q c\circ\varphi_{*, q} u - J_m \partial_q d \circ\varphi_{*, q} \cdot (w_*, u) - J_m \partial_q \partial_z M\circ \varphi_*\cdot (v_*, v_*, u) \\
    &- J_m \partial_q \overline{m}\circ\varphi_{*, qz}\cdot (v_*, u) - J_m \partial_q \overline{L}\circ \varphi_{*, qz}\cdot (w_*, w_*, u) - J_m \partial_p \partial_z M\circ\varphi_* \cdot (v_*, v_*, v)\\
    &- J_m\partial_z M \circ\varphi_* \cdot (v_*, v) - J_m \partial_z M\circ\varphi_*\cdot (v_*, v) - J_m \left(\overline{m}\circ\varphi_{*, qz} - \overline{m}_0\right) v - J_m d\circ\varphi_{*, q}w \\
    &- \partial_z^2 M \circ\varphi_* \cdot (v_*, v_*, w) - J_m \partial_z \overline{m}\circ\varphi_{*, qz}\cdot (v_*, w) - J_m \partial_z \overline{L}\circ\varphi_{*, qz}\cdot(w_*, w_*, w)\\
    &- 2 J_m \overline{L}\circ\varphi_{*, qz}\cdot (w_*, w).
\end{align*}
where we used the notation introduced in~\eqref{def:f0} and~\eqref{def:ft} and we denoted by a subscript the projection of $\varphi_*$ on the $q, p$ and $z$.

Noticing that the decrease rates on $t$ add up when taking products and that they are preserved after composing with $\varphi^t$, using Proposition \ref{prop:Csigma_prop_norms_pol} and Lemma \ref{lemma:def_bar_M_m_L}, a straightforward calculation shows that 
\[I_1 \in \mathcal{S}^{\mathrm{pol},T}_{(\sigma,0), 2\ell - 1},\qquad I_2 \in \mathcal{S}^{\mathrm{pol},T}_{(\sigma,0), 2\ell + \dec+ 1},\qquad  I_3 \in \mathcal{S}^{\mathrm{pol},T}_{(\sigma,0), 2\ell + \dec}.\] 
Moreover, their respective norms in these spaces are bounded by $C|\varphi|_{\sigma, \mathcal{K}}^T$, where $C$ is a constant depending continuously on $\sigma, \ell, \dec, m, n, \Upsilon, |P|_{\sigma, \Lell}^T,$ and $| \varphi_*|_{\sigma, \mathcal{K}}^T$. 

We will write in detail this calculation only for $I_1$, the other two being completely analogous. For this purpose, we observe that 
\begin{align*}
    \overline{M}\circ \varphi_* - \overline{M}_0 &= \int_0^1 \partial_q \overline{M}(\mathrm{id} + \tau u_*, \tau v_*, \tau w_*)u_* d\tau + \int_0^1\partial_p \overline{M}(\mathrm{id} + \tau u_*, \tau v_*, \tau w_*)v_*d \tau \\
    &+ \int_0^1\partial_z \overline{M}(\mathrm{id} + \tau u_*, \tau v_*, \tau w_*)w_* d\tau,
\end{align*}
and thus $ \overline{M}\circ \varphi_* - \overline{M}_0 \in \mathcal{S}^{\mathrm{pol},T}_{(\sigma ,0), \ell -1}$. Similarly $m \circ \varphi_{*,qz} - m_0 \in \mathcal{S}^{\mathrm{pol},T}_{(\sigma ,0), \ell -1}$. Notice that their norms in this space are bounded from above by $C(\sigma, \ell, m, n)\Upsilon |\varphi_*|_{\sigma, \mathcal{K}}^T$.

By  Proposition \ref{prop:Csigma_prop_norms_pol}, we have
\begin{align*}
\label{proof:ell_FP1_first_est_I_1}
    |I_1|^T_{\sigma, 2\ell - 1} &\le  C(\sigma, \ell,\dec,m,n)\Big(|\partial_q b \circ \varphi_{*, q}|^T_{\sigma,\ell} |u|^T_{\sigma,\ell - 1} + \big|\partial_q \overline{M} \circ \varphi_*\big|^T_{\sigma,0}|v_*|^T_{\sigma,\ell + \dec+1}|u|^T_{\sigma,\ell - 1} \\
    & \quad +|\partial_q m \circ \varphi_{*,qz}|^T_{\sigma,0}|w_*|^T_{\sigma,\ell+\dec} |u|^T_{\sigma,\ell - 1} + \big|\overline{M} \circ \varphi_* - \overline{M}_0\big|^T_{\sigma,\ell - 1}|v|^T_{\sigma,\ell +\dec+ 1} \\
    & \quad +\big| \partial_p \overline{M}\circ \varphi_*|^T_{\sigma,0}|v_*|^T_{\sigma,\ell + \dec+1}|v|^T_{\sigma,\ell +\dec+ 1} +\big| \partial_z \overline{M} \circ \varphi_*\big|^T_{\sigma,0} |v_*|^T_{\sigma,\ell +\dec+ 1}|w|^T_{\sigma,\ell+\dec} \\
    &  \quad + |m \circ \varphi_{*,qz}  + m_0|^T_{\sigma,\ell - 1}|w|^T_{\sigma,\ell+\dec} +  |\partial_z m \circ \varphi_{*,qz}|^T_{\sigma,0} |w_*|^T_{\sigma,\ell+\dec}|w|^T_{\sigma,\ell+\dec} \Big)\\
    & \leq C(\sigma, \ell,\dec, m, n, \Upsilon, |P|_{\sigma, \Lell}^T, | \varphi_*|_{\sigma, \mathcal{K}}^T)| \varphi|_{\sigma,  \mathcal{K}}^T,
\end{align*}
where $C$ depends continuously on $\Upsilon, |P|_{\sigma, \Lell}^T,$ and  $|\varphi_*|_{\sigma, \mathcal{K}}^T.$

Therefore,
\begin{align*}
 \big|(D_\varphi\mathcal{F}^T(0,0) - D_\varphi\mathcal{F}^T(P, \varphi_*))\varphi\big|_{\boldsymbol{\sigma, \ell,\dec}}^T & \le |I_1|^T_{\sigma, \ell} + |I_2|^T_{\sigma, \ell + \dec+2} + |I_3|^T_{\sigma, \ell +\dec+ 1}  \\
 & \leq \frac{1}{T^{\ell - 1}} \left( |I_1|^T_{\sigma, 2\ell - 1} + |I_2|^T_{\sigma, 2\ell +\dec + 1} + |I_3|^T_{\sigma, 2\ell +\dec} \right) \\
 & \leq \frac{C}{T^{\ell - 1}} |\varphi|_{\sigma, \mathcal{K}}^T.
\end{align*}
\end{proof}

\subsection{Proof of Item \eqref{thm:A_transverse} of Theorem \ref{Thm:elliptic_Csigma}: Existence of asymptotic elliptic transversal dynamics}\label{sec:proof_thm_a_ell_2}

In this section, we prove that the asymptotic KAM torus obtained in Item \eqref{thm:A_existence} of Theorem \ref{Thm:elliptic_Csigma} is asymptotically elliptic under slightly stronger assumptions on the regularity (more precisely, on $\sigma$) and on the decay rate of the perturbations (see \eqref{eq:decay_ell_2}).

In the following, we continue using the notation and parameters from the previous subsections, with the only exception that we now assume $\sigma \geq 2$.

Recall that the perturbations considered in Item \eqref{thm:A_transverse} of Theorem \ref{Thm:elliptic_Csigma} can be identified with the space $\mathcal{P}^{\mathrm{ell}, 1}_{\sigma, \Lell^*}$ (see Section \ref{sc:matrix_spaces_ell}), with $\mathcal{L}^*$ as fixed in Section \ref{sc:initial_ham_ell}. Moreover, for any $T \geq 1$, we have $\mathcal{P}^{\mathrm{ell}, T}_{\sigma, \Lell^*} \subseteq \mathcal{P}^{\mathrm{ell}, T}_{\sigma, \Lell(\ell, \dec + 1)}$, where $\mathcal{L}(\ell, \dec + 1) \in \left(\R_{\ge 0}\right)^4$ is given by \eqref{eq:index_regularity_perturbations}, and
\[ |P|^T_{\sigma, \Lell(\ell, \dec + 1)} \leq |P|^T_{\sigma, \Lell^*} , \qquad \text{ for any }P \in \mathcal{P}^{\mathrm{ell}, T}_{\sigma, \Lell^*}.\] 

Thus, denoting $\mathcal{K}^* = \mathcal{K}(\ell, \dec + 1)$ where $\mathcal{K}(\ell, \dec + 1) \in \left(\R_{\ge 0}\right)^3$ is given by \eqref{eq:index_regularity_tori}, by Proposition \ref{prop:A_existence}, for any $r > 0$ there exist $T_0 \geq 1$ such that, for any $T \geq T_0$, there exists a $C^1$-map
\[ \boldsymbol{\varphi}^T: B_{\mathcal{P}^{\mathrm{ell}, T}_{\sigma, \Lell^*}}(r)   \to \mathcal{E}^{\mathrm{ell}, T}_{\sigma, \mathcal{K}^*}\]
for which $\boldsymbol{\varphi}^T(P)$ defines a $C^\sigma$ asymptotic KAM torus associated with $(X_{H_0 + P}, X_{H_0}, \varphi_0)$,  for any $P \in  \mathcal{P}^{\mathrm{ell}, T}_{\sigma, \Lell^*} $ with $|P|^T_{\sigma, \Lell^*} < r$.

Recall that the asymptotic KAM torus $\boldsymbol{\varphi}^T(P)$ associated to the perturbation $P$ is said to be \emph{elliptic} if there exist a family of invertible matrices $S:\T^n \times I_{T'} \to \mathcal{M}_{2n+2m}(\R)$, for some $T' \geq T$ as in Definition \ref{def:ell_hyp_par_trasv_dyn_asym_KAM}.  We will construct these invertible matrices using Lemma \ref{lem:embedding_symplectomorphisms_family}, which to any continuous map $G: \T^n \times I_T \to \mathfrak{sp}(2m, \R)$ associates a continuous family of symplectic matrices $\boldsymbol{S}(\boldsymbol{\varphi}^T(P), G): \T^n \times I_T \to Sp(2m + 2n, \R, J)$ of the form \eqref{eq:S_formula}. Moreover, it follows from \eqref{eq:S_formula} that if  $\boldsymbol{\varphi}^T(P)$ and $G$ are of class $C^1$ then $\boldsymbol{S}(\boldsymbol{\varphi}^T(P), G)$ is also of class $C^1$.

Therefore, by Proposition \ref{prop:criteria_non_autonomous}, in order for $S = \boldsymbol{S}(\boldsymbol{\varphi}^T(P), G)$, given by Lemma \ref{lem:embedding_symplectomorphisms_family}, to satisfy the assumptions of Definition \ref{def:ell_hyp_par_trasv_dyn_asym_KAM}, it suffices to assume that $G$ and $\varphi = \boldsymbol{\varphi}^T(P)$ are sufficiently regular and satisfy suitable decay conditions, and to show that the function $\boldsymbol{\zeta}(H_0 + P, \varphi, G)$, defined by \eqref{eq:formula_zeta}, is equal to $\Mell$, namely,
\[
\Pi_{xy}^\top S^\top\big( (D^2 (H_0 + P) \circ \varphi) S + J \Lo S\big) \Pi_{xy} = \Mell.
\]

We will show the following.

\begin{proposition}
\label{prop:A_transverse}
Fix $r > 0$ and let $T_0 \geq 1$ be given by Proposition \ref{prop:A_existence}. There exist $T_1 \geq T_0$ such that, for any $T \geq T_1$,  there exists a $C^1$-map
\[ \boldsymbol{G}^T: B_{\mathcal{P}^{\mathrm{ell}, T}_{\sigma, \Lell^*}}(r)    \subseteq  \mathcal{P}^{\mathrm{ell}, T}_{\sigma, \Lell^*} \to \mathcal{S}p^{\mathrm{ell}, T}_{\sigma-1, \omega, \Omega, \ell + \dec}  \]
for which $\boldsymbol{S}(\boldsymbol{\varphi}^T(P),  \boldsymbol{G}^T(P))$ given by \eqref{eq:S_formula} is of class $C^1$ and
\begin{equation}
\label{eq:IFT_condition_ell}
\boldsymbol{\zeta}(H_0 + P, \boldsymbol{\varphi}^T(P),  \boldsymbol{G}^T(P)) =  \Mell, \qquad \text{for any } P \in B_{\mathcal{P}^{\mathrm{ell}, T}_{\sigma, \Lell^*}}(r),
\end{equation}
where $\boldsymbol{\zeta}$ is given by \eqref{eq:formula_zeta} and  $\boldsymbol{\varphi}^T: B_{\mathcal{P}^{\mathrm{ell}, T}_{\sigma, \Lell^*}}(r)   \to \mathcal{E}^{\mathrm{ell}, T}_{\sigma, \mathcal{K}^*}$  is given by Proposition \ref{prop:A_existence}.

In particular, by Proposition \ref{prop:criteria_non_autonomous}, for any $P \in B_{\mathcal{P}^{\mathrm{ell}, T}_{\sigma, \Lell^*}}(r)$, the asymptotic KAM torus $\boldsymbol{\varphi}^T(P)$ is asymptotically elliptic.
\end{proposition}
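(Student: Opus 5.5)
We mirror the proof of Proposition \ref{prop:A_existence}: construct $\boldsymbol{G}^T$ by the quantitative Implicit Function Theorem (Theorem \ref{thm:QIFT}) applied to a functional whose zeros are exactly the solutions of \eqref{eq:IFT_condition_ell}. For $T \geq T_0$ and $P \in B_{\mathcal{P}^{\mathrm{ell}, T}_{\sigma, \Lell^*}}(r)$, set $\varphi = \boldsymbol{\varphi}^T(P) \in \mathcal{E}^{\mathrm{ell}, T}_{\sigma, \mathcal{K}^*}$ and define
\[
\mathcal{G}^T(P, G) = \boldsymbol{\zeta}(H_0 + P, \boldsymbol{\varphi}^T(P), G) - \Mell, \qquad G \in \mathcal{S}p^{\mathrm{ell}, T}_{\sigma-1, \omega, \Omega, \ell + \dec},
\]
with values in $\mathcal{S}ym^{\mathrm{pol}, T}_{\sigma-1, \ell+\dec+1}$. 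Write $K = e^G$ and use the explicit expression of Remark \ref{rmk:formula_zeta}. Since $\partial_z^2 H_0 \circ \varphi_0 = \Mell$, the leading term is $K^\top \Mell K + K^\top J_m \Lo K$. At $G=0$ this equals $\Mell$, so
\[
D_G \mathcal{G}^T(0,0)\hat G = \hat G^\top \Mell + \Mell \hat G + J_m \Lo \hat G = \mathfrak{L}^{\mathrm{ell}}_{\omega,\Omega} \hat G,
\]
which is exactly the third cohomological operator in \eqref{eq:coh_eqs_ell}. The remaining terms of $\boldsymbol{\zeta}$ are perturbative:
\begin{itemize}
\item terms containing $B = -(\mathrm{Id}_n+\partial_q u)^{-\top}\partial_q w^\top J_m K$, where $\partial_q w$ decays like $t^{-(\ell+\dec+1)}$ (by Remark \ref{rmk:extra_regularity_ell} with $\sigma\ge 2$, which is why we lose one derivative);
\item $K^\top(\partial_z^2 H\circ\varphi - \Mell)K$, where $d$ decays like $t^{-(\ell+\dec+1)}$ under \eqref{eq:decay_ell_2} and $\partial_z^2 R\circ\varphi$ is controlled by $w, v$ through Lemma \ref{lemma:def_bar_M_m_L}.
\end{itemize}
Hence $\mathcal{G}^T(P,0) \in \mathcal{S}ym^{\mathrm{pol},T}_{\sigma-1,\ell+\dec+1}$, with norm bounded by $C r$ uniformly in $T$. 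This uses that $\boldsymbol{\varphi}^T$ maps into a ball of radius $C_0 r$ and the product estimates of Proposition \ref{prop:Csigma_prop_norms_pol}.

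The steps, in order, are:
\begin{enumerate}
\item Well-definedness and continuity of $\mathcal{G}^T$ and $D_G\mathcal{G}^T$ on $B(r)\times B(\rho)$ for $T$ large. This uses that $\boldsymbol{\varphi}^T$ is $C^1$ in $P$ and that $e^G$ and its $\Lo$-derivative are controlled by Lemmas \ref{lem:exp_expansion_bounds} and \ref{lem:exp_exp_bounds}.
\item The bound $|\mathcal{G}^T(P,0)| \le Cr$ described above.
\item Invertibility of $\mathfrak{L}^{\mathrm{ell}}_{\omega,\Omega}$ from $\mathcal{S}p^{\mathrm{ell},T}_{\sigma-1,\omega,\Omega,\ell+\dec}$ onto $\mathcal{S}ym^{\mathrm{pol},T}_{\sigma-1,\ell+\dec+1}$, with inverse bounded independently of $T$.
\item The estimate $\|D_G\mathcal{G}^T(P,G)-D_G\mathcal{G}^T(0,0)\| \le C T^{-\alpha}$ for some $\alpha>0$, uniformly on the balls. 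The differences involve products with $G$, $\partial_q w$, $d$, or $K-\mathrm{Id}$; each carries an extra decay factor, and this gain can be traded for a negative power of $T$ exactly as in Lemma \ref{lem:DF_limit}.
\end{enumerate}
Theorem \ref{thm:QIFT} then gives a $C^1$ map $P\mapsto \boldsymbol{G}^T(P)$ solving \eqref{eq:IFT_condition_ell}. Since $G \in \mathcal{S}p^{\mathrm{ell},T}_{\sigma-1,\omega,\Omega,\ell+\dec}$ with $\sigma-1\ge1$, $G$ is $C^1$. Together with the $C^1$ regularity of $\partial_q u,\partial_q v,\partial_q w$ (Remark \ref{rmk:extra_regularity_ell}), Lemma \ref{lem:embedding_symplectomorphisms_family} shows that $\boldsymbol{S}$ is $C^1$.

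For the final claim we invoke Item (1) of Proposition \ref{prop:criteria_non_autonomous} with decay exponent $\ell+\dec$. We have $w \in \mathcal{S}^{\mathrm{pol}}_{(\sigma,0),\ell+\dec+1}$ because $\mathcal{K}^*=\mathcal{K}(\ell,\dec+1)$, and $G$ decays like $t^{-(\ell+\dec)}$, so both hypotheses hold.

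The main obstacle is Step 3: solving $X^\top \Mell + \Mell X + J_m\Lo X = F$ for $X$ valued in $\mathfrak{sp}(2m,\R)$ and $F$ symmetric. Writing $X = J_m Y$ with $Y$ symmetric would be natural, but note that $J_m X$ symmetric means $X = -J_m Y$, and then $J_m\Lo X = \Lo Y$. The equation becomes
\[
\Lo Y + [\text{linear terms in } Y \text{ involving } J_m \Mell] = F,
\]
a linear first-order system in the entries of $Y$. In complex coordinates adapted to $J_m\Mell$ it diagonalizes into scalar equations $\Lo y - i\mu y = f$ with real $\mu \in \{0, \pm(\Omega_j\pm\Omega_k)\}$. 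Solving these by integrating from $+\infty$, $y(t) = -\int_t^\infty e^{i\mu(t-s)} f(\cdot+\omega(s-t),s)\,ds$, gains one power of $t$ because $|e^{i\mu\cdot}|=1$. This gives the $t^{-(\ell+\dec+1)}\to t^{-(\ell+\dec)}$ loss without small divisors, with constant $1/(\ell+\dec)$, uniformly in $T$, as in Propositions \ref{prop:Csigma_HEomega} and \ref{prop:Csigma_HEomegaOmega}. We also need to check that the resulting $X$ indeed lies in $\mathfrak{sp}$ and that uniqueness holds in the decaying class. I would try this decomposition first and, if the symmetric reduction is awkward, instead treat the operator as $\Lo$ plus a constant skew-adjoint matrix acting on $\mathfrak{sp}(2m,\R)$.
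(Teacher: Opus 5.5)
Your proposal is correct and follows the paper's proof. The paper also applies Theorem~\ref{thm:QIFT} to $\mathcal{G}^T(P,G)=\boldsymbol{\zeta}(H_0+P,\boldsymbol{\varphi}^T(P),G)-\Mell$, using $D_G\mathcal{G}^T(0,0)=\mathfrak{L}^{\mathrm{ell}}_{\omega,\Omega}$ and the bound $\|D_G\mathcal{G}^T(P,G_*)-D_G\mathcal{G}^T(0,0)\|\le C_*T^{-(\ell+\dec-1)}$, and then concludes with Item (1) of Proposition~\ref{prop:criteria_non_autonomous} with exponent $\ell+\dec$; the only difference is cosmetic: you invert $\mathfrak{L}^{\mathrm{ell}}_{\omega,\Omega}$ by parametrizing $\mathfrak{sp}(2m,\R)$ with symmetric $Y=J_mX$ and diagonalizing $\Lo-\mathrm{ad}_{J_m\Mell}$, whereas Proposition~\ref{prop:Csigma_HEomegaOmegaOmega} uses the blocks $P,Q,R$ and the combinations $\mathfrak{F},\mathfrak{G}$, and both arrive at the same scalar equations with frequencies $\Omega_j\pm\Omega_k$.
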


Notice that Item \eqref{thm:A_transverse} of Theorem \ref{Thm:elliptic_Csigma} follows directly from the proposition above, since
\begin{equation*}
    \big|P \big|_{\sigma, \Lell^*}^{T} \leq |P|_{\sigma, \Lell^*}^{1}, \qquad \text{ for any } T \geq 1\,  \text{ and any } \, P \in \mathcal{P}^{\mathrm{ell}, 1}_{\sigma, \Lell^*},
\end{equation*}
and, by definition of $\mathcal{K}^\ast$, the asymptotic KAM torus has the desired decay rates.

We refer the reader to Section \ref{sc:matrix_spaces_ell} for the definition of the Banach space of symplectic matrices $\big( \mathcal{S}p^{\mathrm{ell}, T}_{\sigma-1, \omega, \Omega, \ell+ \dec}, |\cdot|^T_{\sigma - 1, \omega, \Omega, \ell + \dec} \big)$.

To prove Proposition \ref{prop:A_transverse} we will follow the same strategy as in the previous section, that is, we will define a functional encoding the asymptotic ellipticity of the transversal dynamics and we will obtain our result as a consequence of an appropriate quantitative version of the Implicit Function Theorem (see Theorem \ref{thm:QIFT}).

The remainder of this section concerns the proof of Proposition \ref{prop:A_transverse}.

In the following, we fix $r > 0$ and let $T_0 \geq 1$ be given by Proposition \ref{prop:A_existence}. Moreover, for any $T \geq T_0$, we will denote by $\boldsymbol{\varphi}^T: B_{\mathcal{P}^{\mathrm{ell}, T}_{\sigma, \Lell^*}}(r)   \to \mathcal{E}^{\mathrm{ell}, T}_{\sigma, \mathcal{K}^*}$ the map given by Proposition \ref{prop:A_existence}.

\subsubsection{Functionals describing elliptic asymptotic KAM tori}\label{sc:functional_transversal}

We will define a family of functionals $\mathcal{G}^T$, for $T \geq T_0$, having as arguments a perturbation $P \in \mathcal{P}^{\mathrm{ell}, T}_{\sigma, \Lell^*} $ and a matrices $G \in \mathcal{S}p^{\mathrm{ell}, T}_{\sigma-1, \omega, \Omega, \ell + \dec}$, such that, for $\boldsymbol{S}(\boldsymbol{\varphi}^T(P), G)$ given by Lemma \ref{lem:embedding_symplectomorphisms_family}, we have
\begin{equation}
\label{eq:functional_asymptotic_characterization_ell}
      \mathcal{G}^T(P, G) = 0 \quad \mbox{if and only if $\boldsymbol{\zeta}(H_0 + P, \boldsymbol{\varphi}^T(P),  \boldsymbol{G}^T(P)) =  \Mell$}.
\end{equation}
where $\boldsymbol{\zeta}$ is given by \eqref{eq:formula_zeta}. Taking into account the explicit formula for $\boldsymbol{\zeta}$ given in Remark \ref{rmk:formula_zeta}, we define 
\[\mathcal{G}^T: B_{\mathcal{P}^{\mathrm{ell}, T}_{\sigma,\Lell^*}}(r) \times \mathcal{S}p^{\mathrm{ell}, T}_{\sigma-1, \omega, \Omega, \ell+\dec}   \to  \mathcal{S}ym^{\mathrm{pol}, T}_{\sigma-1, \ell+\dec+1}\]
as
\begin{equation}
\label{eq:formula_F_ell_2}
    \begin{aligned}
    \mathcal{G}^T(P, G) =&  B^\top \partial_p^2 H\circ \varphi B + K^{\top} \partial_p\partial_zH\circ \varphi B + B^{\top} \left(\partial_p\partial_zH\circ \varphi\right)^\top K \\
    & + K^{\top} \partial_z^2 H\circ \varphi K + K^{\top} J_m \Lo K - M^{\mathrm{ell}}_\Omega,
    \end{aligned}
\end{equation}
where $H = H_0 + P$, $\varphi = \boldsymbol{\varphi}^T(P) = (u, v, w)$, $K = \exp(G)$, the matrix $M^{\mathrm{ell}}_\Omega$ is defined in~\eqref{Ms},  $B = -(\mathrm{Id}_{2n} +\partial_q  u )^{-\top} (\partial_q w)^\top J_m K$, and $\partial_p\partial_zH\circ \varphi$ stands for the $2m \times n$ matrix having components $\partial_{p_i}\partial_{z_j}H\circ \varphi$ for $1 \le i \le n$ and $1 \le j \le 2m$. We refer the reader to Section \ref{sc:matrix_spaces_ell} for the definition of the space $\big(\mathcal{S}ym^{\mathrm{pol}, T}_{\sigma-1, \ell+\dec+1}, |\cdot|^T_{\sigma - 1, \ell +\dec+ 1}  \big)$ taking values in the set of $2m \times 2m$ symmetric matrices. 

Therefore, with these definitions,
\begin{equation}
\label{eq:elliptic_characterization}
    \text{ If }  \quad \mathcal{G}^T(P, G) = 0 \quad \text{ then } \quad  \boldsymbol{\varphi}^T(P) \text{ is an asymptotically elliptic KAM torus.}
\end{equation}

\subsubsection{Proof of the existence of elliptic transversal dynamics} \label{sc:thm_A_transverse_proof} In this section we will provide a proof of Proposition \ref{prop:A_transverse} assuming the following properties of the functional $\mathcal{G}^T$, which we will prove in the next section. For the rest of this section, we denote by $D_G\mathcal{G}^T$ the differential of the functional $\mathcal{G}^T$ with respect to $G$.

\begin{proposition}
\label{prop:G_properties}
    The operator $\mathcal{G}^T$ given by \eqref{eq:formula_F_ell_2} satisfies the following.
    \begin{enumerate}
    \item \label{prop:G_well_defined} $\mathcal{G}^T$ and $D_G\mathcal{G}^T$ are well-defined continuous maps, for any $T \geq T_0$. 
    
    \item \label{prop:G_bounded} There exists a constant $C$ such that, for any $T \geq T_0$,
\[ \sup \left\{ |\mathcal{G}^T(P, 0)|^T_{\sigma - 1, \ell +\dec+ 1} \,\left|\, P \in B_{\mathcal{P}^{\mathrm{ell}, T}_{\sigma,\Lell^*}}(r) \right\}\right. \leq Cr.\] %

    \item \label{prop:DG_invertible} $D_G \mathcal{G}^T(0, 0)$ is invertible, for any $T \geq T_0$. Moreover, there exists a constant $\bar C$, independent of $T$, such that $\|D_G \mathcal{G}^T(0, 0)^{-1}\| < \bar C$. 
        \item \label{prop:DG_limit} Let $r, \rho > 0$. Then  
        \[\lim_{T \to +\infty} \| D_G \mathcal{G}^T(P, G) - D_G\mathcal{G}^T(0, 0) \| = 0, \qquad \text{uniformly on } \quad B_{\mathcal{P}^{\mathrm{ell}, T}_{\sigma,\Lell^*}}(r) \times B_{\mathcal{S}p^{\mathrm{ell}, T}_{\sigma-1, \omega, \Omega, \ell + \dec}}(\rho),\]
where $\| \cdot\|$ denotes the operator norm and $B_{\mathcal{S}p^{\mathrm{ell}, T}_{\sigma-1, \omega, \Omega, \ell + \dec}}(\rho) \subset \mathcal{S}p^{\mathrm{ell}, T}_{\sigma-1, \omega, \Omega, \ell + \dec}$ stands for a ball of radius $\rho$ centered at the origin.
    \end{enumerate}
\end{proposition}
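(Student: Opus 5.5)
The plan is to follow the same scheme as the proof of Proposition \ref{prop:F_properties}: one lemma for each item, with the only non-routine point being the invertibility of the linearized operator in Item \eqref{prop:DG_invertible}.

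\textbf{Computing the linearization.} At $P=0$ we have $\mathcal{F}^T(0,0)=0$, and Proposition \ref{prop:A_existence} gives uniqueness. Hence $\boldsymbol{\varphi}^T(0)=0$, that is, $u=v=w=0$.

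Consequently $B=0$ in \eqref{eq:formula_F_ell_2}, and
\[
\mathcal{G}^T(0,G)=K^\top \partial_z^2H_0\circ\varphi_0\, K+K^\top J_m\Lo K-\Mell, \qquad K=e^G.
\]
In the decomposition $H_0=N+R$ of Section \ref{sc:initial_ham_ell}, the second $z$-derivatives of $M\cdot p^2$, $m\cdot(p,z)$ and $L\cdot z^3$ all vanish at $p=z=0$. Therefore $\partial_z^2H_0\circ\varphi_0=\Mell$.

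Differentiating at $G=0$ then gives
\[
D_G\mathcal{G}^T(0,0)\hat G=\hat G^\top\Mell+\Mell\hat G+J_m\Lo\hat G=\mathfrak{L}^{\mathrm{ell}}_{\omega,\Omega}\hat G.
\]
This operator takes values in symmetric matrices: $\hat G\in\mathfrak{sp}(2m,\R)$ means $J_m\hat G$ is symmetric, and $\Lo$ preserves this property.

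\textbf{Item \eqref{prop:DG_invertible}: the key step.} We show that $\mathfrak{L}^{\mathrm{ell}}_{\omega,\Omega}:\mathcal{S}p^{\mathrm{ell},T}_{\sigma-1,\omega,\Omega,\ell+\dec}\to\mathcal{S}ym^{\mathrm{pol},T}_{\sigma-1,\ell+\dec+1}$ is bijective, with inverse bounded by $C/(\ell+\dec)$ uniformly in $T$. This is the third cohomological equation in \eqref{eq:coh_eqs_ell}, and we would invoke the corresponding result of Section \ref{sc:cohom_eqs_ell}.

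The idea behind that result is as follows. Write $\hat G=-J_mY$ with $Y$ symmetric. The equation then becomes a linear transport equation along the flow $(q,t)\mapsto(q+\omega s,t+s)$, whose zero-order part is the conjugation $Y\mapsto e^{sJ_m\Mell}{}^{\!\top} Y e^{sJ_m\Mell}$. Since $J_m\Mell$ generates rotations, this conjugation is uniformly bounded.

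We then solve by integrating from $+\infty$:
\[
Y(q,t)=-\int_t^{\infty}(\text{rotated})\,g(q+\omega(s-t),s)\,ds .
\]
This converts a $t^{-(\ell+\dec+1)}$ decay of $g$ into a $t^{-(\ell+\dec)}$ decay of $Y$, with constant $1/(\ell+\dec)$. The $C^{\sigma-1}$ norm in $q$ is preserved because the flow is a translation in $q$. Uniqueness holds in the decaying class, since homogeneous solutions do not decay.

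This step is the main obstacle. In particular, the $\mathfrak{L}^{\mathrm{ell}}_{\omega,\Omega}$-part of the $\mathcal{S}p$ norm is controlled exactly by the norm of $g$.

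\textbf{Items \eqref{prop:G_well_defined} and \eqref{prop:G_bounded}.} Well-definedness follows from the composition and product estimates of Proposition \ref{prop:Csigma_prop_norms_pol} and Lemma \ref{lemma:def_bar_M_m_L}, together with the bounds on $e^G-\mathrm{Id}$ from Lemmas \ref{lem:exp_expansion_bounds} and \ref{lem:exp_exp_bounds}. One derivative is lost, which is why the spaces are at regularity $\sigma-1$: it comes from $\partial_q u$ and $\partial_q w$. The $C^1$ regularity of $\boldsymbol{S}$ follows from Remark \ref{rmk:extra_regularity_ell}.

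For the decay, the linear part of $K^\top J_m\Lo K$ is $J_m\Lo G$, and the quadratic remainders decay like $t^{-2(\ell+\dec)}$. Together with the $\mathfrak{L}^{\mathrm{ell}}$ bound built into the $\mathcal{S}p$ norm, this places $\mathcal{G}^T$ in $\mathcal{S}ym^{\mathrm{pol},T}_{\sigma-1,\ell+\dec+1}$.

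For Item \eqref{prop:G_bounded}, set $G=0$, so $K=\mathrm{Id}$. The bound then follows from three facts:
\begin{itemize}
\item $|\boldsymbol{\varphi}^T(P)|\le C_0r$, where $w$ decays like $t^{-(\ell+\dec+1)}$ and $v$ like $t^{-(\ell+\dec+2)}$ in $\mathcal{K}^*$.
\item $B=O(\partial_q w)$.
\item $\partial_z^2H\circ\varphi-\Mell=d\circ\varphi_q+O(|v|+|w|)$, with $d\in\mathcal{S}_{\ell+\dec+1}$.
\end{itemize}
Each term is linear in $r$ with a $T$-independent constant.

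\textbf{Item \eqref{prop:DG_limit}.} Write $D_G\mathcal{G}^T(P,G)-D_G\mathcal{G}^T(0,0)$ explicitly, as in Lemma \ref{lem:DF_limit}. Every term contains either a factor of $\varphi=\boldsymbol{\varphi}^T(P)$ or of $P$, or a factor of $G$, in addition to the increment $\hat G$. Such factors supply an extra decay $t^{-\min(\ell-1,\ell+\dec)}$. This yields an operator-norm bound $C/T^{\ell-1}$, uniform on the balls in question, where $C$ depends continuously on $r$, $\rho$ and $\Upsilon$.
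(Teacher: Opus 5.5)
Your proposal is correct and has the same architecture as the paper's proof. There is one lemma per item. Item (3) is reduced to the invertibility of $\mathfrak{L}^{\mathrm{ell}}_{\omega,\Omega}$ (Proposition \ref{prop:Csigma_HEomegaOmegaOmega}), and the other items are handled by product, composition and exponential estimates.

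Where you genuinely differ is in how you justify that cohomological result. You set $\hat G=-J_mY$ with $Y$ symmetric. This turns the equation into $\Lo Y+A^\top Y+YA=g$ with $A=J_m\Mell$ antisymmetric, which you solve by variation of constants using the orthogonal group $e^{sA}$. The paper instead splits $\hat G$ into $m\times m$ blocks $P,Q,R$ and forms $\mathfrak{F}=P-P^\top+i(R-Q)$ and $\mathfrak{G}=P+P^\top+i(R+Q)$. This reduces the problem to the scalar equations $i(\Omega_j\mp\Omega_k)z+\Lo z=\cdots$, each solved by Lemma \ref{lemma:HE_interm_LoO}.

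Your route is basis-free. The $T$-independent bound is immediate, and so is uniqueness: the conjugation is an isometry, so homogeneous solutions cannot decay. It also shows at a glance why the hyperbolic analogue needs $\lambda>2\max_i\Omega_i$. There $\|e^{sA}\|$ grows like $e^{s\max_i\Omega_i}$, and the conjugation picks up this growth twice. The paper's route reuses the scalar lemma already needed for $\mathcal{L}^{\mathrm{ell}}_{\omega,\Omega}$ and gives explicit componentwise formulas.

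One bookkeeping correction is needed in Item (4). The increment $\hat G$ does not sit in the target space. Its $\mathcal{S}p$ norm controls $\mathfrak{L}^{\mathrm{ell}}_{\omega,\Omega}\hat G$ at rate $\ell+\dec+1$. However, $\hat G$ and $J_m\Lo\hat G=\mathfrak{L}^{\mathrm{ell}}_{\omega,\Omega}\hat G-\hat G^\top\Mell-\Mell\hat G$ only decay like $t^{-(\ell+\dec)}$, one power short. A small factor decaying like $t^{-(\ell-1)}$ (for instance a bare $u$) would therefore gain only $T^{-(\ell-2)}$, which is useless for $1<\ell\le 2$.

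The estimate works because no bare $u$ occurs. Write $B=\tilde BK$ with $\tilde B=-(\mathrm{Id}+\partial_qu)^{-\top}(\partial_qw)^\top J_m$. Then
$$\mathcal{G}^T(P,G)=e^{G^\top}(\beta(P)+\Mell)e^{G}+e^{G^\top}J_m\Lo e^{G}-\Mell,$$
where $\beta(P)$ is independent of $G$ and decays like $t^{-(\ell+\dec+1)}$. The only small factors are then $\beta(P)$, $G_*$ and $\Lo G_*$, each of rate at least $\ell+\dec$. This gives the bound $C/T^{\ell+\dec-1}$ of Lemma \ref{lem:DG_limit}; your $C/T^{\ell-1}$ is a weaker consequence of it.

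Two smaller points:
\begin{itemize}
\item In Item (2), the term $B^\top\partial_p^2H\circ\varphi B$ is quadratic in $r$, so the constant depends on $r$. This is harmless, since $r$ is fixed.
\item In Item (1), symmetry of $\mathcal{G}^T(P,G)$ for $G\neq0$ requires $K^\top J_m\Lo K$ to be symmetric. This follows from $K^\top J_mK=J_m$, which gives $K^\top J_m\Lo K=J_m\mathcal{A}(G,\Lo G)$ with $\mathcal{A}(G,\Lo G)\in\mathfrak{sp}(2m,\R)$.
\end{itemize}
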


The properties above will be proven in Lemmas \ref{lemma:G_well_defined}, \ref{lemma:G_bounded}, \ref{lemma:DG_invertible} and \ref{lem:DG_limit}, respectively. As for  Proposition \ref{prop:F_properties}, Properties \eqref{prop:G_well_defined}, \eqref{prop:G_bounded}  and \eqref{prop:DG_limit} of Proposition \ref{prop:G_properties} follow by straightforward calculations, while Property \eqref{prop:DG_invertible} is much more subtle. 

Assuming Proposition \ref{prop:G_properties}, the proof of Proposition \ref{prop:A_transverse} is very similar to that of Proposition \ref{prop:A_existence} (for which we assumed Proposition \ref{prop:F_properties}). For the sake of completeness, we reproduce it here.

\begin{proof}[Proof of Proposition \ref{prop:A_transverse}]
Let $T \geq T_0$ and define $\rho = 2C\bar C r,$ where $C$ and $\bar C$ are the constants given by Items \eqref{prop:G_bounded} and \eqref{prop:DG_invertible} of Proposition \ref{prop:G_properties}.

By Items \eqref{prop:G_well_defined} and \eqref{prop:DG_limit} of Proposition \ref{prop:G_properties} there exists $T_1 \geq T_0$ such that, for any $T \geq T_1$,  $B_{\mathcal{P}^{\mathrm{ell}, T}_{\sigma,\Lell^*}}(r) \times \mathcal{S}p^{\mathrm{ell}, T}_{\sigma-1, \omega, \Omega, \ell+\dec}$ is contained in the domain of definition of $\mathcal{G}^T$ and 
\[  \| D\mathcal{G}^T(P, \varphi) - D\mathcal{G}^T(0, 0) \| < \frac{1}{2\bar C}, \qquad \text{ for } \quad |P|^T_{\sigma, \Lell^*} < r, \quad |G|_{\sigma - 1, \omega, \Omega, \ell+\dec}^T < \rho.\]

Let $T \geq T_1$. By Items \eqref{prop:G_bounded} and \eqref{prop:DG_invertible} of Proposition \ref{prop:G_properties},
\[\|D_\varphi \mathcal{G}^T(0, 0)^{-1}\| \leq \bar C, \qquad \sup \left\{ |\mathcal{G}^T(P, 0)|^T_{\sigma - 1, \ell +\dec+ 1} \,\left|\, P \in B_{\mathcal{P}^{\mathrm{ell}, T}_{\sigma,\Lell^*}}(r)  \right\}\right. \leq \frac{\rho}{2\bar C}.\]

Noticing that $\mathcal{G}^T(0,0)=0$ and recalling \eqref{eq:functional_asymptotic_characterization_ell}, the existence of a map
\[
\boldsymbol{G}^T: B_{\mathcal{P}^{\mathrm{ell}, T}_{\sigma, \Lell^*}}(r) \to \mathcal{S}p^{\mathrm{ell}, T}_{\sigma-1, \omega, \Omega, \ell + \dec}
\]
satisfying \eqref{eq:IFT_condition_ell} follows from Theorem \ref{thm:QIFT}. 

Since $\boldsymbol{\varphi}^T(P) = (\mathrm{id} + u, v, w) \in \mathcal{E}^{\mathrm{ell}, T}_{\sigma, \mathcal{K}^*}$, the functions $\partial_q u$, $\partial_q v$, and $\partial_q w$ are of class $C^1$ (see Remark \ref{rmk:extra_regularity_ell}). Moreover, by construction, $\boldsymbol{G}^T(P) \in \mathcal{S}p^{\mathrm{ell}, T}_{\sigma-1, \omega, \Omega, \ell + \dec}$ and is therefore also of class $C^1$. Hence, $\boldsymbol{S}(\boldsymbol{\varphi}^T(P), \boldsymbol{G}^T(P))$, given by \eqref{eq:S_formula}, is of class $C^1$. Finally, it suffices to notice that $w \in \mathcal{S}^{\mathrm{pol}, T}_{(\sigma, 0), \ell + \dec + 1}$ and $G \in \mathcal{S}^{\mathrm{pol}, T'}_{(\sigma - 1, 0), \ell + \dec}$, and then apply Proposition \ref{prop:criteria_non_autonomous}.
\end{proof}

The remainder of this subsection concerns the proof of Proposition \ref{prop:G_properties}.

\subsubsection{Properties of the functional $\mathcal{G}^T$} We start by checking that $\mathcal{G}^T$ as in \eqref{eq:formula_F_ell_2} is well-defined.

\begin{lemma}\label{lemma:G_well_defined}
   Let $T \geq T_0$. The functional $\mathcal{G}^T$ given by \eqref{eq:formula_F_ell_2} and its partial derivative $D_G\mathcal{G}^T$ are well-defined continuous maps.
\end{lemma}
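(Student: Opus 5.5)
We argue as in Lemma \ref{lemma:F_well_defined}: write $\mathcal{G}^T(P,G)$ explicitly via \eqref{eq:formula_F_ell_2} and check, term by term, that it lands in $\mathcal{S}ym^{\mathrm{pol}, T}_{\sigma-1, \ell+\dec+1}$ and depends continuously (indeed $C^1$) on $(P,G)$.

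\emph{Symmetry.} Since $K=e^G$ with $G$ taking values in $\mathfrak{sp}(2m,\R)$, $K$ is symplectic. The identity $(\Lo K)K^{-1} = -K\Lo K^{-1}$ from the proof of Lemma \ref{lem:matrix_A} then shows that $K^\top J_m \Lo K$ is symmetric. The remaining terms in \eqref{eq:formula_F_ell_2} are symmetric by construction. Hence $\mathcal{G}^T(P,G)$ takes values in $\mathrm{Sym}(2m,\R)$.

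\emph{Regularity and compositions.} By Proposition \ref{prop:A_existence}, $\varphi=\boldsymbol{\varphi}^T(P)=(u,v,w)\in \mathcal{E}^{\mathrm{ell},T}_{\sigma,\mathcal{K}^*}$ depends $C^1$ on $P$ and lies in the domain \eqref{eq:domain_F_ell}. The compositions $\partial^2_{(p,z)}H\circ\varphi$ are therefore well defined and belong to $\mathcal{S}^{\mathrm{pol},T}_{(\sigma-1,0),0}$, using the assumption $\partial^2_{(p,z)}H_0\in\mathcal{S}^{\mathrm{pol},1}_{(\sigma,3),0}$ and the composition estimates of Proposition \ref{prop:Csigma_prop_norms_pol}. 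Since $|u|_{C^1}$ is small for $T$ large, $(\mathrm{Id}+\partial_q u)^{-1}$ exists via a Neumann series and lies in $\mathcal{S}^{\mathrm{pol},T}_{(\sigma-1,0),0}$. Also $K=e^G$ is a convergent power series in the Banach algebra $\mathcal{S}^{\mathrm{pol},T}_{(\sigma-1,0),0}$, analytic in $G$, with $K-\mathrm{Id}\in \mathcal{S}^{\mathrm{pol},T}_{(\sigma-1,0),\ell+\dec}$; this is the role of Lemmas \ref{lem:exp_expansion_bounds}--\ref{lem:exp_exp_bounds}.

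\emph{Decay.} We check that each contribution decays like $t^{-(\ell+\dec+1)}$ in the $C^{\sigma-1}$ norm.
\begin{itemize}
\item $B=O(\partial_q w)$ decays like $t^{-(\ell+\dec+1)}$, since $w\in\mathcal{S}_{(\sigma,0),\ell+\dec+1}$. Hence $B^\top\partial_p^2H\circ\varphi B$ is of order $t^{-2(\ell+\dec+1)}$.
\item The cross terms $K^\top\partial_p\partial_zH\circ\varphi\, B$ decay like $t^{-(\ell+\dec+1)}$.
\item For $K^\top\partial_z^2H\circ\varphi K-\Mell$, split $\partial_z^2 H\circ\varphi-\Mell$ into three pieces. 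The piece $d\circ\varphi_q$ lies in $\mathcal{S}_{\ell+\dec+1}$ by \eqref{eq:decay_ell_2}. The pieces $\partial_z^2 R\circ\varphi-\partial_z^2R\circ\varphi_0$, controlled by $|(v,w)|$, decay at rate $\ell+\dec+1$. The piece $\partial_z^2R\circ\varphi_0=0$ vanishes by the form of $H_0$.
\item The remaining term is handled by the identity
\[
K^\top\Mell K+K^\top J_m\Lo K-\Mell = \mathfrak{L}^{\mathrm{ell}}_{\omega,\Omega}G + O(G^2,\, G\,\Lo G).
\]
By definition of $\mathcal{S}p^{\mathrm{ell},T}_{\sigma-1,\omega,\Omega,\ell+\dec}$, the first term lies in $\mathcal{M}_{\ell+\dec+1}$. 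The quadratic terms decay at rate $2(\ell+\dec)\ge \ell+\dec+1$.
\end{itemize}

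\textbf{Main obstacle.} The difficulty is controlling $\Lo K$, which involves only $\Lo G$ and not $\mathfrak{L}^{\mathrm{ell}}_{\omega,\Omega}G$. I would write $J_m\Lo G=\mathfrak{L}^{\mathrm{ell}}_{\omega,\Omega}G-G^\top\Mell-\Mell G$. This gives $\Lo G\in\mathcal{M}_{\ell+\dec}$, which suffices when multiplied by the extra factor of $G$ appearing in the nonlinear remainder of $\Lo e^G$. For the latter I would use $\Lo e^G=\int_0^1 e^{sG}(\Lo G)e^{(1-s)G}\,ds$.

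\emph{Continuity and the derivative.} Continuity of $\mathcal{G}^T$ and of $D_G\mathcal{G}^T$ follows from the following facts:
\begin{itemize}
\item $\mathcal{G}^T$ is polynomial in $(B,K,\Lo K)$ and in the compositions.
\item The exponential is analytic.
\item $P\mapsto\boldsymbol{\varphi}^T(P)$ is $C^1$.
\item Composition is $C^1$, losing one derivative, which is why $\sigma-1$ appears.
\end{itemize}
The derivative $D_G\mathcal{G}^T(P,G)\hat G$ is obtained by differentiating $e^G$ through the integral formula; the same estimates bound it in operator norm.
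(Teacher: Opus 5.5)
Your proof is correct and follows the paper's route: the same symmetry argument for $K^\top J_m \Lo K$, the $B$-terms decaying through $\partial_q w$, and the same splitting of $K^\top \partial_z^2 H\circ\varphi\, K + K^\top J_m\Lo K - \Mell$ into $K^\top(\partial_z^2H\circ\varphi-\Mell)K$ plus $\mathfrak{L}^{\mathrm{ell}}_{\omega,\Omega}G$ plus quadratic remainders. Your extra step, deriving $\Lo G\in\mathcal{M}^{\mathrm{pol},T}_{\sigma-1,\ell+\dec}$ from $J_m\Lo G=\mathfrak{L}^{\mathrm{ell}}_{\omega,\Omega}G-G^\top\Mell-\Mell G$, makes explicit the control of $\Lo(e^G-\mathrm{Id}_{2m}-G)$ that the paper leaves implicit, as does your Neumann-series justification of $(\mathrm{Id}_n+\partial_q u)^{-1}$ (which may require enlarging $T_0$).
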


\begin{proof}
   Let us verify that $\mathcal{G}^T$ takes values in $\mathcal{S}ym^{\mathrm{pol}, T}_{\sigma-1, \ell+\dec+1}.$ First, we check that, for any $P, G$, the matrix $\mathcal{G}^T(P, G)(q, t)$ given by \eqref{eq:formula_F_ell_2} is symmetric. In the following, if there is no risk of confusion, we omit writing the variables $P, G, q, t$. By \eqref{eq:formula_F_ell_2}, it suffices to check that $K^{\top} J_m \Lo K$ is symmetric.
  
By taking derivatives with respect to $q$ and $t$ in $K(q, t) K(q, t)^{-1} = \mathrm{Id}_{2m}$, it easily follows that 
\[ (\Lo K)  K^{-1} = -K (\Lo K^{-1}).\]
Hence, recalling that $K^{\top} J_m = J_m K^{-1}$, we have
\begin{align*}
 (K^{\top} J_m \Lo K)^{\top} & = - (\Lo K^{\top})J_m K = - J_m(\Lo K^{-1}) K = J_mK^{-1} \Lo K = K^{\top} J_m \Lo K,
\end{align*} 
and thus $\mathcal{G}^T(P, G)(q, t)$ is symmetric.

Now, let us check that $\mathcal{G}^T(P, G) \in\mathcal{M}^{\mathrm{pol}, T}_{\sigma-1, \ell+\dec+1}.$ We have
\begin{equation}
\begin{aligned}\label{eq:der_order_2_Ham}
    \partial_p^2 H & = \overline{\overline{M}},\\
    \partial_p\partial_z H & = \overline{m} + \partial_z \overline{M}\cdot p,  \\
    \partial_z^2 H & = M^{\mathrm{ell}}_\Omega + d +  \partial_z \overline{m} \cdot p + \partial_z^2 M^t \cdot p^2 + \overline{\overline{L}}^t \cdot z ,
\end{aligned}
\end{equation}
where the matrix $M^{\mathrm{ell}}_\Omega$ is defined in~\eqref{Ms}, and $\overline{M}$, $\overline{\overline{M}}$, $\overline{m}$, and $\overline{\overline{L}}$ in Lemma \ref{lemma:def_bar_M_m_L}.

Since $B_{ij} \in \mathcal{S}^{\mathrm{pol}, T}_{(\sigma-1, 0), \ell+\dec+1}$ for all $1 \le i \le n$ and $1 \le j \le 2m$, it is easy to check that all the terms containing the matrix $B$ in the definition of $\mathcal{G}^T$ belong to $\mathcal{M}^{\mathrm{pol}, T}_{\sigma-1, \ell+\dec+1}.$

Thus it remains to verify that  $K^{\top} \partial_z^2 H K   + K^{\top} J_m \Lo K - M^{\mathrm{ell}}_\Omega \in \mathcal{M}^{\mathrm{pol}, T}_{\sigma-1, \ell+\dec+1}.$ For this purpose, we notice that, 
\begin{align*}
    &K^\top \partial_z^2 H\circ \varphi K   + K^\top J_m \Lo K - M^{\mathrm{ell}}_\Omega \\
    &= K^\top \left(\partial_z^2 H\circ \varphi - M^{\mathrm{ell}}_\Omega\right)K \\
    &+K^\top \left[(K - \mathrm{Id}_{2m})^\top M_\Omega^{\mathrm{ell}} + M_\Omega^{\mathrm{ell}}(K - \mathrm{Id}_{2m}) + J_m \Lo (K - \mathrm{Id}_{2m})\right]\\
    &-(K - \mathrm{Id}_{2m})^\top(K - \mathrm{Id}_{2m})^\top M_{\Omega}^{\mathrm{ell}},
\end{align*}
where the latter is obtained by adding and subtracting $K^\top M^{\mathrm{ell}}_\Omega K$ and $K^\top \left(K - \mathrm{Id}_{2m}\right)^\top M_{\Omega}^{\mathrm{ell}}$ and using the trivial equality $\Lo K = \Lo(K - \mathrm{Id}_{2m})$.

Since $$K - \mathrm{Id}_{2m} - G = G^2 \int_0^1(1 - s)\exp(sG)ds \in \mathcal{M}_{\sigma - 1, 2\ell + 2\dec}^{T} \subseteq \mathcal{M}_{\sigma - 1, \ell + \dec+1}^{T}$$ and $G^\top M_\Omega^{\mathrm{ell}} + M_\Omega^{\mathrm{ell}} G^\top + J_m\Lo G \in  \mathcal{M}_{\sigma - 1, \ell + \dec+1}^{T}$ (see~\eqref{def:BS_ell_Sp}), it follows that $$(K - \mathrm{Id}_{2m})^\top M_\Omega^{\mathrm{ell}} + M_\Omega^{\mathrm{ell}}(K - \mathrm{Id}_{2m}) + J_m \Lo (K - \mathrm{Id}_{2m}) \in \mathcal{M}^{T}_{\sigma-1, \ell +\dec+1}.$$

Noticing that $\partial_z^2 H\circ \varphi - M^{\mathrm{ell}}_\Omega \in \mathcal{M}^{\mathrm{pol}, T}_{\sigma, \ell +\dec+1}$ and $K - \mathrm{Id}_{2m} \in \mathcal{M}^{\mathrm{pol}, T}_{\sigma-1, \ell+\dec}$,  we conclude that the functional $\mathcal{G}^T$ is well-defined. Concerning the regularity of $\mathcal{G}^T$ and $D_G \mathcal{G}^T$, it follows directly from~\eqref{eq:formula_F_ell_2}, together with the regularity assumptions on the perturbative terms  $P=(a,b,c,d) \in \mathcal{P}^{\mathrm{ell}, T}_{\sigma,\mathcal{L}^*}$, the components of the $C^\sigma$ asymptotic KAM torus $\varphi =  \boldsymbol{\varphi}^T(P) = (u,v,w) \in \mathcal{E}^{\mathrm{ell}, T}_{\sigma, \mathcal{K}^*}$ and the matrix $G \in \mathcal{S}p^{\mathrm{ell}, T}_{\sigma-1, \omega, \Omega, \ell+\dec}$.
\end{proof}

\begin{lemma}
    \label{lemma:G_bounded}
   There exists a constant $C$, such that for any $T \geq T_0$,
\[ \sup \left\{ |\mathcal{G}^T(P, 0)|^T_{\sigma - 1, \ell + \dec+ 1} \,\left|\, P \in B_{\mathcal{P}^{\mathrm{ell}, T}_{\sigma,\Lell^*}}(r) \right\}\right. \leq Cr.\]
\end{lemma}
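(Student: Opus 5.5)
To bound $\mathcal{G}^T(P,0)$ we set $G=0$ in \eqref{eq:formula_F_ell_2}. Then $K=\mathrm{Id}_{2m}$ and $\Lo K=0$, so with $H=H_0+P$, $\varphi=\boldsymbol{\varphi}^T(P)=(u,v,w)$ and $B=-(\mathrm{Id}_n+\partial_q u)^{-\top}(\partial_q w)^\top J_m$ we get
\[
\mathcal{G}^T(P,0)= B^\top \partial_p^2 H\circ\varphi\, B + \partial_p\partial_z H\circ\varphi\, B + B^\top(\partial_p\partial_z H\circ\varphi)^\top + \big(\partial_z^2 H\circ\varphi - \Mell\big).
\]
The plan is to bound each of these terms in $\mathcal{M}^{\mathrm{pol},T}_{\sigma-1,\ell+\dec+1}$ by a constant times $r$. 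Two inputs will be used throughout. First, Proposition \ref{prop:A_existence} (with $\mathcal{K}^*=\mathcal{K}(\ell,\dec+1)$) gives $|\varphi|^T_{\sigma,\mathcal{K}^*}\le C_0 r$. Hence $|u|^T_{\sigma,\ell-1}$, $|v|^T_{\sigma,\ell+\dec+2}$ and $|w|^T_{\sigma,\ell+\dec+1}$ are all at most $C_0r$, with $C_0$ independent of $T$. Second, the product and composition estimates of Proposition \ref{prop:Csigma_prop_norms_pol} (decay rates add under products, and composition with $\varphi$ preserves them at the cost of one derivative) will be combined with Lemma \ref{lemma:def_bar_M_m_L} and \eqref{def:const_Upsilon_ell}.

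\emph{Terms involving $B$.} Since $|\partial_q w|^T_{\sigma-1,\ell+\dec+1}\le C_0 r$, we get $|B|^T_{\sigma-1,\ell+\dec+1}\le C r$. For this we need $(\mathrm{Id}_n+\partial_q u)^{-\top}$ bounded in $C^{\sigma-1}$ uniformly in $t$. This follows from a Neumann series, using $|\partial_q u|_{C^{\sigma-1}}\le C_0 r\, T^{-(\ell-1)}$, which is small for $T\ge T_0$; enlarging $T_0$ if necessary, and only as a function of $r$, makes this legitimate. By \eqref{eq:der_order_2_Ham}, $\partial_p^2H\circ\varphi$ and $\partial_p\partial_zH\circ\varphi$ are built from $\overline{\overline M}$, $\overline m$ and $\partial_z\overline M\cdot v$. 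All of these are bounded by $\Upsilon$ in $\mathcal{S}^{\mathrm{pol},T}_{(\sigma-1,0),0}$ after composition, since $P$ has no $p^2$ or $pz$ part. The three $B$-terms are therefore bounded by $C(\Upsilon)\big(Cr+(Cr)^2\big)$.

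\emph{The term $\partial_z^2H\circ\varphi-\Mell$.} By \eqref{eq:der_order_2_Ham} it equals
\[
d\circ\varphi_q + \partial_z\overline m\circ\varphi_{qz}\cdot v + \partial_z^2 M\circ\varphi\cdot v^2 + \overline{\overline L}\circ\varphi_{qz}\cdot w.
\]
Under $\mathcal{L}^*$ we have $d\in\mathcal{S}^{\mathrm{pol},T}_{(\sigma,2),\ell+\dec+1}$ with norm $\le r$, and composition with $\varphi_q$ keeps this decay. The factor $v$ carries decay $\ell+\dec+2$ and $w$ carries $\ell+\dec+1$, so every remaining term lies in $\mathcal{M}^{\mathrm{pol},T}_{\sigma-1,\ell+\dec+1}$. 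Each is bounded by $C(\Upsilon)(C_0r+(C_0r)^2)$; the loss of one derivative is absorbed by $\overline{\overline L}\in\mathcal{S}_{(\sigma,2),0}$ via Lemma \ref{lemma:def_bar_M_m_L}.

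\emph{Expected obstacle.} The main difficulty is keeping the constant linear in $r$ and independent of $T$, since quadratic terms such as $(C_0r)^2$ appear. I expect to handle this by noting that $r$ is fixed. The constant $C$ may depend on $r$ (through $1+C_0r$) but not on $T$, and this is exactly what the proof of Proposition \ref{prop:A_transverse} uses. Symmetry of the output needs no argument, as it was already established in Lemma \ref{lemma:G_well_defined}.
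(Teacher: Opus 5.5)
Your proposal is correct and follows the paper's argument: set $G=0$, use $|\boldsymbol{\varphi}^T(P)|^T_{\sigma,\mathcal{K}^*}<C_0 r$, and bound each term of $\mathcal{G}^T(P,0)$ with the product and composition estimates. You also supply details the paper leaves implicit, namely the Neumann series for $(\mathrm{Id}_n+\partial_q u)^{-\top}$ (after possibly enlarging $T_0$) and the term-by-term decay count. Letting $C$ depend on the fixed $r$ is harmless for the proof of Proposition~\ref{prop:A_transverse}, and the decay $\ell+\dec+1$ you use for $d$ is the one in \eqref{eq:decay_ell_2}, which is what is needed here (the fourth entry $\ell+1$ of $\mathcal{L}^*$ as printed would not suffice when $\dec>0$).
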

\begin{proof}

For any $T \geq T_0$ and any $P \in B_{\mathcal{P}^{\mathrm{ell}, T}_{\sigma, \Lell^*}}(r)$, we have
\[  \mathcal{G}^T(P, 0) =  B^\top \partial_p^2 H\circ \varphi B +  \partial_p\partial_zH\circ \varphi B + B^{\top}  \left(\partial_p\partial_zH\circ \varphi\right)^\top  +  \partial_z^2 H\circ \varphi  - M^{\mathrm{ell}}_\Omega, \] 
where $H = H_0 + P$, $\varphi = \boldsymbol{\varphi}^T(P) = (u, v, w)$, and  $B = -(\mathrm{Id}_{2n} +\partial_q  u )^{-T} \partial_q w J_m K.$ 

Recall that by definition of $\boldsymbol{\varphi}^T$ and Proposition \ref{prop:A_existence} we have $| \boldsymbol{\varphi}^T(P)|^T_{\sigma, \mathcal{K}^*} < C_0r$, where $C_0$ is a constant independent of $T$, $P$ and $r$. Hence there exists a constant $C$, independent of $T$ and $r$, such that
\[ \sup \left\{ |\mathcal{G}^T(P, 0)|^T_{\sigma - 1, \ell + \dec+1} \,\left|\, P \in B_{\mathcal{P}^{\mathrm{ell}, T}_{\sigma,\Lell^*}}(r) \right\}\right.  < Cr \]
\end{proof}

We will now consider the differential of $\mathcal{G}^T$ with respect to $G$ evaluated at $(0,0)$. Observe that $\mathcal{G}^T(0, 0) = 0$. A direct calculation shows that 
\[ \mathcal{G}^T(0, G) = \exp(G)^{\top} M^{\mathrm{ell}}_\Omega \exp(G) + \exp(G)^{\top} J_m \Lo \exp(G) - M^{\mathrm{ell}}_\Omega.\]
Thus, the differential of $\mathcal{G}^T$ with respect to $G$ evaluated at $(0,0)$ is given by
\begin{equation}\label{eq:diff_form_F_ell_2}
\Function{D_{G}\mathcal{G}^T(0,0)}{\mathcal{S}p^{\mathrm{ell}, T}_{\sigma-1, \omega, \Omega, \ell+\dec}}{ \mathcal{S}ym^{\mathrm{pol}, T}_{\sigma-1, \ell+\dec+1}}{G}{\mathfrak{L}^{\mathrm{ell}}_{\omega, \Omega} (G) = G^{\top}M^{\mathrm{ell}}_\Omega + M^{\mathrm{ell}}_\Omega G + J_m \Lo G},
\end{equation}
where we refer to~\eqref{eq:coh_eqs_ell} for the definition of the linear operator $\mathfrak{L}^{\mathrm{ell}}_{\omega, \Omega} $.

We recall that, throughout this work, we denote by $C(\cdot)$ a generic positive constant depending on the parameter in brackets. 
\begin{lemma}\label{lemma:DG_invertible}
    Let $T \geq T_0$. The linear operator $D_G \mathcal{G}^T(0,0)$ given by \eqref{eq:diff_form_F_ell_2} is invertible. Moreover, there exits a constant $\bar C$, depending only on $\ell$ and $\dec$, such that
    \[ \| D_{G} \mathcal{G}^T(0,0)^{-1}\| < \bar C(\ell, \dec),\]
    where $\| \cdot \|$ denotes the operator norm.
\end{lemma}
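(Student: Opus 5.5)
The plan is to show that for every $Y \in \mathcal{S}ym^{\mathrm{pol}, T}_{\sigma-1, \ell+\dec+1}$ the equation $\mathfrak{L}^{\mathrm{ell}}_{\omega,\Omega}(G) = Y$ has a unique solution $G \in \mathcal{S}p^{\mathrm{ell}, T}_{\sigma-1, \omega, \Omega, \ell+\dec}$, with $|G|^T_{\sigma-1,\omega,\Omega,\ell+\dec} \le \bar C(\ell,\dec)|Y|^T_{\sigma-1,\ell+\dec+1}$. Since $G \in \mathfrak{sp}(2m,\R)$ means $G = J_m S$ with $S$ symmetric, I first reparametrize. Using $J_m^\top = -J_m$ and the symmetry of $\Mell$, we have $G^\top \Mell = -S J_m \Mell$. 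Hence
\[
\mathfrak{L}^{\mathrm{ell}}_{\omega,\Omega}(J_m S) = \Mell J_m S - S J_m \Mell - \Lo S .
\]
So the problem reduces to the matrix cohomological equation $\Lo S + S N - N S = -Y$ on symmetric matrices, where $N := J_m \Mell = \begin{pmatrix} 0 & \Omega \\ -\Omega & 0\end{pmatrix}$. This is a commutator-type equation, $\Lo S - [N,S] = -Y$, and it preserves symmetry, because $N$ is antisymmetric and so $[N,S]^\top = [N,S]$ whenever $S$ is symmetric.

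Next I diagonalize the constant operator $\mathrm{ad}_N$. The matrix $N$ is skew-symmetric with eigenvalues $\pm i\Omega_j$, and it is diagonalized by a fixed unitary matrix $U$ (pairing the coordinates $x_j, y_j$ into $x_j \pm i y_j$). Conjugating by $U$, the equation splits into scalar equations of the form
\[
\Lo \chi_{jk} - i\mu_{jk}\chi_{jk} = -\tilde Y_{jk}, \qquad \mu_{jk} \in \{0, \pm(\Omega_j - \Omega_k), \pm(\Omega_j + \Omega_k)\}.
\]
Each scalar equation is solved by integrating from $+\infty$ along the characteristics $q + \omega t$:
\[
\chi(q,t) = \int_t^{+\infty} e^{-i\mu(s-t)}\tilde Y(q + \omega(s-t), s)\, ds .
\]
Since $|\tilde Y^s|_{C^{\sigma-1}} \le |Y| s^{-(\ell+\dec+1)}$ and translations in $q$ preserve the $C^{\sigma-1}$ norm, this gives $|\chi^t|_{C^{\sigma-1}} \le |Y|\, t^{-(\ell+\dec)}/(\ell+\dec)$. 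This is exactly the argument behind Proposition~\ref{prop:Csigma_HEomega} and Proposition~\ref{prop:Csigma_HEomegaOmega}; no small divisors appear because the oscillatory factor has modulus one. I expect to invoke the matrix analogue of those propositions, or reprove it in these few lines.

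Uniqueness in the decaying class follows because a solution of the homogeneous equation with decay $t^{-(\ell+\dec)}$ must be constant along characteristics up to a unimodular phase, and hence vanishes. Transforming back, $S := U\chi U^{-1}$ is real and symmetric by uniqueness, since $S^\top$ and $\bar S$ solve the same equation. Setting $G = J_m S$ gives $G \in \mathcal{M}^{\mathrm{pol},T}_{\sigma-1,\ell+\dec}$ with $\mathfrak{L}^{\mathrm{ell}}_{\omega,\Omega}G = Y \in \mathcal{M}^{\mathrm{pol},T}_{\sigma-1,\ell+\dec+1}$. The norm bound is therefore $\max\{C/(\ell+\dec),\,1\}\cdot|Y|$, with $C$ depending only on $m$ through the conjugation by $U$. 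This gives $\bar C = \bar C(\ell,\dec)$, independent of $T$.

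The main obstacle is checking that the integral formula lands in the right space with the right regularity. This requires continuity of the relevant derivatives in $(q,t)$ and control of the $C^{\sigma-1}$ Hölder seminorm under the integral, which follows by Minkowski's inequality, as in Proposition~\ref{prop:Csigma_HEomegaOmega}. The symmetric and symplectic bookkeeping is routine but must be done carefully so that the solution stays in $\mathfrak{sp}(2m,\R)$.
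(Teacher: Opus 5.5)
Your proposal is correct and is essentially the paper's argument. The paper deduces the lemma from Proposition~\ref{prop:Csigma_HEomegaOmegaOmega}, which likewise complexifies $\mathfrak{L}^{\mathrm{ell}}_{\omega,\Omega}G = g$, via $\mathfrak{F} = P - P^\top + i(R-Q)$ and $\mathfrak{G} = P + P^\top + i(R+Q)$, into decoupled scalar equations with frequencies $\Omega_j \pm \Omega_k$, each solved by integrating from $t=+\infty$ as in Lemma~\ref{lemma:HE_interm_LoO}. Your parametrization $G = J_m S$ with the commutator form $\Lo S - [N,S] = -Y$ is only tidier bookkeeping. It silently uses $\Mell J_m = J_m \Mell$, which holds because both diagonal blocks of $\Mell$ equal $\Omega$. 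Getting realness and symmetry of $S$ from uniqueness replaces the paper's explicit reconstruction of $P,Q,R$.
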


\begin{proof}
This follows directly from Proposition \ref{prop:Csigma_HEomegaOmegaOmega}.
  \end{proof}

Now we will study the dependence of $\| D_G\mathcal{G}^T(P, G) - D_G\mathcal{G}^T(0, 0) \|$ on $P$ and $G$.

\begin{lemma}
    \label{lem:DG_limit}
    Let $P \in  \mathcal{P}^{\mathrm{ell}, T}_{\sigma, \Lell^*}$ and $ G_* \in \mathcal{S}p^{\mathrm{ell},T}_{\sigma - 1, \omega, 
    \Omega, \ell+\dec}$ with $|P|_{\sigma, \Lell^*}^T < r$. Then there exists a constant $C_*$, depending continuously on $\Upsilon, |P|^T_{\sigma, \Lell^*},$ and $| G_*|^T_{\sigma - 1, \omega,\Omega, \ell+\dec}$, such that the operator
    \[  D_G\mathcal{G}^T(P, G_*) - D_G\mathcal{G}^T(0, 0):   \mathcal{S}p^{\mathrm{ell}, T}_{\sigma-1, \omega, \Omega, \ell+\dec} \to  \mathcal{S}ym^{\mathrm{pol}, T}_{\sigma - 1, \ell +\dec+ 1} \]
    satisfies
\begin{equation}
\label{eq:DG_difference_bound}
\| D\mathcal{G}^T(0, 0) -  D\mathcal{G}^T(P, G_*) \| \leq \frac{C_*}{T^{\ell + \dec - 1}},
\end{equation}
where $\| \cdot\|$ denotes the operator norm. 

In particular, Item \eqref{prop:DG_limit} of Proposition \ref{prop:G_properties} holds.
\end{lemma}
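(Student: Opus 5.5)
We compute the derivative of $\mathcal{G}^T$ with respect to $G$ at a general point $(P,G_*)$ and compare it with $\mathfrak{L}^{\mathrm{ell}}_{\omega,\Omega}$, in the same spirit as Lemma \ref{lem:DF_limit}. Write $K_* = \exp(G_*)$, $\varphi = \boldsymbol{\varphi}^T(P) = (u,v,w)$, $H = H_0+P$. For $\hat G \in \mathcal{S}p^{\mathrm{ell},T}_{\sigma-1,\omega,\Omega,\ell+\dec}$ we set $\hat K = D\exp(G_*)\hat G = \int_0^1 e^{sG_*}\hat G e^{(1-s)G_*}\,ds$. The matrix $\varphi$ does not depend on $G$. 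Hence only $K$ and $B = -(\mathrm{Id}+\partial_q u)^{-\top}(\partial_q w)^\top J_m K$ vary, with $\hat B = -(\mathrm{Id}+\partial_q u)^{-\top}(\partial_q w)^\top J_m \hat K$.

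Differentiating \eqref{eq:formula_F_ell_2} gives
\[
D_G\mathcal{G}^T(P,G_*)\hat G = \big(\text{terms containing } B,\hat B\big) + \hat K^\top \partial_z^2H\circ\varphi\, K_* + K_*^\top \partial_z^2H\circ\varphi\,\hat K + \hat K^\top J_m\Lo K_* + K_*^\top J_m \Lo \hat K .
\]
We subtract $\mathfrak{L}^{\mathrm{ell}}_{\omega,\Omega}\hat G = \hat G^\top \Mell + \Mell\hat G + J_m\Lo\hat G$ and split the difference into three groups.

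The first group consists of the $B$-terms. Each contains $\partial_q w$, which lies in $\mathcal{S}^{\mathrm{pol},T}_{(\sigma-1,0),\ell+\dec+1}$ (recall $\mathcal{K}^*$). These terms are multiplied by $\hat K$, whose weight is $\ell+\dec$, and by bounded factors. Their weight is therefore at least $2\ell+2\dec+1$, which exceeds the target weight $\ell+\dec+1$ by $\ell+\dec$.

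The second group consists of the terms with $\partial_z^2H\circ\varphi - \Mell$. By \eqref{eq:der_order_2_Ham} and the decay of $d$, $v$, $w$, this quantity lies in weight $\ell+\dec+1$. Multiplied by $\hat K$, it gains a factor $T^{-(\ell+\dec)}$.

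The third group is the principal part:
\[
\hat K^\top \Mell K_* + K_*^\top\Mell\hat K + \hat K^\top J_m\Lo K_* + K_*^\top J_m\Lo\hat K - \mathfrak{L}^{\mathrm{ell}}_{\omega,\Omega}\hat G .
\]
To handle it, write $K_* = \mathrm{Id}+(K_*-\mathrm{Id})$ and $\hat K = \hat G + (\hat K-\hat G)$. Then $K_*-\mathrm{Id}$ and $\hat K - \hat G$ are $O(|G_*|)$ in weight $\ell+\dec$, with $\hat K-\hat G$ also linear in $\hat G$. Products of these with $\Mell\hat G$ are harmless, since they gain $T^{-(\ell+\dec)}$. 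Also $\hat K^\top J_m \Lo K_*$ is harmless once we know $\Lo G_*$ is bounded in weight $\ell+\dec$; this follows from $J_m\Lo G_* = \mathfrak{L}^{\mathrm{ell}}_{\omega,\Omega}G_* - G_*^\top\Mell - \Mell G_*$.

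The delicate term is $K_*^\top J_m\Lo\hat K - J_m\Lo\hat G$. Only $\mathfrak{L}^{\mathrm{ell}}_{\omega,\Omega}\hat G$ is controlled with weight $\ell+\dec+1$, not $\Lo\hat G$ alone. The plan is to express $\Lo\hat K$ through $\Lo\hat G$ and $\Lo G_*$ by differentiating the integral formula for $D\exp$. We then substitute $J_m\Lo\hat G = \mathfrak{L}^{\mathrm{ell}}_{\omega,\Omega}\hat G - \hat G^\top\Mell - \Mell\hat G$, which is bounded in weight $\ell+\dec$ by $C|\hat G|_{\sigma-1,\omega,\Omega,\ell+\dec}$. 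The remaining error is then a product of $(K_*-\mathrm{Id})$, or of commutator-type terms in $G_*$, with a quantity of weight $\ell+\dec$. Hence it lies in weight $2\ell+2\dec \ge \ell+\dec+1 + (\ell+\dec-1)$.

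This bookkeeping, where we must avoid losing the decay weight in $\Lo\hat G$, is the main obstacle. It is also the reason the final bound involves $T^{-(\ell+\dec-1)}$ rather than $T^{-(\ell+\dec)}$.

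Collecting the three groups with Proposition \ref{prop:Csigma_prop_norms_pol}, and using $|f|_{\sigma-1,\ell+\dec+1}^T \le T^{-(\ell+\dec-1)}|f|^T_{\sigma-1,2\ell+2\dec}$, we obtain \eqref{eq:DG_difference_bound}. The constant $C_*$ depends continuously on $\Upsilon$, on $|P|^T_{\sigma,\Lell^*}$ (through $|\boldsymbol{\varphi}^T(P)|\le C_0 r$), and on $|G_*|$. Since $\ell>1$, the right-hand side tends to $0$ uniformly on the balls, which gives Item \eqref{prop:DG_limit}.
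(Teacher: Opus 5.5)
Your proposal is correct and follows essentially the paper's argument. The paper also differentiates through $D\exp(G_*)\hat G=e^{G_*}\mathcal{A}(G_*,\hat G)$, subtracts $\mathfrak{L}^{\mathrm{ell}}_{\omega,\Omega}\hat G$, places every remaining term in weight $2\ell+2\dec$ (controlling $\Lo\hat G$ only in weight $\ell+\dec$ via the $\mathfrak{L}^{\mathrm{ell}}_{\omega,\Omega}$-component of the norm, as you do), and converts the excess weight into $T^{-(\ell+\dec-1)}$; the only cosmetic difference is that it lumps your first two groups into a single matrix $\beta(P)$ of weight $\ell+\dec+1$.

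One correction to your commentary: the lost power is not specific to the $\Lo\hat G$ terms, since $\hat G^\top\Mell(K_*-\mathrm{Id})$ alone is also only of weight $2\ell+2\dec$, and these losses in fact cancel (one checks $J_m\mathcal{G}^T(0,G)=\mathcal{A}(G,J_m\mathfrak{L}^{\mathrm{ell}}_{\omega,\Omega}G)$), so the exponent $\ell+\dec-1$ rather than $\ell+\dec$ is an artifact of termwise estimation.
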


\begin{proof}

    Recall that (see \eqref{eq:formula_F_ell_2}) $\mathcal{G}^T(P, G)$ is of the form
   \[  \mathcal{G}^T(P, G) =  e^{G^\top}\beta(P)e^G + e^{G^\top}\Mell e^G +  e^{G^\top} J_m \Lo  e^{G} - M^{\mathrm{ell}}_\Omega,
\]
where $\beta(P) \in \mathcal{M}^{\mathrm{pol},T}_{\sigma,\ell + \dec+ 1}$ depends only on $P$ (more precisely, on $H_0$, $P$ and the $C^\sigma$ asymptotic KAM torus $\boldsymbol{\varphi}^T(P)$). Hence, by \eqref{eq:derivative_exp}, we have
\begin{align*}
D_G \mathcal{G}(P, G_*)G & = \mathcal{A}(G_*, G)^\top e^{G^\top_*}\beta(P)e^{G_*} + e^{G^\top_*}\beta(P)e^{G_*}\mathcal{A}(G_*, G) \\
& +  \mathcal{A}(G_*, G)^\top e^{G^\top_*}\Mell e^{G_*} +  e^{G^\top_*}\Mell e^{G_*}\mathcal{A}(G_*, G)\\
& +  \mathcal{A}(G_*, G)^\top e^{G^\top_*} J_m \Lo  e^{G_*} +   e^{G^\top_*}  J_m \Lo  (e^{G_*} \mathcal{A}(G_*, G)),
\end{align*}
where $\mathcal{A}(\cdot, \cdot)$ is given by \eqref{eq:adjoint_formula}.

Therefore,
\begin{equation}
\label{eq:DG_difference}
\begin{aligned}
(D_G \mathcal{G}(P, G_*) & - D_G \mathcal{G}(0, 0))G  = \mathcal{A}(G_*, G)^\top e^{G^\top_*}\beta(P)e^{G_*} + e^{G^\top_*}\beta(P)e^{G_*}\mathcal{A}(G_*, G) \\
& + (\mathcal{A}(G_*, G)^\top e^{G^\top_*}\Mell e^{G_*} - G^\top \Mell) +  (e^{G^\top_*}\Mell e^{G_*}\mathcal{A}(G_*, G) - \Mell G)\\
& +  \mathcal{A}(G_*, G)^\top e^{G^\top_*} J_m \Lo  e^{G_*} +  (e^{G^\top_*}  J_m \Lo  (e^{G_*} \mathcal{A}(G_*, G)) - J_m \Lo G).
\end{aligned}
\end{equation}

Using  Proposition \ref{prop:Csigma_prop_norms_matrix} and Lemma \ref{lem:exp_expansion_bounds}, a straightforward calculation shows that 
\begin{equation}
\label{eq:better_decay_DG}
(D_G \mathcal{G}(P, G_*)  - D_G \mathcal{G}(0, 0))G \in \mathcal{S}ym^{\mathrm{pol},T}_{\sigma - 1, 2\ell +2\dec}.
\end{equation}
Moreover, its norm is bounded by $C_*| G|^T_{\sigma - 1, \omega, \Omega, \ell+\dec}$, where $C_*$ is a constant depending continuously on $\sigma, \ell, m, n, \Upsilon, |P|_{\sigma, \Lell^*},$ and $|G_*|^T_{\sigma - 1, \omega, \Omega, \ell +\dec}$. 

Indeed, to check \eqref{eq:better_decay_DG}, we will obtain an explicit bound of the $|\cdot|_{\sigma - 1, 2\ell + 2 \dec}^T$-norm that for each of the terms in \eqref{eq:DG_difference} (where we consider the differences inside the parentheses as a single term). %

In the following, we denote simply by $C$ constants appearing in the inequalities and that depend only on $\sigma, \ell, \dec, m$. 

By Proposition \ref{prop:Csigma_prop_norms_matrix} and Lemma \ref{lem:exp_expansion_bounds}, we have
\[ |\mathcal{A}(G_*, G)^\top e^{G^\top_*}\beta(P)e^{G_*} |^T_{\sigma-1, 2\ell +2\dec} \leq Ce^{C|G_*|^T_{\sigma-1, 0}} | G|^T_{\sigma-1, \omega, \Omega, \ell + \dec} |\beta(P)|^T_{\sigma, \ell+\dec+1}.\]

The term $e^{G^\top_*}\beta(P)e^{G_*}\mathcal{A}(G_*, G)$ is treated similarly.

By triangular inequality, Proposition \ref{prop:Csigma_prop_norms_matrix} and Lemma \ref{lem:exp_expansion_bounds}, we have
\begin{align*}
    |\mathcal{A}(G_*, G)^\top e^{G^\top_*}\Mell e^{G_*}  & - G^\top \Mell|^T_{\sigma -1, 2\ell+2\dec}  \leq C \Big( |\mathcal{A}(G_*, G)^\top (e^{G^\top_*} - \mathrm{Id}_m)\Mell e^{G_*} |^T_{\sigma -1, 2\ell +2\dec}  \\ 
    & + |\mathcal{A}(G_*, G)^\top \Mell (e^{G_*} - \mathrm{Id}_m) |^T_{\sigma -1, 2\ell +2\dec} +  |(\mathcal{A}(G_*, G)^\top - G^\top)\Mell |^T_{\sigma -1, 2\ell +2\dec} \Big)\\
    & \leq Ce^{C|G_*|^T_{\sigma-1, 0}} | G|^T_{\sigma -1, \ell+\dec} \Big(|e^{G^\top_*} - \mathrm{Id}_m|^T_{\sigma -1, \ell +\dec} + |e^{G_*} - \mathrm{Id}_m|^T_{\sigma -1, \ell +\dec}\Big)\\
    &+  C|\mathcal{A}(G_*, G)^\top - G^\top|^T_{\sigma-1, 2\ell+2\dec} \\
    & \leq Ce^{C|G_*|^T_{\sigma-1, 0}} | G|^T_{\sigma-1, \omega, \Omega, \ell+\dec}| G_*|^T_{\sigma-1, \ell+\dec}
\end{align*}

The term $e^{G^\top_*}\Mell e^{G_*}\mathcal{A}(G_*, G) - \Mell G$ is treated similarly.

By Proposition \ref{prop:Csigma_prop_norms_matrix} and Lemma \ref{lem:exp_expansion_bounds},
\begin{align*}
| \mathcal{A}(G_*, G)^\top e^{G^\top_*} J_m \Lo  e^{G_*} |_{\sigma-1, 2\ell+2\dec} & \leq Ce^{C|G_*|^T_{\sigma-1, 0}}  | G|^T_{\sigma-1, \ell+\dec} |\Lo G_*|^T_{\sigma-1, \ell+\dec} \\
& \leq Ce^{C|G_*|^T_{\sigma-1, 0}}  | G|^T_{\sigma-1,\omega, \Omega, \ell+\dec} |G_*|^T_{\sigma-1, \omega, \Omega, \ell+\dec}.
\end{align*}

Finally, again by Proposition \ref{prop:Csigma_prop_norms_matrix} and Lemma \ref{lem:exp_expansion_bounds},
\begin{align*}
|e^{G^\top_*} & J_m \Lo  (e^{G_*} \mathcal{A}(G_*, G))  - J_m \Lo G|_{\sigma-1, 2\ell+2\dec}  \leq C \Big( |e^{G^\top_*}  J_m \Lo  (e^{G_*}) \mathcal{A}(G_*, G)|_{\sigma-1, 2\ell+2\dec} \\
& \quad +| e^{G^\top_*}  J_m  e^{G_*} \Lo (\mathcal{A}(G_*, G)) -  e^{G^\top_*} J_m \Lo G|_{\sigma-1, 2\ell+2\dec} + | e^{G^\top_*} J_m \Lo G -  J_m \Lo G|_{\sigma-1, 2\ell+2\dec}\Big) \\
& \leq Ce^{C|G_*|^T_{\sigma-1, 0}}(| \Lo (e^{G_*})|_{\sigma-1, \ell+\dec})|\mathcal{A}(G_*, G)|_{\sigma-1, \ell+\dec} + |\Lo(\mathcal{A}(G_*, G) - G)|_{\sigma-1, 2\ell+2\dec}\\
&\quad + |e^{G^\top_*} - \mathrm{Id}_m|_{\sigma-1, \ell+\dec} |\Lo G|_{\sigma-1, \ell+\dec} \\
& \leq Ce^{C|G_*|^T_{\sigma-1, 0}} | G|^T_{\sigma-1,\omega, \Omega, \ell+\dec} |G_*|^T_{\sigma-1, \omega, \Omega, \ell+\dec}.
\end{align*}

Therefore, there exists $C_*$ depending continuously on $\sigma, \ell, \dec, m, n, \Upsilon, |P|^T_{\sigma, \Lell^*},$ and $|G_*|^T_{\sigma - 1, \omega, \Omega, \ell+\dec}$ such that
\begin{align*}
 \big|(D_G\mathcal{G}^T(0,0) - D_G\mathcal{G}^T(P, G_*))G\big|^T_{\sigma - 1, \ell + \dec+1} & \leq  \frac{1}{T^{\ell +\dec- 1}}\big|(D_G\mathcal{G}^T(0,0) - D_G\mathcal{G}^T(P, G_*))G\big|^T_{\sigma - 1, 2\ell+2\dec}  \\
 & \leq \frac{C_*}{T^{\ell +\dec - 1}} |G|^T_{\sigma - 1, \omega, \Omega, \ell+\dec}.
\end{align*}

\end{proof}

\subsection{Proof of Theorem \ref{Thm:elliptic_analy}}
\label{sc:proof_analytic_ell} 
 As mentioned at the beginning of Section \ref{sec:Proof_Theorem_Ell}, it suffices to prove the first part of Theorem \ref{Thm:elliptic_analy} (since the second part will follow immediately from Theorem \ref{Thm:elliptic_Csigma}). 

Moreover, since the proof of the first part of Theorem \ref{Thm:elliptic_analy} follows the same lines and structure as that of Item \eqref{thm:A_existence} in Theorem \ref{Thm:elliptic_Csigma} (which is given in Section \ref{sec:proof_thm_a_ell_1}), but considering spaces of analytic instead of Hölder functions, we will only define the spaces and discuss the very minor changes needed to carry out the proof. Let us point out that, for the sake of completeness, the cohomological equations appearing in Section \ref{sec:proof_thm_a_ell_1} are also considered when restricted to the analytic setting in Section \ref{sc:cohom_eqs_ell} and that the analytic counterparts of the results concerning the norms and the Banach spaces appearing in Section \ref{sec:proof_thm_a_ell_1} are also proved in Section \ref{sc:norm_properties_ell} (see Proposition \ref{prop:properties_analy_pol}).

Recall that the space of real analytic functions with polynomial decay in time $\mathscr{S}^{\mathrm{pol},T}_{\sigma,\ell}$ and its associated norm $    |\cdot |^T_{\sigma, \ell}$ (which as an abuse of notation we denote as the one associated to $\mathcal{S}^{\mathrm{pol},T}_{\sigma,\ell}$) were defined in \eqref{def:S_pol_anal} and \eqref{def:norm_S_analy}, respectively.

Given $\sigma >0$, $\ell\ge 0$ and $T\ge 1$, where $\sigma$ will now denote the width of the complex neighbourhood, we define the counterparts of the spaces $\mathcal{S}^{\mathrm{pol},T}_{(\sigma, \ast), (0,\ell)}$, $\mathcal{U}^{\mathrm{pol}, T}_{\sigma, \omega, \ell}$ and $\mathcal{W}^{\mathrm{ell}, T}_{\sigma, \omega, \Omega, \ell}$, given by \eqref{def: S0ell},~\eqref{def:U} and~\eqref{def:W}, as
\begin{equation}\label{def: S0ell_analy}
    \mathscr{S}^{\mathrm{pol},T}_{\sigma, (0,\ell)} = \left\{f:\T^n_\sigma \times B_\sigma \times I_T \to \C  \hspace{1mm} | \hspace{1mm} f \in \mathscr{S}^{\mathrm{pol}, T}_{\sigma, 0}, \hspace{2mm} \partial_q f \in \mathscr{S}^{\mathrm{pol}, T}_{\sigma, \ell}\right\},
\end{equation}
\begin{equation}\label{def:U_analy}
    \mathscr{U}^{\mathrm{pol}, T}_{\sigma, \omega, \ell} = \left\{u:\T^n_\sigma \times I_T \to \C \hspace{1mm} | \hspace{1mm} u \in \mathscr{S}^{\mathrm{pol},T}_{\sigma, \ell}, \hspace{2mm}\Lo u \in \mathscr{S}^{\mathrm{pol}, T}_{\sigma, \ell+1}\right\},
\end{equation}
\begin{equation}\label{def:W_analy}
    \mathscr{W}^{\mathrm{ell}, T}_{\sigma, \omega, \Omega, \ell} = \left\{w:\T^n_\sigma \times I_T \to \C^{2m} \hspace{1mm} | \hspace{1mm} w \in \mathscr{S}^{\mathrm{pol}, T}_{\sigma, \ell}, \hspace{2mm}\mathcal{L}^{\mathrm{ell}}_{\omega,\Omega} w \in \mathscr{S}^{\mathrm{pol}, T}_{\sigma, \ell+1}\right\},
\end{equation}
endowed with the norms
\begin{equation}\label{def:norm_S0ell_analy}
    |f|^T_{\sigma, (0,\ell)} = \max\{|f|^T_{\sigma, 0}, |\partial_qf|^T_{\sigma, \ell}\},
\end{equation}
\begin{equation}\label{def:norm_U_analy}
    |u|^T_{\sigma, \omega, \ell} = \max \left\{|u|^T_{\sigma, \ell}, |\Lo u|^T_{\sigma, \ell +1}\right\},
\end{equation}
\begin{equation}\label{def:norm_W_analy}
    |w|^{T}_{\sigma, \omega, \Omega, \ell} = \max \left\{|w|^T_{\sigma, \ell}, |\mathcal{L}^{\mathrm{ell}}_{\omega,\Omega} w|^T_{\sigma, \ell +1}\right\},
\end{equation}
respectively.

With these notations, the proof follows the exact same lines as the proof of Item \eqref{thm:A_existence} in Theorem \ref{Thm:elliptic_Csigma} given in Section \ref{sec:proof_thm_a_ell_1}, by defining the counterparts $\mathscr{P}^{\mathrm{ell}, T}_{\sigma, \Lell}$, $\mathscr{E}^{\mathrm{ell}, T}_{\sigma, \mathcal{K}}$ of the spaces $\mathcal{P}^{\mathrm{ell}, T}_{\sigma, \Lell}$, $\mathcal{E}^{\mathrm{ell}, T}_{\sigma, \mathcal{K}}$ in the obvious way and by considering the functional $\mathcal{F}^T$ in an appropriate subset of $\mathscr{P}^{\mathrm{ell}, T}_{\sigma, \Lell} \times \mathscr{E}^{\mathrm{ell}, T}_{\sigma, \mathcal{K}}$, for example $ \mathscr{P}^{\mathrm{ell}, T}_{\sigma,\Lell} \times B_{\mathscr{E}^{\mathrm{ell}, T}_{\sigma/2, \mathcal{K}}}\big(T^{(\ell - 1)/2}\big)$ (compare with \eqref{eq:domain_F_ell} in Lemma \ref{lemma:F_well_defined}). 

Let us mention that the choice of this subset is the only major difference in the proof, as for $\mathcal{F}^T$ to be well-defined we will have to consider a smaller open neighbourhood of the origin in $\mathscr{E}^{\mathrm{ell}, T}_{\sigma, \mathcal{K}}$ with a smaller analiticity width (for example $\sigma/2$). 

\subsection{Proof of Corollary \ref{cor:ell}}\label{sec:proof_cor_ell}
 We point out that the proof of Theorem \ref{Thm:elliptic_Csigma} is constructive. Once we have proved the existence of a $C^\sigma$ asymptotic elliptic KAM torus associated with $(X_H, X_{H_0}, \varphi_0)$, then the matrix $A$ in~\eqref{def:A} has an explicit form depending on the asymptotic KAM torus $\varphi=(\mathrm{id}_{\T^n} + u,v,w)$ and the components of the matrix $S$ (see~\eqref{eq:A_as_function_of_S}). In particular, a straightforward computation shows that the component $a_{32}$ of the matrix $A$ is given by
  \begin{equation*}
     a_{32} = e^{G^\top}
\left(\partial_z\partial_pH\circ\varphi\right)
(\mathrm{id}_{\T^n}+\partial_qu)^{-\top}- e^{G^\top} J_m\partial_qw
(\mathrm{id}_{\T^n}+\partial_qu)^{-1}
\left(\partial_p^2H\circ\varphi\right)
(\mathrm{id}_{\T^n}+\partial_qu)^{-\top}
 \end{equation*}
 where, for $\ell >2$, $\dec \ge 0$ and $T'' \ge 1$
 \begin{equation}\label{cor_proof:decay_uvwG}
     u \in \mathcal{S}_{(\sigma, 0), \ell-1}^{\mathrm{pol},T''}, \qquad  v \in \mathcal{S}_{(\sigma, 0), \ell+ \dec + 2}^{\mathrm{pol},T''}, \qquad w \in \mathcal{S}_{(\sigma, 0), \ell+ \dec+1}^{\mathrm{pol},T''}, \qquad G \in \mathcal{S}_{(\sigma, 0), \ell+\dec}^{\mathrm{pol},T''}.
 \end{equation}
We refer to~\eqref{eq:asymp_KAM_torus_form}, Lemma \ref{lem:embedding_symplectomorphisms_family}, Proposition \ref{prop:criteria_non_autonomous} and Section \ref{sec:proof_thm_a_ell_2}. We recall that here we are assuming $\ell >2$ instead of $\ell >1$. 

The proof of Corollary \ref{cor:ell} follows directly from Proposition \ref{prop:ell_1:1_asym_sol} together with~\eqref{eq:conjugated_cocycles} and~\eqref{def:trasv_dyn_limit} in Definition \ref{def:ell_hyp_par_trasv_dyn_asym_KAM}. Thus, it suffices to verify the hypothesis~\eqref{hyp:limit_aut_sys_ell} of Proposition \ref{prop:ell_1:1_asym_sol} that, in this case, translates into
\begin{equation}\label{proof_cor:cond_to_verify}
        \int_{T''}^{+\infty} |a_{32}^s - \partial_z\partial_pH^\infty\circ\varphi_0|_{C^0} ds < \infty,
    \end{equation}
    where $\varphi_0$ is the trivial embedding defined by~\eqref{def:varphi0=(q,0,0)}. 

    To this end,  we observe that
    \begin{align*}
        &(\mathrm{id}_{\T^n}+\partial_qu)^{-\top} = \mathrm{id}_{\T^n} - \int_0^1 (\mathrm{id}_{\T^n}+\tau\partial_qu)^{-\top}(\partial_qu)^\top(\mathrm{id}_{\T^n}+\tau\partial_qu)^{-\top}\,d\tau,\\
        &e^{G^\top} = \mathrm{Id} + {G^\top}\int_0^1e^{\tau {G^\top}}\,d\tau,\\
        &\partial_z\partial_pH\circ\varphi(q) - \partial_z\partial_pH^\infty\circ\varphi_0(q) = \left(\partial_z\partial_pH\right)_0 - \left(\partial_z\partial_pH^\infty\right)_0 + \int_0^1 \partial_q\partial_z\partial_pH(\mathrm{id}_{\T^n} + \tau u, \tau v, \tau w) d \tau \, u\\
        &\qquad \qquad + \int_0^1 \partial_p\partial_z\partial_pH(\mathrm{id}_{\T^n} + \tau u, \tau v, \tau w) d \tau \, v +  \int_0^1 \partial_z\partial_z\partial_pH(\mathrm{id}_{\T^n} + \tau u, \tau v, \tau w) d \tau \, w
    \end{align*}
    where, in the last equality, we used the notation (subscript $0$) introduced in ~\eqref{def:f0}. 
    It follows that
    \begin{align*}
        &e^{G^\top}
\left(\partial_z\partial_pH\circ\varphi\right)
(\mathrm{id}_{\T^n}+\partial_qu)^{-\top} - \partial_z\partial_pH^\infty\circ\varphi_0  \\
&\qquad \qquad =\left(\partial_z\partial_pH\right)_0 - \left(\partial_z\partial_pH^\infty\right)_0 + \int_0^1 \partial_q\partial_z\partial_pH(\mathrm{id}_{\T^n} + \tau u, \tau v, \tau w) d \tau \, u\\
        &\qquad \qquad + \int_0^1 \partial_p\partial_z\partial_pH(\mathrm{id}_{\T^n} + \tau u, \tau v, \tau w) d \tau \, v +  \int_0^1 \partial_z\partial_z\partial_pH(\mathrm{id}_{\T^n} + \tau u, \tau v, \tau w) d \tau \, w  \\
&\qquad \qquad +  {G^\top}\int_0^1e^{\tau {G^\top}}\,d\tau \,\left(\partial_z\partial_pH\circ\varphi\right) - \partial_z\partial_pH\circ\varphi \int_0^1 (\mathrm{id}_{\T^n}+\tau\partial_qu)^{-\top}(\partial_qu)^{\top}(\mathrm{id}_{\T^n}+\tau\partial_qu)^{-\top}\,d\tau\\
&\qquad\qquad - {G^\top}\int_0^1e^{\tau {G^\top}}\,d\tau \,\left(\partial_z\partial_pH\circ\varphi\right)\int_0^1 (\mathrm{id}_{\T^n}+\tau\partial_qu)^{-\top}(\partial_qu)^{\top}(\mathrm{id}_{\T^n}+\tau\partial_qu)^{-\top}\,d\tau.
    \end{align*}
Using~\eqref{cor_proof:decay_uvwG}, hypothesis~\eqref{hyp:cor_ell_decay} and the properties contained in Section \ref{sc:norm_properties_ell}, we have that 
\begin{equation}\label{proof_cor:est_1}
    \begin{gathered}
       \scalebox{0.97}{$e^{G^\top}
\left(\partial_z\partial_pH\circ\varphi\right)
(\mathrm{id}_{\T^n}+\partial_qu)^{-\top} - \partial_z\partial_pH^\infty\circ\varphi_0 
 - \left(\left(\partial_z\partial_pH\right)_0  - \left(\partial_z\partial_pH^\infty\right)_0\right) \in \mathcal{S}_{(\sigma, 0), \ell-1}^{\mathrm{pol},T''},$}\\
 \int_{T''}^{+\infty} |\left(\partial_z\partial_pH\right)_0  - \left(\partial_z\partial_pH^\infty\right)_0|_{C^0} ds < \infty.
    \end{gathered}
\end{equation}
    Similarly, one can prove that 
    \begin{equation} \label{proof_cor:est_2}
        e^{G^\top} J_m\partial_qw
(\mathrm{id}_{\T^n}+\partial_qu)^{-1}
\left(\partial_p^2H\circ\varphi\right)
(\mathrm{id}_{\T^n}+\partial_qu)^{-\top} \in \mathcal{S}_{(\sigma, 0), \ell+ \dec+1}^{\mathrm{pol},T''}. 
    \end{equation}
  Thanks to~\eqref{proof_cor:est_1},~\eqref{proof_cor:est_2} and remembering that $\ell >2$, we prove~\eqref{proof_cor:cond_to_verify}. This concludes the proof of Corollary \ref{cor:ell}. 

\subsection{Cohomological Equations}
\label{sc:cohom_eqs_ell}

In this section, we study in detail the cohomological equations appearing in the proof of Theorems \ref{Thm:elliptic_Csigma} and \ref{Thm:elliptic_analy}, namely, the three equations in \eqref{eq:coh_eqs_ell} restricted to the Hölder setting (see Sections \ref{sec:proof_thm_a_ell_1} and \ref{sec:proof_thm_a_ell_2}) and the first two equations in \eqref{eq:coh_eqs_ell} in the analytic setting (see Section \ref{sc:proof_analytic_ell}), respectively.

To avoid unnecessary repetition, we present the proofs in full detail only for the Hölder setting, since the arguments in the analytic setting are very similar. For the latter, we only indicate the main steps and the modifications required by the analytic framework.

First, we analyze the linear operator $\Lo$. Throughout this section, $\|\cdot\|$ denotes the operator norm. In the following proposition, we use the Banach spaces defined in~\eqref{def:S},~\eqref{def:U}~\eqref{def:S_pol_anal}, and~\eqref{def:U_analy}. 
\begin{proposition}\label{prop:Csigma_HEomega} 
       Given $\sigma \ge 0$, $T \ge 1$, and $\ell >0$,  the operator
    \begin{align*}
        \Lo : \mathcal{U}^{\mathrm{pol}, T}_{\sigma, \omega, \ell} \longrightarrow \mathcal{S}^{\mathrm{pol}, T}_{(\sigma, 0), \ell +1}(\R^n)
    \end{align*}
    is well-defined, invertible and satisfies
    \[ \| \Lo ^{-1}\| \leq C(\ell),\]
    for some $C > 0$ depending only on $\ell$.
    
   In addition, if $\sigma \ge 2$, then, for any $u \in \mathcal{U}^{\mathrm{pol}, T}_{\sigma, \omega, \ell}$
    \begin{equation}\label{eq:prop_dist_Lomega_U}
        \partial_q\left(\Lo u\right) = \Lo \partial_q  u .
    \end{equation}
    Moreover, given $\sigma > 0$, $T \ge 1$, and $\ell >0$, the same well-definedness, invertibility, and estimate hold if one replaces the Banach spaces $\mathcal{U}^{\mathrm{pol}, T}_{\sigma, \omega, \ell}$ and $\mathcal{S}^{\mathrm{pol}, T}_{(\sigma, 0), \ell +1}$ by $\mathscr{U}^{\mathrm{pol}, T}_{\sigma, \omega, \ell}$ and $\mathscr{S}^{\mathrm{pol}, T}_{\sigma, \ell +1}$, respectively. Moreover, in this case, the equality~\eqref{eq:prop_dist_Lomega_U} holds for any $\sigma>0$ and $u \in \mathscr{U}^{\mathrm{pol}, T}_{\sigma, \omega, \ell}$.
\end{proposition}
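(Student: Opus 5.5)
The inverse of $\Lo$ will be written down explicitly by integrating along characteristics from $+\infty$. Given $g\in\mathcal{S}^{\mathrm{pol},T}_{(\sigma,0),\ell+1}(\R^n)$, define
\[
u(q,t) = -\int_t^{+\infty} g(q+\omega(s-t), s)\, ds .
\]
Along an orbit $t\mapsto (q_0+\omega t,t)$ one has $u(q_0+\omega t,t) = -\int_t^{\infty} g(q_0+\omega s,s)\,ds$. Differentiating in $t$ gives $\Lo u = g$, so $u$ is a $C^1$ function along characteristics.

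For the decay and regularity, note that translation $q\mapsto q+\omega(s-t)$ is an isometry of $C^\sigma(\T^n)$. Hence
\[
|u^t|_{C^\sigma}\le \int_t^\infty |g^s|_{C^\sigma}\,ds \le |g|^{T}_{\sigma,\ell+1}\int_t^\infty s^{-\ell-1}\,ds = \frac{t^{-\ell}}{\ell}\,|g|^T_{\sigma,\ell+1}.
\]
This requires checking that the Hölder norm of a Bochner-type integral is bounded by the integral of the norms. That is elementary for $C^\sigma$ seminorms, since difference quotients commute with the integral, and the integral converges uniformly because of the decay. Continuity of $u$ and of its spatial derivatives up to the relevant order in $(q,t)$ follows by dominated convergence from the continuity of $g$. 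Therefore $|u|^T_{\sigma,\omega,\ell}\le \max\{1/\ell,1\}\,|g|^T_{\sigma,\ell+1}$, which gives $\|\Lo^{-1}\|\le C(\ell)$. Well-definedness of $\Lo$ on $\mathcal{U}$ is immediate from the definition of the space. Interpreting $\Lo u$ as the derivative along characteristics is the natural meaning, since $u$ is only $C^\sigma$ in $q$ and possibly not $C^1$ in $t$ separately.

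Injectivity: if $u\in\mathcal{U}^{\mathrm{pol},T}_{\sigma,\omega,\ell}$ and $\Lo u=0$, then $u$ is constant along each characteristic. Since $|u^t|_{C^0}\le Ct^{-\ell}\to 0$, we get $u\equiv 0$. Together with the explicit solution this gives bijectivity. I expect this step to be the main subtlety: the uniqueness uses $\ell>0$ essentially, because without decay the constants along characteristics would not be killed.

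For the commutation identity \eqref{eq:prop_dist_Lomega_U} when $\sigma\ge 2$, I would use the integral representation $u=-\int_t^\infty g(q+\omega(s-t),s)\,ds$ with $g=\Lo u$. Then $\partial_q u = -\int_t^\infty \partial_q g(q+\omega(s-t),s)\,ds$, by differentiating under the integral, which is justified by the uniform decay of $|\partial_q g^s|_{C^0}$. Applying the same characteristic computation to this formula yields $\Lo\partial_q u=\partial_q g=\partial_q \Lo u$.

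The analytic case is identical. Translation by real $\omega(s-t)$ preserves the strip $\T^n_\sigma$ and the sup norm $|\cdot|_\sigma$. The integral of analytic functions converging uniformly is analytic, and differentiation under the integral holds on the full strip by Weierstrass. Hence the identity holds for every $\sigma>0$.
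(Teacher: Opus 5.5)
Your proposal is correct and follows the paper's proof: the same explicit inverse $u(q,t)=-\int_t^{+\infty} g(q+\omega(\tau-t),\tau)\,d\tau$, the same bound $|u|^T_{\sigma,\ell}\le \ell^{-1}|g|^T_{\sigma,\ell+1}$, and differentiation under the integral sign for \eqref{eq:prop_dist_Lomega_U}. The only difference is presentational: the paper verifies $\partial_t\partial_q u=\partial_q\partial_t u$, whereas you compute $\Lo\partial_q u$ along characteristics. Your injectivity argument via decay along characteristics spells out what the paper leaves as ``one can verify''.
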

\begin{proof} 
    The proof of the first part of this proposition is similar to that contained in~\cite{Sca22}. For this reason, it is sketched. Given $g \in \mathcal{S}^{\mathrm{pol}, T}_{(\sigma, 0), \ell +1}(\R^n)$ it suffices to find the unique solution $u \in \mathcal{U}^{\mathrm{pol}, T}_{\sigma, \omega, \ell}$ of
    \begin{equation}\label{eq:proof_inv_Lo}
        \Lo u(q,t) = g(q,t) 
    \end{equation}
    for all $(q,t) \in \T^n \times I_T$. 
    One can verify that the above equation admits a unique solution $u \in \mathcal{U}^{\mathrm{pol}, T}_{\sigma, \omega, \ell}$ of the form
    \begin{equation}\label{eq:proof_inv_sol_Lo}
        u(q,t) = - \int_t^{+\infty} g (q + \omega(\tau-t), \tau)d\tau
    \end{equation}
    for all $(q,t) \in \T^n \times I_T$. Moreover, by the latter, a straightforward computation shows that 
    \begin{equation*}
        |u|^T_{\sigma, \ell} \le {1 \over \ell} |g|_{\sigma, \ell + 1}^T.
    \end{equation*}
    Recalling the definition of the norm $|\cdot|_{\sigma, \omega, \ell}^T$ in~\eqref{def:norm_U}, using the above inequality and~\eqref{eq:proof_inv_Lo} we prove that $u \in \mathcal{U}^{\mathrm{pol}, T}_{\sigma, \omega, \ell}$, which concludes the proof of the first part of this proposition. We need to verify~\eqref{eq:prop_dist_Lomega_U}.  To this end, we claim that it suffices to prove that, if $\sigma \ge 2$, then for any $u \in \mathcal{U}^{\mathrm{pol}, T}_{\sigma, \omega, \ell}$ one has that 
    \begin{equation}\label{eq:partialtqu=partialqtu}
        \partial_t \partial_q u = \partial_q \partial_t u.
    \end{equation}
    Indeed, if we assume~\eqref{eq:partialtqu=partialqtu} then
    \begin{equation}
        \partial_q\left(\Lo u\right) = \partial_q(\partial_q u \omega + \partial_tu) = \partial^2_q u \omega + \partial_q\partial_tu = \partial^2_q u \omega + \partial_t\partial_qu = \Lo \partial_q u,
    \end{equation}
    which proves~\eqref{eq:prop_dist_Lomega_U}. We verify~\eqref{eq:partialtqu=partialqtu}. By the definition~\eqref{def:U} of the Banach space $\mathcal{U}^{\mathrm{pol}, T}_{\sigma, \omega, \ell}$, if $u \in \mathcal{U}^{\mathrm{pol}, T}_{\sigma, \omega, \ell}$, then there exists $g \in  \mathcal{S}^{\mathrm{pol}, T}_{(\sigma, 0), \ell +1}$ such that $\Lo u = g$. Thus, we can write $u$ as in~\eqref{eq:proof_inv_sol_Lo}. A straightforward computation shows that $u$ is differentiable with respect to the variable $t$ and
    \begin{align*}
        \partial_t u (q,t) &= g(q,t) + \int_t^{+\infty} \partial_q g(q + \omega(\tau-t), \tau)\omega d\tau,\\
        \partial_q u (q,t) &= - \int_t^{+\infty} \partial_q g(q + \omega(\tau-t), \tau) d\tau,\\
        \partial_t\partial_q u (q,t) &= \partial_q\partial_t u (q,t) = \partial_qg(q,t) + \int_t^{+\infty} \partial^2_q g(q + \omega(\tau-t), \tau)\omega d\tau,
    \end{align*}
for all $(q,t) \in\T^n \times I_T$. This concludes the proof of this proposition in the Hölder setting. The proof in the analytic setting is identical. We therefore omit the details.
\end{proof}

Before addressing the analysis of the linear operator $\mathcal{L}^{\mathrm{ell}}_{\omega, \Omega}$, we need to prove the following two auxiliary lemmas 

\begin{lemma}\label{lemma:HE_interm_LoO}
    We fix $\sigma \ge 0$, $T \ge 1$, $\ell >0$, and a positive $\Lambda \in \R$. We consider the function $g:\T^n \times I_T \to \C$ and we assume that $\mathrm{Re} g$, $\mathrm{Im} g \in \mathcal{S}^{\mathrm{pol},T}_{(\sigma, 0), \ell +1}$, where $\mathrm{Re} g$ and $\mathrm{Im} g$ stand for the real and imaginary part of $g$, respectively. 

    We consider the following equation 
    \begin{equation}\label{eq:HE_Interm_1dim_LoO}
        \pm i\Lambda z - \Lo z = g
    \end{equation}
    in the unknown $z:\T^n \times I_T \to \C$. The latter admit a unique solution $z$ satisfying %
    \begin{equation}\label{est:HE_Interm_1dim_LoO}
        \mathrm{Re} z, \mathrm{Im} z \in \mathcal{S}^{\mathrm{pol},T}_{(\sigma, 0), \ell}, \hspace{2mm}\mbox{and}\hspace{3mm} |\mathrm{Re} z|_{\sigma, \ell}^T, |\mathrm{Im} z|_{\sigma, \ell}^T \le {1 \over \ell} \left(|\mathrm{Re} g|_{\sigma, \ell+1}^T + |\mathrm{Im} g|_{\sigma, \ell+1}^T\right),
    \end{equation}
    where $\mathrm{Re} z$ and $\mathrm{Im} z$ stand for the real and imaginary part of $z$. In addition, if $\sigma \ge 2$, then 
    \begin{equation}\label{eq:prop_dist_Lomega_z_W}
        \partial_q \left(\Lo \mathrm{Re}z\right) = \Lo \left( \partial_q \mathrm{Re}z\right), \quad \partial_q \left(\Lo \mathrm{Im}z\right) = \Lo \left( \partial_q \mathrm{Im}z\right).
    \end{equation}
\end{lemma}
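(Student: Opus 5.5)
The proof will mirror that of Proposition \ref{prop:Csigma_HEomega}: solve the equation by integrating along the characteristic lines of $\Lo$ backward from $+\infty$. Along a line $t\mapsto (q+\omega t, t)$ the equation $\pm i\Lambda z - \Lo z = g$ is the scalar linear ODE $\frac{d}{dt}\zeta = \pm i\Lambda \zeta - G(t)$. Imposing decay at $+\infty$ picks out a unique solution, which leads to the candidate
\begin{equation*}
    z(q,t) = \int_t^{+\infty} e^{\mp i\Lambda(\tau - t)}\, g\big(q+\omega(\tau-t), \tau\big)\, d\tau .
\end{equation*}
The first step is to check that this integral converges and defines a solution. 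Since $|e^{\mp i\Lambda(\tau-t)}|=1$, the integrand is bounded in $C^0$ by $|g^\tau|_{C^0}\le C\tau^{-\ell-1}$, so the integral converges absolutely. Differentiating in $t$ and using the identity $\frac{d}{ds}\big[z(q+\omega s, t+s)\big]$ (equivalently, checking that $\Lo$ applied to the integral gives boundary term $-g$ plus $\pm i\Lambda z$) shows that $z$ solves the equation. The computation is the same as in Proposition \ref{prop:Csigma_HEomega}, with the extra factor $e^{\mp i\Lambda(\tau-t)}$.

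The second step is the estimate. For fixed $t$ and $\tau$, the map $q\mapsto g(q+\omega(\tau-t),\tau)$ is a translate of $g^\tau$, so its $C^\sigma$ norm equals $|g^\tau|_{C^\sigma}$. Multiplying by the constant unimodular scalar and splitting into real and imaginary parts, $\mathrm{Re}(e^{\mp i\Lambda s} g) = \cos(\Lambda s)\mathrm{Re}\, g \pm \sin(\Lambda s)\mathrm{Im}\, g$, bounds the real and imaginary parts by $|\mathrm{Re}\, g^\tau|_{C^\sigma}+|\mathrm{Im}\, g^\tau|_{C^\sigma}$. Integrating, and using that the Hölder norm of an integral is bounded by the integral of the norms, gives
\[
|\mathrm{Re}\, z^t|_{C^\sigma} \le \int_t^\infty \tau^{-\ell-1} d\tau \,\big(|\mathrm{Re}\, g|^T_{\sigma,\ell+1}+|\mathrm{Im}\, g|^T_{\sigma,\ell+1}\big) = \tfrac{1}{\ell} t^{-\ell}(\cdots),
\]
which is \eqref{est:HE_Interm_1dim_LoO}. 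Continuity of $z$ in $(q,t)$ follows from dominated convergence.

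The third step is uniqueness. If $z_1$ and $z_2$ are two solutions with the stated decay, their difference $\delta$ satisfies $\Lo \delta = \pm i\Lambda \delta$. Along characteristics this gives $|\delta(q+\omega s, t+s)| = |\delta(q,t)|$. Since $\delta\to 0$ as $t\to\infty$, it follows that $\delta\equiv 0$.

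The last step is the commutation property \eqref{eq:prop_dist_Lomega_z_W}. When $\sigma\ge 2$, $g$ is $C^2$ in $q$, so we may differentiate under the integral to get $\partial_q z = \int_t^\infty e^{\mp i\Lambda(\tau-t)}\partial_q g(\cdots)\,d\tau$. Computing $\partial_t\partial_q z$ and $\partial_q\partial_t z$ explicitly, as in the proof of Proposition \ref{prop:Csigma_HEomega}, shows they coincide. Here the $\partial_q^2 g$ term needs $\sigma\ge2$, and the justification uses uniform convergence from the decay $\tau^{-\ell-1}$. Taking real and imaginary parts then gives the claim. I expect this justification of differentiation under the integral (uniform integrability of the $t$-derivatives) to be the only delicate point, and it is handled as in Proposition \ref{prop:Csigma_HEomega}.
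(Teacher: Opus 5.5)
Your proposal is correct and is essentially the paper's proof. The paper straightens the characteristics with the change of variables $\phi(q,t)=(q-\omega t,t)$, reducing to $\pm i\Lambda k-\partial_t k=g\circ\phi^{-1}$, which is your integration along $s\mapsto(q+\omega s,t+s)$; it arrives at the same formula $z(q,t)=\int_t^{+\infty}e^{\mp i\Lambda(\tau-t)}g(q+\omega(\tau-t),\tau)\,d\tau$, singled out by requiring decay, and then obtains the estimate by the same $\cos$/$\sin$ splitting of $\mathrm{Re}\, g$ and $\mathrm{Im}\, g$ and the commutation identity by explicitly computing $\partial_t\partial_q$ and $\partial_q\partial_t$ of $\mathrm{Re}\, z$ and $\mathrm{Im}\, z$.
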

\begin{proof}
    We consider the following transformation
    \begin{equation*}
        \phi:\T^n \times I_T \to \T^n \times I_T, \quad \phi(q,t) = (q-\omega t, t).
    \end{equation*}
    We claim that it is enough to find a solution to the following simpler equation
    \begin{equation}\label{eq:HE_Interm_1dim_LoO_2}
        \pm i \Lambda k(q,t) - \partial_t k(q,t) = g\circ \phi^{-1}(q,t). 
    \end{equation}
    Let $k$ be a solution of~\eqref{eq:HE_Interm_1dim_LoO_2}, we want to prove that $z = k \circ \phi$ is a solution of~\eqref{eq:HE_Interm_1dim_LoO}. Indeed, replacing $k \circ \phi$ on the left hand side of~\eqref{eq:HE_Interm_1dim_LoO}, we obtain that 
    \begin{equation*}
        \pm i \Lambda k\circ \phi - \partial_q \left( k\circ \phi\right)\omega - \partial_t\left( k\circ \phi\right) = \pm i \Lambda k\circ \phi - \partial_t k \circ \phi = g
    \end{equation*}
    where we used~\eqref{eq:HE_Interm_1dim_LoO_2} the last equality of the latter.
    Vice versa, let $z$ be a solution of~\eqref{eq:HE_Interm_1dim_LoO}, we verify that $k = z \circ \phi^{-1}$ is a solution of~\eqref{eq:HE_Interm_1dim_LoO_2}. To this end, we replace $z \circ \phi^{-1}$ on the left hand side of~\eqref{eq:HE_Interm_1dim_LoO_2}, and we can see that
    \begin{equation*}
        \pm i \Lambda z \circ \phi^{-1} - \partial_t\left(z \circ \phi^{-1}\right)  = \pm i \Lambda z \circ \phi^{-1} - \partial_q z \circ \phi^{-1}\omega - \partial_t z \circ \phi^{-1} = g \circ \phi^{-1}
    \end{equation*}
    where the last equality of the latter is due to~\eqref{eq:HE_Interm_1dim_LoO}.
    This concludes the proof of the claim. 

    A solution of~\eqref{eq:HE_Interm_1dim_LoO_2} can be provided by integration. Hence, for all $(q,t) \in \T^n \times I_T$ and $t_0 \in I_T$, a solution of~\eqref{eq:HE_Interm_1dim_LoO_2} exists and it is given by
    \begin{equation*}
        k(q,t) = e^{\mp i\Lambda (t_0 -t)}\left(k_0(q) - \int_{t_0}^t e^{\mp i \Lambda(\tau - t_0)} g(q + \omega \tau, \tau)\, d\tau \right) 
    \end{equation*}
    where $k_0:\T^n \to \C$ is free. 

    In this work, we are interested in functions that decay in time. In particular, we are looking for solutions $k$ of~\eqref{eq:HE_Interm_1dim_LoO_2} satisfying $\lim_{t \to +\infty} |k(q,t)| = 0$, where $|\cdot|$ denotes the modulus. For this reason, the only possible choice of $k_0$ is 
    \begin{equation*}
        k_0(q,t) = \int_{t_0}^{+\infty} e^{\mp i \Lambda(\tau - t_0)} g(q + \omega \tau, \tau)\, d\tau 
    \end{equation*}
    and hence a unique solution of~\eqref{eq:HE_Interm_1dim_LoO_2} satisfying the above-mentioned asymptotic condition exists, and it has the form
    \begin{equation*}
        k(q,t) =  \int_t^{+\infty} e^{\mp i \Lambda(\tau - t)} g(q + \omega \tau, \tau)\, d\tau 
    \end{equation*}
    for all $(q,t)\in \T^n \times I_T$. 
    Moreover, 
    \begin{equation*}
        z(q,t) = k \circ \phi(q,t) = \int_t^{+\infty} e^{\mp i \Lambda(\tau - t)} g(q + \omega (\tau -t), \tau)\, d\tau 
    \end{equation*}
    is the unique solution of~\eqref{eq:HE_Interm_1dim_LoO} we are looking for. It remains to verify that $z$ satisfies~\eqref{est:HE_Interm_1dim_LoO}.

    A straightforward computation shows that
    \begin{align*}
        &\mathrm{Re} z (q,t)=  \int_t^{+\infty} \cos \left(\mp \Lambda(\tau -t)\right) \mathrm{Re} g(q + \omega(\tau -t), \tau) - \sin \left(\mp \Lambda(\tau -t)\right) \mathrm{Im} g(q + \omega(\tau -t), \tau)\, d\tau \\
         &\mathrm{Im} z (q,t)=  \int_t^{+\infty} \cos \left(\mp \Lambda(\tau -t)\right) \mathrm{Im} g(q + \omega(\tau -t), \tau) + \sin \left(\mp \Lambda(\tau -t)\right) \mathrm{Re} g(q + \omega(\tau -t), \tau)\, d\tau 
    \end{align*}
    for all $(q,t) \in \T^n \times I_T$. Using the trivial estimates $|\cos \left(\mp \Lambda(\tau -t)\right)|, |\sin \left(\mp \Lambda(\tau -t)\right)| \le 1$, and remembering the definition of the norms $|\cdot|^T_{\sigma, \ell}$ in~\eqref{def:norm_S}, we prove~\eqref{est:HE_Interm_1dim_LoO} and this concludes the proof of this part of this lemma. In order to prove the second part, we assume $\sigma \ge 2$. By the same argument used in the second part of the proof of Proposition \ref{prop:Csigma_HEomega}, one can verify that if
    \begin{equation}\label{eq:partialtqz=partialqtz}
        \partial_t \partial_q \mathrm{Re} z(q,t) = \partial_q\partial_t \mathrm{Re} z(q,t), \quad \partial_t \partial_q \mathrm{Im} z(q,t) = \partial_q\partial_t \mathrm{Im} z(q,t)
    \end{equation}
    for all $(q,t)\in\T^n \times I_t$,
    then~\eqref{eq:prop_dist_Lomega_z_W} holds. We only verify~\eqref{eq:partialtqz=partialqtz} for $\mathrm{Re}z$ being the proof for $\mathrm{Im}z$ similar. A straightforward computation shows that $\mathrm{Re}z$ is differentiable with respect to $t$ and
    \begin{align*}
        \partial_t \mathrm{Re}z(q,t) &= -  \mathrm{Re}g(q+\omega(\tau-t), \tau) + \int_t^{+\infty}\bigg[\left(\mp \Lambda \sin \left(\mp \Lambda (\tau-t)\right)\right) \mathrm{Re}g(q+\omega(\tau-t), \tau)\\
        &-\cos \left(\mp \Lambda \left(\tau - t\right)\right)\partial_q \mathrm{Re}g(q+\omega(\tau-t), \tau)\omega \mp \Lambda \cos \left(\mp \Lambda \left(\tau - t\right)\right) \mathrm{Im}g(q+\omega(\tau-t), \tau)\\
        &- \sin\left(\mp \Lambda \left(\tau - t\right)\right) \partial_q \mathrm{Im}g(q+\omega(\tau-t), \tau)\omega \bigg]d \tau,\\
        \partial_q \mathrm{Re}z(q,t) &= \int_t^{+\infty}\bigg[\cos \left(\mp \Lambda \left(\tau - t\right)\right)\partial_q \mathrm{Re}g(q+\omega(\tau-t), \tau) \\
        &- \sin \left(\mp \Lambda (\tau-t)\right)\partial_q \mathrm{Im}g(q+\omega(\tau-t), \tau)\bigg]d \tau,\\
        \partial_t \partial_q \mathrm{Re}(q,t) &= \partial_q \partial_t \mathrm{Re}(q,t) = -\partial_q\mathrm{Re}g(q+\omega(\tau-t), \tau) \\
        &+ \int_t^{+\infty}\bigg[\left(\mp \Lambda \sin \left(\mp \Lambda (\tau-t)\right)\right) \partial_q\mathrm{Re}g(q+\omega(\tau-t), \tau)\\
        &-\cos \left(\mp \Lambda \left(\tau - t\right)\right)\partial^2_q \mathrm{Re}g(q+\omega(\tau-t), \tau)\omega \mp \Lambda \cos \left(\mp \Lambda \left(\tau - t\right)\right) \partial_q\mathrm{Im}g(q+\omega(\tau-t), \tau)\\
        &- \sin\left(\mp \Lambda \left(\tau - t\right)\right) \partial^2_q \mathrm{Im}g(q+\omega(\tau-t), \tau)\omega \bigg]d \tau
    \end{align*}
    for all $(q,t) \in \T^n \times I_T$, which proves~\eqref{eq:partialtqz=partialqtz} for $\mathrm{Re}z$. This concludes the proof of this lemma.
\end{proof}

The following lemma is the real-analytic counterpart of Lemma \ref{lemma:HE_interm_LoO}. 
\begin{lemma}\label{lemma:HE_interm_LoO_analy}
    We fix $\sigma > 0$, $T \ge 1$, $\ell >0$, and a positive $\Lambda \in \R$. Given $g:\T^n_\sigma \times I_T \to \C$ such that $g \in \mathscr{S}^{\mathrm{pol},T}_{\sigma, \ell +1}$, we consider the following equation 
    \begin{equation}\label{eq:HE_Interm_1dim_LoO_analy}
        \pm i\Lambda z - \Lo z = g
    \end{equation}
    in the unknown $z:\T^n_\sigma \times I_T \to \C$. The latter admits a unique solution $z \in \mathscr{S}^{\mathrm{pol},T}_{\sigma, \ell}$. Moreover, there exists a positive constant $C(\ell)$ depending on $\ell$ such that
    \begin{equation}\label{est:HE_Interm_1dim_LoO_analy}
         |z|_{\sigma, \ell}^T \le C(\ell) | g|_{\sigma, \ell+1}^T.
    \end{equation}
    In addition, 
    \begin{equation}\label{eq:prop_dist_Lomega_z_W_analy}
        \partial_q \left(\Lo z\right) = \Lo \left( \partial_q z\right).
    \end{equation}
\end{lemma}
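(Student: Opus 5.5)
I would follow the proof of Lemma \ref{lemma:HE_interm_LoO} almost verbatim, now working directly with the complex-valued function $g$ on the complex strip $\T^n_\sigma$. There is no need to split into real and imaginary parts, because the norm $|\cdot|_\sigma$ is a supremum of the modulus on $\T^n_\sigma$.

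\emph{Candidate solution.} The natural candidate is
\[
z(q,t) = \int_t^{+\infty} e^{\mp i\Lambda(\tau-t)}\, g(q+\omega(\tau-t),\tau)\,d\tau, \qquad (q,t)\in \T^n_\sigma\times I_T.
\]
Since $\omega\in\R^n$, the translation $q\mapsto q+\omega(\tau-t)$ preserves $\T^n_\sigma$, because it does not change $\mathrm{Im}\, q$. Using $|e^{\mp i\Lambda(\tau-t)}|=1$ and $|g^\tau|_\sigma\le |g|^T_{\sigma,\ell+1}\tau^{-\ell-1}$, the integral converges absolutely and uniformly. This gives
\[
|z^t|_\sigma \le |g|^T_{\sigma,\ell+1}\,\frac{t^{-\ell}}{\ell},
\]
which is \eqref{est:HE_Interm_1dim_LoO_analy} with $C(\ell)=1/\ell$.

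\emph{Regularity of $z$.}
\begin{itemize}
\item Analyticity of $z^t$ on $\T^n_\sigma$: a locally uniform limit of holomorphic functions is holomorphic (Morera/Weierstrass).
\item Real-analyticity: $z$ is real on real arguments exactly as $g$ is. Note that $z$ is complex even for real $g$, as in the Hölder case, so ``real-analytic'' should be understood in the sense used for $\mathscr{S}$.
\item Joint continuity on $\T^n_\sigma\times I_T$: this follows from dominated convergence, using the continuity of $g$.
\end{itemize}
Together with the decay bound, these give $z\in\mathscr{S}^{\mathrm{pol},T}_{\sigma,\ell}$.

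\emph{Solving the equation.} As in the Hölder proof, I conjugate by $\phi(q,t)=(q-\omega t,t)$. The function $k=z\circ\phi^{-1}$ equals $\int_t^\infty e^{\mp i\Lambda(\tau-t)}g(q+\omega\tau,\tau)\,d\tau$. This is differentiable in $t$ with
\[
\partial_t k = \pm i\Lambda k - g\circ\phi^{-1},
\]
by the fundamental theorem of calculus, which only needs continuity of $g$ in $\tau$. Hence $z$ solves \eqref{eq:HE_Interm_1dim_LoO_analy}, with $\Lo z$ understood as $\partial_t$ along characteristics.

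\emph{Uniqueness.} The difference of two solutions in $\mathscr{S}^{\mathrm{pol},T}_{\sigma,\ell}$ satisfies a homogeneous equation. Along each characteristic it is therefore $e^{\pm i\Lambda t}$ times a constant, so it has constant modulus in $t$. Since it must decay like $t^{-\ell}$, it vanishes.

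\emph{The commutation \eqref{eq:prop_dist_Lomega_z_W_analy}.} This is the only step needing an extra ingredient. By Cauchy's estimates, $\partial_q g^\tau$ is bounded on $\T^n_{\sigma'}$ by $C(\sigma-\sigma')^{-1}|g^\tau|_\sigma$ for every $\sigma'<\sigma$. This justifies differentiating under the integral in $q$ on each smaller strip. Holomorphy then gives $\partial_q z$ on the whole open strip, which is all that is needed pointwise.

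Then $\partial_q z = \int_t^\infty e^{\mp i\Lambda(\tau-t)}\partial_q g(q+\omega(\tau-t),\tau)\,d\tau$ is the solution of the same equation with right-hand side $\partial_q g$. Hence
\[
\Lo\partial_q z = \pm i\Lambda\,\partial_q z - \partial_q g = \partial_q\big(\pm i\Lambda z - g\big) = \partial_q\Lo z.
\]
In contrast with the Hölder case, this holds for every $\sigma>0$, with no condition $\sigma\ge2$.

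I expect no genuine obstacle. The only care needed is uniform convergence and the Cauchy estimates that license differentiating under the integral sign.
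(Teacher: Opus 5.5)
Your proposal is correct and is essentially the paper's proof. The paper also uses the Duhamel formula $z(q,t)=\int_t^{+\infty} e^{\mp i\Lambda(\tau-t)} g(q+\omega(\tau-t),\tau)\,d\tau$, obtained through the conjugation by $\phi$ from Lemma \ref{lemma:HE_interm_LoO}, and your bound $|z|^T_{\sigma,\ell}\le \ell^{-1}|g|^T_{\sigma,\ell+1}$, uniqueness along characteristics, and Cauchy-estimate justification for differentiating under the integral simply spell out its ``straightforward computation''.

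One slip: you say $z$ is real on real arguments whenever $g$ is, but the factor $e^{\mp i\Lambda(\tau-t)}$ rules this out, as your next sentence itself concedes. Membership in $\mathscr{S}^{\mathrm{pol},T}_{\sigma,\ell}$ must therefore be read as holomorphy on the strip plus joint continuity, which is how the paper uses these spaces anyway.
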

\begin{proof}
    As in the proof of Lemma \ref{lemma:HE_interm_LoO}, 
    \begin{equation*}
        z(q,t) = k \circ \phi(q,t) = \int_t^{+\infty} e^{\mp i \Lambda(\tau - t)} g(q + \omega (\tau -t), \tau)\, d\tau
    \end{equation*}
    is the unique solution of~\eqref{eq:HE_Interm_1dim_LoO_analy} satisfying $z \in \mathscr{S}^{\mathrm{pol},T}_{\sigma, \ell}$. A straightforward computation gives~\eqref{est:HE_Interm_1dim_LoO_analy} and~\eqref{eq:prop_dist_Lomega_z_W_analy}.
\end{proof}

The following proposition is devoted to the analysis of the linear operator $\mathcal{L}^{\mathrm{ell}}_{\omega, \Omega}$. We recall that the spaces $\mathcal{W}^{\mathrm{ell},T}_{\sigma, \omega, \Omega, \ell}$ and $\mathscr{W}^{\mathrm{ell},T}_{\sigma, \omega, \Omega, \ell}$ are defined in~\eqref{def:W} and~\eqref{def:W_analy}, respectively. 

\begin{proposition}\label{prop:Csigma_HEomegaOmega}  
    Given $\sigma \ge 0$, $T \ge 1$, and $\ell >0$, the operator
    \begin{align*}
         \mathcal{L}^{\mathrm{ell}}_{\omega, \Omega} : \mathcal{W}^{\mathrm{ell},T}_{\sigma, \omega, \Omega, \ell} \longrightarrow \mathcal{S}^{\mathrm{pol},T}_{(\sigma, 0), \ell +1}(\R^{2m})
    \end{align*}
    as in \eqref{eq:coh_eqs_ell} is well-defined, invertible and satisfies
    \begin{equation}\label{est:HE_ell_Holder_analy}   
    \| (\mathcal{L}^{\mathrm{ell}}_{\omega, \Omega})^{-1} \| \leq C(\ell),
\end{equation}
    for some $C > 0$ depending only on $\ell$.%
    
In addition, if $\sigma \ge 2$, then, for any $\chi \in \mathcal{W}^{\mathrm{ell},T}_{\sigma, \omega, \Omega, \ell}$
    \begin{equation}\label{eq:prop_dist_Lomega_W}
        \partial_q\left(\Lo \chi\right) = \Lo \partial_q  \chi .
    \end{equation}
    Moreover, given $\sigma > 0$, $T \ge 1$, and $\ell >0$, the same well-definedness, invertibility, and estimate hold if one replaces the Banach spaces $\mathcal{W}^{\mathrm{ell}, T}_{\sigma, \omega, \Omega,\ell}$ and $\mathcal{S}^{\mathrm{pol}, T}_{(\sigma, 0), \ell +1}$ by $\mathscr{W}^{\mathrm{ell}, T}_{\sigma, \omega, \Omega, \ell}$ and $\mathscr{S}^{\mathrm{pol}, T}_{\sigma, \ell +1}$, respectively. Moreover, in this case, the equality~\eqref{eq:prop_dist_Lomega_W} holds for any $\sigma>0$ and $\chi \in \mathscr{W}^{\mathrm{ell}, T}_{\sigma, \omega, \Omega, \ell}$.
\end{proposition}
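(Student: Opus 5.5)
The plan is to diagonalize $J_m\Mell$ by passing to the complex variables already used in Section~\ref{sc:analysis_transversal}, and then to apply Lemma~\ref{lemma:HE_interm_LoO} (resp.\ Lemma~\ref{lemma:HE_interm_LoO_analy}) componentwise. Write $\chi=(\chi_x,\chi_y)\in\R^m\times\R^m$ and $\boldsymbol g=(g_x,g_y)$. Since $J_m\Mell\chi=(\Omega\chi_y,-\Omega\chi_x)$, the equation $\mathcal{L}^{\mathrm{ell}}_{\omega,\Omega}\chi=\boldsymbol g$ reads
\[
\Omega\chi_y-\Lo\chi_x=g_x,\qquad -\Omega\chi_x-\Lo\chi_y=g_y .
\]
Setting $\zeta=\chi_x+i\chi_y$ and $G=g_x+ig_y$, this is equivalent to the decoupled scalar equations $-i\Omega_j\zeta_j-\Lo\zeta_j=G_j$, for $1\le j\le m$, with $\chi_x=\mathrm{Re}\,\zeta$ and $\chi_y=\mathrm{Im}\,\zeta$. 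Because the change of variables $(\chi_x,\chi_y)\leftrightarrow\zeta$ is a real-linear isomorphism with norms equivalent up to a factor $2$, the problem reduces to $m$ copies of \eqref{eq:HE_Interm_1dim_LoO} with $\Lambda=\Omega_j$ and the minus sign.

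For existence and the estimate, take $g\in\mathcal{S}^{\mathrm{pol},T}_{(\sigma,0),\ell+1}(\R^{2m})$. Lemma~\ref{lemma:HE_interm_LoO} gives, for each $j$, a solution $\zeta_j$ whose real and imaginary parts lie in $\mathcal{S}^{\mathrm{pol},T}_{(\sigma,0),\ell}$, with norm at most $\tfrac{2}{\ell}|g|^T_{\sigma,\ell+1}$. Then $\chi\in\mathcal{S}^{\mathrm{pol},T}_{(\sigma,0),\ell}$ and $\mathcal{L}^{\mathrm{ell}}_{\omega,\Omega}\chi=g\in\mathcal{S}^{\mathrm{pol},T}_{(\sigma,0),\ell+1}$, so $\chi\in\mathcal{W}^{\mathrm{ell},T}_{\sigma,\omega,\Omega,\ell}$ and
\[
|\chi|^T_{\sigma,\omega,\Omega,\ell}\le\max\{C/\ell,1\}\,|g|^T_{\sigma,\ell+1}.
\]
This yields \eqref{est:HE_ell_Holder_analy}. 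Well-definedness of the operator is immediate from the definition of $\mathcal{W}^{\mathrm{ell},T}_{\sigma,\omega,\Omega,\ell}$.

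Injectivity is the step needing care, since Lemma~\ref{lemma:HE_interm_LoO} asserts uniqueness only among solutions that decay. If $\chi\in\mathcal{W}^{\mathrm{ell},T}_{\sigma,\omega,\Omega,\ell}$ and $\mathcal{L}^{\mathrm{ell}}_{\omega,\Omega}\chi=0$, then along each characteristic $t\mapsto\zeta_j(q+\omega t,t)$ we have $\tfrac{d}{dt}\zeta_j(q+\omega t,t)=-i\Omega_j\zeta_j$. Hence $|\zeta_j(q+\omega t,t)|$ is constant in $t$. But $\chi$ decays like $t^{-\ell}$ with $\ell>0$, so this constant is $0$ and $\chi\equiv0$. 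Note that $\chi$ is differentiable along $\Lo$, because $\Lo\chi$ exists in the space.

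For the commutation identity \eqref{eq:prop_dist_Lomega_W} with $\sigma\ge2$, apply \eqref{eq:prop_dist_Lomega_z_W} to $\mathrm{Re}\,\zeta_j$ and $\mathrm{Im}\,\zeta_j$, using the explicit integral formula for the unique solution. This works since every $\chi$ in the space equals $(\mathcal{L}^{\mathrm{ell}}_{\omega,\Omega})^{-1}g$ for $g=\mathcal{L}^{\mathrm{ell}}_{\omega,\Omega}\chi$.

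The analytic case is handled identically, using Lemma~\ref{lemma:HE_interm_LoO_analy} on $\T^n_\sigma$: the complexified $\chi$ is just a pair of holomorphic functions, and $\zeta=\chi_x+i\chi_y$ is formed algebraically. The same characteristic argument gives uniqueness, and \eqref{eq:prop_dist_Lomega_z_W_analy} gives commutation for all $\sigma>0$. The main obstacle I foresee is only bookkeeping: the real and imaginary parts are mixed in the analytic setting, so I would work directly with $\chi_x,\chi_y$ and the $\Lambda=\Omega_j$ rotation formula.
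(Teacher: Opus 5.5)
Your H\"older-case argument is the paper's. Both complexify to decouple into scalar equations $\pm i\Omega_j z-\Lo z=g$ and invoke Lemma~\ref{lemma:HE_interm_LoO}; the paper uses $\chi^{(1)}-i\chi^{(2)}$, which gives the $+i\Omega_j$ equation, while you use its conjugate. Your characteristic argument for injectivity and your bound $\max\{2/\ell,1\}$ on the full $\mathcal{W}$-norm only make explicit what the paper leaves to the uniqueness clause of the lemma.

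The analytic case is not ``handled identically'', and as written it has a gap. There $\chi$ is $\C^{2m}$-valued, so $\chi_x,\chi_y$ are complex and the single combination $\zeta=\chi_x+i\chi_y$ does not determine $\chi$. For instance, $\chi_y=i\chi_x$ gives $\zeta\equiv 0$ with $\chi\neq 0$. So you can neither recover $\chi$ from the solution $\zeta$ given by Lemma~\ref{lemma:HE_interm_LoO_analy}, nor conclude $\chi\equiv0$ from $\zeta\equiv 0$ in your injectivity step.

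The fix, which is what the paper does with its $Z$ and $W$, is to also use $\eta=\chi_x-i\chi_y$. It satisfies $i\Omega_j\eta_j-\Lo\eta_j=g_{x,j}-ig_{y,j}$. Solve both scalar equations with Lemma~\ref{lemma:HE_interm_LoO_analy} and set $\chi_x=(\zeta+\eta)/2$, $\chi_y=(\zeta-\eta)/(2i)$. Uniqueness then follows because $|\zeta_j|$ and $|\eta_j|$ are both constant along $t\mapsto(q+\omega t,t)$ for the homogeneous equation.

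Your closing remark points to an equally valid route that avoids the issue altogether. Write
\[
\chi(q,t)=\int_t^{+\infty}e^{J_m\Mell(t-\tau)}\,g(q+\omega(\tau-t),\tau)\,d\tau ,
\]
where $e^{J_m\Mell s}$ is a real rotation acting on $\C^{2m}$. This gives existence and the $2/\ell$ bound directly. Since $J_m\Mell$ is real antisymmetric, the Hermitian norm of a homogeneous solution is conserved along characteristics, which gives uniqueness. This works uniformly in the real and complex settings.
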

\begin{proof}
    We recall that, for any $\chi \in \mathcal{W}^{\mathrm{ell},T}_{\sigma, \omega, \Omega, \ell}$, the operator $\mathcal{L}^{\mathrm{ell}}_{\omega, \Omega} \chi = J_mM^{\mathrm{ell}}_\Omega \chi - \Lo \chi$, where the matrices $M^{\mathrm{ell}}_\Omega$ and $J_m$ are defined in~\eqref{Ms} and~\eqref{def:J}, respectively. Then, for all $g = (g_1,\dots, g_{2m})^\top \in \mathcal{S}^{\mathrm{pol}, T}_{(\sigma, 0), \ell +1}$, we can rewrite this dynamical problem in terms of finding the unique solution $\chi = (\chi_1,\dots, \chi_{2m})^\top \in \mathcal{W}^{\mathrm{ell},T}_{\sigma, \omega, \Omega, \ell}$ of the following equation
   \begin{equation}\label{proof:HE_eq_LoO}
       J_mM^{\mathrm{ell}}_\Omega \chi - \Lo \chi = g.
   \end{equation}
   For this purpose, we observe that the matrix $JM^{\mathrm{ell}}_\Omega$ is diagonalizable in the complex with eigenvalues $\pm i\Omega_j$ for $j=1,\dots, m$. Therefore, there exists a matrix $P$ such that 
   \begin{equation}\label{proof:HE_diagMO}
       J_mM^{\mathrm{ell}}_\Omega  = P\begin{pmatrix} i\Omega & 0 \\ 0 &  -i\Omega\end{pmatrix}P^{-1} \quad \mbox{with} \hspace{2mm} P =\begin{pmatrix} \mathrm{Id}_m & \mathrm{Id}_m \\ i \mathrm{Id}_m &  -i \mathrm{Id}_m\end{pmatrix}
   \end{equation}
   where $\mathrm{Id}_m$ stands for the identity $m \times m$ matrix and we recall that $\Omega=\mathrm{diag}\left(\Omega_1, \dots, \Omega_m\right)$ with $\Omega_1, \dots, \Omega_m \in \R_{>0}$. 
   
   We introduce the following notation
   \begin{equation}\label{not:chi_HE_ell}
       \chi=(\chi^{(1)}, \chi^{(2)})^\top, \quad g=(g^{(1)}, g^{(2)})^\top
   \end{equation}
   where $\chi^{(1)} = (\chi_1,\dots, \chi_{m})^\top$, $\chi^{(2)} = (\chi_{m+1},\dots, \chi_{2m})^\top$, $g^{(1)} = (g_1,\dots, g_{m})^\top$ and $g^{(2)} = (g_{m+1},\dots, g_{2m})^\top$.
   
   Replacing~\eqref{proof:HE_diagMO} into~\eqref{proof:HE_eq_LoO} and multiplying on the left both sides of the equation by $P^{-1}$, a straightforward computation shows that we can rewrite~\eqref{proof:HE_diagMO} in terms of the following $2m$ equations
   \begin{equation}
        \begin{aligned}\label{syst:HE_ell_1}
           &i\Omega \left(\chi^{(1)} - i\chi^{(2)}\right) - \Lo\left(\chi^{(1)} - i\chi^{(2)}\right) = \left(g^{(1)} - ig^{(2)}\right),\\
           -&i\Omega \left(\chi^{(1)} + i\chi^{(2)}\right) - \Lo\left(\chi^{(1)} + i\chi^{(2)}\right) = \left(g^{(1)} + ig^{(2)}\right).
        \end{aligned}   
   \end{equation}
   Introducing the complex variables $Z = (Z_1,\dots, Z_m) =  \chi^{(1)} - i\chi^{(2)}$, and $\bar Z =  (\bar Z_1,\dots, \bar Z_m) = \chi^{(1)} + i\chi^{(2)}$, the above system can be rewritten as $2m$ complex uncoupled equations
   \begin{equation}\label{syst:Eq_Z_inv_W}
        \begin{aligned}
           &i\Omega_k Z_k - \Lo Z_k = \left(g^{(1)}_k - ig^{(2)}_{m+k}\right),\\
           -&i\Omega_k \bar Z_k - \Lo\bar Z_k = \left(g^{(1)}_k + ig^{(2)}_{m+k}\right).
        \end{aligned}   
   \end{equation}
   for $k=1,\dots m$.
   Thanks to Lemma \ref{lemma:HE_interm_LoO}, there exists a unique solution of the latter satisfying 
   \begin{equation}\label{proof:HE_LoO_sol_Z}
       \mathrm{Re} Z, \mathrm{Im} Z \in \mathcal{S}^{\mathrm{pol},T}_{(\sigma, 0), \ell} \hspace{2mm}\mbox{and}\hspace{3mm} |\mathrm{Re} Z|_{\sigma, \ell}^T, |\mathrm{Im} Z|_{\sigma, \ell}^T \le {2 \over \ell} |g|_{\sigma, \ell+1}^T.
    \end{equation}
    Furthermore, returning to the original variables,
    \begin{equation}\label{eq:sol_eq_inv_W}
        \chi^{(1)} = {\bar Z + Z \over 2} = \mathrm{Re}Z, \quad \chi^{(2)} = {\bar Z - Z \over 2i} = \mathrm{Im}Z
    \end{equation}
    and using~\eqref{proof:HE_LoO_sol_Z}, we prove that $\chi =(\chi^{(1)}, \chi^{(2)})^\top \in \mathcal{S}^{\mathrm{pol},T}_{(\sigma, 0), \ell} $ and $|\chi|^T_{\sigma, \ell} \le {2 \over \ell}|g|^T_{\sigma, \ell+1}$. This concludes the proof of the first part of this proposition.  We need to verify~\eqref{eq:prop_dist_Lomega_W}. To this end, we assume $\sigma \ge 2$ and we observe that, using the definition~\eqref{def:W}, if $\chi \in \mathcal{W}^{\mathrm{ell},T}_{\sigma, \omega, \Omega, \ell}$, then there exists $g \in \mathcal{S}^{\mathrm{pol},T}_{(\sigma, 0), \ell +1}$ such that $\mathcal{L}^{\mathrm{ell}}_{\omega, \Omega} \chi = g$. Then, we can write $\chi$ as in~\eqref{eq:sol_eq_inv_W} where $Z$ is the unique solution of the system~\eqref{syst:Eq_Z_inv_W}. Thanks to~\eqref{eq:prop_dist_Lomega_z_W} in Lemma \ref{lemma:HE_interm_LoO}, we have that 
    \begin{equation*}
        \partial_q \left(\Lo \mathrm{Re}Z\right) = \Lo \left( \partial_q \mathrm{Re}Z\right), \quad \partial_q \left(\Lo \mathrm{Im}Z\right) = \Lo \left( \partial_q \mathrm{Im}Z\right).
    \end{equation*}
    Using~\eqref{eq:sol_eq_inv_W} and the latter, we prove~\eqref{eq:prop_dist_Lomega_W}.

    In the analytic setting, we need to solve the following dynamical problem. For any $g \in \mathscr{S}^{\mathrm{pol}, T}_{\sigma, \ell +1}$, we need to find a unique solution $\chi \in \mathscr{W}^{\mathrm{ell},T}_{\sigma, \omega, \Omega, \ell}$ of equation~\eqref{proof:HE_eq_LoO}. Using the notation~\eqref{not:chi_HE_ell} and following the proof in the Hölder case, we observe that solving  equation~\eqref{proof:HE_eq_LoO} is equivalent to solving the system~\eqref{syst:HE_ell_1}. 

    We introduce the following variables $Z = (Z_1,\dots, Z_m) =  \chi^{(1)} - i\chi^{(2)}$, and $W =  (W_1,\dots, W_m) = \chi^{(1)} + i\chi^{(2)}$, and we rewrite system~\eqref{syst:HE_ell_1} as a system of $2m$ uncoupled equations
   \begin{equation}\label{syst:Eq_Z_inv_W_analy}
        \begin{aligned}
           &i\Omega_k Z_k - \Lo Z_k = \left(g^{(1)}_k - ig^{(2)}_{m+k}\right),\\
           -&i\Omega_k W_k - \Lo W_k = \left(g^{(1)}_k + ig^{(2)}_{m+k}\right).
        \end{aligned}   
   \end{equation}
   for $k=1,\dots m$. By Lemma \ref{lemma:HE_interm_LoO_analy}, there exists a unique solution of the latter satisfying
   \begin{equation*}
        Z, \, W \in \mathscr{S}^{\mathrm{pol},T}_{\sigma, \ell} \hspace{2mm}\mbox{and}\hspace{3mm} |Z|_{\sigma, \ell}^T, |W|_{\sigma, \ell}^T \le {2 \over \ell} |g|_{\sigma, \ell+1}^T.
   \end{equation*}
   Returning to the original variables $\chi^{(1)} = {W + Z \over 2}$ and $\chi^{(2)} = {W - Z \over 2i}$, we prove the invertibility of $\mathcal{L}^{\mathrm{ell}}_{\omega, \Omega}$ in this setting and estimate~\eqref{est:HE_ell_Holder_analy}. The commutation identity~\eqref{eq:prop_dist_Lomega_W} follows in the analytic setting by the same argument as in the Hölder case.
\end{proof}

Finally, in the following proposition, we study the invertibility of the linear operator $\mathfrak{L}^{\mathrm{ell}}_{\omega, \Omega}$.  We consider only the Hölder setting, as it is the only case required for the proofs of our theorems. We use the Banach spaces defined in~\eqref{def:BS_ell_M_Sym} and~\eqref{def:BS_ell_Sp}.
\begin{proposition}\label{prop:Csigma_HEomegaOmegaOmega}
    Given $\sigma \ge 0$, $T\ge 1$, and $\ell >0$, the following operator
    \begin{equation*}
        \mathfrak{L}^{\mathrm{ell}}_{\omega, \Omega} : \mathcal{S}p^{\mathrm{ell}, T}_{\sigma, \omega, \Omega, \ell} \longrightarrow \mathcal{S}ym^{\mathrm{pol}, T}_{\sigma, \ell+1}
    \end{equation*}
    is well-defined and invertible.   Moreover, 
    \begin{equation*}\label{est:LOoo_ell} 
\| (\mathfrak{L}^{\mathrm{ell}}_{\omega, \Omega})^{-1} \| \leq C(\ell),
    \end{equation*}
    for some $C > 0$ depending only on $\ell$.

\end{proposition}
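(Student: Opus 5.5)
Given $G\in\mathcal{S}ym^{\mathrm{pol},T}_{\sigma,\ell+1}$, we must find a unique $X:\T^n\times I_T\to\mathfrak{sp}(2m,\R)$ with $X\in\mathcal{M}^{\mathrm{pol},T}_{\sigma,\ell}$ solving $X^\top\Mell+\Mell X+J_m\Lo X=G$. The plan is to linearize the problem using the Lie algebra structure. Since $X\in\mathfrak{sp}(2m,\R)$, we have $X^\top J_m=-J_mX$, hence $X^\top=J_mXJ_m$ (using $J_m^{-1}=-J_m$). Equivalently, $Y:=J_mX$ is symmetric and $X=-J_mY$.

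Substituting, the equation becomes
\[
-Y^\top J_m^\top \Mell\cdot(\ldots)
\]
More cleanly: $X^\top\Mell=(-J_mY)^\top\Mell=YJ_m\Mell$ and $\Mell X=-\Mell J_mY$, while $J_m\Lo X=\Lo Y$. The equation therefore reads
\[
\Lo Y + YJ_m\Mell-\Mell J_m Y=G .
\]
Because $\Mell$ commutes with $J_m$ (both are block matrices built from $\Omega$ and the identity, and $\Omega$ is diagonal), set $N:=J_m\Mell=\begin{pmatrix}0&\Omega\\-\Omega&0\end{pmatrix}$. Then $\Mell J_m=N$ as well, and the equation is the Sylvester-type flow equation
\[
\Lo Y-[N,Y]=G,
\]
which preserves symmetry since $N^\top=-N$. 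Conversely, any symmetric solution $Y$ gives $X=-J_mY\in\mathfrak{sp}$, so well-definedness of $\mathfrak{L}^{\mathrm{ell}}_{\omega,\Omega}$ (mapping into symmetric matrices, and into $\mathcal{M}_{\ell+1}$ by the definition of $\mathcal{S}p$) is immediate.

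To solve $\Lo Y=[N,Y]+G$, I would diagonalize $N$ via the matrix $P$ of \eqref{proof:HE_diagMO}, so that $N=PDP^{-1}$ with $D=\mathrm{diag}(i\Omega,-i\Omega)$. Writing $\tilde Y=P^{-1}YP$ and $\tilde G=P^{-1}GP$, the system decouples entrywise into
\[
\Lo\tilde Y_{jk}-(d_j-d_k)\tilde Y_{jk}=\tilde G_{jk},\qquad d_j-d_k\in i\R .
\]
Each scalar equation is of the form treated in Lemma \ref{lemma:HE_interm_LoO}: either the frequency $\Lambda=|d_j-d_k|>0$, or, when $d_j=d_k$, it reduces to $\Lo$ alone, handled by Proposition \ref{prop:Csigma_HEomega}. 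The Lemma and the Proposition apply to complex-valued functions in the same way. The unique decaying solution is
\[
\tilde Y_{jk}(q,t)=-\int_t^\infty e^{-(d_j-d_k)(\tau-t)}\tilde G_{jk}(q+\omega(\tau-t),\tau)\,d\tau,
\]
with $|\tilde Y_{jk}|_{\sigma,\ell}\le \ell^{-1}|\tilde G_{jk}|_{\sigma,\ell+1}$. No small divisors arise, since the exponentials have modulus one.

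Equivalently, and more cleanly, one can write the solution directly as
\[
Y(q,t)=-\int_t^\infty e^{-N(\tau-t)}G(q+\omega(\tau-t),\tau)e^{N(\tau-t)}\,d\tau ,
\]
where $e^{Ns}$ is orthogonal. This formula makes both the symmetry and the reality of $Y$ evident, and the Hölder-norm bound follows at once. Transforming back with $P$ costs only a constant, so $\|(\mathfrak{L}^{\mathrm{ell}}_{\omega,\Omega})^{-1}\|\le C(\ell)$. The bound on $\mathfrak{L}X=G$ in the $\ell+1$ norm is tautological.

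For uniqueness, any solution in $\mathcal{M}_{\sigma,\ell}$ of the homogeneous equation satisfies
\[
Y(q+\omega s,t+s)=e^{Ns}Y(q,t)e^{-Ns}.
\]
Its norm is therefore constant along characteristics, and decay then forces $Y=0$.

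The only step needing care is the algebraic reduction: checking that $X^\top\Mell+\Mell X$ becomes the commutator $-[N,Y]$ with the correct signs. This relies on $J_m$ commuting with $\Mell$, which holds in the elliptic case but would not in the hyperbolic one. Everything after that reduction is routine.
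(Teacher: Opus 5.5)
Your argument is correct, and its core is a genuinely different, coordinate-free route. The paper works in blocks. It writes $X=\begin{pmatrix}P&Q\\ R&-P^\top\end{pmatrix}$ and the right-hand side as a symmetric block matrix, and derives four coupled block equations. It decouples them through the complex combinations $\mathfrak F=P-P^\top+i(R-Q)$ and $\mathfrak G=P+P^\top+i(R+Q)$. It then solves the resulting scalar equations, with frequencies $\Omega_j-\Omega_k$ and $\Omega_j+\Omega_k$, by Lemma~\ref{lemma:HE_interm_LoO}, and reassembles $P,Q,R$ from real and imaginary parts. Your diagonalized variant is the same computation in disguise: the entries of your $\tilde Y$ are, up to the factor $\tfrac i2$ and complex conjugation, exactly the paper's $\mathfrak F_{jk}$ and $\mathfrak G_{jk}$.

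What is new is the substitution $Y=J_mX$. It identifies $\mathfrak{sp}(2m,\R)$ with symmetric matrices and, because $J_m$ commutes with $\Mell$, turns $\mathfrak L^{\mathrm{ell}}_{\omega,\Omega}$ into $\Lo-\mathrm{ad}_N$ with $N$ skew-symmetric. This yields the inverse as a single real Duhamel integral conjugated by the rotations $e^{Ns}$. From it, symmetry, reality, an explicit bound of order $1/\ell$, and uniqueness (the Frobenius norm is conserved along characteristics) all follow at once. No complexification is needed, and the zero-frequency entries $\Omega_j=\Omega_k$ need no special care; the paper sends them through Lemma~\ref{lemma:HE_interm_LoO} even though that lemma is stated for $\Lambda>0$. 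The paper's route has the modest advantage of being an explicit componentwise computation that reuses the scalar lemma as is.

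Your closing remark also undersells the method. In the hyperbolic case $J_m$ anticommutes with $\Mhyp$, so the same substitution gives $\Lo Y+N_hY+YN_h=G$ with $N_h=J_m\Mhyp$ symmetric. The Duhamel formula with $e^{-N_hs}(\cdot)\,e^{-N_hs}$ then makes visible the growth rates $\Omega_j+\Omega_k$ that force $\lambda>2\max_i\Omega_i$ in Proposition~\ref{prop:Csigma_HEomegaOmegaOmega_hyp}.
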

\begin{proof}
For any $G \in \mathcal{S}p^{\mathrm{ell}, T}_{\sigma, \omega, \Omega, \ell}$, we recall that $\mathfrak{L}^{\mathrm{ell}}_{\omega, \Omega} G = G^{\top}M^{\mathrm{ell}}_\Omega + M^{\mathrm{ell}}_\Omega G + J_m \Lo G$. %
Thus, for any $g \in \mathcal{S}ym^{\mathrm{pol}, T}_{\sigma, \ell+1}$, this dynamical problem is equivalent of finding the unique solution $G \in \mathcal{S}p^{\mathrm{ell}, T}_{\sigma, \omega, \Omega, \ell}$ of the following equation
\begin{equation}\label{proof:HE_eq_LoOO}
   G^{\top}M^{\mathrm{ell}}_\Omega + M^{\mathrm{ell}}_\Omega G + J_m \Lo G = g.
\end{equation}

    To this end, we fix $g \in \mathcal{S}ym^{\mathrm{pol}, T}_{\sigma, \ell+1}$. Notice that $g$ can be written as
\[ g = \begin{pmatrix}
X & Y \\ Y^{\top} & -Z
\end{pmatrix},  \qquad  \begin{array}{ll}  Y : \T^n \times I_T \to \mathcal{M}_m(\R), \quad X, Z : \T^n \times I_T \to  \textup{Sym}(m, \R), \\
X_{ij}, Y_{ij}, Z_{ij} \in \mathcal{S}^{\mathrm{pol}, T}_{(\sigma, 0), \ell+1}, \qquad \text{ for any } 1 \leq i, j \leq m. \end{array}\]

Similarly, notice that any $G \in \mathcal{S}p^{\mathrm{ell}, T}_{\sigma, \omega, \Omega, \ell } $ can be written as 
\[ G = \begin{pmatrix}
P & Q \\ R & -P^{\top}
\end{pmatrix}, \qquad   \begin{array}{ll}  P : \T^n \times I_T \to \mathcal{M}_m(\R), \quad Q, R : \T^n \times I_T \to  \textup{Sym}(m, \R), \\ P_{ij}, Q_{ij}, R_{ij} \in \mathcal{S}^{\mathrm{pol},T}_{(\sigma, 0), \ell} \qquad \text{ for any } 1 \leq i, j \leq m.  \end{array} \]

With these notations, the equation~\eqref{proof:HE_eq_LoOO} becomes
\[ \left\{ \begin{array}{llll}
P^{\top}\Omega + \Omega P + \Lo R = X, \\
R \Omega + \Omega Q - \Lo P^{\top} = Y, \\
Q \Omega + \Omega R - \Lo P= Y^{\top}, \\
P \Omega + \Omega P^{\top} + \Lo Q = Z.
 \end{array} \right.  \]

 Denoting
 \[  \mathfrak{F} = P - P^{\top} + i (R - Q), \qquad {}\mathfrak{G} = P + P^{\top} + i (R + Q),\]

the system of equations above can be written as
\[ \left\{ \begin{array}{llll}
 i\Omega \mathfrak{F} - i \mathfrak{F} \Omega + \Lo \mathfrak{F} = (Y - Y^{\top}) + i(X - Z),  \\
 i\Omega \mathfrak{G} + i \mathfrak{G} \Omega + \Lo \mathfrak{G} =  -(Y + Y^\top) + i(X + Z),
 \end{array} \right. \]
 or equivalently,
 \[ \left\{ \begin{array}{llll}
 i(\Omega_j - \Omega_k)\mathfrak{F}_{jk} + \Lo \mathfrak{F}_{jk} = Y_{jk} - Y_{kj} + i(X_{jk} - Z_{jk}),  \qquad 1 \leq j,k \leq m,\\
  i(\Omega_j + \Omega_k)\mathfrak{G}_{jk} + \Lo \mathfrak{G}_{jk} = -Y_{jk} - Y_{kj} + i(X_{jk} + Z_{jk}),  \qquad 1 \leq j,k \leq m.
 \end{array} \right.\]
 
 By Proposition \ref{lemma:HE_interm_LoO}, each of the equations above admits a unique solution whose real and imaginary parts belong to $\mathcal{S}^{\mathrm{pol}, T}_{(\sigma, 0), \ell}$.  Noticing that
 \[ \begin{pmatrix}
P & Q \\ R & -P^{\top}
\end{pmatrix} = \frac{1}{2}
\begin{pmatrix}
\mathrm{Re}(\mathfrak{F} + \mathfrak{G}) & \mathrm{Im}(\mathfrak{G} - \mathfrak{F}) \\ \mathrm{Im}(\mathfrak{F} + \mathfrak{G}) & \mathrm{Re}(\mathfrak{F} - \mathfrak{G})
\end{pmatrix}, \]
it follows that $\mathfrak{L}^{\mathrm{ell}}_{\omega, \Omega}$ is invertible. Moreover, by~\eqref{est:HE_Interm_1dim_LoO} in Proposition \ref{lemma:HE_interm_LoO}, there exists a positive constant depending on $\ell$ such that \[ |(\mathfrak{L}^{\mathrm{ell}}_{\omega, \Omega})^{-1}g|^T_{\sigma, \omega, \Omega, \ell} \le C(\ell) |g|^T_{\sigma, \ell+1},  \]
which finishes the proof of the proposition. 
\end{proof}

\subsection{Norm properties}
\label{sc:norm_properties_ell}
In this section, we prove several properties for the norms of the Banach spaces introduced in Sections \ref{sc:functional_setting_ell} and \ref{sc:proof_analytic_ell}. Throughout this section, we denote by $C(\cdot)$ a generic positive constant depending on the dimensions $n$ and $m$, and the parameter in brackets.

\begin{proposition}\label{prop:Csigma_prop_norms_pol}
    Given $\sigma$, $\ell$, $d \ge 0$, $T\ge 1$ and a non-negative integer $k$, for all $f \in \mathcal{S}^{\mathrm{pol},T}_{(\sigma, k), \ell}$ and $g \in \mathcal{S}^{\mathrm{pol}, T}_{(\sigma, k), d}$ we have the following properties.
\begin{enumerate}
\item For all $s\ge 0$ and  $\beta \in \N^{2(n + m)}$, if $|\beta| + s \le \sigma + k$, then  
\begin{equation*}
\left|\partial^{\beta}_{(q,p,z)} f \right|^T_{s, \ell} \le C |f|^T_{\sigma +k, \ell}
\end{equation*}
\item For all $\ell' \ge 0$, if $f \in \mathcal{S}^{\mathrm{pol},T}_{(\sigma, k), \ell+\ell'}$ then $|f|^T_{\sigma+k, \ell}  \le T^{-\ell'} |f|^T_{\sigma+k, \ell+\ell'}$,
\item $|fg|^T_{\sigma+k, \ell+d} \le C(\sigma, k)\left(|f|^T_{0,\ell}|g|^T_{\sigma+k,d} + |f|^T_{\sigma+k,\ell}|g|^T_{0,d}\right)$. 
\item We consider $\sigma \ge 1$,  and we assume that $g:\T^n \times B \times I_T \to \T^n \times B$.  %
Letting $\tilde g: \T^n \times B \times I_T \to \T^n \times B \times I_T$ such that $\tilde g(q,p,z,t) = (g(q,p,z,t), t)$, 
then $f \circ  \tilde g \in \mathcal{S}^{\mathrm{pol}, T}_{\sigma+k, \ell}$ and 
\begin{equation*}
|f \circ \tilde g|^T_{\sigma+k, \ell} \le C(\sigma, k) \left(|f|^T_{\sigma+k,\ell}\left(|\partial_{(q,p,z)} g|^T_{0,d}\right)^\sigma + |f|^T_{1,\ell}|\partial_{(q,p,z)}  g|^T_{\sigma+k-1,d} +  |f|^T_{0, \ell}  \right).
 \end{equation*}
\end{enumerate}
\end{proposition}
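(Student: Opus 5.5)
The four properties reduce to fixed-time Hölder estimates, because every norm $|\cdot|^T_{s,\ell}$ is a weighted supremum in time. In each item we work at a fixed $t\in I_T$ with the classical $C^{\sigma+k}$ inequalities for $f^t$ and $g^t$ (Appendix \ref{app:Holder}), multiply by the weight $t^\ell$ or $t^{\ell+d}$, and take $\sup_{t\in I_T}$. Continuity in $t$ of the relevant derivatives, as required in \eqref{def:S}, follows from the continuity of the data and of composition/products. The plan therefore has one step per item, each reducing to a Hölder inequality.

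For item (1), the classical inequality $|\partial^\beta h|_{C^s}\le C|h|_{C^{s+|\beta|}}\le C|h|_{C^{\sigma+k}}$ for $|\beta|+s\le\sigma+k$ is applied to $h=f^t$. We then multiply by $t^\ell$ and take the supremum. Item (2) is immediate: $|f^t|_{C^{\sigma+k}}t^\ell = |f^t|_{C^{\sigma+k}}t^{\ell+\ell'}\,t^{-\ell'}\le T^{-\ell'}|f|^T_{\sigma+k,\ell+\ell'}$ for $t\ge T$.

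For item (3) we use the tame product estimate $|f^tg^t|_{C^r}\le C(r)(|f^t|_{C^0}|g^t|_{C^r}+|f^t|_{C^r}|g^t|_{C^0})$ with $r=\sigma+k$. Multiplying by $t^{\ell+d}=t^\ell t^d$ and distributing the two weights across the factors gives the bound. The supremum of the product is dominated by the product of the suprema.

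Item (4), the composition estimate, is the only nontrivial step. At fixed $t$ we apply the standard Hölder composition inequality (de la Llave–Obaya type): for $\sigma\ge1$,
\[
|F\circ G|_{C^r}\le C(r)\big(|F|_{C^r}|DG|_{C^0}^{r}+|F|_{C^1}|DG|_{C^{r-1}}+|F|_{C^0}\big),
\]
with $F=f^t$, $G=g^t$ and $r=\sigma+k$. Here $\tilde g$ only carries $t$ along, so the composition is fiberwise. The exponent on $|DG|_{C^0}$ should be matched to the statement; the stated power $\sigma$ may need to read $\sigma+k$, and I would either accept the absorbed constant or note this. Multiplying by $t^\ell$, only $f$ carries the weight. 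The factors involving $g$ are bounded by their unweighted sup norms, which are controlled by the $d$-weighted norms because $t^{-d}\le1$ for $d\ge0$ and $T\ge1$. This yields the stated inequality. The main obstacle is the bookkeeping of the composition inequality's constants and exponents in the non-integer Hölder range. I would cite the appendix or the literature rather than reprove it, checking only that $g$ takes values in the domain $\T^n\times B$ so that the composition is defined.
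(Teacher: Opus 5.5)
Your proposal is correct and is essentially the argument behind the paper's proof, which simply cites \cite{Sca25}: apply the fixed-time Hölder inequalities of Appendix \ref{app:Holder} to $f^t$ and $g^t$, multiply by the weights $t^{\ell}$ or $t^{\ell+d}$, and take the supremum over $t\in I_T$, using $t^{-d}\le 1$ to control the $g$-factors in item (4). You are right that the exponent should be $\sigma+k$, but the mismatch cannot be absorbed into $C(\sigma,k)$: for $k\ge 1$ the stated bound with power $\sigma$ fails (take $f=\cos(2\pi q_1)$ and $g(q,p,z)=(Nq,p,z)$ with $N\to\infty$; the composition has $C^{\sigma+k}$-norm of order $N^{\sigma+k}$, while the stated right-hand side is of order $N^{\sigma}$), so the correct power is $\sigma+k$. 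This is harmless in the paper's applications, where $g$ is a small perturbation of the identity and $|\partial g|$ is therefore a priori bounded.
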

\begin{proof}
    We refer to~\cite{Sca25} for the proof. 
\end{proof}

 As a direct consequence of Proposition \ref{prop:Csigma_prop_norms_pol}, we have the following.

\begin{proposition}\label{prop:Csigma_prop_norms_matrix}
    Let $\sigma, \ell, d\ge 0$ and $T\ge 1$. For any $F \in  \mathcal{M}^{\mathrm{pol}, T}_{\sigma, \ell}$ and $G \in  \mathcal{M}^{\mathrm{pol}, T}_{\sigma, d}$ we have the following properties.
\begin{enumerate}
\item For all $s\ge 0$ and  $\beta \in \N^{2(n + m)}$, if $|\beta| + s \le \sigma $, then  
\begin{equation*}
\left|\partial^{\beta}_{(q,p,z)} F \right|^T_{s, \ell} \le C |F|^T_{\sigma , \ell}
\end{equation*}
\item \label{prop:crescita_indici_norm_ell} For all $\ell' \ge 0$,  if $F \in \mathcal{M}^{\mathrm{pol}, T}_{\sigma, \ell+\ell'}$ then $|F|^T_{\sigma, \ell}  \le T^{-\ell'} |F|^T_{\sigma, \ell+\ell'}$, 
\item \label{prop:bound_norm_product_matrices} $|FG|^T_{\sigma, \ell+d} \le C(\sigma)\left(|F|^T_{0,\ell}|G|^T_{\sigma,d} + |F|^T_{\sigma,\ell}|G|^T_{0,d}\right)$. %
\end{enumerate}
\end{proposition}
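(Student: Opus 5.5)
The plan is to derive each of the three items from Proposition~\ref{prop:Csigma_prop_norms_pol}. The key observation is that a matrix-valued function lies in $\mathcal{M}^{\mathrm{pol}, T}_{\sigma, \ell} = \mathcal{S}^{\mathrm{pol}, T}_{(\sigma,0), \ell}(\R^{2m\times 2m})$ exactly when each entry lies in $\mathcal{S}^{\mathrm{pol}, T}_{(\sigma,0), \ell}$. The matrix norm is the maximum of the entry norms. Every statement will therefore be reduced to a scalar statement about entries, with $k = 0$, followed by a maximum over indices.

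For item (1), for every entry $F_{ij}$ I will apply item (1) of Proposition~\ref{prop:Csigma_prop_norms_pol} with $k=0$. This gives $|\partial^\beta F_{ij}|^T_{s,\ell} \le C|F_{ij}|^T_{\sigma,\ell} \le C|F|^T_{\sigma,\ell}$, and taking the maximum over $(i,j)$ yields the claim with the same constant. For item (2), I will use item (2) of that proposition entrywise: $|F_{ij}|^T_{\sigma,\ell} \le T^{-\ell'}|F_{ij}|^T_{\sigma,\ell+\ell'}$, and then take the maximum. One can also check this directly, since $t^{\ell} = t^{\ell+\ell'}t^{-\ell'} \le t^{\ell+\ell'}T^{-\ell'}$ for $t \ge T$.

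For item (3), I will write $(FG)_{ij} = \sum_{r=1}^{2m} F_{ir}G_{rj}$ and first use the triangle inequality for $|\cdot|^T_{\sigma,\ell+d}$. This holds because the Hölder norm is a norm and the supremum over $t$ of a sum is at most the sum of the suprema. Each product $F_{ir}G_{rj}$ is then bounded by item (3) of Proposition~\ref{prop:Csigma_prop_norms_pol} with $k=0$. Bounding every entry norm by the corresponding matrix norm gives
\[
|(FG)_{ij}|^T_{\sigma,\ell+d} \le 2m\, C(\sigma)\left(|F|^T_{0,\ell}|G|^T_{\sigma,d} + |F|^T_{\sigma,\ell}|G|^T_{0,d}\right).
\]
The factor $2m$ is absorbed into $C(\sigma)$, which the section's convention allows to depend on $n$ and $m$.

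I do not expect any real obstacle. The only point needing care is the dependence on dimension in item (3), and the convention stated at the start of the section already covers it.
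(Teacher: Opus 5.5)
Your proposal is correct and matches the paper, which presents this proposition as a direct consequence of Proposition~\ref{prop:Csigma_prop_norms_pol}: you apply that proposition entrywise with $k=0$ and then take the maximum over the indices. The paper leaves implicit the factor $2m$ coming from the matrix product in item (3); your step of absorbing it into $C(\sigma)$, using the section's convention that constants may depend on $n$ and $m$, supplies that detail.
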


Recall that the derivative of the exponential operator $\exp: \mathcal{M}_{2m}(\R) \to \mathcal{M}_{2m}(\R)$ at $F \in \mathcal{M}_{2m}$ and evaluated at $G \in  \mathcal{M}_{2m}$ is given by
\begin{equation}
\label{eq:derivative_exp}
D \exp(F)G = e^{F} \mathcal{A}(F, G),
\end{equation}
where
\begin{equation}
\label{eq:adjoint_formula}
\mathcal{A}(F, G) := \int_0^1e^{-sF}Ge^{sF}ds.
\end{equation}
Similarly, given a differentiable map $s \mapsto G(s)$, the derivative of the map $s \mapsto e^{G(s)}$ is given by
\begin{equation}
\label{eq:derivative_exp_parameter}
\frac{d }{ds} \exp (F(s)) = e^{F(s)}\mathcal{A}(F(s), F'(s )).
\end{equation}

The following properties follow easily from the definitions together with Proposition \ref{prop:Csigma_prop_norms_matrix}.

\begin{lemma}
\label{lem:exp_expansion_bounds}
      Let $\sigma, \ell, d \ge 0$ and $T\ge 1$. For any $B \in \mathcal{M}^{\mathrm{pol}, T}_{\sigma, 0},$ $F \in  \mathcal{M}^{\mathrm{pol}, T}_{\sigma, \ell},$ and $G \in  \mathcal{M}^{\mathrm{pol}, T}_{\sigma, d},$ the following statements hold.

      \begin{enumerate}
          \item \label{it:bound_expF} $|e^B|_{\sigma, 0}^T \leq e^{C(\sigma)|B|_{\sigma, 0}^T}.$
          \item \label{it:bound_expF-id} $|e^F - \mathrm{Id}_{2m}|_{\sigma, \ell}^T \leq  |F|_{\sigma, \ell}^Te^{C(\sigma)|F|_{\sigma, 0}^T}$.       
          \item \label{it:bound_expF-id-F} $|e^F - \mathrm{Id}_{2m} - F|_{\sigma, 2\ell}^T \leq (|F|_{\sigma, \ell}^T)^2e^{C(\sigma)|F|_{\sigma, 0}^T}$.
          \item \label{it:bound_adjoint} $|\mathcal{A}(B, G)|_{\sigma, d}^T \leq C(\sigma)|G|_{\sigma, d}^Te^{C(\sigma)|B|_{\sigma, 0}^T}$.
          \item \label{it:bound_adjoint-G} $|\mathcal{A}(F, G) - G|_{\sigma, \ell + d}^T \leq C(\sigma)|F|_{\sigma, \ell}^T |G|_{\sigma, d}^T e^{C(\sigma)|F|_{\sigma, 0}^T}.$
          \item \label{it:bound_operator_L} If $\Lo B \in \mathcal{M}_{\sigma, \ell}^{\mathrm{pol}, T}$ then $|\Lo (e^B)|_{\sigma, \ell}^T \leq C(\sigma) e^{C(\sigma)|B|^T_{\sigma, 0}}|\Lo B|_{\sigma, \ell}^T$.
          \item \label{it:bound_operator_LA} If $\Lo B \in \mathcal{M}_{\sigma, 1}^{\mathrm{pol},T}$ and $\Lo G \in \mathcal{M}_{\sigma, d + 1}^{\mathrm{pol},T}$ then $$|\Lo (\mathcal{A}(B, G))|_{\sigma, d + 1}^T \leq C(\sigma) e^{C(\sigma)|B|^T_{\sigma, 0}}(|\Lo B|_{\sigma, 1}^T|G|_{\sigma, d}^T + |\Lo G|_{\sigma, d + 1}^T).$$
          \item \label{it:bound_operator_LA-G} If $\Lo F \in \mathcal{M}_{\sigma, \ell + 1}^{\mathrm{pol},T}$ and $\Lo G \in \mathcal{M}_{\sigma, d + 1}^{T}$ then $$|\Lo (\mathcal{A}(F, G) - G)|_{\sigma, \ell + d + 1}^{\mathrm{pol},T} \leq C(\sigma) e^{C(\sigma)|Fe|^T_{\sigma, 0}}(|\Lo F|_{\sigma, \ell + 1}^T|G|_{\sigma, d}^T + |F|_{\sigma, \ell}^T |\Lo G|_{\sigma, d + 1}^T).$$
      \end{enumerate}
      
\end{lemma}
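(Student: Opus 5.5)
The plan is to work entrywise with the norm $|\cdot|^T_{\sigma,\ell}$, taking the supremum over $t\in I_T$ at the end. The tools are three: the product estimate in Proposition \ref{prop:Csigma_prop_norms_matrix} (item \eqref{prop:bound_norm_product_matrices}), the power series of the exponential, and the integral representations of the remainders. The basic fact needed is a power bound. By item \eqref{prop:bound_norm_product_matrices} and induction, $|B^k|^T_{\sigma,0}\le (C(\sigma)|B|^T_{\sigma,0})^{k}$ up to a harmless factor $k$. More precisely, one uses the tame form of the product estimate: each term of the Leibniz expansion of $\partial^\beta(B^k)$ contains at most one factor in $C^\sigma$, the rest being in $C^0$. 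Consequently
\[
|F_1\cdots F_k|^T_{\sigma,\sum \ell_i}\le C(\sigma)^k\, k \max_i |F_i|^T_{\sigma,\ell_i}\prod_{j\ne i}|F_j|^T_{0,\ell_j}.
\]
Summing $\sum_k |B^k|/k!$ then gives item \eqref{it:bound_expF}, because $\sum_k k\,C^k x^{k-1}/k!\le Ce^{Cx}$.

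Items \eqref{it:bound_expF-id} and \eqref{it:bound_expF-id-F} come from the same series. For \eqref{it:bound_expF-id}, write $e^F-\mathrm{Id}=\sum_{k\ge1}F^k/k!$ and put one factor of $F$ in the weighted space $\ell$ and the remaining $k-1$ in weight $0$. This is allowed because $|F|^T_{\sigma,0}\le |F|^T_{\sigma,\ell}$ when $T\ge1$, by item \eqref{prop:crescita_indici_norm_ell}. For \eqref{it:bound_expF-id-F}, write $e^F-\mathrm{Id}-F=\sum_{k\ge2}F^k/k!$ and put two factors in weight $\ell$, which gives the weight $2\ell$.

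For the adjoint items, expand $\mathcal{A}(B,G)=\int_0^1 e^{-sB}Ge^{sB}\,ds$ and apply the product bound together with item \eqref{it:bound_expF} to each integrand, uniformly in $s\in[0,1]$; this gives \eqref{it:bound_adjoint}. For \eqref{it:bound_adjoint-G}, write
\[
e^{-sF}Ge^{sF}-G=(e^{-sF}-\mathrm{Id})Ge^{sF}+G(e^{sF}-\mathrm{Id}),
\]
and use item \eqref{it:bound_expF-id} with weight $\ell$ on the factor $e^{\pm sF}-\mathrm{Id}$; the resulting weights add to $\ell+d$.

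The operator items use the fact that $\Lo$ is a derivation along the flow direction. The chain rule \eqref{eq:derivative_exp_parameter} applied along the curve $s\mapsto B(q+\omega s,t+s)$ gives $\Lo e^B=e^B\mathcal{A}(B,\Lo B)$, and items \eqref{it:bound_expF} and \eqref{it:bound_adjoint} then yield \eqref{it:bound_operator_L}. For \eqref{it:bound_operator_LA}, differentiate under the integral:
\[
\Lo(e^{-sB}Ge^{sB})=(\Lo e^{-sB})Ge^{sB}+e^{-sB}(\Lo G)e^{sB}+e^{-sB}G\,\Lo e^{sB}.
\]
Bound $\Lo e^{\pm sB}$ by \eqref{it:bound_operator_L} with weight $1$. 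The first and third terms then have weight $1+d$ with factor $|\Lo B|^T_{\sigma,1}|G|^T_{\sigma,d}$, and the middle term has weight $d+1$ with factor $|\Lo G|^T_{\sigma,d+1}$. For \eqref{it:bound_operator_LA-G}, the same expansion is applied to
\[
\Lo\bigl((e^{-sF}-\mathrm{Id})Ge^{sF}+G(e^{sF}-\mathrm{Id})\bigr).
\]
The terms where $\Lo$ hits an exponential carry weight $\ell+1$ on $\Lo F$ and weight $d$ on $G$. The terms where $\Lo$ hits $G$ carry weight $d+1$ on $\Lo G$ and weight $\ell$ on $e^{\pm sF}-\mathrm{Id}$, by \eqref{it:bound_expF-id}.

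The main obstacle is regularity bookkeeping rather than any new idea. First, one must justify differentiating the series and the $s$-integrals in $t$ and interchanging them with $\Lo$; this needs only continuity and uniform convergence in $C^\sigma$, which the exponential bounds provide. Second, the constants must stay of the form $C(\sigma)e^{C(\sigma)|B|_{\sigma,0}}$, which requires the tame product estimate above rather than a naive iteration of item \eqref{prop:bound_norm_product_matrices}; a naive iteration would give $C^k|B|_\sigma^k$ with nothing worse.
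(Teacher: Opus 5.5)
Your proposal is correct and follows the paper's proof essentially step for step. The shared steps are: the power series plus the product estimate for items (1)--(3); the representation $\mathcal{A}(F,G)=\int_0^1 e^{-sF}Ge^{sF}\,ds$ with the splitting $(e^{-sF}-\mathrm{Id}_{2m})Ge^{sF}+G(e^{sF}-\mathrm{Id}_{2m})$ for (4)--(5); the identity $\Lo(e^B)=e^B\mathcal{A}(B,\Lo B)$ for (6); and differentiation under the $s$-integral for (7)--(8). Your one divergence, the $k$-fold tame product bound, is unnecessary (and your Leibniz-term justification of it is imprecise, since derivatives get distributed over several factors): simply iterating item \eqref{prop:bound_norm_product_matrices} of Proposition~\ref{prop:Csigma_prop_norms_matrix}, using $|\cdot|^T_{0,0}\le|\cdot|^T_{\sigma,0}$, gives $|B^k|^T_{\sigma,0}\le C^{k-1}(|B|^T_{\sigma,0})^k$, which is what the paper does and which already sums to $e^{C|B|^T_{\sigma,0}}$.
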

\begin{proof}
Along the proof, for the sake of simplicity, we will denote by $C$ a generic positive constant depending only on $n, m$ and $\sigma$. %

By Item \eqref{prop:bound_norm_product_matrices} of Proposition \ref{prop:Csigma_prop_norms_matrix} and since  $e^B = \sum_{k \geq 0}\frac{B^k}{k!}$,
    \begin{align*}
    |e^B|_{\sigma, 0}^T & \leq \sum_{k \geq 0}\frac{|B^k|^T_{\sigma, 0}}{k!}  \leq 1 + \sum_{k \geq 1}\frac{C^{k - 1}(|B|^T_{\sigma, 0})^k}{k!}  \leq e^{C|B|^T_{\sigma, 0}}, 
    \end{align*}
    which proves Property \eqref{it:bound_expF}. 
    
    Similarly,
    \begin{align*}
    |e^F - \mathrm{Id}_{2m}|_{\sigma, \ell}^T & \leq  \sum_{k \geq 1}\frac{|F^k|^T_{\sigma, \ell}}{k!}  \leq \sum_{k \geq 1}\frac{C|F|^T_{\sigma, \ell}|F^{k - 1}|^T_{\sigma, 0}}{k!}  \leq  \sum_{k \geq 1}\frac{C^{k}|F|^T_{\sigma, \ell}(|F|^T_{\sigma, 0})^{k - 1}}{k!}   \\ & \leq |F|^T_{\sigma, \ell}  e^{C|F|^T_{\sigma, 0}},
    \end{align*}
    which proves Property \eqref{it:bound_expF-id}.
    
    Furthermore,
    \begin{align*}
    |e^F - \mathrm{Id}_{2m} - F|_{\sigma, 2\ell}^T & \leq  \sum_{k \geq 2}\frac{|F^k|^T_{\sigma, 2\ell}}{k!}  \leq \sum_{k \geq 2}\frac{{C|F|^T_{\sigma, \ell}|F^{k - 1}|^T_{\sigma, \ell}}}{k!}  \leq  \sum_{k \geq 2}\frac{C^{k}(|F|^T_{\sigma, \ell})^2(|F|^T_{\sigma, 0})^{k - 2}}{k!}   \\ & \leq (|F|^T_{\sigma, \ell})^2  e^{C|F|^T_{\sigma, 0}},
    \end{align*}
    which proves Property \eqref{it:bound_expF-id-F}.

   By Item \eqref{prop:bound_norm_product_matrices} of Proposition \ref{prop:Csigma_prop_norms_matrix} and Property \eqref{it:bound_expF} of this Lemma, we have
    \begin{align*}
    |\mathcal{A}(B, G)|_{\sigma, d}^T & \leq C^2\int_0^1 |e^{-sB}|_{\sigma, 0}^T |G|_{\sigma, d}^T  |e^{sB}|_{\sigma, 0}^T \leq C^2  |G|_{\sigma, d}^T e^{2C |B|_{\sigma, 0}^T},
    \end{align*}
    which proves Property \eqref{it:bound_adjoint}.

     By Item \eqref{prop:bound_norm_product_matrices} of Proposition \ref{prop:Csigma_prop_norms_matrix} and Properties \eqref{it:bound_expF}, \eqref{it:bound_expF-id} of this Lemma,
    \begin{align*}
    |\mathcal{A}(F, G) -G|_{\sigma, \ell + d}^T & = \left|  \int_0^1 (e^{-sF} - \mathrm{Id}_{2m})Ge^{sF} + G(e^{sF} - \mathrm{Id}_{2m})ds  \right|_{\sigma, \ell + d}^T \\
    & \leq \int_0^1 \left( C^2|e^{-sF} - \mathrm{Id}_{2m}|_{\sigma, \ell}^T |G|_{\sigma, d}^T  |e^{sF}|_{\sigma, 0}^T  + C  |G|_{\sigma, d}^T |e^{sF} - \mathrm{Id}_{2m}|_{\sigma, \ell}^T \right)ds \\
    &\leq C^2 |F|_{\sigma, \ell}^T e^{C|F|_{\sigma, 0}^T}|G|_{\sigma, d}^T e^{C|F|_{\sigma, 0}^T} + C|G|_{\sigma, d}^T|F|_{\sigma, \ell}^T e^{CF|_{\sigma, 0}^T},
    \end{align*}
    which proves Property \eqref{it:bound_adjoint-G}.

    By Item \eqref{prop:bound_norm_product_matrices} of Proposition \ref{prop:Csigma_prop_norms_matrix}, Properties \eqref{it:bound_expF}-\eqref{it:bound_adjoint-G} of this Lemma and  \eqref{eq:derivative_exp_parameter}, we have
    \[ |\Lo (e^B)|^T_{\sigma, \ell} = |e^B \mathcal{A}(B, \Lo B)|^T_{\sigma, \ell} \leq  C C e^{(C + C)|B|^T_{0, \ell}}|\Lo B|_{\sigma, \ell}^T,\]
        which proves Property \eqref{it:bound_operator_L}.

    By Item \eqref{prop:bound_norm_product_matrices} of Proposition \ref{prop:Csigma_prop_norms_matrix} and Properties \eqref{it:bound_expF}-\eqref{it:bound_operator_L} of this Lemma,
    \begin{align*}
    |\Lo(\mathcal{A}(B, G))|_{\sigma, d + 1}^T & = \left|  \int_0^1 e^{-sB} \left( \mathcal{A}(-sB, -s\Lo B) Ge^{sB} + \Lo Ge^{sB} + Ge^{sB} \mathcal{A}(sB, s\Lo B) \right)ds  \right|_{\sigma, \ell + d}^T \\
    &\leq C^4 e^{C|B|_{\sigma, 0}}\left(2C |\Lo B|_{\sigma, 1}^T|G|^T_{\sigma, d} +  |\Lo G|_{\sigma, d + 1}^T\right)e^{C|B|_{\sigma, 0}^T},
    \end{align*}
        which proves Property \eqref{it:bound_operator_LA}.
        
    Similarly, by Item \eqref{prop:bound_norm_product_matrices} of Proposition \ref{prop:Csigma_prop_norms_matrix}, Properties \eqref{it:bound_expF}-\eqref{it:bound_operator_LA},
    \begin{align*}
    | & \Lo(\mathcal{A}(F, G)  - G)|_{\sigma, \ell + d + 1}^T = \left|  \int_0^1  \Lo \left( (e^{-sF} - \mathrm{Id}_{2m})Ge^{sF} + G(e^{sF} - \mathrm{Id}_{2m}) \right) ds  \right|_{\sigma, \ell + d + 1}^T \\
    & = \left|  \int_0^1 e^{-sF} \mathcal{A}(-sF, -s\Lo F)Ge^{sF} + (e^{-sF} - \mathrm{Id}_{2m})(\Lo G e^{sF} + G e^{sF} \mathcal{A}(sF, s\Lo F )) \right. \\
    & \quad + \Lo G(e^{sF} - \mathrm{Id}_{2m}) + Ge^{sF}\mathcal{A}(sF, s\Lo F ) ds  \Bigg|_{\sigma, \ell + d + 1}^T \\
    & \leq C^4\Big( C e^{(2C + C)|F|_{\sigma, 0}^T} |\Lo F|_{\sigma, \ell + 1}^T |G|_{\sigma, \ell}^T +  C|F|_{\sigma, \ell}^Te^{(C + C + C)|F|_{\sigma, 0}^T} (|\Lo G|_{\sigma, d + 1}^T + |G|_{\sigma, d}^T|\Lo F|_{\sigma, \ell + 1}^T )  \\
    & \quad + e^{C|F|_{\sigma, 0}^T}|\Lo G|_{\sigma, d + 1}^T |F|_{\sigma, \ell}^T + C e^{C|F|_{\sigma, 0}^T} |G|_{\sigma, d}^T|\Lo F|_{\sigma, \ell + 1}^T \Big) 
    \end{align*}
            which proves Property \eqref{it:bound_operator_LA-G}.
\end{proof}

\begin{proposition}\label{prop:properties_analy_pol}
Given $\sigma>0$, $\ell, \, d \ge 0$ and $T\ge 1$,  for all $f \in \mathscr{S}^{\mathrm{pol}, T}_{\sigma, \ell}$ and $g \in \mathscr{S}^{\mathrm{pol}, T}_{\sigma, d}$, we have the following properties    
\begin{enumerate}
    \item For all $0<\sigma'<\sigma$ and $\beta \in \N^{2(n+m)}$, then
    \begin{equation*}
        \left|\partial^{\beta}_{(q,p,z)} f \right|^T_{\sigma', \ell} \le {C \over (\sigma-\sigma')^{|\beta|}} |f|^T_{\sigma, \ell}.
    \end{equation*}
    \item For all $\ell'\ge 0$, if $f \in \mathscr{S}^{\mathrm{pol}, T}_{\sigma, \ell+\ell'}$ then $|f|^T_{\sigma, \ell} \le T^{-\ell'}|f|^T_{\sigma, \ell+\ell'}$.
    \item $|fg|^T_{\sigma, \ell +d} \le |f|^T_{\sigma, \ell}|g|^T_{\sigma, d}$. %
    \item For all $0<\sigma'<\sigma$, we consider $g:\T^n_{\sigma'} \times B_{\sigma'} \times I_T \to \T^n_\sigma \times B_\sigma$ such that $g \in \mathscr{S}^{\mathrm{pol}, T}_{\sigma',0}$. Letting $\tilde g: \T^n_{\sigma'} \times B_{\sigma'} \times I_T \to \T^n_\sigma \times B_\sigma \times I_T$ such that $\tilde g(q,p,z,t) = (g(q,p,z,t), t)$, 
then $f \circ  \tilde g \in \mathscr{S}^{\mathrm{pol}, T}_{\sigma', \ell}$ and 
\begin{equation*}
    |f\circ \tilde g|^T_{\sigma', \ell} \le |f|^T_{\sigma, \ell}.
\end{equation*}
\end{enumerate}
\end{proposition}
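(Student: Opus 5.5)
The four properties in Proposition~\ref{prop:properties_analy_pol} are the analytic counterparts of Proposition~\ref{prop:Csigma_prop_norms_pol}. The supremum norm on complex strips makes each of them essentially a pointwise statement at fixed $t$, multiplied by the weight $t^\ell$ (or $t^{\ell+d}$). So the plan is to freeze $t \in I_T$, prove the corresponding inequality for the sup norm $|\cdot|_\sigma$ defined in \eqref{def:norm_analy}, then multiply by the weight and take $\sup_{t\in I_T}$. Real-analyticity and continuity in $(q,p,z,t)$ must also be checked each time so that the output lies in the relevant $\mathscr{S}^{\mathrm{pol},T}$ space.

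\textbf{Item 1.} I would apply Cauchy's estimates. Fix $t$ and a point $x\in \T^n_{\sigma'}\times B_{\sigma'}$. The closed polydisc of radius $(\sigma-\sigma')/\sqrt{n+2m}$ around $x$ (or radius $\sigma-\sigma'$ in each coordinate, adapting to the precise norm on $B_\sigma$) lies in $\T^n_\sigma\times B_\sigma$. The iterated Cauchy formula then gives
\[
|\partial^\beta f^t(x)|\le \beta!\,\bigl(c_{n,m}/(\sigma-\sigma')\bigr)^{|\beta|}\,|f^t|_\sigma .
\]
Multiplying by $t^\ell$ and taking the supremum yields the claim with $C=C(n,m,\beta)$. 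Continuity of $\partial^\beta f$ in $t$ follows from the same integral representation, since $f$ is jointly continuous on the compact polydisc.

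\textbf{Item 2.} This is immediate: $|f^t|_\sigma t^\ell = |f^t|_\sigma t^{\ell+\ell'}t^{-\ell'}\le T^{-\ell'}|f|^T_{\sigma,\ell+\ell'}$ for $t\ge T$.

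\textbf{Item 3.} Since $|f^tg^t|_\sigma\le |f^t|_\sigma|g^t|_\sigma$, we get
\[
|f^tg^t|_\sigma\,t^{\ell+d}\le \bigl(|f^t|_\sigma t^\ell\bigr)\bigl(|g^t|_\sigma t^d\bigr)\le |f|^T_{\sigma,\ell}\,|g|^T_{\sigma,d},
\]
and analyticity and continuity of products are clear.

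\textbf{Item 4.} For each fixed $t$, $g^t$ maps $\T^n_{\sigma'}\times B_{\sigma'}$ into $\T^n_\sigma\times B_\sigma$. Hence $f^t\circ g^t$ is well defined and analytic (it is a composition of holomorphic maps, and it is real on real points because both maps are real-analytic). Its sup over the smaller domain is bounded by $\sup_{\T^n_\sigma\times B_\sigma}|f^t|=|f^t|_\sigma$. Multiplying by $t^\ell$ gives $|f\circ\tilde g|^T_{\sigma',\ell}\le|f|^T_{\sigma,\ell}$, and joint continuity follows from continuity of $f$ and $g$.

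\textbf{Main obstacle.} The only delicate point is item 1 near the boundary, namely checking that the polydisc around $x$ stays inside $\T^n_\sigma\times B_\sigma$. This depends on the choice of norm in $|\mathrm{Im}\,q|$ and $|(p,z)|$, and it is why the constant $C$ must depend on the dimensions; everything else is bookkeeping.
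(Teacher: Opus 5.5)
Your proposal is correct and follows the paper's proof: item (1) comes from Cauchy's estimates on polydiscs of radius comparable to $\sigma-\sigma'$, which is where the dimension-dependent constant arises, and items (2)--(4) follow directly from the definitions of the norms \eqref{def:norm_analy} and \eqref{def:norm_S_analy}. Your write-up simply spells out the fixed-$t$ details that the paper's one-line proof leaves implicit.
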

\begin{proof}
    The proof of the first property is a straightforward application of Cauchy's estimates. The remaining ones follow directly from the definitions of the norms~\eqref{def:norm_analy} and~\eqref{def:norm_S_analy}.
\end{proof}

\section{Asymptotically Hyperbolic Case}
\label{sec:Proof_Theorem_Hyp}

In this section we prove Theorems \ref{Thm:hyp_Csigma}, \ref{Thm:hyp_analy} and Corollary \ref{cor:hyp}, which are the hyperbolic counterparts of the results proven in Section \ref{sec:Proof_Theorem_Ell}. 

We will structure this section similarly to Section \ref{sec:Proof_Theorem_Ell} focusing mostly on the proof of Theorem \ref{Thm:hyp_Csigma}, which follows the same proof strategy as Theorem \ref{Thm:elliptic_Csigma} and relies on a version of the Implicit Function Theorem applied to appropriately defined Banach spaces of time-dependent functions exhibiting exponential decay in time. By doing this, we will be naturally led to consider the existence of solutions to the following (cohomological) equations, which are the analogs of those in \eqref{eq:coh_eqs_ell}, namely,
\begin{equation}
\label{eq:coh_eqs_hyp}
\left\{ \begin{array}{lll}
    \Lo \chi := (\partial_t + \omega \cdot \partial_q) \chi = g, & & g: \T^n \times I_T \to \R,\\
    \mathcal{L}^{\mathrm{hyp}}_{\omega, \Omega} \boldsymbol{\chi} :=  J_mM^{\mathrm{hyp}}_\Omega \boldsymbol{\chi} - \Lo \boldsymbol{\chi}  = g, & & \boldsymbol{g}: \T^n \times I_T \to \R^{2m}, \\
    \mathfrak{L}^{\mathrm{hyp}}_{\omega, \Omega} X := X^{\top}M^{\mathrm{hyp}}_\Omega + M^{\mathrm{hyp}}_\Omega X + J_m \Lo X =  G, & & G: \T^n \times I_T \to \mathcal{M}_{2m}(\R),
    \end{array} \right.
\end{equation}
in the unknowns $\chi: \T^n \times I_T \to \R$, $\boldsymbol{\chi}: \T^n \times I_T \to \R^{2m}$ and $X: \T^n \times I_T \to \mathcal{M}_{2m}(\R)$ (restricted to appropriate Banach spaces of functions with exponential decay),  where $\omega \in \R^n$, $\Omega = \mathrm{diag}(\Omega_1,\dots,\Omega_m)$ with $\Omega_1,\dots,\Omega_m \in \R_{> 0}$, and the matrices $M^{\mathrm{hyp}}_\Omega$ and $J_m$ are as in \eqref{Ms} and~\eqref{def:J}, respectively. Recall that  $I_T = [T, + \infty)$, for any $T \geq 1$.

 The remainder of this section is structured as follows. In Section \ref{sc:functional_setting_hyp}, we introduce several Banach spaces of functions with exponential decay in time. In Section \ref{sc:initial_ham_hyp} we write the unperturbed Hamiltonian of Theorem \ref{Thm:hyp_Csigma} in a suitable form and set some notations. In Sections \ref{sec:proof_Thm_hyp_Csigma_1} and \ref{sec:proof_thm_b_hyp_2}, we prove items \eqref{thm:B_existence} and \eqref{thm:B_transverse} of Theorem \ref{Thm:hyp_Csigma}, respectively, assuming several results concerning the cohomological equations mentioned above which, for the sake of clarity of exposition, we prove later in Section \ref{sc:cohom_eqs_hyp}. In Section \ref{sc:proof_analytic_hyp} we prove Theorem \ref{Thm:hyp_analy}.  Section \ref{sec:proof_cor_hyp} is dedicated to the proof of Corollary \ref{cor:hyp}. Finally, Section \ref{sc:norm_properties_hyp} contains several technical results about the norms and spaces introduced in Sections \ref{sc:functional_setting_hyp} and \ref{sc:proof_analytic_hyp}.%

\subsection{Functional setting}\label{sc:functional_setting_hyp}  In the following, we let $\omega \in \R^n$, $\Omega = \mathrm{diag}(\Omega_1,\dots,\Omega_m)$ with $\Omega_1,\dots,\Omega_m \in \R_{> 0}$, $\sigma \geq 1$, $k \in \Z_{>0}$, $\lambda \geq 0$ and $T \geq 0$, and we use the notations in \eqref{eq:coh_eqs_hyp} for the operators $\Lo, \mathcal{L}^{\mathrm{hyp}}_{\omega, \Omega}$ and $\mathfrak{L}^{\mathrm{hyp}}_{\omega, \Omega}.$

\subsubsection{Real-valued functions with exponential decay} \label{sc:functions_exp_decay} Recall that, the Banach space $\Sexp_{(\sigma, k), \lambda}$ is defined in~\eqref{def:Sexp} and the associated norm $|\cdot |^{\mathrm{exp}, T}_{\sigma+k, \lambda}$ in \eqref{def:norm_Sexp}.

In what follows, we introduce the counterparts of the spaces defined in~\eqref{def: S0ell},~\eqref{def:U}, and~\eqref{def:W} for functions decaying exponentially fast in time. Given $k \ge 1$, we define
\begin{equation}\label{def: S0exp}
    \mathcal{S}^{\mathrm{exp},T}_{(\sigma,k), (0,\lambda)} = \left\{f:\T^n \times B \times I_T \to \R  \hspace{1mm} \left| \hspace{1mm} f \in \mathcal{S}^{\mathrm{exp}, T}_{(\sigma,k), 0}, \hspace{2mm} \partial_q f \in \mathcal{S}^{\mathrm{exp}, T}_{(\sigma, k-1), \lambda}\right\}\right.,
\end{equation}
\begin{equation}\label{def:Uexp}
    \mathcal{U}^{\mathrm{exp}, T}_{\sigma, \omega, \lambda} = \left\{u:\T^n \times I_T \to \R \hspace{1mm} \left| \hspace{1mm} u \in \mathcal{S}^{\mathrm{exp},T}_{(\sigma,0), \lambda}, \hspace{2mm}\Lo u \in \mathcal{S}^{\mathrm{exp}, T}_{(\sigma,0), \lambda}\right\}\right.,
\end{equation}
\begin{equation}\label{def:Wexp}
    \mathcal{W}^{\mathrm{hyp}, T}_{\sigma, \omega, \Omega, \ell} = \left\{w:\T^n \times I_T \to \R^{2m} \hspace{1mm} \left| \hspace{1mm} w \in \mathcal{S}^{\mathrm{exp}, T}_{(\sigma,0), \lambda}, \hspace{2mm}\mathcal{L}^{\mathrm{hyp}}_{\omega,\Omega} w \in \mathcal{S}^{\mathrm{pol}, T}_{(\sigma,0), \lambda}\right\}\right.,
\end{equation}
endowed with the norms
\begin{equation}\label{def:norm_S0exp}
    |f|^{\mathrm{exp}, T}_{(\sigma,k), (0,\lambda)} = \max\{|f|^{\mathrm{exp}, T}_{\sigma+k, 0}, |\partial_qf|^{\mathrm{exp}, T}_{\sigma+k-1, \lambda}\},
\end{equation}
\begin{equation}\label{def:norm_Uexp}
    |u|^{\mathrm{exp}, T}_{\sigma, \omega, \lambda} = \max \left\{|u|^{\mathrm{exp}, T}_{\sigma, \lambda}, |\Lo u|^{\mathrm{exp}, T}_{\sigma, \lambda}\right\},
\end{equation}
\begin{equation}\label{def:norm_Wexp}
    |w|^{\mathrm{exp}, T}_{\sigma, \omega, \Omega, \lambda} = \max \left\{|w|^{\mathrm{exp}, T}_{\sigma, \lambda}, |\mathcal{L}^{\mathrm{hyp}}_{\omega,\Omega} w|^{\mathrm{exp}, T}_{\sigma, \lambda}\right\},
\end{equation}
respectively.

\subsubsection{Matrix-valued functions with exponential decay}
\label{sc:matrices_exp_decay}
In this part, we introduce a variant of the Banach spaces of matrix-valued functions defined earlier in this section that retains the same structure but incorporates exponential time decay rather than polynomial decay. We define
\begin{equation}
\begin{aligned}\label{def:M_Sym_exp_Holder}
     \mathcal{M}^{\mathrm{exp}, T}_{\sigma, \lambda} &= \left\{M:\T^n \times I_T \to \mathcal{M}_{2m}( \R) \hspace{1mm} \left| \hspace{1mm}  \forall 1 \leq i, j \leq m, \quad M_{ij}  \in \mathcal{S}^{\mathrm{exp}, T}_{(\sigma,0), \lambda} \right\}\right. , \\
          \mathcal{S}ym^{\mathrm{exp}, T}_{\sigma, \lambda} &= \left\{M:\T^n \times I_T \to \textup{Sym}(2m, \R) \hspace{1mm} \left| \hspace{1mm}  M \in \mathcal{M}^{\mathrm{exp}, T}_{\sigma, \lambda} \right\}\right. ,
          \end{aligned}
          \end{equation}
endowed with the norm,
\[ |M|^{\mathrm{exp}, T}_{\sigma, \lambda} := \max_{1 \leq i, j \leq m} |M_{ij}|^{\mathrm{exp}, T}_{\sigma, \lambda},\]
 and
 \begin{align}\label{def:Sp_exp_Holder}
\mathcal{S}p^{\mathrm{hyp}, T}_{\sigma, \omega, \Omega, \lambda}
&  = \left\{M:\T^n \times I_T \to \mathfrak{sp}(2m, \R) \hspace{1mm} \left| \hspace{1mm} M \in \mathcal{M}^{\mathrm{exp}, T}_{\sigma, \lambda}, \quad \mathfrak{L}_{\omega, \Omega}^{\mathrm{hyp}} M \in \mathcal{M}^{\mathrm{exp}, T}_{\sigma, \lambda}  \right\} \right.,
\end{align}
endowed with the norm 
\[ |M|^{\mathrm{exp}, T}_{\sigma, \omega, \Omega, \lambda} = \max\left \{ |M|_{\sigma, \lambda}^{\mathrm{exp}, T} ,|\mathfrak{L}_{\omega, \Omega}^{\mathrm{hyp}} M|^{\mathrm{exp}, T}_{\sigma, \lambda} \right\}.\]

\subsubsection{Perturbations and torus embeddings with exponential decay}
\label{sc:perturbation_hyp}
To describe the space of perturbations considered in Theorem \ref{Thm:hyp_Csigma} and the families of torus embeddings with exponential decay, we define
\begin{equation}\label{proof_hyp_1_def_spaces_P_E}
    \begin{aligned}
    &\mathcal{P}^{\mathrm{hyp}, T}_{\sigma, \lambda} = \mathcal{S}^{\mathrm{exp},T}_{(\sigma,2), (0,\lambda)}(\R) \times \mathcal{S}^{\mathrm{exp},T}_{(\sigma,2), \lambda}(\R^n) \times \mathcal{S}^{\mathrm{exp},T}_{(\sigma,2), \lambda}(\R^{2m}) \times \mathcal{S}^{\mathrm{exp},T}_{(\sigma,2), \lambda}(\R^{2m \times 2m}),\\
    &\mathcal{E}^{\mathrm{hyp}, T}_{\sigma, \lambda} = \mathcal{U}^{\mathrm{exp},T}_{\sigma, \omega, \lambda} \times \mathcal{U}^{\mathrm{exp},T}_{\sigma, \omega, \lambda} \times \mathcal{W}^{\mathrm{hyp}, T}_{\sigma, \omega, \Omega, \lambda},
    \end{aligned}
\end{equation}
each endowed with the sum norm associated with the corresponding product space. With a slight abuse of notation, we use the notation $| \cdot |^{\mathrm{hyp}, T}_{\sigma, \lambda}$ for both norms, whenever no confusion can arise. Notice that these are well-defined Banach spaces. 

Similarly to Sections \ref{sc:embeddings_ell} and \ref{sc:perturbation_ell}, as an abuse of notation, we denote the elements of these product spaces simply as $P = (a, b, c, d) \in \mathcal{P}^{\mathrm{hyp}, T}_{\sigma, \lambda}$ and $\varphi = (u, v, w) \in \mathcal{E}^{\mathrm{hyp}, T}_{\sigma, \lambda}$. Moreover, we will use the same letters to denote the associated perturbation $P$ (given by \eqref{eq:perturbation_form_hyp}) and family of torus embeddings $\varphi$ (given by \eqref{eq:formula_torus_embeddings}). 

With these notations, the space of perturbations associated with Items \eqref{thm:B_existence} and \eqref{thm:B_transverse} of Theorem \ref{Thm:hyp_Csigma} is given by, $\mathcal{P}^{\mathrm{hyp}, 0}_{\sigma, \lambda}$ with 
\[\sigma \geq 1, \quad \lambda > \max_{1 \leq i \leq m} \Omega_i, \qquad \text{and} \qquad \sigma \geq 2, \quad \lambda > 2\max_{1 \leq i \leq m} \Omega_i,\] respectively. 

Moreover, as we shall see in Section \ref{sec:proof_Thm_hyp_Csigma_1}, for any $P \in  \mathcal{P}^{\mathrm{hyp}, 0}_{\sigma, \lambda}$ with $\sigma \geq 1$ and $\lambda > \max_{1 \leq i \leq m} \Omega_i$ there exists a $C^\sigma$ asymptotic KAM torus in $ \mathcal{E}^{\mathrm{hyp}, T}_{\sigma, \lambda}$ associated with $(X_{H_0 + P}, X_{H_0}, \varphi_0)$, for some $T \geq 0$.

\subsection{Initial Hamiltonian and conventions in notation}\label{sc:initial_ham_hyp}

For the remainder of this section, we fix $\Omega_1,\dots,\Omega_m \in \R_{> 0}$, $\omega \in \R^n$, $\sigma \geq 1$ and $H_0: \T^n \times B \times I_1 \to \R$ of the form \eqref{eq:initial_hamiltonian} as in the statement of Theorem \ref{Thm:hyp_Csigma}, where $B \subseteq \R^{n +2m}$ is a ball centred at $0$ and $I_T = [T, + \infty)$, for any $T \geq 0$. For the sake of simplicity, let us assume that $B = B_{\R^{n + 2m}}(1)$ is the unit ball in $\R^{n + 2m}$. We denote $\Omega = \mathrm{diag}(\Omega_1,\dots,\Omega_m)$.

Since all the norms considered in this section concern (products of) function spaces with polynomial decay, for the sake of clarity and to simplify the notation, we omit the superscripts $\mathrm{hyp}$ and $\mathrm{exp}$ from all the norms whenever there is no risk of confusion.

Using the notations introduced in Section \ref{sec:FS_Ref_Ham},  we can express $H_0$ as in \eqref{eq:initial_Hamiltonian_NR}, namely,
\[         H_0(q,p,z,t) = N(p,z) + R(q,p,z,t), \]
where 
\begin{equation*}
\begin{array}{rcl}   N(p,z) &= & \omega \cdot p + {1 \over 2} \Mhyp \cdot z^2, \\
    R(q,p,z,t) &= &M(q,p,z,t) \cdot p^2 + m(q,z,t) \cdot (p,z) + L(q,z,t) \cdot z^3,
\end{array}
\end{equation*}
$\Mhyp$ is given by \eqref{Ms}, $M, m, L$ are given by \eqref{def:abcdMmL_terms} and satisfy
\begin{equation}
  \label{def:const_Upsilon_hyp}
|M|^{0}_{\sigma + 3, 0}, |m|_{\sigma + 3, 0}^{0}, |L|_{\sigma + 2, 0}^{ 0} \le|\partial_{(p,z)}^2 H_0|^{0}_{\sigma+3, 0} \leq \Upsilon, \end{equation}
for some $\Upsilon > 0$ that we fix for the rest of the section.

We recall that the trivial embedding $\varphi_0: \T^n \to \T^n \times \R^{n + 2m}$, given by \eqref{def:varphi0=(q,0,0)}, defines an invariant torus for $H_0$ supporting quasiperiodic solutions with frequency vector $\omega$.

As mentioned before, we prove the two statements of Theorem \ref{Thm:hyp_Csigma} separately. The structure of the proof is similar to that of Theorem \ref{Thm:elliptic_Csigma}. For this reason, some parts of the proof are only sketched.

\subsection{Proof of Item \eqref{thm:B_existence} of Theorem \ref{Thm:hyp_Csigma}: Construction of $C^\sigma$ asymptotic KAM tori}\label{sec:proof_Thm_hyp_Csigma_1} 
In the following, we will use freely the notations introduced at the beginning of this section and in the previous two subsections.

Let us fix  $\lambda > \max_{1 \le i \le m} \Omega_i$. Recall that the space of perturbations associated with Item \eqref{thm:B_existence} of Theorem \ref{Thm:elliptic_Csigma} is given by  $\mathcal{P}^{\mathrm{hyp}, 0}_{\sigma, \lambda}$ (see Section \ref{sc:perturbation_hyp}).

 Given $r, \rho>0$ and $T \geq 0$, we denote by $B_{\mathcal{P}^{\mathrm{hyp}, T}_{\sigma, \lambda}}(r)  \subseteq  \mathcal{P}^{\mathrm{hyp}, T}_{\sigma, \lambda}$ and $B_{ \mathcal{E}^{\mathrm{hyp}, T}_{\sigma, \lambda}}(\rho) \subseteq \mathcal{E}^{\mathrm{hyp}, T}_{\sigma, \lambda}$ the open balls centered at the origin with radius $r$ and $\rho$, respectively. In the spirit of the proof of Item \eqref{thm:A_existence} of Theorem \ref{Thm:elliptic_Csigma}, contained in Section \ref{sec:proof_thm_a_ell_1}, we will show the following.

\begin{proposition}
\label{prop:B_existence}
There exists $C_0 > 1$ such that for any $r > 0$ there exists $T_0 \geq 0$ satisfying the following. For any $T \geq T_0$, there exists a $C^1$-map
\[ \boldsymbol{\varphi}^T: B_{\mathcal{P}^{\mathrm{hyp}, T}_{\sigma, \lambda}}(r)  \subseteq  \mathcal{P}^{\mathrm{hyp}, T}_{\sigma, \lambda}  \to  B_{ \mathcal{E}^{\mathrm{hyp}, T}_{\sigma, \lambda}}(C_0r) \subseteq \mathcal{E}^{\mathrm{hyp}, T}_{\sigma, \lambda}\]
such that $\boldsymbol{\varphi}^T(P)$ defines an asymptotic KAM torus associated with $(X_{H_0 + P}, X_{H_0}, \varphi_0)$,  for any $P \in  \mathcal{P}^{\mathrm{hyp}, T}_{\sigma, \lambda} $ with $|P|^T_{\sigma, \lambda} < r$.
\end{proposition}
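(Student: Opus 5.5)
The plan is to mirror the proof of Proposition \ref{prop:A_existence}. We define the functional
\[
\mathcal{F}^T(P,\varphi) = X_{H_0+P}\circ\varphi - \Lo\varphi
\]
on $\mathcal{P}^{\mathrm{hyp},T}_{\sigma,\lambda}\times\mathcal{E}^{\mathrm{hyp},T}_{\sigma,\lambda}$, with values in $\mathcal{S}^{\mathrm{exp},T}_{(\sigma,0),\lambda}(\R^n)\times\mathcal{S}^{\mathrm{exp},T}_{(\sigma,0),\lambda}(\R^n)\times\mathcal{S}^{\mathrm{exp},T}_{(\sigma,0),\lambda}(\R^{2m})$. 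Its zeros are exactly the $C^\sigma$ asymptotic KAM tori of the required form. Proposition \ref{prop:B_existence} then follows from the quantitative Implicit Function Theorem \ref{thm:QIFT}, once the hyperbolic analogue of Proposition \ref{prop:F_properties} is established. That analogue consists of four items:
\begin{enumerate}
\item well-definedness of $\mathcal{F}^T$ on a neighbourhood that contains any fixed product of balls once $T$ is large;
\item a bound $|\mathcal{F}^T(P,0)|\le Cr$;
\item invertibility of $D_\varphi\mathcal{F}^T(0,0)$ with $T$-uniform bounds;
\item $\|D_\varphi\mathcal{F}^T(P,\varphi)-D_\varphi\mathcal{F}^T(0,0)\|\to0$ uniformly on balls as $T\to\infty$.
\end{enumerate}
With these, we set $\rho=2C\bar C r$ and repeat verbatim the argument given after Proposition \ref{prop:F_properties}.

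Items (1), (2) and (4) come from the explicit formulas \eqref{eq:terms_F_ell_1}, with $\Mell$ replaced by $\Mhyp$, together with the exponential analogues of Proposition \ref{prop:Csigma_prop_norms_pol}. Under exponential weights, products of functions in $\mathcal{S}^{\mathrm{exp}}_{\lambda}$ and $\mathcal{S}^{\mathrm{exp}}_{\lambda'}$ lie in $\mathcal{S}^{\mathrm{exp}}_{\lambda+\lambda'}$, and $|f|_{\lambda}\le e^{-\lambda' T}|f|_{\lambda+\lambda'}$. For item (1), we restrict to $|\varphi|<e^{\lambda T}$ (or any radius $\rho$ with $T$ large), which gives $|u|,|v|,|w|_{C^0}<1$, so the composition is defined. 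For item (4), every term of $D_\varphi\mathcal{F}^T(P,\varphi_*)-D_\varphi\mathcal{F}^T(0,0)$ carries an extra factor from $\varphi_*$ or from $P$ (in particular $b,c,d,\partial_q a$) decaying at rate $\lambda$. The target norm therefore gains a factor $e^{-\lambda T}$, yielding the bound $Ce^{-\lambda T}$ in place of $C/T^{\ell-1}$. Here all components share the same rate $\lambda$, so there is no need for the staggered indices $\mathcal{K}$ and $\mathcal{L}$ used in the elliptic case.

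The core is item (3), i.e.\ the triangular system
\[
\overline M_0\hat v+m_0\hat w-\Lo\hat u=g_1,\qquad -\Lo\hat v=g_2,\qquad \mathcal{L}^{\mathrm{hyp}}_{\omega,\Omega}\hat w+J_m\overline m_0\hat v=g_3.
\]
We solve it in the order $\hat v$, then $\hat w$, then $\hat u$.

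For $\Lo$ on exponentially decaying data, we use $u(q,t)=-\int_t^\infty g(q+\omega(\tau-t),\tau)\,d\tau$. This gives $|u|_{\sigma,\lambda}\le\lambda^{-1}|g|_{\sigma,\lambda}$ and $|\Lo u|_{\sigma,\lambda}=|g|_{\sigma,\lambda}$, so $\Lo$ is an isomorphism $\mathcal{U}^{\mathrm{exp}}_{\sigma,\omega,\lambda}\to\mathcal{S}^{\mathrm{exp}}_{(\sigma,0),\lambda}$. Uniqueness holds because the kernel of $\Lo$ consists of functions constant along orbits, which cannot decay.

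For $\mathcal{L}^{\mathrm{hyp}}_{\omega,\Omega}$, we diagonalize $J_m\Mhyp$ in the variables $\zeta_\pm=x\pm y$, whose eigenvalues are $\mp\Omega_j$. This reduces the problem to scalar equations $\mu z-\Lo z=g$ with $\mu=\pm\Omega_j$, each solved by
\[
z(q,t)=\int_t^\infty e^{\mu(\tau-t)}g(q+\omega(\tau-t),\tau)\,d\tau .
\]
The integrand is bounded by $e^{\mu(\tau-t)}e^{-\lambda\tau}$. This is exactly where $\lambda>\max_j\Omega_j$ enters: for $\mu=+\Omega_j$ the integral converges, and the result is bounded by $(\lambda-\Omega_j)^{-1}e^{-\lambda t}|g|$. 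For $\mu=-\Omega_j$ the bound $(\lambda+\Omega_j)^{-1}$ is immediate. Uniqueness in $\mathcal{S}^{\mathrm{exp}}_{\lambda}$ follows because homogeneous solutions grow or decay only like $e^{\pm\Omega_j t}$, which is slower than $e^{-\lambda t}$. We regard this step as the main (though mild) obstacle. We must check that the weighted estimate is $T$-independent, since the constant is $(\lambda-\max\Omega_j)^{-1}$, and that the $C^\sigma$ norms in $q$ pass under the integral, as in Lemma \ref{lemma:HE_interm_LoO}.

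Finally, $\hat u$ is obtained by inverting $\Lo$ once more. Products such as $\overline M_0\hat v$ stay in $\mathcal{S}^{\mathrm{exp}}_{\lambda}$ by Lemma \ref{lemma:def_bar_M_m_L}, which gives $\|D_\varphi\mathcal{F}^T(0,0)^{-1}\|\le\bar C(\sigma,\lambda,\Omega,\Upsilon)$ independent of $T$. Uniqueness of the torus in the intersection of domains is then inherited from the local uniqueness in Theorem \ref{thm:QIFT}, combined with the flow relation \eqref{eq:asymptotic_KAM_flow}, exactly as in the elliptic case.
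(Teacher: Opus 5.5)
Your proposal is correct and follows the paper's route: establish the hyperbolic analogue of Proposition~\ref{prop:F_properties} (the paper's Proposition~\ref{prop:F_properties_hyp}) and apply Theorem~\ref{thm:QIFT} with $\rho=2C\bar C r$, the key step being the invertibility of $\mathcal{L}^{\mathrm{hyp}}_{\omega,\Omega}$ through the splitting $x\pm y$, exactly as in Lemma~\ref{lemma:HE_interm_LoO_exp} and Proposition~\ref{prop:Csigma_HEomegaOmega_exp}. One harmless slip: the decaying solution of $\mu z-\Lo z=g$ is $z(q,t)=\int_t^{+\infty}e^{\mu(t-\tau)}g(q+\omega(\tau-t),\tau)\,d\tau$, not $e^{\mu(\tau-t)}$, so it is $\mu=-\Omega_j$ (the $x+y$ component) that requires $\lambda>\Omega_j$ and gives the constant $(\lambda-\Omega_j)^{-1}$, while $\mu=+\Omega_j$ holds without any condition and gives $(\lambda+\Omega_j)^{-1}$; since both signs occur, your conclusion (the condition $\lambda>\max_j\Omega_j$ and a $T$-independent bound) is unaffected.
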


Notice that Item \eqref{thm:B_existence} of Theorem \ref{Thm:hyp_Csigma} follows directly from the proposition above since
\begin{equation*}
    \big|P\big|_{\sigma, \lambda}^{T} \leq |P|_{\sigma, \lambda}^{0}, \qquad \text{ for any } T \geq 0\,  \text{ and any } \, P \in \mathcal{P}^{\mathrm{hyp}, 0}_{\sigma, \lambda}. 
\end{equation*}

To prove Proposition \ref{prop:B_existence} we consider the functional 
\begin{equation}\label{def:F_hyp_1}
    \Function{\mathcal{F}^T}{\mathcal{P}^{\mathrm{hyp}, T}_{\sigma, \lambda} \times \mathcal{E}^{\mathrm{hyp}, T}_{\sigma, \lambda}}{\mathcal{S}^{\mathrm{exp},T}_{(\sigma,0), \lambda}(\R^n) \times \mathcal{S}^{\mathrm{exp},T}_{(\sigma,0), \lambda}(\R^n) \times \mathcal{S}^{\mathrm{exp},T}_{(\sigma,0), \lambda}(\R^{2m})}{(P, \varphi)}{X_{H_0 + P} \circ \varphi - \Lo \varphi },
\end{equation}
and endow the codomain of $\mathcal{F}^T$, namely $\mathcal{S}^{\mathrm{exp},T}_{(\sigma,0), \lambda}(\R^n) \times \mathcal{S}^{\mathrm{exp},T}_{(\sigma,0), \lambda}(\R^n) \times \mathcal{S}^{\mathrm{exp},T}_{(\sigma,0), \lambda}(\R^{2m})$ with the sum norm associated with the product, which we denote by $|\cdot|^T_{\boldsymbol{\sigma, \lambda}}$, where the parameters are written in bold symbols.

Notice that this functional is defined by the same formula as the one introduced in Section \ref{sec:proof_thm_a_ell_1} and given in \eqref{proof:Csigma_def_F_ell_1}, but with a different unperturbed Hamiltonian $H_0$, and with domain and codomain consisting of functions with exponential rather than polynomial decay. Moreover, by the arguments given in Section \ref{sec:proof_thm_a_ell_1} it follows that, for $(P, \varphi)$ in the domain of definition, $\mathcal{F}^T(P, \varphi) = 0 $ if and only if $\varphi$ defines a $C^\sigma$ asymptotic KAM torus for $H_0 + P$.

The differential of $\mathcal{F}^T$ in the variables $\varphi = (u,v,w)$ evaluated at $(0,0)$ is given by
\begin{equation}\label{proof:Csigma_def_hyp_DF_1}
\Function{D_{\varphi}\mathcal{F}(0,0)}{\mathcal{E}^{\mathrm{hyp}, T}_{\sigma,\lambda}}{ \mathcal{S}^{\mathrm{exp},T}_{(\sigma,0), \lambda}(\R^n) \times \mathcal{S}^{\mathrm{exp},T}_{(\sigma,0), \lambda}(\R^n) \times \mathcal{S}^{\mathrm{exp},T}_{(\sigma,0), \lambda}(\R^{2m})}{(\hat u, \hat v, \hat w) }{\begin{pmatrix} \overline{M}_0 \cdot \hat v + m_0 \cdot \hat w - \Lo \hat u\\
    -\Lo \hat v\\
    \mathcal{L}^{\mathrm{hyp}}_{\omega, \Omega} \hat w + J_m\overline{m}_0 \cdot \hat v
    \end{pmatrix}},
\end{equation}
where we used the notation (subscript $0$) introduced in ~\eqref{def:f0} and the functions $\bar m, \bar M$ are given by Lemma \ref{lemma:def_bar_M_m_L}. We refer to~\eqref{eq:coh_eqs_hyp} for the definition of the linear operators $\Lo$ and $\mathcal{L}^{\mathrm{hyp}}_{\omega, \Omega}$. Notice that this is given precisely \eqref{proof:Csigma_def_ell_DF_1} (that is, the differential of the map $D_\varphi \mathcal{F}$ in the elliptic case) but replacing the operator $\mathcal{L}^{\mathrm{ell}}_{\omega, \Omega}$ by $\mathcal{L}^{\mathrm{hyp}}_{\omega, \Omega}$.

Proposition \ref{prop:B_existence} will be a consequence of the following analogue of Proposition \ref{prop:F_properties}.

\begin{proposition}
\label{prop:F_properties_hyp}
    The operator $\mathcal{F}^T$ given by \eqref{def:F_hyp_1} satisfies the following.
    \begin{enumerate}
    \item \label{prop:F_well_defined_hyp} For any $T \geq 0$, there exists a neighbourhood $\mathcal{U}^T \subseteq \mathcal{P}^{\mathrm{hyp}, T}_{\sigma,\lambda} \times \mathcal{E}^{\mathrm{hyp}, T}_{\sigma, \lambda}$ of $(0, 0)$ such that $\mathcal{F}^T\mid_{\mathcal{U}^T}$ and $D_\varphi\mathcal{F}^T\mid_{\mathcal{U}^T}$ are well-defined continuous maps. 
    
    \noindent Moreover, for any $r, \rho \geq 0$ there exists $T_0 \geq 0$ such that, for any $T \geq T_0$,  
    \[B_{\mathcal{P}^{\mathrm{hyp}, T}_{\sigma,\lambda}}(r) \times B_{\mathcal{E}^{\mathrm{hyp}, T}_{\sigma, \lambda}}(\rho) \subseteq \mathcal{U}^T.\]

         \item \label{prop:F_bounded_hyp}There exists a constant $C$,
     such that for any $r > 0$ and any $T \geq 0$,
\[ \sup \left\{ |\mathcal{F}^T(P, 0)|_{\boldsymbol{\sigma, \lambda}}^T \,\left|\, P \in B_{\mathcal{P}^{\mathrm{hyp}, T}_{\sigma,\lambda}}(r); \,\, (P, 0) \in \mathcal{U}^T \right\}\right. \leq Cr.\]

    \item \label{prop:DF_invertible_hyp} $D_\varphi \mathcal{F}^T(0, 0)$ is invertible, for any $T \geq 0$. Moreover, there exists a constant $\bar C$, independent of $T$, such that $\|D_\varphi \mathcal{F}^T(0, 0)^{-1}\| \leq \bar C$, where $\| \cdot\|$ denotes the operator norm. 

        \item \label{prop:DF_limit_hyp} Let $r, \rho > 0$. Then  
        \[\lim_{T \to +\infty} \| D_\varphi\mathcal{F}^T(P, \varphi) - D_\varphi\mathcal{F}^T(0, 0) \| = 0, \qquad \text{uniformly on } \quad B_{\mathcal{P}^{\mathrm{hyp}, T}_{\sigma,\lambda}}(r) \times B_{\mathcal{E}^{\mathrm{hyp}, T}_{\sigma, \lambda}}(\rho),\]
where $\| \cdot\|$ denotes the operator norm.
    \end{enumerate}
\end{proposition}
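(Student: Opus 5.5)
The plan is to follow the elliptic case, Proposition \ref{prop:F_properties}, item by item, replacing polynomial weights $t^\ell$ by exponential weights $e^{\lambda t}$. The one new ingredient is the invertibility of $\mathcal{L}^{\mathrm{hyp}}_{\omega,\Omega}$, which is where the condition $\lambda>\max_i\Omega_i$ enters.

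For Item \eqref{prop:F_well_defined_hyp}, I would take
\[\mathcal{U}^T=\mathcal{P}^{\mathrm{hyp},T}_{\sigma,\lambda}\times B_{\mathcal{E}^{\mathrm{hyp},T}_{\sigma,\lambda}}(e^{\lambda T/2}).\]
Any $\varphi=(u,v,w)$ in this ball satisfies $|u|_{C^0},|v|_{C^0},|w|_{C^0}\le e^{-\lambda T/2}<1$, so the composition $X_{H_0+P}\circ\varphi$ makes sense. The expansion \eqref{eq:terms_F_ell_1} is unchanged, with $\Mhyp$ in place of $\Mell$. Every term of that expansion is either $\Lo$ applied to a component, a perturbation term of weight $e^{-\lambda t}$, or a product involving at least one of $u,v,w$ with bounded coefficients. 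Using the exponential analogues of Proposition \ref{prop:Csigma_prop_norms_pol} (weights multiply, composition preserves weight), each component therefore lies in $\mathcal{S}^{\mathrm{exp},T}_{(\sigma,0),\lambda}$. Continuity of $\mathcal{F}^T$ and $D_\varphi\mathcal{F}^T$ then follows exactly as in Lemma \ref{lemma:F_well_defined}, and since $e^{\lambda T/2}\to\infty$, any prescribed ball is contained in $\mathcal{U}^T$ for $T$ large.

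Item \eqref{prop:F_bounded_hyp} is immediate. We have
\[\mathcal{F}^T(P,0)=\bigl(b,\,-\partial_q a,\,J_m c\bigr)\circ\varphi_0,\]
so its norm is bounded by $C|P|^T_{\sigma,\lambda}$ with $C$ independent of $T$.

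For Item \eqref{prop:DF_invertible_hyp}, I would solve the triangular system \eqref{proof:Csigma_def_hyp_DF_1} in the order $\hat v$, then $\hat w$, then $\hat u$, as in Lemma \ref{lemma:DF_invertible}.
\begin{itemize}
\item[(i)] $\Lo$ is invertible from $\mathcal{U}^{\mathrm{exp},T}_{\sigma,\omega,\lambda}$ onto $\mathcal{S}^{\mathrm{exp},T}_{(\sigma,0),\lambda}$, via
\[u(q,t)=-\int_t^\infty g(q+\omega(\tau-t),\tau)\,d\tau,\]
which gives $|u|_{\sigma,\lambda}\le\lambda^{-1}|g|_{\sigma,\lambda}$.
\item[(ii)] For $\mathcal{L}^{\mathrm{hyp}}_{\omega,\Omega}$, note that $J_m\Mhyp$ is diagonalized by the variables $\zeta_\pm=\chi^{(1)}\pm\chi^{(2)}$ with eigenvalues $\mp\Omega_j$. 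Conjugating by $\phi(q,t)=(q-\omega t,t)$ as in Lemma \ref{lemma:HE_interm_LoO}, the system reduces to scalar equations
\[\mp\Omega_j k-\partial_t k=\tilde g.\]
The unique solution decaying at $+\infty$ is
\[k(t)=\int_t^\infty e^{\pm\Omega_j(t-\tau)}\tilde g(\tau)\,d\tau.\]
Multiplying by $e^{\lambda t}$ and using $|\tilde g(\tau)|\le|g|\,e^{-\lambda\tau}$ gives the bound $|g|/(\lambda\mp\Omega_j)$. This is finite precisely because $\lambda>\Omega_j$, which is needed for the $+$ sign choice (the growing direction $e^{\Omega_j(t-\tau)}$).
\end{itemize}
The $C^\sigma$ norms in $q$ are controlled identically, since translation in $q$ preserves Hölder norms. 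Uniqueness holds because the homogeneous solutions $e^{\pm\Omega_j t}k_0(q)$ cannot lie in an $e^{-\lambda t}$-weighted space when $\lambda>\Omega_j$. The resulting bound on $\|D_\varphi\mathcal{F}^T(0,0)^{-1}\|$ depends only on $\lambda,\Omega,\sigma,\Upsilon$, not on $T$. I expect this step, together with the bookkeeping that the coupling terms $\overline{M}_0\hat v$, $m_0\hat w$, $J_m\overline m_0\hat v$ keep the weight $\lambda$, to be the main point.

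For Item \eqref{prop:DF_limit_hyp}, I would reuse the explicit differences $I_1,I_2,I_3$ from Lemma \ref{lem:DF_limit}. Each term contains a factor coming from $P$ or $\varphi_*$, each of weight $e^{-\lambda t}$, multiplied by a component of $\varphi$. Hence each $I_i$ lies in $\mathcal{S}^{\mathrm{exp},T}_{(\sigma,0),2\lambda}$ with norm $\le C|\varphi|$, where $C$ depends on $r$, $\rho$ and $\Upsilon$. By the exponential analogue of Proposition \ref{prop:Csigma_prop_norms_pol}(2),
\[|\cdot|^T_{\sigma,\lambda}\le e^{-\lambda T}\,|\cdot|^T_{\sigma,2\lambda},\]
so
\[\|D_\varphi\mathcal{F}^T(P,\varphi_*)-D_\varphi\mathcal{F}^T(0,0)\|\le Ce^{-\lambda T}\to0\]
uniformly on the balls.
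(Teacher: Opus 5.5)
Your proposal is correct and follows the paper's route: the same triangular solve $\hat v\to\hat w\to\hat u$ using the exponentially weighted inverses of $\Lo$ and $\mathcal{L}^{\mathrm{hyp}}_{\omega,\Omega}$ (Lemma \ref{lemma:HE_interm_LoO_exp}, Proposition \ref{prop:Csigma_HEomegaOmega_exp}), and the difference bound of Lemma \ref{lem:DF_limit} with $T^{\ell-1}$ replaced by $e^{\lambda T}$. One sign slip needs fixing: for $\mp\Omega_j k-\partial_t k=\tilde g$ the decaying solution is $k(t)=\int_t^{\infty}e^{\mp\Omega_j(t-\tau)}\tilde g(\tau)\,d\tau$, not $e^{\pm\Omega_j(t-\tau)}$, so it is the stable mode $\zeta_+$ that needs $\lambda>\Omega_j$ for both convergence and uniqueness, since its kernel $e^{\Omega_j(\tau-t)}$ grows in $\tau$ and its homogeneous solutions $e^{-\Omega_j t}$ decay; this is consistent with your stated bound $|g|/(\lambda\mp\Omega_j)$.
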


\begin{proof}[Proof of Proposition \ref{prop:B_existence}]
Assuming Proposition \ref{prop:F_properties_hyp}, the proof of Proposition \ref{prop:B_existence} follows exactly the same lines as that of Proposition \ref{prop:A_existence}, which relies on Proposition \ref{prop:F_properties} and uses a quantitative version of the Implicit Function Theorem (Theorem \ref{thm:QIFT}), and thus we omit it here. 
\end{proof}

Finally, let us sketch the proof of Proposition \ref{prop:F_properties_hyp}.

\begin{proof}[Proof of Proposition \ref{prop:F_properties_hyp}] As in Proposition \ref{prop:F_properties}, Properties \eqref{prop:F_well_defined_hyp}, \eqref{prop:F_bounded_hyp} and \eqref{prop:DF_limit_hyp} follow from straightforward calculations while the proof of Property \eqref{prop:DF_invertible_hyp} requires the invertibility of certain linear operator (namely $\mathcal{L}^{\mathrm{hyp}}_{\omega, \Omega}$, see Proposition \ref{prop:Csigma_HEomegaOmega_exp}) which appears naturally in the cohomological equations related to the invertibility of $D_\varphi \mathcal{F}^T(0, 0)$ (see the proof of Lemma \ref{lemma:DF_invertible}). 

Furthermore, the proofs of the properties above follow closely the proofs of the corresponding properties in Proposition \ref{prop:F_properties}, by considering norms with exponential decay rather than with polynomial decay and with very minor variations. 

More precisely, to prove Property \eqref{prop:F_well_defined_hyp} it suffices to replace $T^{\ell - 1}$ by $e^{\lambda T}$ in \eqref{eq:domain_F_ell} (that is, to consider $\mathcal{U}^T =  \mathcal{P}^{\mathrm{hyp}, T}_{\sigma,\lambda} \times B_{\mathcal{E}^{\mathrm{hyp}, T}_{\sigma, \lambda}}({e^{\lambda T}})$) and follow the proof of Lemma \ref{lemma:F_well_defined}. Property \eqref{prop:F_bounded_hyp} is a straightforward computation as in Lemma \ref{lemma:F_bounded}. The proof of Property \eqref{prop:DF_invertible_hyp} follows the exact same lines as the proof of Lemma \ref{lemma:DF_invertible} but using  Proposition \ref{prop:Csigma_HEomegaOmega_exp} (which guarantees the invertibility of $\mathcal{L}^{\mathrm{hyp}}_{\omega, \Omega}$) rather than Proposition \ref{prop:Csigma_HEomega} (which guarantees the invertibility of $\mathcal{L}^{\mathrm{ell}}_{\omega, \Omega}$). Finally, in Property \eqref{prop:DF_limit_hyp}, it suffices to replace $T^{\ell - 1}$ by $e^{\lambda T}$ in \eqref{eq:DF_difference_bound} and follow the proof of Lemma \ref{lem:DF_limit}.
\end{proof}

\subsection{Proof of Item \eqref{thm:B_transverse} of Theorem \ref{Thm:hyp_Csigma}: Existence of asymptotic hyperbolic transversal dynamics}\label{sec:proof_thm_b_hyp_2} 
In this section, we prove that the asymptotic KAM torus obtained in Item \eqref{thm:B_existence} of Theorem \ref{Thm:hyp_Csigma} is asymptotically hyperbolic under slightly stronger assumptions on the regularity and on the decay rate of the perturbations (more precisely, on $\sigma$ and $\lambda$).

In the following, we continue using the notation and parameters from the previous subsections, with the only exceptions that we now assume $\sigma \geq 2$ and $\lambda > 2\max_{1 \le i \le m} \Omega_i$.

Recall that, with these notations, the perturbations considered in Item \eqref{thm:B_transverse} can be identified with the space $\mathcal{P}^{\mathrm{hyp}, 0}_{\sigma, \lambda}$ (see Section \ref{sc:perturbation_hyp}).

In the spirit of the proof of Item \eqref{thm:A_transverse} of Theorem \ref{Thm:elliptic_Csigma}, contained in Section \ref{sec:proof_thm_a_ell_2}, we will show the following.

\begin{proposition}
\label{prop:B_transverse}
Fix $r > 0$ and let $T_0 \geq 0$ be given by Proposition \ref{prop:B_existence}. There exist $T_1 \geq T_0$ such that, for any $T \geq T_1$,  there exists a $C^1$-map
\[ \boldsymbol{G}^T: B_{\mathcal{P}^{\mathrm{hyp}, T}_{\sigma, \lambda}}(r)    \subseteq  \mathcal{P}^{\mathrm{hyp}, T}_{\sigma, \lambda} \to \mathcal{S}p^{\mathrm{hyp}, T}_{\sigma-1, \omega, \Omega, \lambda}\]
for which $\boldsymbol{S}(\boldsymbol{\varphi}^T(P),  \boldsymbol{G}^T(P))$ given by \eqref{eq:S_formula} is of class $C^1$ and
\begin{equation*}
\label{eq:IFT_condition_hyp}
\boldsymbol{\zeta}(H_0 + P, \boldsymbol{\varphi}^T(P),  \boldsymbol{G}^T(P)) =  \Mhyp, \qquad \text{for any } P \in B_{\mathcal{P}^{\mathrm{hyp}, T}_{\sigma, \lambda}}(r),
\end{equation*}
where $\boldsymbol{\zeta}$ is given by \eqref{eq:formula_zeta} and  $\boldsymbol{\varphi}^T: B_{\mathcal{P}^{\mathrm{hyp}, T}_{\sigma, \lambda}}(r)   \to \mathcal{E}^{\mathrm{hyp}, T}_{\sigma, \lambda}$  is given by Proposition \ref{prop:B_existence}.

In particular, by Proposition \ref{prop:criteria_non_autonomous}, for any $P \in B_{\mathcal{P}^{\mathrm{hyp}, T}_{\sigma, \lambda}}(r)$, the asymptotic KAM torus $\boldsymbol{\varphi}^T(P)$ is asymptotically hyperbolic.
\end{proposition}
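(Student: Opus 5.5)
We follow the proof of Proposition \ref{prop:A_transverse}: we apply the quantitative Implicit Function Theorem (Theorem \ref{thm:QIFT}) to a functional whose zeros give the required matrices $G$. For $T \geq T_0$ we define
\[\mathcal{G}^T: B_{\mathcal{P}^{\mathrm{hyp}, T}_{\sigma,\lambda}}(r) \times \mathcal{S}p^{\mathrm{hyp}, T}_{\sigma-1, \omega, \Omega, \lambda} \to \mathcal{S}ym^{\mathrm{exp}, T}_{\sigma-1, \lambda}\]
by formula \eqref{eq:formula_F_ell_2}, replacing $\Mell$ by $\Mhyp$ and taking $\varphi = \boldsymbol{\varphi}^T(P)$ from Proposition \ref{prop:B_existence}. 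By Remark \ref{rmk:formula_zeta}, $\mathcal{G}^T(P,G)=0$ is equivalent to $\boldsymbol{\zeta}(H_0+P,\boldsymbol{\varphi}^T(P),G)=\Mhyp$. We then prove the four properties of Proposition \ref{prop:G_properties} in the exponential setting and conclude exactly as in the proof of Proposition \ref{prop:A_transverse}, with $\rho = 2C\bar C r$.

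Well-definedness and the bound $|\mathcal{G}^T(P,0)|^T_{\sigma-1,\lambda}\le Cr$ are proved as in Lemmas \ref{lemma:G_well_defined} and \ref{lemma:G_bounded}. Symmetry follows from the same identity $(K^\top J_m\Lo K)^\top = K^\top J_m \Lo K$. For membership in $\mathcal{S}ym^{\mathrm{exp},T}_{\sigma-1,\lambda}$ we expand $K = e^G$ around $\mathrm{Id}$ and use three facts. First, $\partial_z^2 H\circ\varphi - \Mhyp$ decays like $e^{-\lambda t}$, because $d$, $v$ and $w$ do. Second, $B$ contains $\partial_q w$ and so decays like $e^{-\lambda t}$. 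Third, the linear part in $K-\mathrm{Id}$ is $\mathfrak{L}^{\mathrm{hyp}}_{\omega,\Omega}G$, which lies in $\mathcal{M}^{\mathrm{exp},T}_{\sigma-1,\lambda}$ by definition of $\mathcal{S}p^{\mathrm{hyp}}$. Quadratic remainders decay like $e^{-2\lambda t}$. We need exponential versions of Propositions \ref{prop:Csigma_prop_norms_pol}, \ref{prop:Csigma_prop_norms_matrix} and Lemma \ref{lem:exp_expansion_bounds}. They are proved identically because weights multiply: $e^{-\lambda_1 t}e^{-\lambda_2 t}=e^{-(\lambda_1+\lambda_2)t}$, and $|f|^T_{\sigma,\lambda}\le e^{-\lambda' T}|f|^T_{\sigma,\lambda+\lambda'}$ (this is presumably Lemma \ref{lem:exp_exp_bounds}). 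Using these, the analogue of Lemma \ref{lem:DG_limit} gives
\[\|D_G\mathcal{G}^T(P,G_*)-D_G\mathcal{G}^T(0,0)\| \le C_* e^{-\lambda T}.\]
The difference operator maps into the $2\lambda$-weighted space, and we pay one factor of $e^{-\lambda T}$ to return to weight $\lambda$.

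The main obstacle is the analogue of Lemma \ref{lemma:DG_invertible}: invertibility with a $T$-uniform bound of
\[\mathfrak{L}^{\mathrm{hyp}}_{\omega,\Omega}: \mathcal{S}p^{\mathrm{hyp},T}_{\sigma-1,\omega,\Omega,\lambda}\to\mathcal{S}ym^{\mathrm{exp},T}_{\sigma-1,\lambda}.\]
We decompose $G = \begin{pmatrix} P & Q\\ R & -P^\top\end{pmatrix}$ and the right-hand side $g$ as in Proposition \ref{prop:Csigma_HEomegaOmegaOmega}. Since $\Mhyp=\mathrm{diag}(\Omega,-\Omega)$, suitable real combinations of $P\pm P^\top$, $Q$ and $R$ diagonalize the system entrywise. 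We then get scalar equations of the form $\Lo f + \mu f = h$ with $\mu\in\{\pm(\Omega_j+\Omega_k),\pm(\Omega_j-\Omega_k)\}$. Each is solved by the decaying formula
\[f(q,t) = -\int_t^{+\infty} e^{\mu(\tau-t)}h(q+\omega(\tau-t),\tau)\,d\tau,\]
which converges and satisfies
\[|f|^T_{\sigma,\lambda}\le \frac{1}{\lambda-\mu}|h|^T_{\sigma,\lambda}\]
provided $\lambda>\mu$. The worst case is $\mu = 2\max_i\Omega_i$, which is exactly where the hypothesis $\lambda>2\max_i\Omega_i$ enters. Uniqueness within the decaying class holds because homogeneous solutions grow or decay like $e^{-\mu t}$, which is too slow to lie in the $e^{-\lambda t}$ class. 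This gives $\bar C = C(\lambda,\Omega)$ independent of $T$.

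Finally, as in Proposition \ref{prop:A_transverse}, $G\in\mathcal{S}p^{\mathrm{hyp}}_{\sigma-1}$ and $\partial_q u,\partial_q v,\partial_q w$ are $C^1$, using the exponential analogue of Remark \ref{rmk:extra_regularity_ell}. Hence $\boldsymbol{S}$ is $C^1$, and item (3) of Proposition \ref{prop:criteria_non_autonomous}, with $w, G$ decaying at rate $\lambda>\max\Omega_i$, gives asymptotic hyperbolicity.
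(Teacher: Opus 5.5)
Your proposal is correct and follows essentially the same route as the paper. You define $\mathcal{G}^T$ by \eqref{eq:formula_F_hyp_2}, prove the exponential analogue of Proposition \ref{prop:G_properties} (including the $e^{-\lambda T}$ gain in the derivative-difference bound) and apply Theorem \ref{thm:QIFT}; for the invertibility of $\mathfrak{L}^{\mathrm{hyp}}_{\omega,\Omega}$ you reduce, as in Proposition \ref{prop:Csigma_HEomegaOmegaOmega_hyp} and Lemma \ref{lemma:HE_interm_LoO_exp}, to scalar equations with rates $\pm(\Omega_j\pm\Omega_k)$, where $\lambda>2\max_i\Omega_i$ is exactly what the worst rate $\Omega_j+\Omega_k$ requires (the weight-shift inequality you attribute to Lemma \ref{lem:exp_exp_bounds} is actually item (2) of Proposition \ref{prop:Csigma_prop_norms_matrix_exp}, a harmless misattribution).
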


 We refer the reader to  Section \ref{sc:matrices_exp_decay} for the definition of the space $\big( \mathcal{S}p^{\mathrm{hyp}, T}_{\sigma-1, \omega, \Omega, \lambda}, |\cdot|^T_{\sigma - 1, \omega, \Omega, \lambda} \big)$. 

Notice that Item \eqref{thm:B_transverse} of Theorem \ref{Thm:hyp_Csigma} follows directly from the proposition above, since
\begin{equation*}
    \big|P\big|_{\sigma, \lambda}^{T} \leq |P|_{\sigma, \lambda}^{0}, \qquad \text{ for any } T \geq 0\,  \text{ and any } \, P \in \mathcal{P}^{\mathrm{hyp}, 0}_{\sigma, \lambda}. 
\end{equation*}

In the following, we fix $r > 0$ and let $T_0 \geq 0$ be given by Proposition \ref{prop:B_existence}. Moreover, for any $T \geq T_0$, we will denote by $\boldsymbol{\varphi}^T: B_{\mathcal{P}^{\mathrm{hyp}, T}_{\sigma, \lambda}}(r)   \to \mathcal{E}^{\mathrm{hyp}, T}_{\sigma, \lambda}$ the map given by Proposition \ref{prop:B_existence}. 

To prove Proposition \ref{prop:B_transverse}, we consider the functional 
$$\mathcal{G}^T: B_{\mathcal{P}^{\mathrm{hyp}, T}_{\sigma,\lambda}}(r) \times \mathcal{S}p^{\mathrm{hyp}, T}_{\sigma-1, \omega, \Omega, \lambda}   \to  \mathcal{S}ym^{\mathrm{exp}, T}_{\sigma-1, \lambda}$$ 
as
\begin{equation}
\label{eq:formula_F_hyp_2}
    \begin{aligned}
    \mathcal{G}^T(P, G) =&  B^\top \partial_p^2 H\circ \varphi B + K^{\top} \partial_p\partial_zH\circ \varphi B + B^{\top}  {\left(\partial_p\partial_zH\circ \varphi\right)^\top} K \\
    & + K^{\top} \partial_z^2 H\circ \varphi K + K^{\top} J_m \Lo K - M^{\mathrm{hyp}}_\Omega,
    \end{aligned}
\end{equation}
where $H = H_0 + P$, $\varphi = \boldsymbol{\varphi}^T(P) = (u, v, w)$, $K = \exp(G)$,  $B = -(\mathrm{Id}_{2n} +\partial_q  u )^{-\top} (\partial_q w)^\top J_m K$ and {$\partial_p\partial_zH\circ \varphi$ stands for the $2m \times n$ matrix having coomponents $\partial_{p_i}\partial_{z_j}H\circ \varphi$ for $1 \le i \le n$ and $1 \le j \le 2m$}. We refer the reader to Section \ref{sc:matrices_exp_decay} for the definition of the Banach spaces of symmetric matrices $\big(\mathcal{S}ym^{\mathrm{exp}, T}_{\sigma-1, \lambda}, |\cdot|^T_{\sigma - 1, \lambda}  \big)$. 

Notice that this functional is defined by substituting $M^{\mathrm{ell}}_\Omega$ by $M^{\mathrm{hyp}}_\Omega$ in the functional \eqref{eq:formula_F_ell_2} introduced in Section \ref{sec:proof_thm_a_ell_2} and by restricting its domain to functions with exponential decay. Moreover, by the same arguments provided in Section \ref{sec:proof_thm_a_ell_2}, it follows that 
\begin{equation}
\label{eq:hyperbolic_characterization}
    \text{ If }  \quad \mathcal{G}^T(P, \varphi) = 0 \quad \text{ then } \quad  \boldsymbol{\varphi}^T(P) \text{ is a asymptotically hyperbolic KAM torus.}
\end{equation}

We observe that $\mathcal{G}^T(0,0) = 0$ and that the differential of $\mathcal{G}^T$ with respect to $G$ evaluated at $(0,0)$ is given by
\begin{equation}\label{eq:diff_form_F_hyp_1}
\Function{D_{G}\mathcal{G}^T(0,0)}{\mathcal{S}p^{\mathrm{hyp}, T}_{\sigma-1, \omega, \Omega, \lambda}}{ \mathcal{S}ym^{\mathrm{exp}, T}_{\sigma-1, \lambda}}{\hat G}{\mathfrak{L}^{\mathrm{hyp}}_{\omega, \Omega} (\hat G) = \hat G^{\top}M^{\mathrm{hyp}}_\Omega + M^{\mathrm{hyp}}_\Omega\hat G + J_m \Lo \hat G}.
\end{equation}

Proposition \ref{prop:B_transverse} will be a consequence of the following.

\begin{proposition}
\label{prop:G_properties_hyp}
    The operator $\mathcal{G}^T$ given by \eqref{eq:formula_F_hyp_2} satisfies the following.
    \begin{enumerate}
    \item \label{prop:G_well_defined_hyp} $\mathcal{G}^T$ and $D_\varphi\mathcal{G}^T$ are well-defined continuous maps, for any $T \geq T_0$. 
    
    \item \label{prop:G_bounded_hyp} There exists a constant $C$ such that, for any $T \geq T_0$,
\[ \sup \left\{ |\mathcal{G}^T(P, 0)|^T_{\sigma - 1, \lambda} \,\left|\, P \in B_{\mathcal{P}^{\mathrm{hyp}, T}_{\sigma,\lambda}}(r) \right\}\right. \leq Cr.\] %

    \item \label{prop:DG_invertible_hyp} $D_G \mathcal{G}^T(0, 0)$ is invertible, for any $T \geq T_0$. Moreover, there exists a constant $\bar C$, independent of $T$, such that $\|D_G \mathcal{G}^T(0, 0)^{-1}\| < \bar C$. 
        \item \label{prop:DG_limit_hyp} Let $r, \rho > 0$. Then  
        \[\lim_{T \to +\infty} \| D_G \mathcal{G}^T(P, G) - D_G\mathcal{G}^T(0, 0) \| = 0, \qquad \text{uniformly on } \quad B_{\mathcal{P}^{\mathrm{hyp}, T}_{\sigma,\lambda}}(r) \times B_{\mathcal{S}p^{\mathrm{hyp}, T}_{\sigma-1, \omega, \Omega, \lambda}}(\rho),\]
where $\| \cdot\|$ denotes the operator norm and $B_{\mathcal{S}p^{\mathrm{hyp}, T}_{\sigma-1, \omega, \Omega, \lambda}}(\rho) \subset \mathcal{S}p^{\mathrm{hyp}, T}_{\sigma-1, \omega, \Omega, \lambda}$ stands for a ball of radius $\rho$ centered at the origin.
    \end{enumerate}
\end{proposition}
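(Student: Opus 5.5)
I would follow the elliptic template of Proposition \ref{prop:G_properties} and prove the four items in the same way as Lemmas \ref{lemma:G_well_defined}, \ref{lemma:G_bounded}, \ref{lemma:DG_invertible} and \ref{lem:DG_limit}. Polynomial weights $t^\ell$ are replaced by exponential weights $e^{\lambda t}$. Where the elliptic proof gains a factor $T^{-(\ell+\dec-1)}$ from a product of two decaying factors, here a product of two factors in weight $\lambda$ lies in weight $2\lambda$, and passing back to weight $\lambda$ gains $e^{-\lambda T}$. The norm calculus (products, compositions, exponentials, the bounds of Lemma \ref{lem:exp_expansion_bounds}) carries over verbatim to $\mathcal{M}^{\mathrm{exp},T}_{\sigma,\lambda}$, since it only uses $|fg|_{\sigma,\lambda+\mu}\le C(|f|_{0,\lambda}|g|_{\sigma,\mu}+|f|_{\sigma,\lambda}|g|_{0,\mu})$. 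I would record these facts first as the exponential analogue of Section \ref{sc:norm_properties_ell}.

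For Item \eqref{prop:G_well_defined_hyp}, I would copy the proof of Lemma \ref{lemma:G_well_defined}. Symmetry of $K^\top J_m\Lo K$ is proved exactly as there. For the decay I would decompose
\[
K^\top\partial_z^2H\circ\varphi K+K^\top J_m\Lo K-\Mhyp
= K^\top(\partial_z^2H\circ\varphi-\Mhyp)K+K^\top\mathfrak{L}^{\mathrm{hyp}}_{\omega,\Omega}(K-\mathrm{Id})-(K-\mathrm{Id})^\top(K-\mathrm{Id})^\top\Mhyp .
\]
Using \eqref{eq:der_order_2_Ham}, $d\in\mathcal{S}^{\mathrm{exp}}_{\lambda}$ and $v,w\in\mathcal{S}^{\mathrm{exp}}_{\lambda}$, the first term decays at rate $\lambda$. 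Since $K-\mathrm{Id}-G$ is quadratic in $G$ and $\mathfrak{L}^{\mathrm{hyp}}_{\omega,\Omega}G\in\mathcal{M}^{\mathrm{exp}}_{\sigma-1,\lambda}$, the remaining terms also decay at rate $\lambda$. The terms involving $B$ decay at rate $\lambda$ because $\partial_q w$ does; this is where $\sigma\ge2$ is used, so that we lose one derivative. Item \eqref{prop:G_bounded_hyp} follows as in Lemma \ref{lemma:G_bounded}, using $|\boldsymbol{\varphi}^T(P)|\le C_0 r$ from Proposition \ref{prop:B_existence}. For Item \eqref{prop:DG_limit_hyp} I would reuse the decomposition \eqref{eq:DG_difference}. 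Each term is bounded in weight $2\lambda$ by $C_*|G|_{\sigma-1,\omega,\Omega,\lambda}$, and returning to weight $\lambda$ gives the bound $C_*e^{-\lambda T}\to0$.

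The main step is Item \eqref{prop:DG_invertible_hyp}, the invertibility of $\mathfrak{L}^{\mathrm{hyp}}_{\omega,\Omega}:\mathcal{S}p^{\mathrm{hyp},T}_{\sigma-1,\omega,\Omega,\lambda}\to\mathcal{S}ym^{\mathrm{exp},T}_{\sigma-1,\lambda}$ with a bound uniform in $T$. I would write $G=\begin{pmatrix}P&Q\\R&-P^\top\end{pmatrix}$ and $g=\begin{pmatrix}X&Y\\Y^\top&Z\end{pmatrix}$ as in Proposition \ref{prop:Csigma_HEomegaOmegaOmega}. With $\Mhyp=\mathrm{diag}(\Omega,-\Omega)$ the system becomes real. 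Taking the combinations $P\pm P^\top$ and $R\pm Q$ (no complexification needed), it decouples into scalar equations of the form
\[
\Lo f_{jk}+\mu_{jk}f_{jk}=h_{jk},\qquad \mu_{jk}\in\{\pm(\Omega_j+\Omega_k),\pm(\Omega_j-\Omega_k)\}.
\]
Each is solved by the decaying formula
\[
f(q,t)=\int_t^{\infty}e^{\mu(\tau-t)}h(q+\omega(\tau-t),\tau)\,d\tau .
\]
This converges in weight $\lambda$ precisely when $\lambda>\mu$, with $|f|_{\sigma,\lambda}\le(\lambda-\mu)^{-1}|h|_{\sigma,\lambda}$. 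The worst case is $\mu=2\max_i\Omega_i$, which is exactly where the hypothesis $\lambda>2\max_i\Omega_i$ is needed. Uniqueness in the decaying class holds because homogeneous solutions grow like $e^{-\mu t}$, which is not $O(e^{-\lambda t})$ since $\lambda>\mu$. The bound on $\Lo G$ then follows from the equation itself. I would state this as Proposition-level lemmas in Section \ref{sc:cohom_eqs_hyp}, mirroring Lemma \ref{lemma:HE_interm_LoO}, and I expect the bookkeeping of signs in this decoupling to be the only real obstacle.
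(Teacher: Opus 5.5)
Your proposal is correct and is essentially the paper's argument: Items (1), (2) and (4) follow by transcribing Lemmas \ref{lemma:G_well_defined}, \ref{lemma:G_bounded} and \ref{lem:DG_limit} to exponential weights, with the gain $e^{-\lambda T}$ coming from products that land in weight $2\lambda$, and Item (3) follows by decoupling $\mathfrak{L}^{\mathrm{hyp}}_{\omega,\Omega}$ into scalar equations with rates $\pm(\Omega_j\pm\Omega_k)$, which are solvable in weight $\lambda$ exactly when $\lambda>2\max_i\Omega_i$, as in Proposition \ref{prop:Csigma_HEomegaOmegaOmega_hyp} and Lemma \ref{lemma:HE_interm_LoO_exp}. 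Two cosmetic slips: your integral formula solves $\Lo f+\mu f=-h$, so a minus sign is missing; and the fully decoupled unknowns are $P+P^\top\pm(R-Q)$ and $P-P^\top\pm(R+Q)$ (the paper's $2\mathfrak{B},2\mathfrak{C},2\mathfrak{A},2\mathfrak{D}$), because $P\pm P^\top$ and $R\pm Q$ on their own still satisfy coupled $2\times 2$ systems.
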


\begin{proof}[Proof of Proposition \ref{prop:B_transverse}]
Assuming Proposition \ref{prop:G_properties_hyp}, the proof of Proposition \ref{prop:B_transverse} follows exactly the same lines as that of Proposition \ref{prop:A_transverse}, which relies on Proposition \ref{prop:G_properties} and uses a quantitative version of the Implicit Function Theorem (Theorem \ref{thm:QIFT}), and thus we omit it here. 
\end{proof}

We now provide a sketch of the proof of Proposition \ref{prop:G_properties_hyp}.
\begin{proof}[Proof of Proposition \ref{prop:G_properties_hyp}] 

The proof of Properties \eqref{prop:G_well_defined_hyp}, \eqref{prop:G_bounded_hyp} and \eqref{prop:DG_limit_hyp} of Proposition \ref{prop:G_properties_hyp} will follow closely the proof of the corresponding properties in Proposition \ref{prop:G_properties}. However, the proof  Property \eqref{prop:DG_invertible_hyp} relies on the analysis of a particular linear operator $\mathfrak{L}^{\mathrm{hyp}}_{\omega, \Omega}$ whose invertibility is proven in Proposition \ref{prop:Csigma_HEomegaOmegaOmega_hyp}.

 Considering norms with exponential decay rather than with polynomial decay, Properties \eqref{prop:G_well_defined_hyp} and \eqref{prop:G_bounded_hyp} can be shown by following the proofs of Lemmas \ref{lemma:G_well_defined} and \ref{lemma:G_bounded}, respectively. Property \eqref{prop:DF_invertible_hyp} follows by Proposition \ref{prop:Csigma_HEomegaOmegaOmega_hyp}. Finally, to prove Property \eqref{prop:DG_limit_hyp}, it suffices to replace $T^{\ell - 1}$ by $e^{\lambda T}$ in \eqref{eq:DG_difference_bound} and follow the proof of Lemma \ref{lem:DG_limit}.
\end{proof}

\subsection{Proof of Theorem \ref{Thm:hyp_analy}}
\label{sc:proof_analytic_hyp} 

 Notice that to prove Theorem \ref{Thm:hyp_analy} it suffices to prove only the first Item in the statement, since the second part will follow immediately from Theorem \ref{Thm:hyp_Csigma}. 

Similarly to Section \ref{sc:proof_analytic_ell}, the proof of the first part of Theorem \ref{Thm:hyp_analy} follows the same lines and structure of that of Item \ref{thm:B_existence} in Theorem \ref{Thm:hyp_Csigma} (which is given in Section \ref{sec:proof_Thm_hyp_Csigma_1}) but considering spaces of analytic instead of Hölder functions. For this reason, we will only define the spaces and discuss the very minor changes needed to carry out the proof. Let us point out that, for the sake of completeness, the cohomological equations appearing in Section \ref{sec:proof_Thm_hyp_Csigma_1} are also considered when restricted to the analytic setting in Section \ref{sc:cohom_eqs_ell} and that the analytic counterparts of the results concerning the norms and the Banach spaces appearing in Section \ref{sec:proof_Thm_hyp_Csigma_1} are also proved in Section \ref{sc:norm_properties_hyp} (see Proposition \ref{prop:properties_analy_hyp}).

Recall that the space of real analytic functions with exponential decay in time $\mathscr{S}^{\mathrm{exp},T}_{\sigma,\ell}$ and its associated norm $|\cdot |^T_{\sigma, \ell}$ (which as an abuse of notation we denote as the one associated to $\mathcal{S}^{\mathrm{exp},T}_{\sigma,\ell}$) were defined in \eqref{def:S_exp_anal} and \eqref{def:norm_S_analy_exp}, respectively.

Given $\sigma >0$, $\lambda \ge 0$ and $T\ge 0$, where $\sigma$ will now denote the width of the complex neighbourhood, we define the counterparts of the spaces $\mathcal{S}^{\mathrm{exp},T}_{(\sigma, \ast), (0,\ell)}$, $\mathcal{U}^{\mathrm{exp}, T}_{\sigma, \omega, \ell}$ and $\mathcal{W}^{\mathrm{ell}, T}_{\sigma, \omega, \Omega, \ell}$, given by \eqref{def: S0exp},~\eqref{def:Uexp} and~\eqref{def:Wexp}, as

\begin{equation}\label{def: S0hyp_analy}
    \mathscr{S}^{\mathrm{exp},T}_{\sigma, (0,\lambda)} = \left\{f:\T^n_\sigma \times B_\sigma \times I_T \to \C  \hspace{1mm} | \hspace{1mm} f \in \mathscr{S}^{\mathrm{exp}, T}_{\sigma, 0}, \hspace{2mm} \partial_q f \in \mathscr{S}^{\mathrm{exp}, T}_{\sigma, \lambda}\right\},
\end{equation}
\begin{equation}\label{def:U_analy_exp}
    \mathscr{U}^{\mathrm{exp}, T}_{\sigma, \omega, \ell} = \left\{u:\T^n_\sigma \times I_T \to \C \hspace{1mm} | \hspace{1mm} u \in \mathscr{S}^{\mathrm{exp},T}_{\sigma, \lambda}, \hspace{2mm}\Lo u \in \mathscr{S}^{\mathrm{exp}, T}_{\sigma, \lambda}\right\},
\end{equation}
\begin{equation}\label{def:W_analy_exp}
    \mathscr{W}^{\mathrm{hyp}, T}_{\sigma, \omega, \Omega, \lambda} = \left\{w:\T^n_\sigma \times I_T \to \C^{2m} \hspace{1mm} | \hspace{1mm} w \in \mathscr{S}^{\mathrm{exp}, T}_{\sigma, \lambda}, \hspace{2mm}\mathcal{L}^{\mathrm{hyp}}_{\omega,\Omega} w \in \mathscr{S}^{\mathrm{hyp}, T}_{\sigma, \lambda}\right\},
\end{equation}
endowed with the norms
\begin{equation}\label{def:norm_S0hyp}
    |f|^T_{\sigma, (0,\lambda)} = \max\{|f|^T_{\sigma, 0}, |\partial_qf|^T_{\sigma, \lambda}\},
\end{equation}
\begin{equation}\label{def:norm_U_analy_exp}
    |u|^T_{\sigma, \omega, \lambda} = \max \left\{|u|^T_{\sigma, \lambda}, |\Lo u|^T_{\sigma, \lambda}\right\},
\end{equation}
\begin{equation}\label{def:norm_W_analy_exp}
    |w|^{T}_{\sigma, \omega, \Omega, \lambda} = \max \left\{|w|^T_{\sigma, \lambda}, |\mathcal{L}^{\mathrm{hyp}}_{\omega,\Omega} w|^T_{\sigma, \ell +1}\right\},
\end{equation}
respectively.

With these notations, the proof follows the exact same lines as the proof of Item \ref{thm:B_existence} in Theorem \ref{Thm:hyp_Csigma}. Again, as in the proof of Theorem \ref{Thm:elliptic_analy}, the only major difference when carrying out the proof is the need for a different choice of domain for the functional $\mathcal{F}^T$, but this can be easily done as explained in Section \ref{sc:proof_analytic_ell}.

\subsection{Proof of Corollary \ref{cor:hyp}}\label{sec:proof_cor_hyp}
The proof of this corollary is similar to that of Corollary \ref{cor:ell} contained in Section \ref{sec:proof_cor_ell}. We recall that the proof of Theorem \ref{Thm:hyp_Csigma} is constructive, hence the component $a_{32}$ of the matrix $A$ in~\eqref{def:A} in terms of the asymptotic KAM torus $\varphi=(\mathrm{id}_{\T^n} + u,v,w)$ and the components of the matrix $S$ (see~\eqref{eq:A_as_function_of_S}) has the following form
  \begin{equation*}
     a_{32} = e^{G^\top}
\left(\partial_z\partial_pH\circ\varphi\right)
(\mathrm{id}_{\T^n}+\partial_qu)^{-\top}- e^{G^\top} J_m(\partial_qw)^\top
(\mathrm{id}_{\T^n}+\partial_qu)^{-1}
\left(\partial_p^2H\circ\varphi\right)
(\mathrm{id}_{\T^n}+\partial_qu)^{-\top}
 \end{equation*}
 where, for $\lambda > 2 \max_{1 \le i \le m}\Omega_i$ and $T'' \ge 1$
 \begin{equation*}%
     u, \, v, \, w, \, G  \in \mathcal{S}_{(\sigma, 0), \lambda}^{\mathrm{pol},T''}.
 \end{equation*}
The proof of Corollary \ref{cor:hyp} is an immediate consequence of Proposition \ref{prop:hyp_1:1_asym_sol},~\eqref{eq:conjugated_cocycles}, and~\eqref{def:trasv_dyn_limit}. It only remains to verify hypothesis~\eqref{hyp:limit_aut_sys_hyp} that, in the present setting, this amounts to proving that
\begin{equation}\label{proof_cor:cond_to_verify_hyp}
        \lim_{t \to +\infty} e^{-\Omega_jt} \int_{T''}^t e^{\Omega_j s}|a_{32}^s - \partial_z\partial_pH^\infty\circ\varphi_0|_{C^0}\,ds = 0,
    \end{equation}
    for all $1 \le j \le m$,  where $\varphi_0$ is the trivial embedding defined by~\eqref{def:varphi0=(q,0,0)}.

     As in the proof of Corollary \ref{cor:ell} in Section \ref{sec:proof_cor_ell}, one can verify that 
     \begin{equation}\label{proof_cor:est_hyp}
    \begin{gathered}
       \scalebox{0.97}{$e^{G^\top}
\left(\partial_z\partial_pH\circ\varphi\right)
(\mathrm{id}_{\T^n}+\partial_qu)^{-\top} - \partial_z\partial_pH^\infty\circ\varphi_0 
 - \left(\left(\partial_z\partial_pH\right)_0  - \left(\partial_z\partial_pH^\infty\right)_0\right) \in \mathcal{S}_{(\sigma, 0), \lambda}^{\mathrm{exp},T''},$}\\
 e^{G^\top} J_m(\partial_qw)^\top
(\mathrm{id}_{\T^n}+\partial_qu)^{-1}
\left(\partial_p^2H\circ\varphi\right)
(\mathrm{id}_{\T^n}+\partial_qu)^{-\top} \in \mathcal{S}_{(\sigma, 0), \lambda}^{\mathrm{exp},T''}.
    \end{gathered}
\end{equation}
Moreover, by hypothesis~\eqref{hyp:cor_hyp_decay}, we have that 
\begin{equation}\label{proof_cor_hyp:cons_hyp_decay}
        \lim_{t\to+\infty}e^{-\Omega_j t}\int_1^t e^{\Omega_j \tau} |\left(\partial_z\partial_pH\right)_0  - \left(\partial_z\partial_pH^\infty\right)_0|_{C^0} \, d\tau =0.
\end{equation}
Using~\eqref{proof_cor:est_hyp},~\eqref{proof_cor_hyp:cons_hyp_decay} and remembering Remark \ref{rmk:cond_corr_hyp}, we prove~\eqref{proof_cor:cond_to_verify_hyp}. This concludes the proof of Corollary \ref{cor:hyp}.

\subsection{Cohomological Equations}
\label{sc:cohom_eqs_hyp}

In this section, we study in detail the cohomological equations appearing in the proof of Theorems \ref{Thm:hyp_Csigma} and \ref{Thm:hyp_analy}, namely, the three equations in \eqref{eq:coh_eqs_hyp} restricted to the Hölder setting (see Sections \ref{sec:proof_Thm_hyp_Csigma_1}  and \ref{sec:proof_thm_b_hyp_2}) and the first two equations in \eqref{eq:coh_eqs_hyp} in the analytic setting (see Section \ref{sc:proof_analytic_hyp}), respectively.

As in Section \ref{sc:cohom_eqs_ell}, we only present the proofs in detail in the Hölder setting, indicating only the necessary modifications in the analytic one.

In the following proposition, we study the linear operator $\Lo$ on the Banach spaces defined in~\eqref{def:Sexp},~\eqref{def:Uexp},~\eqref{def:S_exp_anal}, and~\eqref{def:U_analy_exp}.

\begin{proposition}\label{prop:Csigma_HEomega_exp}
       Given $\sigma \ge 0$, $T \ge 0$, and $\lambda >0$,  the operator
    \begin{align*}
        \Lo : \mathcal{U}^{\mathrm{exp}, T}_{\sigma, \omega, \lambda} \longrightarrow \mathcal{S}^{\mathrm{exp}, T}_{(\sigma, 0), \lambda}(\R^n)
    \end{align*}
    is well-defined, invertible and satisfies
    \[ \| \Lo ^{-1}\| \leq C(\ell),\]
    for some $C > 0$ depending only on $\ell$.
    
    In addition, if $\sigma \ge 2$, then, for any $u \in \mathcal{U}^{\mathrm{exp}, T}_{\sigma, \omega, \ell}$
    \begin{equation}\label{eq:prop_dist_Lomega_U_exp}
        \partial_q\left(\Lo u\right) = \Lo \partial_q  u .
    \end{equation}
    Moreover, given $\sigma > 0$, $T \ge 0$, and $\lambda >0$, the same well-definedness, invertibility, and estimate hold if one replaces the Banach spaces $\mathcal{U}^{\mathrm{exp}, T}_{\sigma, \omega, \lambda}$ and $\mathcal{S}^{\mathrm{exp}, T}_{(\sigma, 0), \lambda}$ by $\mathscr{U}^{\mathrm{exp}, T}_{\sigma, \omega, \lambda}$ and $\mathscr{S}^{\mathrm{exp}, T}_{\sigma, \lambda}$, respectively. Moreover, in this case, the equality~\eqref{eq:prop_dist_Lomega_U_exp} holds for any $\sigma>0$ and $u \in \mathscr{U}^{\mathrm{exp}, T}_{\sigma, \omega, \lambda}$.
\end{proposition}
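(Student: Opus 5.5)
The plan mirrors Proposition \ref{prop:Csigma_HEomega}: write down the candidate inverse explicitly and check it maps into the right space with a bound uniform in $T$. Given $g \in \mathcal{S}^{\mathrm{exp}, T}_{(\sigma, 0), \lambda}(\R^n)$, I would define
\[
u(q,t) = -\int_t^{+\infty} g(q + \omega(\tau - t), \tau)\, d\tau ,
\]
which converges absolutely since $|g^\tau|_{C^0} \le |g|^T_{\sigma,\lambda} e^{-\lambda \tau}$. Along characteristics, $s \mapsto u(q + \omega s, t + s)$ equals $-\int_{t+s}^{+\infty} g(q+\omega(\tau - t), \tau)\,d\tau$, and differentiating at $s=0$ gives $\Lo u = g$. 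For the norm, translation $q \mapsto q + \omega(\tau - t)$ is an isometry of $C^\sigma(\T^n)$, so
\[
|u^t|_{C^\sigma} \le \int_t^{\infty} |g^\tau|_{C^\sigma}\, d\tau \le |g|^T_{\sigma,\lambda}\, \frac{e^{-\lambda t}}{\lambda}.
\]
Hence $|u|^T_{\sigma,\lambda} \le \lambda^{-1}|g|^T_{\sigma,\lambda}$, and since $\Lo u = g$, we get $|u|^T_{\sigma,\omega,\lambda} \le \max\{1, \lambda^{-1}\}|g|^T_{\sigma,\lambda}$. So $\|\Lo^{-1}\|$ is bounded by a constant depending only on $\lambda$; the ``$C(\ell)$'' in the statement should be read as $C(\lambda)$. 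Continuity of $u$ and of its $q$-derivatives up to order $\lfloor\sigma\rfloor$ jointly in $(q,t)$ follows by dominated convergence from the corresponding continuity of $g$.

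Well-definedness of $\Lo$ on $\mathcal{U}^{\mathrm{exp},T}_{\sigma,\omega,\lambda}$ holds by definition of that space. Injectivity is the one place needing care. Suppose $u \in \mathcal{U}^{\mathrm{exp},T}_{\sigma,\omega,\lambda}$ and $\Lo u = 0$. Then $u$ is constant along the lines $s \mapsto (q+\omega s, t+s)$, but $|u^t|_{C^0} \le C e^{-\lambda t} \to 0$. Hence $u \equiv 0$, which gives uniqueness, so $\Lo$ is invertible with inverse given by the formula above.

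For \eqref{eq:prop_dist_Lomega_U_exp} with $\sigma \ge 2$, I would follow the second half of the proof of Proposition \ref{prop:Csigma_HEomega}. Write $u$ via the integral formula with $g = \Lo u$ and differentiate under the integral. This is justified by the exponential decay of $\partial_q g$ and $\partial_q^2 g$, which are controlled by $|g|^T_{\sigma,\lambda}$ since $\sigma \ge 2$. That yields $\partial_t\partial_q u = \partial_q\partial_t u$, and therefore $\partial_q \Lo u = \Lo \partial_q u$. The analytic case uses the same formula and the same estimates with the sup norm on $\T^n_\sigma$, since complex translation by real $\omega(\tau - t)$ preserves the strip. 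Commutation there is immediate from analyticity, with the derivatives controlled via Cauchy estimates.
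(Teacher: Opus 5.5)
Your proposal is correct and follows the route the paper intends. The paper omits this proof as analogous to Proposition~\ref{prop:Csigma_HEomega}, and your argument is exactly that analogue: the explicit inverse $u(q,t)=-\int_t^{+\infty} g(q+\omega(\tau-t),\tau)\,d\tau$, the bound $|u|^T_{\sigma,\lambda}\le \lambda^{-1}|g|^T_{\sigma,\lambda}$, and differentiation under the integral for the commutation identity. Your explicit injectivity argument (constancy along characteristics combined with the decay) and your reading of $C(\ell)$ as a constant depending on $\lambda$, namely $\|\Lo^{-1}\|\le\max\{1,\lambda^{-1}\}$, are both correct.
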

\begin{proof}
    The proof is similar to that of Proposition \ref{prop:Csigma_HEomega}. For this reason, it is omitted. 
\end{proof}

Also in this case, before analyzing the linear operator $\mathcal{L}^{\mathrm{hyp}}_{\omega, \Omega}$, we need to prove the following quantitative lemma. Its statement is divided into two parts.
The first part concerns the Hölder setting, while the second deals with the analytic one.
\begin{lemma}\label{lemma:HE_interm_LoO_exp}
    We fix $\sigma \ge 0$, $T \ge 0$, $\lambda >0$, and $\Lambda >0$. Let $g_\pm:\T^n \times I_T \to \R$ be functions such that $g_\pm \in \Sexp_{(\sigma, 0), \lambda}$. We consider the following equations
    \begin{align}\label{lemma:he_exp_auxlemma_1}
        \Lambda z_+ - \Lo z_+ &= g_+\\ \label{lemma:he_exp_auxlemma_2}
        -\Lambda z_- - \Lo z_- &= g_-
    \end{align}
    in the unknowns $z_\pm: \T^n \times I_T \to \R$. Then, there exists a unique solution $z_+$ of~\eqref{lemma:he_exp_auxlemma_1} satisfying 
    \begin{equation}\label{lemma:he_exp_z+_properties}
        z_+ \in \Sexp_{(\sigma, 0), \lambda} \hspace{3mm} \mbox{and} \hspace{3mm} |z_+|_{\sigma, \lambda}^T \le {1 \over \lambda + \Lambda} |g_+|_{\sigma, \lambda}^T.
    \end{equation}
    Moreover, if we assume that 
    \begin{equation*}
        \lambda > \Lambda
    \end{equation*}
    there exists a unique solution $z_-$ of~\eqref{lemma:he_exp_auxlemma_2} satisfying
    \begin{equation}\label{lemma:he_exp_z-_properties}
        z_- \in \Sexp_{(\sigma, 0), \lambda} \hspace{3mm} \mbox{and} \hspace{3mm} |z_-|_{\sigma, \lambda}^T \le {1 \over \lambda - \Lambda} |g_-|_{\sigma, \lambda}^T.
    \end{equation}
    In addition, if $\sigma \ge 2$, then
    \begin{equation}\label{eq:prop_dist_Lomega_z_W_exp}
        \partial_q \left(\Lo z_\pm\right) = \Lo \left(\partial_q z_\pm\right).
    \end{equation}
    In this second part, we fix $\sigma, \, \lambda, \, \Lambda >0$, and $T\ge 0$. We consider functions $g_\pm:\T^n \times I_T \to \C$ such that $g_\pm \in \mathscr{S}^{\mathrm{exp}, T}_{\sigma, \lambda}$. Then, there exists a unique solution $z_+$ of~\eqref{lemma:he_exp_auxlemma_1} satisfying~\eqref{lemma:he_exp_z+_properties} with $\Sexp_{(\sigma, 0), \lambda}$ replaced by $\mathscr{S}^{\mathrm{exp}, T}_{\sigma, \lambda}$. In addition, if $\lambda > \Lambda$, there exists a unique solution $z_-$ of~\eqref{lemma:he_exp_auxlemma_2} satisfying~\eqref{lemma:he_exp_z-_properties} with $\Sexp_{(\sigma, 0), \lambda}$ replaced by $\mathscr{S}^{\mathrm{exp}, T}_{\sigma, \lambda}$. Moreover, in this case, equality~\eqref{eq:prop_dist_Lomega_z_W_exp} holds for any $\sigma >0$.
\end{lemma}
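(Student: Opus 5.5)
The plan follows the proof of Lemma~\ref{lemma:HE_interm_LoO}: first straighten the transport operator $\Lo$, then solve two scalar ODEs in $t$, choosing the integration constant according to the sign of the exponent. As there, set $\phi(q,t)=(q-\omega t,t)$. Then $z$ solves $\pm\Lambda z-\Lo z=g$ if and only if $k=z\circ\phi^{-1}$ solves $\pm\Lambda k-\partial_t k=g\circ\phi^{-1}$. This reduces each equation to a linear ODE in $t$ with $q$ as a parameter, and the candidate solutions can be written down by variation of constants.

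\textbf{Equation~\eqref{lemma:he_exp_auxlemma_1}.} The homogeneous solutions are $e^{\Lambda t}k_0(q)$, which grow, so the only solution decaying as $t\to+\infty$ comes from integrating from $+\infty$:
\[
z_+(q,t)=\int_t^{+\infty}e^{-\Lambda(\tau-t)}g_+(q+\omega(\tau-t),\tau)\,d\tau .
\]
Composition with the translation $q\mapsto q+\omega(\tau-t)$ preserves the $C^\sigma$ norm. This gives
\[
|z_+^t|_{C^\sigma}\le |g_+|^T_{\sigma,\lambda}\int_t^\infty e^{-\Lambda(\tau-t)}e^{-\lambda\tau}\,d\tau=\frac{e^{-\lambda t}}{\lambda+\Lambda}\,|g_+|^T_{\sigma,\lambda},
\]
which is \eqref{lemma:he_exp_z+_properties}. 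Uniqueness holds because any other solution in $\Sexp_{(\sigma,0),\lambda}$ differs from $z_+$ by $e^{\Lambda t}k_0(q-\omega t)$, which decays only if $k_0=0$.

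\textbf{Equation~\eqref{lemma:he_exp_auxlemma_2}.} Here the homogeneous solutions are $e^{-\Lambda t}k_0(q-\omega t)$. These decay, but only at rate $\Lambda<\lambda$, so they do not belong to $\Sexp_{(\sigma,0),\lambda}$ unless $k_0=0$; this gives uniqueness. For existence I take the solution integrated from infinity,
\[
z_-(q,t)=\int_t^{+\infty}e^{\Lambda(\tau-t)}g_-(q+\omega(\tau-t),\tau)\,d\tau .
\]
This integral converges precisely because $\lambda>\Lambda$, and the same computation gives
\[
|z_-^t|_{C^\sigma}\le |g_-|^T_{\sigma,\lambda}\,\frac{e^{-\lambda t}}{\lambda-\Lambda},
\]
which is \eqref{lemma:he_exp_z-_properties}. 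A direct differentiation in $t$ confirms that $z_\pm$ solve their equations. The continuity requirements in the definition of $\Sexp$ follow from dominated convergence, since the integrands are bounded by a constant times $e^{-(\lambda\mp\Lambda)(\tau-t)}$.

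\textbf{Commutation and analytic case.} For \eqref{eq:prop_dist_Lomega_z_W_exp} with $\sigma\ge2$, I will argue exactly as in Proposition~\ref{prop:Csigma_HEomega}. Differentiating under the integral, I compute $\partial_q z_\pm$, $\partial_t z_\pm$ and both mixed derivatives explicitly. They coincide, because they are given by integrals of $\partial_q g_\pm$, $\partial_q^2 g_\pm$ against the same kernels plus boundary terms, all of which converge since $\partial_q^2 g_\pm$ is continuous and exponentially decaying. Equality of the mixed derivatives then yields $\partial_q\Lo z_\pm=\Lo\partial_q z_\pm$. The analytic case is identical: the same formulas apply on $\T^n_\sigma$, the sup norm is preserved by real translations, and the commutation follows from holomorphy via Cauchy estimates.

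The main obstacle is the unstable-sign equation: it requires justifying that the backward-integrated formula is the unique solution in the space and that it converges. This is exactly where the hypothesis $\lambda>\Lambda$ is used, and the rest is routine.
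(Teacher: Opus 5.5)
Your proposal is correct and is essentially the paper's proof. Both integrate along the characteristics of $\Lo$ and select the solutions $z_\pm(q,t)=\int_t^{+\infty}e^{\mp\Lambda(\tau-t)}g_\pm(q+\omega(\tau-t),\tau)\,d\tau$, which give the bounds $1/(\lambda\pm\Lambda)$. Uniqueness follows because the homogeneous solutions $e^{\pm\Lambda t}k_0(q-\omega t)$ lie in $\Sexp_{(\sigma,0),\lambda}$ only when $k_0=0$ (using $\lambda>\Lambda$ for $z_-$), and the commutation comes from differentiating under the integral. The only slip is cosmetic: in your continuity argument the dominating factor should read $e^{-(\lambda\pm\Lambda)(\tau-t)}$, not $e^{-(\lambda\mp\Lambda)(\tau-t)}$.
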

 \begin{proof}
     For the first part of this proof, we rewrite equations~\eqref{lemma:he_exp_auxlemma_1} and~\eqref{lemma:he_exp_auxlemma_2} as
     \begin{equation*}
         \pm \Lambda z_\pm - \Lo z_\pm = g_\pm. 
     \end{equation*}
     As in the proof of Proposition \ref{lemma:HE_interm_LoO}, one can see that, for all $t_0 \in I_T$, formal solutions of the above equations are given by
     \begin{equation}\label{proof_lemma:formsolpm_exp_auxlemma}
         z_\pm (q,t) = e^{\pm \Lambda(t - t_0)} \left(z_{0, \pm}(q -\omega t) - \int_{t_0}^t e^{\pm \Lambda(t_0-\tau)} g_\pm (q + \omega(\tau - t), \tau)d\tau\right)
     \end{equation}
     for all $(q, t) \in \T^n \times I_T$, where $z_{0, \pm}:\T^n \to \R$ is free. We analyze $z_+$ and $z_-$ separately. 
     First, we consider $z_+$. We observe that $e^{\Lambda(t - t_0)} \to +\infty$ if $t \to +\infty$. For this reason, in order to ensure the decay in time of $z_+$, the only possible choice of $z_{0,+}$ is
     \begin{equation*}
         z_{0,+}(q,t) = \int_{t_0}^{+\infty} e^{\Lambda(t_0- \tau)}g_+(q + \omega(\tau - t), \tau)d\tau.
     \end{equation*}
     Hence, replacing the latter into~\eqref{proof_lemma:formsolpm_exp_auxlemma}, we obtain that 
     \begin{equation}\label{eq:z+_inv_hyp}
         z_+ (q,t) =  \int_{t}^{+\infty} e^{\Lambda(t - \tau)} g_+ (q + \omega(\tau - t), \tau)d\tau
     \end{equation}
     is the unique solution of~\eqref{lemma:he_exp_auxlemma_2} satisfying $\lim_{t\to +\infty}|z_+(q,t)|=0$, where $|\cdot|$ denotes the absolute value.  Remembering the definition of the norm $|\cdot|^T_{\sigma, \lambda}$ in~\eqref{def:norm_S0exp}, thanks to the latter, we prove~\eqref{lemma:he_exp_z+_properties}. 

     The analysis of $z_-$ is more subtle. First, we point out that $e^{- \Lambda(t - t_0)}\to 0$ if $t \to +\infty$. This implies that, in this case, there is significantly more freedom in the choice of $z_{0, -}$ in~\eqref{proof_lemma:formsolpm_exp_auxlemma} in order to ensure that $\lim_{t\to +\infty}|z_-(q,t)|=0$. For this reason, we first choose $z_{0,-}$ in~\eqref{proof_lemma:formsolpm_exp_auxlemma} so that $z_-$ satisfies~\eqref{lemma:he_exp_z-_properties}, and then we establish uniqueness.

     If $\lambda > \Lambda$, we take
     \begin{equation*}
         z_{0,-}(q,t) = \int_{t_0}^{+\infty} e^{-\Lambda(t_0 - \tau)}g_-(q + \omega(\tau - t), \tau)d\tau
     \end{equation*}
     and hence, a straightforward computation shows that a solution of~\eqref{lemma:he_exp_auxlemma_2} satisfying~\eqref{lemma:he_exp_z-_properties} is given by 
     \begin{equation}\label{eq:z-_inv_hyp}
         z_- (q,t) =  \int_{t}^{+\infty} e^{-\Lambda(t - \tau)} g_- (q + \omega(\tau - t), \tau)d\tau
     \end{equation}
     for all $(q,t) \in \T^n \times I_T$. It remains to prove that this solution is unique. Suppose there exist $z_{0,-}^{(1)}, z_{0,-}^{(2)}:\T^n \to \R$ in such a way that 
     \begin{equation}
     \begin{aligned}\label{proof:uniqueness_hyp}
         z_-^{(1)} (q,t) &= e^{- \Lambda(t - t_0)} \left(z^{(1)}_{0, -}(q -\omega t) - \int_{t_0}^t e^{- \Lambda(t_0-\tau)} g_- (q + \omega(\tau - t), \tau)d\tau\right),\\
         z_-^{(2)} (q,t) &= e^{- \Lambda(t - t_0)} \left(z^{(2)}_{0, -}(q -\omega t) - \int_{t_0}^t e^{- \Lambda(t_0-\tau)} g_- (q + \omega(\tau - t), \tau)d\tau\right),
     \end{aligned}
     \end{equation}
     are solutions of~\eqref{lemma:he_exp_auxlemma_2} satisfying~\eqref{lemma:he_exp_z-_properties}. We define $w = z_-^{(1)} - z_-^{(2)}$ and thanks to~\eqref{proof:uniqueness_hyp} we obtain that 
     \begin{equation}\label{proof:he_aproxLemma_hyp_w1}
         w(q,t) = e^{- \Lambda(t - t_0)} \left(z^{(1)}_{0, -} - z^{(2)}_{0, -} \right)(q -\omega t)
     \end{equation}
     for all $(q,t)\in\T^n \times I_T$. On the other hand, by the definition of $z_-^{(1)}$ and $z_-^{(2)}$, we have that $w \in \Sexp_{(\sigma, 0), \lambda}$ and hence
     \begin{equation}\label{proof:he_aproxLemma_hyp_w2}
         |w(q,t)| \le C|w|^T_{\sigma, \lambda} e^{-\lambda t}
     \end{equation}
     for all $(q,t) \in \T^n \times I_T$ and a positive constant $C$. Now, combining~\eqref{proof:he_aproxLemma_hyp_w1} and~\eqref{proof:he_aproxLemma_hyp_w2}, we obtain that 
     \begin{equation*}
         \left|\left(z^{(1)}_{0, -} - z^{(2)}_{0, -} \right)(q -\omega t)\right| \le C e^{\lambda t_0}|w|^T_{\sigma, \lambda} e^{-(\lambda-\Lambda) t}
     \end{equation*}
     for all $t_0 \in I_T$ and $(q,t) \in \T^n \times I_T$. We stress that $\lambda >\Lambda$ implies that the right-hand side of the latter goes to zero if $t\to +\infty$. This proves that $z^{(1)}_{0, -} = z^{(2)}_{0, -}$ (and hence $z_-^{(1)}=z_-^{(2)}$), which concludes the proof of the first part of this lemma. Now, we assume $\sigma \ge 2$ and we prove~\eqref{eq:prop_dist_Lomega_z_W_exp}. Using the same argument in the second part of the  proof of Proposition \ref{prop:Csigma_HEomega}, one can see that the property~\eqref{eq:prop_dist_Lomega_z_W_exp} is a straightforward consequence of
     \begin{equation}\label{eq:partialtqzexp=partialqtzexp}
        \partial_t \partial_q  z_\pm = \partial_q\partial_t  z_\pm. 
    \end{equation}
    In order to prove the latter, we observe that, using~\eqref{eq:z+_inv_hyp} and~\eqref{eq:z-_inv_hyp}, we can write $z_\pm$ in the following form
    \begin{equation*}
         z_\pm (q,t) =  \int_{t}^{+\infty} e^{\pm\Lambda(t - \tau)} g_\pm (q + \omega(\tau - t), \tau)d\tau
     \end{equation*}
    for all $(q,t) \in \T^n \times I_T$ and an easy computation shows that $z_\pm$ are differentiable with respect to the variable $t$ and
    \begin{align*}
        \partial_t z_\pm (q,t) &= g_\pm(q,t) + \int_t^{+\infty} \left(\pm \Lambda\right)e^{\pm\Lambda(t-\tau) }g_\pm (q + \omega(\tau-t), \tau) - e^{\Lambda(t-\tau)}\partial_q g(q + \omega(\tau-t), \tau) \omega d\tau,\\
        \partial_q z_\pm(q,t) &= \int_t^{+\infty} e^{\pm\Lambda(t -\tau)} \partial_q g_\pm (q+\omega(\tau-t), \tau)d\tau,\\
        \partial_t \partial_q z_\pm(q,t) &= \partial_q \partial_t z_\pm(q,t) = \partial_q g_\pm(q,t) + \int_t^{+\infty} \bigg[\left(\pm \Lambda\right)e^{\pm\Lambda(t-\tau) }\partial_qg_\pm (q + \omega(\tau-t), \tau)\\
        &- e^{\Lambda(t-\tau)}\partial^2_q g(q + \omega(\tau-t), \tau) \omega \bigg]d\tau.
    \end{align*}
    This concludes the proof of~\eqref{eq:partialtqzexp=partialqtzexp}.

  The proof of the second part in the real-analytic setting follows exactly the same argument. We therefore omit it and leave to the reader the straightforward modifications needed to adapt the above proof.
\end{proof}

We emphasize that, unlike equation~\eqref{lemma:he_exp_auxlemma_2}, the existence of a solution to~\eqref{lemma:he_exp_auxlemma_1} does not require any assumption of exponential decay in time on $g_+$. Indeed, any time decay exhibited by $g_+$ is inherited by the solution $z_+$. However, a more refined analysis of the decay of $z_+$ does not lead to any improvement in our results, and we therefore stated Lemma \ref{lemma:HE_interm_LoO_exp} in that form.

 In the following proposition, we study the linear operator $ \mathcal{L}^{\mathrm{hyp}}_{\omega, \Omega}$. We recall that the spaces $\mathcal{W}^{\mathrm{hyp}, T}_{\sigma, \omega, \Omega, \lambda}$ and $\mathscr{W}^{\mathrm{hyp}, T}_{\sigma, \omega, \Omega, \lambda}$ are defined in~\eqref{def:Wexp} and~\eqref{def:W_analy_exp}, respectively. 

\begin{proposition}\label{prop:Csigma_HEomegaOmega_exp}
    Given $\sigma \ge 0$, $T \ge 0$, and $\lambda >0$,  the following operator
    \begin{align*}
         \mathcal{L}^{\mathrm{hyp}}_{\omega, \Omega} : \mathcal{W}^{\mathrm{hyp}, T}_{\sigma, \omega, \Omega, \lambda} \longrightarrow \mathcal{S}^{\mathrm{exp},T}_{(\sigma, 0), \lambda}(\R^{2m})
    \end{align*}
    is well-defined. If $\lambda > \max_{1 \le i \le m}\Omega_i$, then it is invertible and satisfies
    \[ \| (\mathcal{L}^{\mathrm{hyp}}_{\omega, \Omega})^{-1} \| \leq {2 \lambda \over \lambda^2 - \max_{1\le i \le m}\Omega_i^2}.\]
    
In addition, if $\sigma \ge 2$, then, for any $\chi \in \mathcal{W}^{\mathrm{hyp},T}_{\sigma, \omega, \Omega, \lambda}$
    \begin{equation}\label{eq:prop_dist_Lomega_W_exp}
        \partial_q\left(\Lo \chi\right) = \Lo \partial_q  \chi .
    \end{equation}
Moreover, given $\sigma > 0$, $T \ge 0$, and $\lambda > \max_{1 \le i \le m}\Omega_i$, the same well-definedness, invertibility, and estimate hold if one replaces the Banach spaces $\mathcal{W}^{\mathrm{hyp}, T}_{\sigma, \omega, \Omega,\lambda}$ and $\mathcal{S}^{\mathrm{exp}, T}_{(\sigma, 0), \lambda}$ by $\mathscr{W}^{\mathrm{hyp}, T}_{\sigma, \omega, \Omega, \lambda}$ and $\mathscr{S}^{\mathrm{exp}, T}_{\sigma, \lambda}$, respectively. Moreover, in this case, the equality~\eqref{eq:prop_dist_Lomega_W_exp} holds for any $\sigma>0$ and $\chi \in \mathscr{W}^{\mathrm{hyp}, T}_{\sigma, \omega, \Omega, \lambda}$.
\end{proposition}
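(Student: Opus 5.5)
We follow the proof of Proposition \ref{prop:Csigma_HEomegaOmega} but replace complex diagonalization with a real one. The matrix $J_m\Mhyp$ has eigenvalues $\pm\Omega_j$. Writing $\chi=(\chi^{(1)},\chi^{(2)})^\top$ and $g=(g^{(1)},g^{(2)})^\top$ as in \eqref{not:chi_HE_ell}, a direct computation gives $J_m\Mhyp\chi=(-\Omega\chi^{(2)},-\Omega\chi^{(1)})^\top$. Setting $z_\pm=\chi^{(1)}\pm\chi^{(2)}$ and $g_\pm=g^{(1)}\pm g^{(2)}$, the equation $\mathcal{L}^{\mathrm{hyp}}_{\omega,\Omega}\chi=g$ becomes $2m$ uncoupled scalar equations:
\[
-\Omega_k z_{+,k}-\Lo z_{+,k}=g_{+,k},\qquad \Omega_k z_{-,k}-\Lo z_{-,k}=g_{-,k},\qquad 1\le k\le m.
\]
These are exactly equations \eqref{lemma:he_exp_auxlemma_2} and \eqref{lemma:he_exp_auxlemma_1} of Lemma \ref{lemma:HE_interm_LoO_exp} with $\Lambda=\Omega_k$, with the roles of $\pm$ swapped.

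Well-definedness is immediate. For $\chi\in\mathcal{W}^{\mathrm{hyp},T}_{\sigma,\omega,\Omega,\lambda}$ we have $\mathcal{L}^{\mathrm{hyp}}_{\omega,\Omega}\chi\in\mathcal{S}^{\mathrm{exp},T}_{(\sigma,0),\lambda}$ by definition \eqref{def:Wexp}, and the operator is bounded by the definition of the norm \eqref{def:norm_Wexp}.

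Assume $\lambda>\max_i\Omega_i$. Lemma \ref{lemma:HE_interm_LoO_exp} then gives unique solutions $z_{\pm,k}\in\Sexp_{(\sigma,0),\lambda}$ with
\[
|z_{-,k}|^T_{\sigma,\lambda}\le \frac{|g_{-,k}|^T_{\sigma,\lambda}}{\lambda+\Omega_k},\qquad |z_{+,k}|^T_{\sigma,\lambda}\le \frac{|g_{+,k}|^T_{\sigma,\lambda}}{\lambda-\Omega_k}.
\]
The second bound is where $\lambda>\Omega_k$ is needed. Uniqueness within the decaying class transfers directly to $\chi$, since the change of variables is invertible. We recover $\chi^{(1)}=(z_++z_-)/2$ and $\chi^{(2)}=(z_+-z_-)/2$. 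Combining the two bounds gives
\[
\frac{1}{2}\Big(\frac{1}{\lambda-\Omega_k}+\frac{1}{\lambda+\Omega_k}\Big)=\frac{\lambda}{\lambda^2-\Omega_k^2},
\]
multiplied by constants coming from $|g_\pm|\le 2|g|$ in the max norm. Using $|g_\pm|\le 2|g|$ and maximizing over $k$ yields the stated bound $\frac{2\lambda}{\lambda^2-\max_i\Omega_i^2}$ for the $|\cdot|_{\sigma,\lambda}$ part of the norm. The bookkeeping of factors of $2$ between the sum and max norms is the main place where care is needed.

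The component $\mathcal{L}^{\mathrm{hyp}}_{\omega,\Omega}\chi=g$ of the $\mathcal{W}$-norm equals $|g|$ exactly. If this term cannot be absorbed into the displayed constant, we would accept a constant of the form $\max\{1,2\lambda/(\lambda^2-\max_i\Omega_i^2)\}$. Inverting a $\mathcal{W}$-type norm this way is the one point where the stated constant might need this adjustment.

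The commutation \eqref{eq:prop_dist_Lomega_W_exp} for $\sigma\ge2$ follows from \eqref{eq:prop_dist_Lomega_z_W_exp} applied to each $z_{\pm,k}$, together with linearity of the inverse change of variables. The analytic statement is obtained verbatim from the second part of Lemma \ref{lemma:HE_interm_LoO_exp}: the same real change of variables works, now for complex-valued functions on $\T^n_\sigma$.
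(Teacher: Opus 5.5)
Your proposal is correct and follows the paper's route: the same real change of variables $X=\chi^{(1)}+\chi^{(2)}$, $Y=\chi^{(1)}-\chi^{(2)}$, the same decoupled scalar equations solved by Lemma~\ref{lemma:HE_interm_LoO_exp} (with $\lambda>\Omega_j$ needed only for the $-\Omega_j X_j-\Lo X_j$ equation), and the same reconstruction and commutation argument. Your caveat about the constant is also justified: the paper's proof only bounds the $|\cdot|^T_{\sigma,\lambda}$ part, and since $|\chi|^T_{\sigma,\omega,\Omega,\lambda}\ge|\mathcal{L}^{\mathrm{hyp}}_{\omega,\Omega}\chi|^T_{\sigma,\lambda}$, the inverse always has operator norm at least $1$, so the displayed constant should be read as $\max\{1,2\lambda/(\lambda^2-\max_i\Omega_i^2)\}$; this is harmless for the Implicit Function Theorem argument, which only needs a $T$-independent bound.
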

\begin{proof}
     We recall that the matrices $M^{\mathrm{hyp}}_\Omega$ and $J_m$ are defined in~\eqref{Ms} and~\eqref{def:J}, respectively. Furthermore, for any $\chi \in \mathcal{W}^{\mathrm{hyp}, T}_{\sigma, \omega, \Omega, \lambda}$, the operator $\mathcal{L}^{\mathrm{hyp}}_{\omega, \Omega} \chi = J_m M^{\mathrm{hyp}}_\Omega \chi - \Lo \chi$. Hence, for all $g = (g_1,\dots, g_{2m})^\top \in \mathcal{S}^{\mathrm{exp}, T}_{(\sigma, 0), \lambda}$, we can rewrite this problem in terms of finding the unique solution $\chi = (\chi_1,\dots, \chi_{2m})^\top \in \mathcal{W}^{\mathrm{hyp}, T}_{\sigma, \omega, \Omega, \lambda}$ of the following equation
   \begin{equation}\label{proof:HE_eq_LoO_hyp}
       J_mM^{\mathrm{hyp}}_\Omega \chi - \Lo \chi = g.
   \end{equation}
   In the spirit of the proof of Proposition \ref{prop:Csigma_HEomegaOmega}, we note that $J_mM^{\mathrm{hyp}}_\Omega$ is diagonalizable with eigenvalues $\pm \Omega_j$ with $j=1,\dots,m$. Then, there exists an invertible matrix $P$ such that 
   \begin{equation}\label{proof:HE_diagMO_hyp}
       J_mM^{\mathrm{hyp}}_\Omega  = P\begin{pmatrix} -\Omega & 0 \\ 0 &  \Omega\end{pmatrix}P^{-1} \quad \mbox{with} \hspace{2mm} P =\begin{pmatrix} \mathrm{Id}_m & \mathrm{Id}_m \\  \mathrm{Id}_m &  - \mathrm{Id}_m\end{pmatrix}. 
   \end{equation}
   We recall the notation
   \begin{equation*}
       \chi=(\chi^{(1)}, \chi^{(2)})^\top, 
   \end{equation*}
   where $\chi^{(1)} = (\chi_1,\dots, \chi_{m})^\top$, and $\chi^{(2)} = (\chi_{m+1},\dots, \chi_{2m})^\top$. Furthermore, we define $X= \chi^{(1)} + \chi^{(2)}$ and $Y=\chi^{(1)} - \chi^{(2)}$. Hence, replacing~\eqref{proof:HE_diagMO_hyp} into~\eqref{proof:HE_eq_LoO_hyp}, multiplying on the left both sides of the equation by $P^{-1}$ and using the above-mentioned notation, we can rewrite~\eqref{proof:HE_eq_LoO_hyp} in terms of the following system of $2m$ decoupled equations
   \begin{equation*}
       \begin{aligned}
           -\Omega_j X_j - \Lo X_j &= g_j + g_{m+j}\\
           \Omega_j Y_j - \Lo Y_j &= g_j - g_{m+j}.
       \end{aligned}
   \end{equation*}
   for $j=1,\dots,m$.
   Using Lemma \ref{lemma:HE_interm_LoO_exp}, there exists a unique solution of the above system of equations in such a way that 
   \begin{equation}\label{systm:inv_XY_exp}
       X_j, Y_j \in \mathcal{S}^{\mathrm{exp}, T}_{(\sigma, 0), \lambda} \hspace{3mm} \mbox{and} \hspace{3mm} |X_j|^T_{\sigma, \lambda} \le {2 \over \lambda - \Omega_j}|g|^T_{\sigma, \lambda}, \hspace{2mm} |Y_j|^T_{\sigma, \lambda} \le {2 \over \lambda + \Omega_j}|g|^T_{\sigma, \lambda},
   \end{equation}
   for all $j=1,\dots,m$. Hence, going back to the original variables 
   \begin{equation}\label{eq:inv_exp_W}
       \chi^{(1)}={X+Y \over 2}, \quad \chi^{(2)}={X-Y \over 2},
   \end{equation}
    we conclude the proof of the first part of this Proposition. In order to prove~\eqref{eq:prop_dist_Lomega_W_exp}, we assume $\sigma \ge 2$ and we observe that, if $\chi \in \mathcal{W}^{\mathrm{hyp}, T}_{\sigma, \omega, \Omega, \lambda}$ then there exists $g \in \mathcal{S}^{\mathrm{exp},T}_{(\sigma, 0), \lambda}$ such that $\mathcal{L}^{\mathrm{hyp}}_{\omega, \Omega} \chi = g$. Then, we can write $\chi$ as in~\eqref{eq:inv_exp_W} where $X_j$ and $Y_j$ are the unique solutions of~\eqref{systm:inv_XY_exp} for $j=1,\dots,m$. Hence, by\eqref{eq:prop_dist_Lomega_z_W_exp} of Lemma \ref{lemma:HE_interm_LoO_exp}, we have that 
    \begin{equation*}
        \partial_q \left(\Lo X_j\right) = \Lo \left(\partial_q X_j\right),\quad \partial_q \left(\Lo Y_j\right) = \Lo \left(\partial_q Y_j\right)
    \end{equation*}
    for $j=1,\dots,m$. Now, using~\eqref{eq:inv_exp_W} and the latter, we conclude the proof of~\eqref{eq:prop_dist_Lomega_W_exp}.
    
    The proof in the real-analytic setting is identical, up to the obvious modifications, and is therefore omitted.
\end{proof}

In the following proposition, we study the invertibility of the linear operator $\mathfrak{L}^{\mathrm{hyp}}_{\omega, \Omega}$. For our purposes, it is sufficient to consider only the Hölder setting. We use the Banach spaces defined in~\eqref{def:M_Sym_exp_Holder} and~\eqref{def:Sp_exp_Holder}.
\begin{proposition}\label{prop:Csigma_HEomegaOmegaOmega_hyp}
    Given $\sigma \ge 0$, $T\ge 0$, and $\lambda > 2 \max_{1 \le j \le m} \Omega_j$, the following operator
    \begin{equation*}
        \mathfrak{L}^{\mathrm{hyp}}_{\omega, \Omega} : \mathcal{S}p^{\mathrm{hyp}, T}_{\sigma, \omega, \Omega, \lambda} \longrightarrow \mathcal{S}ym^{\mathrm{exp}, T}_{\sigma, \lambda}
    \end{equation*}
    is well-defined and invertible. Moreover, 
    \[
\| (\mathfrak{L}^{\mathrm{hyp}}_{\omega, \Omega})^{-1} \| \leq C(\lambda, \Omega),
    \]
    for some $C > 0$ depending only on $\lambda$ and $\Omega$. 
\end{proposition}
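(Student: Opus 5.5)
We will imitate the proof of Proposition \ref{prop:Csigma_HEomegaOmegaOmega}, replacing the complex eigenbasis by the real hyperbolic one. Given $g \in \mathcal{S}ym^{\mathrm{exp}, T}_{\sigma, \lambda}$, we write
\[
g = \begin{pmatrix} X & Y \\ Y^\top & -Z\end{pmatrix}, \qquad G = \begin{pmatrix} P & Q \\ R & -P^\top\end{pmatrix},
\]
with $X, Z, Q, R$ symmetric, which parametrizes $\mathfrak{sp}(2m,\R)$. Well-definedness is immediate from the definition \eqref{def:Sp_exp_Holder}, since $\mathfrak{L}^{\mathrm{hyp}}_{\omega,\Omega} G \in \mathcal{M}^{\mathrm{exp},T}_{\sigma,\lambda}$ by construction. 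Symmetry of $\mathfrak{L}^{\mathrm{hyp}}_{\omega,\Omega}G$ follows from $J_mG$ being symmetric: indeed $G^\top \Mhyp + \Mhyp G$ is symmetric and $J_m\Lo G = \Lo(J_m G)$.

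Expanding $G^\top\Mhyp + \Mhyp G + J_m \Lo G = g$ blockwise with $\Mhyp = \mathrm{diag}(\Omega,-\Omega)$ gives
\[
\left\{\begin{array}{l}
P^\top\Omega + \Omega P + \Lo R = X, \\
R^\top(-\Omega)\cdot(-1)\ \text{type terms:}\ -R\Omega + \Omega Q - \Lo P^\top = Y,\\
P\Omega + \Omega P^\top - \Lo Q = Z,
\end{array}\right.
\]
up to signs, which we will check carefully. We then introduce real combinations adapted to the eigenvalues $\pm\Omega_j$ of $J_m\Mhyp$, for instance $\mathfrak{F}_\pm = (P \pm P^\top) + (R \mp Q)$ and similar, or more directly we work entrywise. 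Each entry $(j,k)$ should decouple into scalar equations of the form
\[
\pm(\Omega_j + \Omega_k)\,\mathfrak{u}_{jk} - \Lo \mathfrak{u}_{jk} = h_{jk}, \qquad \pm(\Omega_j - \Omega_k)\,\mathfrak{v}_{jk} - \Lo \mathfrak{v}_{jk} = k_{jk},
\]
with right-hand sides that are linear combinations of $X, Y, Y^\top, Z$ and hence lie in $\mathcal{S}^{\mathrm{exp},T}_{(\sigma,0),\lambda}$. Conceptually this matches the fact that $G \mapsto \mathfrak{L}^{\mathrm{hyp}}_{\omega,\Omega}G$ is conjugate to $\mathrm{ad}$-type operators whose spectrum is $\{\pm\Omega_j\pm\Omega_k\}$.

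Each scalar equation is then solved by Lemma \ref{lemma:HE_interm_LoO_exp} with $\Lambda = |\Omega_j\pm\Omega_k|$, using the $z_+$ branch or the $z_-$ branch depending on the sign. The $z_-$ branch requires $\lambda > \Lambda$, and the worst case is $\Lambda = \Omega_j+\Omega_k \le 2\max_i\Omega_i$. This is exactly why we assume $\lambda > 2\max_j\Omega_j$. When $\Lambda = 0$ (that is, $j=k$ in the difference channel, or equal frequencies), the equation is just $\Lo \mathfrak{v} = -k$, which is solved by Proposition \ref{prop:Csigma_HEomega_exp} with bound $1/\lambda$. Uniqueness in each channel comes from the same lemma and proposition, using $\lambda > \Lambda$ for the decaying branches.

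Reassembling $P, Q, R$ as linear combinations of the $\mathfrak{u}, \mathfrak{v}$ gives a unique $G$ with values in $\mathfrak{sp}(2m,\R)$. The symmetry constraints on $Q, R$ are preserved by uniqueness, since the transposed system has transposed data. The bounds from Lemma \ref{lemma:HE_interm_LoO_exp} then yield $|G|^T_{\sigma,\lambda} \le C(\lambda,\Omega)|g|^T_{\sigma,\lambda}$, and since $\mathfrak{L}^{\mathrm{hyp}}_{\omega,\Omega}G = g$, the full norm $|G|^T_{\sigma,\omega,\Omega,\lambda}$ is bounded as well. The main obstacle is the bookkeeping of the block decoupling: finding the right real combinations so that the coupled $4$-block system splits into scalar equations with rates exactly $\pm\Omega_j\pm\Omega_k$, and tracking the signs so that each channel is assigned the correct branch of Lemma \ref{lemma:HE_interm_LoO_exp}. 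We would first compute in the diagonal basis $P^{-1}(\cdot)P$ of \eqref{proof:HE_diagMO_hyp} and only then translate back to real blocks.
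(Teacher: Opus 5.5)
Your proposal is correct and is essentially the paper's proof: conjugating $G$ by the diagonalizing matrix of \eqref{proof:HE_diagMO_hyp} produces exactly the paper's blocks $\mathfrak{A},\mathfrak{B},\mathfrak{C},\mathfrak{D}$. These satisfy decoupled scalar equations with rates $\pm(\Omega_j\pm\Omega_k)$, which Lemma \ref{lemma:HE_interm_LoO_exp} solves precisely because $\lambda>2\max_j\Omega_j$. Your separate handling of the zero-rate channels (the diagonal entries of the difference blocks) via Proposition \ref{prop:Csigma_HEomega_exp} fills in a detail the paper skips, since Lemma \ref{lemma:HE_interm_LoO_exp} is stated only for $\Lambda>0$.
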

\begin{proof}
We recall that, for any $G \in \mathcal{S}p^{\mathrm{hyp}, T}_{\sigma, \omega, \Omega, \lambda} $, we have that $\mathfrak{L}^{\mathrm{hyp}}_{\omega, \Omega} G = G^\top \Mhyp + \Mhyp G + J_m \Lo G$. Hence, for every $g \in \mathcal{S}ym^{\mathrm{exp}, T}_{\sigma, \lambda}$, we can rewrite this dynamical problem in terms of finding the unique solution $G \in \mathcal{S}p^{\mathrm{hyp}, T}_{\sigma, \omega, \Omega, \lambda} $ of the following equation
\begin{equation}\label{eq:HE_hyp_trasv}
    G^\top \Mhyp + \Mhyp G + J_m \Lo G = g.
\end{equation}
For this purpose, we observe that, for any $g \in \mathcal{S}ym^{\mathrm{exp}, T}_{\sigma, \lambda}$ and $ G \in \mathcal{S}p^{\mathrm{hyp}, T}_{\sigma, \omega, \Omega, \lambda}$, we can write
    \[ g = \begin{pmatrix}
X & Y \\ Y^{\top} & -Z
\end{pmatrix},  \qquad  \begin{array}{ll}  Y : \T^n \times I_T \to \mathcal{M}_m(\R), \quad X, Z : \T^n \times I_T \to  \textup{Sym}(m, \R), \\
X_{ij}, Y_{ij}, Z_{ij} \in \mathcal{S}^{\mathrm{exp}, T}_{(\sigma, 0), \lambda}, \qquad \text{ for any } 1 \leq i, j \leq m; \end{array}\]
\[ \hat G = \begin{pmatrix}
P & Q \\ R & -P^{\top}
\end{pmatrix}, \qquad   \begin{array}{ll}  P : \T^n \times I_T \to \mathcal{M}_m(\R), \quad Q, R : \T^n \times I_T \to  \textup{Sym}(m, \R), \\ P_{ij}, Q_{ij}, R_{ij} \in \mathcal{S}^{\mathrm{exp},T}_{(\sigma, 0), \lambda} \qquad \text{ for any } 1 \leq i, j \leq m.  \end{array} \]

    Using the above notation, we can rewrite the equation~\eqref{eq:HE_hyp_trasv} as
    \begin{equation} \label{proof:lemma_Df_inv_hyp_2}
    \left\{ \begin{array}{llll}
P^{\top}\Omega + \Omega P - \Lo Q = X, \\
-R \Omega + \Omega Q + \Lo P = Y, \\
Q \Omega - \Omega R + \Lo P^\top = Y^{\top}, \\
P \Omega + \Omega P^{\top} + \Lo R = -Z.
 \end{array} \right.  \end{equation}
We denote
\begin{align*}
    2\mathfrak{A} &= P - P^\top + (Q + R), \qquad 2 \mathfrak{B} = P+P^\top - (Q-R), \\
    2 \mathfrak{C} &= P+P^\top + (Q-R), \qquad 2\mathfrak{D} = P-P^\top - (Q+R)
\end{align*}
and we observe that $\mathfrak{A}^\top = -\mathfrak{D}$, $\mathfrak{B}^\top = \mathfrak{B}$ and $\mathfrak{C}^\top = \mathfrak{C}$.

We can rewrite~\eqref{proof:lemma_Df_inv_hyp_2} in terms of the following system of $2m$ decoupled equations
\begin{equation*} 
\begin{cases}
&-\Omega\mathfrak{A} + \mathfrak{A}\Omega - \Lo \mathfrak{A} = -{1 \over 2} \left[(Y-Y^\top) + (X-Z)\right], \\ 
&-\Omega\mathfrak{B} - \mathfrak{B}\Omega - \Lo \mathfrak{B} = {1 \over 2} \left[(Y+Y^\top) - (X-Z)\right], \\ 
&\Omega \mathfrak{C} + \mathfrak{C}\Omega - \Lo \mathfrak{C} = -{1 \over 2} \left[(Y+Y^\top) + (X+Z)\right], \\ 
&\Omega \mathfrak{D} - \mathfrak{D}\Omega - \Lo \mathfrak{D} = -{1 \over 2} \left[(Y-Y^\top) - (X-Z)\right].
\end{cases}
\end{equation*}
or equivalently,
\begin{equation*} 
\begin{cases}
&-\left(\Omega_j - \Omega_k\right)\mathfrak{A}_{jk} - \Lo \mathfrak{A}_{jk} = -{1 \over 2} \left[Y_{jk}-Y_{kj} + X_{jk}-Z_{jk}\right], \\ 
&-\left(\Omega_j + \Omega_k\right)\mathfrak{B}_{jk} - \Lo \mathfrak{B}_{jk} = {1 \over 2} \left[Y_{jk}+Y_{kj} - X_{jk}-Z_{jk}\right], \\ 
&\left(\Omega_j + \Omega_k\right)\mathfrak{C}_{jk} - \Lo \mathfrak{C}_{jk} = {1 \over 2} \left[Y_{jk}+Y_{kj} + X_{jk}+Z_{jk}\right], \\ 
&\left(\Omega_j - \Omega_k\right)\mathfrak{D}_{jk} - \Lo \mathfrak{D}_{jk} = -{1 \over 2} \left[Y_{jk}-Y_{kj} - X_{jk}+Z_{jk}\right], 
\end{cases}
\end{equation*}
for $1 \le j, k \le m$. Rememebering that $\lambda > 2 \max_{1 \le j \le m} \Omega_j$, using Lemma \ref{lemma:HE_interm_LoO_exp}, we prove the existence of a unique solution, 
\begin{equation*}
    \mathfrak{A},\ \mathfrak{B},\ \mathfrak{C},\ \mathfrak{D} \in \mathcal{S}^{\mathrm{exp}, T}_{(\sigma -1, \lambda)},\ \mbox{such that} \hspace{2mm} |\mathfrak{A}|^T_{\sigma-1, \lambda} ,\ |\mathfrak{B}|^T_{\sigma, \lambda} ,\ |\mathfrak{C}|^T_{\sigma, \lambda} ,\ |\mathfrak{D}|^T_{\sigma, \lambda} \le C(\lambda, \Omega) |g|^T_{\sigma-1, \lambda},
\end{equation*}
where $C(\lambda, \Omega)$ stands for a positive constant depending on $\lambda$ and $\Omega_j$, for $j=1,\dots, m$. Finally, noting that 
\begin{equation*}
    \hat G = \begin{pmatrix}
P & Q \\ R & -P^{\top}
\end{pmatrix} = {1 \over 2}\begin{pmatrix}
\mathfrak{A} + \mathfrak{B} + \mathfrak{C} + \mathfrak{D} & \mathfrak{A} - \mathfrak{B} + \mathfrak{C} - \mathfrak{D}\\
\mathfrak{A} + \mathfrak{B} - \mathfrak{C} - \mathfrak{D} & \mathfrak{A} - \mathfrak{B} - \mathfrak{C} + \mathfrak{D}
\end{pmatrix},
\end{equation*}
we conclude the proof of this lemma. 
\end{proof}

\subsection{Norm properties}
\label{sc:norm_properties_hyp} 
In this section, we prove several properties for the norms of the Banach spaces introduced in Sections \ref{sc:functional_setting_hyp} and \ref{sc:proof_analytic_hyp}. Throughout this section, we denote by $C(\cdot)$ a generic positive constant depending on the dimensions $n$ and $m$, and the parameter in brackets.

\begin{proposition}\label{prop:Csigma_prop_norms_exp}
    Given $\sigma, \lambda, \varrho \ge 0$, $d \ge 1$, $T\ge 0$ and a non-negative integer $k$, for all $f \in \mathcal{S}^{\mathrm{exp},T}_{(\sigma, k), \lambda}$ and $g \in \mathcal{S}^{\mathrm{exp},T}_{(\sigma, k), \varrho}$ we have the following properties
\begin{enumerate}
\item For all $s\ge 0$ and  $\beta \in \N^{2(n + m)}$, if $|\beta| + s \le \sigma + k$, then  
\begin{equation*}
\left|\partial^{\beta}_{(q,p,z)} f \right|^T_{s, \lambda} \le C |f|^T_{\sigma +k, \lambda}
\end{equation*}
\item If $f \in \mathcal{S}^{\mathrm{exp},T}_{(\sigma, k), d\lambda}$ then $|f|^T_{\sigma+k, \lambda}  \le e^{-(d-1)\lambda T}|f|^T_{\sigma+k, d\lambda}$, 
\item $|fg|^T_{\sigma+k, \lambda + \varrho} \le C(\sigma, k)\left(|f|^T_{0,\lambda}|g|^T_{\sigma+k,\varrho} + |f|^T_{\sigma+k,\lambda}|g|^T_{0,\varrho}\right)$. %
\item We consider $\sigma \ge 1$,  and we assume that $g:\T^n \times B \times I_T \to \T^n \times B$.  Letting $\tilde g: \T^n \times B \times I_T \to \T^n \times B \times I_T$ such that $\tilde g(q,p,z,t) = (g(q,p,z,t), t)$. Moreover, we assume that $f \in \mathcal{S}^{\mathrm{exp},T}_{(\sigma, k), \lambda + \varrho}$. Then $f \circ \tilde g \in \mathcal{S}^{\mathrm{exp}, T}_{\sigma+k, \lambda+\varrho}$ and 
\begin{equation*}
|f \circ \tilde g|^T_{\sigma+k, \lambda+\varrho} \le C(\sigma, k) \left(|f|^T_{\sigma+k,\lambda}\left(|\partial_{(q,p,z)} g|^T_{0,\varrho}\right)^\sigma + |f|^T_{1,\lambda}|\partial_{(q,p,z)}  g|^T_{\sigma+k-1,\varrho} +  |f|^T_{0, \lambda+\varrho}  \right).
 \end{equation*}
\end{enumerate}
\end{proposition}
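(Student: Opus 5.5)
The plan is to reduce each item to a pointwise-in-time statement about Hölder norms, handle it at a fixed $t\in I_T$, and then multiply by the exponential weight. This is the same way the polynomial analogue, Proposition \ref{prop:Csigma_prop_norms_pol}, is obtained. For every fixed $t$ the functions $f^t, g^t$ are ordinary $C^{\sigma+k}$ functions on $\T^n\times B$. So the classical Hölder inequalities of Appendix \ref{app:Holder} hold with constants depending only on $\sigma, k, n, m$: monotonicity in the regularity index for derivatives, the product (Leibniz/interpolation) estimate, and the composition estimate. Only the way the time weights combine differs from the polynomial case: $e^{\lambda t}e^{\varrho t}=e^{(\lambda+\varrho)t}$ plays the role of $t^{\ell}t^{d}=t^{\ell+d}$.

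Items (1) and (3) should follow immediately. For (1), apply $|\partial^\beta f^t|_{C^s}\le C|f^t|_{C^{\sigma+k}}$ whenever $|\beta|+s\le\sigma+k$, multiply by $e^{\lambda t}$, and take the supremum over $t\ge T$. For (3), use the pointwise bound $|f^tg^t|_{C^{\sigma+k}}\le C(|f^t|_{C^0}|g^t|_{C^{\sigma+k}}+|f^t|_{C^{\sigma+k}}|g^t|_{C^0})$ and multiply by $e^{(\lambda+\varrho)t}=e^{\lambda t}e^{\varrho t}$, splitting the weight across the two factors. Continuity of the mixed derivatives $\partial^i f$ in $(q,p,z,t)$ is preserved under products and differentiation, so membership in the spaces is automatic.

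Item (2) is a one-line weight comparison. For $t\ge T$ and $d\ge1$,
\[
|f^t|_{C^{\sigma+k}}e^{\lambda t}=|f^t|_{C^{\sigma+k}}e^{d\lambda t}e^{-(d-1)\lambda t}\le e^{-(d-1)\lambda T}|f^t|_{C^{\sigma+k}}e^{d\lambda t},
\]
and taking the supremum gives the claim. This is where $d\ge1$ is needed, since it makes $e^{-(d-1)\lambda t}$ nonincreasing in $t$.

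Item (4) is the only step requiring care, and I expect it to be the main obstacle, mostly in bookkeeping. First I would invoke the standard composition estimate for Hölder norms at fixed $t$ (e.g. de la Llave–Obaya). Since $B$ is bounded, $g^t$ is bounded, and $\sigma\ge1$, it gives
\[
|f^t\circ g^t|_{C^{\sigma+k}}\le C\big(|f^t|_{C^{\sigma+k}}|Dg^t|_{C^0}^{\sigma}+|f^t|_{C^1}|Dg^t|_{C^{\sigma+k-1}}+|f^t|_{C^0}\big).
\]
I would then distribute the weight $e^{(\lambda+\varrho)t}$ term by term. In the first two terms I would put $e^{\lambda t}$ on $f$ and $e^{\varrho t}$ on the derivative of $g$; for the first term, when $\sigma\ge1$ only one factor needs to absorb $e^{\varrho t}$, and the remaining factors $|Dg^t|_{C^0}^{\sigma-1}$ are bounded by $e^{-\varrho t}|Dg|^T_{0,\varrho}$ times harmless constants, or are absorbed in $C$ since $g$ maps into the bounded set $\T^n\times B$. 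In the last term the full weight goes on $f$, which is why the hypothesis $f\in\mathcal{S}^{\mathrm{exp},T}_{(\sigma,k),\lambda+\varrho}$ is required. The delicate point is matching the power $\sigma$ with a single weight $e^{\varrho t}$; I would handle it via $e^{\varrho t}\le e^{\sigma\varrho t}$ for $\sigma\ge1$ or by absorbing the extra factors into $C$, accepting a constant depending on the bound of $Dg$. Continuity of $\partial^i(f\circ\tilde g)$ in all variables follows from the chain rule and the assumed continuity of the derivatives of $f$ and $g$.
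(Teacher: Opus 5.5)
Your strategy is the standard one: fix $t$, apply the classical Hölder estimates to $f^t,g^t$, then distribute the weight $e^{(\lambda+\varrho)t}$. The paper gives no argument of its own and simply cites \cite{Sca22c}. Your treatment of items (1)--(3) is correct.

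\textbf{The gap is the composition bound you invoke in item (4).} At regularity $\sigma+k$, the estimate of Appendix~\ref{app:Holder} (and of de la Llave--Obaya) carries the factor $|Dg^t|_{C^0}^{\sigma+k}$, not $|Dg^t|_{C^0}^{\sigma}$. The version with exponent $\sigma$ is false once $k\ge1$. Take $\lambda=\varrho=0$, $f=\cos(2\pi q_1)$ and $g(q,p,z,t)=(Nq,p,z)$ with $N\in\N$.
\begin{itemize}
\item Here $\partial_{(q,p,z)}g$ is constant, with $|\partial g|^T_{0,0}=|\partial g|^T_{\sigma+k-1,0}=N$, so the right-hand side of (4) is $O(N^{\sigma})$.
\item On the other hand, $|f\circ\tilde g|^T_{\sigma+k,0}\ge(2\pi N)^{\lfloor\sigma\rfloor+k}$, and $\lfloor\sigma\rfloor+k>\sigma$.
\end{itemize}
So item (4) cannot be proved as literally stated when $k\ge1$. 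What your argument actually yields is (4) with $\bigl(|\partial_{(q,p,z)}g|^T_{0,\varrho}\bigr)^{\sigma+k}$. That is the correct statement, and it suffices for the paper's applications: there $g=\varphi$ with $|\partial\varphi|$ bounded, so the extra power only changes constants.

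\textbf{The weight step you flag as delicate is immediate, and your fallback should be dropped.} With $r=\sigma+k\ge1$, $\varrho\ge0$ and $t\ge T\ge0$, one has $e^{\varrho t}\le e^{r\varrho t}$. Hence
\[
|f^t|_{C^{r}}\,|Dg^t|_{C^0}^{r}\,e^{(\lambda+\varrho)t}\le\bigl(|f^t|_{C^{r}}e^{\lambda t}\bigr)\bigl(|Dg^t|_{C^0}e^{\varrho t}\bigr)^{r}\le|f|^T_{r,\lambda}\bigl(|\partial_{(q,p,z)}g|^T_{0,\varrho}\bigr)^{r},
\]
with no constant depending on $g$. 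Absorbing powers of $|Dg^t|_{C^0}$ into $C$ because $g$ takes values in the bounded set $\T^n\times B$ is not legitimate. A bounded image does not bound $Dg$, as the map $q\mapsto Nq$ above shows. Moreover, a constant depending on a bound for $Dg$ would not be the $C(\sigma,k)$ of the statement.
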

\begin{proof}
    We refer to~\cite{Sca22c} for the proof. 
\end{proof}

 As a direct consequence of Proposition \ref{prop:Csigma_prop_norms_exp}, we have the following.

\begin{proposition}\label{prop:Csigma_prop_norms_matrix_exp}
    Let $\sigma \ge 0$, $\lambda, \, \varrho >0$, $d \ge 1$, and $T\ge 0$, for all $F \in  \mathcal{M}^{\mathrm{exp}, T}_{\sigma, \lambda}$ and $G \in  \mathcal{M}^{\mathrm{exp}, T}_{\sigma, \varrho}$ we have the following properties.
\begin{enumerate}
\item For all $s\ge 0$ and  $\beta \in \N^{2(n + m)}$, if $|\beta| + s \le \sigma $, then  
\begin{equation*}
\left|\partial^{\beta}_{(q,p,z)} F \right|^T_{s, \lambda} \le C(n,m) |F|^T_{\sigma , \lambda}
\end{equation*}
\item \label{prop:crescita_indici_norm_hyp} If $F \in \mathcal{M}^{\mathrm{exp}, T}_{\sigma, d\lambda}$ then $|F|^T_{\sigma, \lambda}  \le e^{-(d-1)\lambda T}|F|^T_{\sigma, d\lambda}$, 
\item \label{prop:bound_norm_product_matrices_hyp} $|FG|^T_{\sigma, \lambda+\varrho} \le C(\sigma,  m)\left(|F|^T_{0,\lambda}|G|^T_{\sigma,\varrho} + |F|^T_{\sigma,\lambda}|G|^T_{0,\varrho}\right)$. %
\end{enumerate}
\end{proposition}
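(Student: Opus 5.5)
The plan is to derive the three items directly from Proposition \ref{prop:Csigma_prop_norms_exp}, applied entrywise. Matrices in $\mathcal{M}^{\mathrm{exp}, T}_{\sigma, \lambda}$ are by definition families of $2m\times 2m$ matrices whose entries lie in $\mathcal{S}^{\mathrm{exp},T}_{(\sigma,0),\lambda}$, normed by the maximum of the entry norms. We therefore specialize Proposition \ref{prop:Csigma_prop_norms_exp} to $k=0$, with functions defined on $\T^n \times I_T$ rather than on $\T^n\times B\times I_T$. This specialization is harmless, since the norms and arguments there do not use the variables $(p,z)$.

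For Item (1), we fix entries $F_{ij}$. Item (1) of Proposition \ref{prop:Csigma_prop_norms_exp} with $k=0$ gives $|\partial^\beta F_{ij}|^T_{s,\lambda} \le C|F_{ij}|^T_{\sigma,\lambda}\le C|F|^T_{\sigma,\lambda}$. Taking the maximum over $i,j$ yields the claim with a constant depending only on $n,m$.

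Item (2) is even more direct. For each $t\ge T$ we have $|F^t|_{C^\sigma} e^{\lambda t} = |F^t|_{C^\sigma}e^{d\lambda t}e^{-(d-1)\lambda t}\le e^{-(d-1)\lambda T}|F|^T_{\sigma,d\lambda}$, because $d\ge1$. Alternatively, this follows entrywise from Item (2) of Proposition \ref{prop:Csigma_prop_norms_exp}.

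For Item (3), we write $(FG)_{ij}=\sum_{k=1}^{2m}F_{ik}G_{kj}$. We apply Item (3) of Proposition \ref{prop:Csigma_prop_norms_exp} to each product $F_{ik}G_{kj}$, with decay rates $\lambda$ and $\varrho$, and then use the triangle inequality over the $2m$ summands. Bounding the entry norms by the matrix norms gives
\[|(FG)_{ij}|^T_{\sigma,\lambda+\varrho}\le 2m\,C(\sigma)\left(|F|^T_{0,\lambda}|G|^T_{\sigma,\varrho}+|F|^T_{\sigma,\lambda}|G|^T_{0,\varrho}\right),\]
and taking the maximum over $i,j$ concludes with $C(\sigma,m)=2mC(\sigma)$.

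The only point requiring care is the product estimate, which is the Hölder Leibniz-type inequality. We take it from \cite{Sca22c}, via Proposition \ref{prop:Csigma_prop_norms_exp}. The weight $e^{(\lambda+\varrho)t}$ factors as $e^{\lambda t}e^{\varrho t}$ pointwise in $t$, so the exponential weights cause no additional difficulty.
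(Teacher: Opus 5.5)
Your proposal is correct and follows the paper's approach: the paper gives no separate argument and simply states that this proposition is a direct consequence of Proposition~\ref{prop:Csigma_prop_norms_exp}. That is exactly the entrywise reduction you carry out, with $k=0$, the triangle inequality over the $2m$ summands in each entry of $FG$, and a maximum over $i,j$.
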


The following properties follow easily from the definitions together with Proposition \ref{prop:Csigma_prop_norms_matrix}.

\begin{lemma}
\label{lem:exp_exp_bounds}
      Let $\sigma \ge 0$, $\lambda, \, \varrho >0$, and $T\ge 0$. For any $B \in \mathcal{M}^{\mathrm{exp}, T}_{\sigma, 0},$ $F \in  \mathcal{M}^{\mathrm{exp}, T}_{\sigma, \lambda},$ and $G \in  \mathcal{M}^{\mathrm{exp}, T}_{\sigma, \varrho},$ the following statements hold for some constants $C = C(\sigma, m)$, depending only on $\sigma$ and $m$.

      \begin{enumerate}
          \item  $|e^B|_{\sigma, 0}^T \leq e^{C|B|_{\sigma, 0}^T}.$
          \item  $|e^F - \mathrm{Id}_{2m}|_{\sigma, \lambda}^T \leq  |F|_{\sigma, \lambda}^Te^{C|F|_{\sigma, 0}^T}$.          
          \item  $|e^F - \mathrm{Id}_{2m} - F|_{\sigma, 2\lambda}^T \leq (|F|_{\sigma, \lambda}^T)^2e^{C|F|_{\sigma, 0}^T}$.
          \item  $|\mathcal{A}(B, G)|_{\sigma, \varrho}^T \leq C|G|_{\sigma, \varrho}^Te^{C|B|_{\sigma, 0}^T}$.
          \item  $|\mathcal{A}(F, G) - G|_{\sigma, \lambda + \varrho}^T \leq C|F|_{\sigma, \lambda}^T |G|_{\sigma, \varrho}^T e^{C|F|_{\sigma, 0}^T}.$
          \item  If $\Lo B \in \mathcal{M}_{\sigma, \lambda}^T$ then $|\Lo (e^B)|_{\sigma, \lambda}^T \leq C e^{C|B|^T_{\sigma, 0}}|\Lo B|_{\sigma, \lambda}^T$.
          \item  If $\Lo B \in \mathcal{M}_{\sigma, 1}^T$ and $\Lo G \in \mathcal{M}_{\sigma, \varrho + 1}^T$ then $$|\Lo (\mathcal{A}(B, G))|_{\sigma, \varrho + 1}^T \leq C e^{C|B|^T_{\sigma, 0}}(|\Lo B|_{\sigma, 1}^T|G|_{\sigma, \varrho}^T + |\Lo G|_{\sigma, \varrho + 1}^T).$$
          \item If $\Lo F \in \mathcal{M}_{\sigma, \lambda + 1}^T$ and $\Lo G \in \mathcal{M}_{\sigma, \varrho + 1}^T$ then $$|\Lo (\mathcal{A}(F, G) - G)|_{\sigma, \lambda + \varrho + 1}^T \leq C e^{C|Fe|^T_{\sigma, 0}}(|\Lo F|_{\sigma, \lambda + 1}^T|G|_{\sigma, \varrho}^T + |F|_{\sigma, \lambda}^T |\Lo G|_{\sigma, \varrho + 1}^T).$$
      \end{enumerate}
\end{lemma}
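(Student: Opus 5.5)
The plan is to mirror the proof of Lemma~\ref{lem:exp_expansion_bounds} line by line, with polynomial weights replaced by exponential ones. The only input is the product estimate, Item~\eqref{prop:bound_norm_product_matrices_hyp} of Proposition~\ref{prop:Csigma_prop_norms_matrix_exp}: decay rates add under multiplication, and a factor of rate $0$ costs only a constant. Throughout, $C$ denotes a constant depending on $\sigma$ and $m$, and all series and integrals are estimated termwise in the norms $|\cdot|^T_{\sigma,\lambda}$. These norms are sup-in-time weighted Hölder norms, so they commute with absolutely convergent sums and with integrals over $s\in[0,1]$.

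For items (1)--(3) I expand $e^B=\sum_k B^k/k!$. Iterating the product bound with all rates zero gives $|B^k|^T_{\sigma,0}\le C^{k-1}(|B|^T_{\sigma,0})^k$, hence (1). For (2) I peel off one factor carrying rate $\lambda$: $|F^k|^T_{\sigma,\lambda}\le C^k|F|^T_{\sigma,\lambda}(|F|^T_{\sigma,0})^{k-1}$. Summing over $k\ge1$ gives $|F|^T_{\sigma,\lambda}e^{C|F|^T_{\sigma,0}}$. For (3) I peel off two such factors for $k\ge2$, giving $(|F|^T_{\sigma,\lambda})^2e^{C|F|^T_{\sigma,0}}$ in the $2\lambda$-weighted norm.

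Items (4)--(5) use the definition \eqref{eq:adjoint_formula}. For (4) I bound $e^{\pm sB}$ by (1) and apply the product estimate with rates $0+\varrho+0$. For (5) I write
\[
\mathcal{A}(F,G)-G=\int_0^1\big((e^{-sF}-\mathrm{Id})Ge^{sF}+G(e^{sF}-\mathrm{Id})\big)\,ds
\]
and apply (2) to each difference, which contributes the extra rate $\lambda$. Item (6) follows from \eqref{eq:derivative_exp_parameter}, applied along the flow of $\Lo$ (that is, $s\mapsto(q+\omega s,t+s)$), which gives $\Lo e^B=e^B\mathcal{A}(B,\Lo B)$; then (1) and (4) conclude.

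Items (7) and (8) are where some care is needed. I would differentiate under the integral sign, using \eqref{eq:derivative_exp_parameter} again to get $\Lo e^{\pm sB}=\pm s\,e^{\pm sB}\mathcal{A}(\pm sB,\Lo B)$. This expresses $\Lo\mathcal{A}(B,G)$, and $\Lo(\mathcal{A}(F,G)-G)$, as sums of products in which exactly one factor is $\Lo B$ or $\Lo F$ (or $\Lo G$), and every remaining factor is bounded by (1), (2) or (4). The exponent bookkeeping is the main obstacle, because in the exponential scale the subscripts ``$1$'' and ``$\varrho+1$'' in the statement are literal shifts of the rate. So I would verify that each product carries total rate $1+\varrho$ in (7), and $\lambda+1+\varrho$ in (8). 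For instance, in (8) the term $(e^{-sF}-\mathrm{Id})\Lo G\,e^{sF}$ has rate $\lambda+(\varrho+1)$, and the term $e^{-sF}\mathcal{A}(-sF,-s\Lo F)Ge^{sF}$ has rate $(\lambda+1)+\varrho$. Both match, and the rate additivity of Item~\eqref{prop:bound_norm_product_matrices_hyp} then gives the stated bounds. Justifying differentiation under the integral is routine since everything is continuous in $s\in[0,1]$.
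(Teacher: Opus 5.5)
Your proposal is correct and follows the paper's own route. The paper proves this lemma by declaring it identical to the proof of Lemma~\ref{lem:exp_expansion_bounds}, with Proposition~\ref{prop:Csigma_prop_norms_matrix_exp} in place of Proposition~\ref{prop:Csigma_prop_norms_matrix}; that is exactly your series expansion, integral formula for $\mathcal{A}$, and additive rate bookkeeping. One cosmetic point: to get (2) and (3) with no prefactor in front of the exponential, use $|F^k|^T_{\sigma,\lambda}\le C^{k-1}|F|^T_{\sigma,\lambda}(|F|^T_{\sigma,0})^{k-1}$ (and the analogous bound for (3)) rather than $C^{k}$, since $(e^{Cx}-1)/x\to C$ as $x\to 0$.
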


\begin{proof}
    The proof is identical to that of Lemma \ref{lem:exp_expansion_bounds} but using Proposition \ref{prop:Csigma_prop_norms_matrix_exp} instead of Proposition \ref{prop:Csigma_prop_norms_matrix} and using the norm in the spaces of functions with exponential decay rather than the norm for functions with polynomial decay.
\end{proof}

\begin{proposition}
\label{prop:properties_analy_hyp}
Let $\sigma>0$, $\lambda, \, \varrho \ge 0$, $d \ge 1$ and $T\ge 0$. For any $f \in \mathscr{S}^{\mathrm{exp}, T}_{\sigma, \lambda}$ and $g \in \mathscr{S}^{\mathrm{exp}, T}_{\sigma, \varrho}$, we have the following properties    
\begin{enumerate}
    \item For all $0<\sigma'<\sigma$ and $\beta \in \N^{2(n+m)}$, then
    \begin{equation*}
        \left|\partial^{\beta}_{(q,p,z)} f \right|^T_{\sigma', \lambda} \le {C \over (\sigma-\sigma')^{|\beta|}} |f|^T_{\sigma, \lambda}.
    \end{equation*}
    \item If $f \in \mathscr{S}^{\mathrm{exp}, T}_{\sigma, d\lambda}$ then $|f|^T_{\sigma, \lambda} \le e^{-(d-1)\lambda T}|f|^T_{\sigma, d\lambda}$.
    \item $|fg|^T_{\sigma, \lambda +\varrho} \le |f|^T_{\sigma, \lambda}|g|^T_{\sigma, \varrho}$. In particular, if $\varrho >0$ then $|fg|_{\sigma, \lambda}^T \to 0$ as $T\to +\infty$.
    \item For all $0<\sigma'<\sigma$, we consider $g:\T^n_{\sigma'} \times B_{\sigma'} \times I_T \to \T^n_\sigma \times B_\sigma$ such that $g \in \mathscr{S}^{\mathrm{exp}, T}_{\sigma',0}$. Letting $\tilde g: \T^n_{\sigma'} \times B_{\sigma'} \times I_T \to \T^n_\sigma \times B_\sigma \times I_T$ such that $\tilde g(q,p,z,t) = (g(q,p,z,t), t)$, 
then $f \circ  \tilde g \in \mathscr{S}^{\mathrm{exp}, T}_{\sigma', \lambda}$ and 
\begin{equation*}
    |f\circ \tilde g|^T_{\sigma', \lambda} \le |f|^T_{\sigma, \lambda}.
\end{equation*}
\end{enumerate}
\end{proposition}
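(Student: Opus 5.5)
The statement to prove is Proposition \ref{prop:properties_analy_hyp}, the analytic counterpart of Proposition \ref{prop:properties_analy_pol}. The plan is to prove each item directly from the definitions \eqref{def:norm_analy} and \eqref{def:norm_S_analy_exp}, working at each fixed time $t\in I_T$ and then taking the supremum over $t$ after multiplying by $e^{\lambda t}$.

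For item (1), fix $t$ and apply Cauchy's estimates to the holomorphic function $f^t$ on $\T^n_\sigma\times B_\sigma$. Every point of $\T^n_{\sigma'}\times B_{\sigma'}$ is the center of a polydisc of radius $\sigma-\sigma'$ contained in the larger domain, so
\[
|\partial^\beta f^t|_{\sigma'}\le C\,\beta!\,(\sigma-\sigma')^{-|\beta|}\,|f^t|_\sigma .
\]
Multiplying by $e^{\lambda t}$ and taking the supremum over $t$ gives the claim, with $C$ depending on $\beta$ and on the dimensions. For the $p,z$ directions one has to check that the ball geometry still leaves room $\sigma-\sigma'$; it does, because $|(p,z)|\le\sigma'$ implies that the polydisc of radius $(\sigma-\sigma')$ is contained in $B_\sigma$, up to a dimensional constant absorbed in $C$.

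Item (2) is immediate: since $t\ge T$,
\[
|f^t|_\sigma e^{\lambda t}=|f^t|_\sigma e^{d\lambda t}e^{-(d-1)\lambda t}\le e^{-(d-1)\lambda T}|f|^T_{\sigma,d\lambda}.
\]

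For item (3), the sup norm is submultiplicative, so $|f^tg^t|_\sigma e^{(\lambda+\varrho)t}\le |f^t|_\sigma e^{\lambda t}\,|g^t|_\sigma e^{\varrho t}$, and taking suprema gives the inequality. For the "in particular" statement, bound the product by $|f|_{\sigma,\lambda}^T|g|_{\sigma,\varrho}^T e^{-\varrho t}$. This shows $|fg|^T_{\sigma,\lambda}\le e^{-\varrho T}|f|^T_{\sigma,\lambda}|g|^T_{\sigma,\varrho}$ for the norms on $I_T$, which tends to $0$ provided the norms of $f$ and $g$ on $I_T$ stay bounded. They do, since the norms are nonincreasing in $T$.

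For item (4), note that $g^t$ maps $\T^n_{\sigma'}\times B_{\sigma'}$ into $\T^n_\sigma\times B_\sigma$, so $\sup|f^t\circ g^t|\le |f^t|_\sigma$. The composition is holomorphic and continuous in $t$, because $f$ and $g$ are jointly continuous. Multiplying by $e^{\lambda t}$ finishes the argument.

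No step is a genuine obstacle. The only care needed is in item (1), namely keeping track of the polydisc radius inside the ball $B_\sigma$; the constant is allowed to depend on $n$, $m$ and $\beta$.
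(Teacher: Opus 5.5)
Your proposal is correct and takes the same route as the paper. The paper's proof (via Proposition~\ref{prop:properties_analy_pol}) obtains item (1) from Cauchy's estimates and items (2)--(4) directly from the definitions \eqref{def:norm_analy} and \eqref{def:norm_S_analy_exp}, exactly as you do; your observation that the constant in item (1) must be allowed to depend on $\beta$ and on the dimensions, through $\beta!$ and the polydisc-inside-the-ball geometry, is a correct precision that the statement leaves implicit.
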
 
\begin{proof}
    The proof is similar to that of Proposition \ref{prop:properties_analy_pol}.
\end{proof}

\appendix
\section{Hölder class of functions}\label{app:Holder}
In this section, we recall the definition of  Hölder classes of functions and some of their well-known properties. For this purpose, given an open subset $E$ of $\R^n$ and a positive integer $k \ge 0$,  we denote by $C^k(E)$ the spaces of functions $f: E \to \R$ with continuous partial derivatives $\partial^\alpha f \in C^0(E)$ for all $\alpha \in \N^n$ with $|\alpha|= |\alpha|_1 = \alpha_1+...+\alpha_n \le k$. Furthermore, for all $f \in C^k(E)$, we define the following norm
\begin{equation*}
|f|_{C^k} = \sup_{|\alpha|\le k}|\partial^\alpha f|_{C^0},
\end{equation*}
where $|\partial^\alpha f|_{C^0} = \sup_{x \in E}|\partial^\alpha f(x)|$. 

Let $\sigma = k + \mu$, where $k \in \Z$, $k \ge 0$ and $0 < \mu <1$.  We define by $C^\sigma(E)$ the space of Hölder functions $f\in C^k(E)$ verifying 
\begin{equation}
\label{def:Holder_norm}
|f|_{C^\sigma} = \sup_{|\alpha|\le k}|\partial^\alpha f|_{C^0} + \sup_{x,y\in E\\, x\ne y, |\alpha| = k}{|\partial^\alpha f(x) - \partial^\alpha f(y)| \over |x-y|^\mu}<\infty.
\end{equation}
It is well-known that $C^\sigma(E)$ endowed with the norm~\eqref{def:Holder_norm} is a Banach space. We adopt the same notation for Real-valued functions and matrices. We say that a Real-valued function or a matrix belongs to $C^\sigma(E)$ if each of its components does. In this case, the corresponding norm is defined as the maximum of the norms of its components.

We recall that $C(\cdot)$ stands for a constant depending on the parameters in brackets. The following proposition summarizes some properties of the norm~\eqref{def:Holder_norm}. 
\begin{proposition}
\label{prop: prop_Holder_norms}
We consider $f$, $g \in C^\sigma(E)$ and $\sigma \ge 0$.
\begin{enumerate}
\item For all $\beta \in \N^{n}$ and $s\ge 0$, if $|\beta| + s \le \sigma$ then  $\left|{\partial^{|\beta|} \over \partial{x_1}^{\beta_1}... \partial{x_n}^{\beta_n}} f \right|_{C^s} \le C|f|_{C^\sigma}$.\\
\item  $|fg|_{C^\sigma} \le C(\sigma)\left(|f|_{C^0}|g|_{C^\sigma} + |f|_{C^\sigma}|g|_{C^0}\right)$. 
\end{enumerate}
 Let $E_1$ be an open subset of $\R^n$ and $z:E_1\to E$ a function taking values in the domain of $f$.  In what follows $\partial z$ stands for the partial derivatives of $z$.
\begin{enumerate}
\item[(3)] If $\sigma < 1$, $f \in C^1(E)$, $z \in C^\sigma (E_1)$ then $f\circ z \in C^\sigma(E_1)$ and
$$
|f \circ z|_{C^\sigma} \le C(|f|_{C^1}|z|_{C^\sigma}+ |f|_{C^0}).
$$
\end{enumerate} 
\begin{enumerate}
\item[(4)] If $\sigma < 1$, $f \in C^\sigma(E)$, $z \in C^1 (E_1)$ then $f\circ z \in C^\sigma(E_1)$ 
$$
|f \circ z|_{C^\sigma} \le C(|f|_{C^\sigma}|\partial z|^\sigma_{C^0}+ |f|_{C^0}).
$$
\end{enumerate}
\begin{enumerate}
\item[(5)] If $\sigma \ge 1$ and $f \in C^\sigma (E)$, $z \in C^\sigma (E_1)$ then $f\circ z \in C^\sigma(E_1)$ 
$$
|f \circ z|_{C^\sigma} \le C(\sigma) \left(|f|_{C^\sigma}|\partial z|^\sigma_{C^0} + |f|_{C^1}|\partial z|_{C^{\sigma-1}}+ |f|_{C^0}\right).
$$
\end{enumerate}
\end{proposition}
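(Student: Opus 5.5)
The plan is to prove the five items in order, each directly from the definition \eqref{def:Holder_norm}. Throughout, write $\sigma=k+\mu$ with $k\in\Z_{\ge0}$ and $0\le\mu<1$. For integer $\sigma$ we read the Hölder seminorm as absent, so that $|\cdot|_{C^\sigma}=|\cdot|_{C^k}$. We also use the standard convexity assumption on $E$, or at least on the pairs of points being compared. Without it, the mean value step below has to be restricted to $|x-y|\le 1$, and the contribution of far-apart points is controlled by $2|\cdot|_{C^0}$. This only changes the constants.

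Item (1) follows from a one-dimensional interpolation. If $|\beta|+s\le\sigma$, every derivative of $\partial^\beta f$ of order at most $\lfloor s\rfloor$ is a derivative of $f$ of order at most $k$, so its sup norm is bounded by $|f|_{C^\sigma}$. For the Hölder quotient of order $\mu'=s-\lfloor s\rfloor$ there are two cases.

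1. If $|\beta|+\lfloor s\rfloor<k$, then $g=\partial^{\beta+\alpha}f$ is $C^1$. The quotient $|g(x)-g(y)|/|x-y|^{\mu'}$ is at most $|\nabla g|_{C^0}$ when $|x-y|\le1$ and at most $2|g|_{C^0}$ otherwise.
2. If $|\beta|+\lfloor s\rfloor=k$, then $\mu'\le\mu$, and we use $|x-y|^{\mu'}\ge|x-y|^{\mu}$ when $|x-y|\le1$, together with the same sup bound for large distances.

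Item (2) uses the Leibniz rule: $\partial^\alpha(fg)=\sum\binom{\alpha}{\gamma}\partial^\gamma f\,\partial^{\alpha-\gamma}g$. Each term is bounded using the classical interpolation inequality
\[|\partial^\gamma f|_{C^0}\le C|f|_{C^0}^{1-j/\sigma}|f|_{C^\sigma}^{j/\sigma},\qquad j=|\gamma|,\]
and its analogue for the Hölder seminorm. Young's inequality $a^{\theta}b^{1-\theta}\le a+b$ then collapses the products to $|f|_{C^0}|g|_{C^\sigma}+|f|_{C^\sigma}|g|_{C^0}$. The Hölder part of a product at top order splits as $[uv]_\mu\le|u|_{C^0}[v]_\mu+[u]_\mu|v|_{C^0}$.

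Items (3) and (4) cover $\sigma<1$, where the estimates come straight from the difference quotients. In (3),
\[|f(z(x))-f(z(y))|\le|f|_{C^1}|z(x)-z(y)|\le|f|_{C^1}[z]_\mu|x-y|^\mu .\]
In (4),
\[|f(z(x))-f(z(y))|\le[f]_\mu|z(x)-z(y)|^\mu\le[f]_\mu|\partial z|_{C^0}^\mu|x-y|^\mu .\]
Adding $|f\circ z|_{C^0}\le|f|_{C^0}$ gives both stated bounds.

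Item (5) is the main obstacle, and we would prove it by induction on $k$. The chain rule gives $\partial(f\circ z)=(\partial f\circ z)\,\partial z$. For $1\le\sigma<2$, apply the product estimate (2) at regularity $\sigma-1$ together with (4) for $\partial f\circ z$. For larger $\sigma$, use the inductive bound on $\partial f\circ z$ at regularity $\sigma-1$, then (2). The resulting mixed products are of the form $|f|_{C^j}|\partial z|_{C^0}^{a}|\partial z|_{C^{\sigma-1}}^{b}$ with intermediate exponents. These are absorbed into the two extreme terms $|f|_{C^\sigma}|\partial z|^\sigma_{C^0}$ and $|f|_{C^1}|\partial z|_{C^{\sigma-1}}$ by interpolation and Young's inequality, in the spirit of the Faà di Bruno / Moser-type composition estimates. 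Since this is the classical estimate of de la Llave–Obaya and Hörmander, we would invoke it rather than redo the bookkeeping in full.
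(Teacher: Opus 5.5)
Your arguments are correct under the convexity assumption you impose, and they are the standard ones. The paper gives no proof of this proposition; it only points to \cite{Hor76} and \cite{Sca22}, which contain exactly these difference-quotient, Leibniz--interpolation--Young, and chain-rule--induction arguments.

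Your induction for (5) also closes without appealing to the literature. Take $\theta=1/(\sigma-1)$ and use the interpolations $|f|_{C^2}\le C|f|_{C^1}^{1-\theta}|f|_{C^\sigma}^{\theta}$ and $|\partial z|_{C^{\sigma-2}}\le C|\partial z|_{C^0}^{\theta}|\partial z|_{C^{\sigma-1}}^{1-\theta}$. Set $A=|f|_{C^\sigma}|\partial z|_{C^0}^{\sigma}$ and $B=|f|_{C^1}|\partial z|_{C^{\sigma-1}}$. Since $\sigma\theta=1+\theta$, the only mixed term $|f|_{C^2}|\partial z|_{C^{\sigma-2}}|\partial z|_{C^0}$ becomes exactly $A^{\theta}B^{1-\theta}$, which Young bounds by $A+B$.

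The one statement to correct is your fallback for non-convex $E$. Restricting the mean value step to $|x-y|\le 1$ does not help. The obstruction is that the segment $[x,y]$ may leave $E$, not that $|x-y|$ is large, and in that case the estimates genuinely fail.

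Here is a counterexample. Take the slit disc $E=\{|x|<3\}\setminus\{(x_1,0): x_1\ge 0\}\subset\R^2$. Let $h=\rho(|x|)\,\vartheta(x)$, where $\vartheta\in(0,2\pi)$ is the polar angle and $\rho$ is a smooth cutoff vanishing on $[0,1]$ and equal to $1$ on $[2,3)$. All derivatives of $h$ are bounded on $E$, yet $h$ jumps by $2\pi$ across the slit for $x_1\in(2,3)$, so $[h]_{1/2}=\infty$. Consequently:
\begin{itemize}
\item (1) fails for $\beta=0$, $s=1/2$, $\sigma=1$;
\item (3) fails for $f=h$ and $z=\mathrm{id}_E$;
\item (4) fails for $f(y)=y$ on an open interval containing $[0,2\pi]$ and $z=h$ on $E_1=E$.
\end{itemize}

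So a geometric hypothesis on $E$ and $E_1$ is genuinely required; it is not a matter of constants. Convexity suffices, or quasi-convexity (nearby points joined inside the domain by paths of length $\le C|x-y|$). The paper's statement omits this hypothesis, but it holds in every application there. The functions live on $\T^n\times B$ with $B$ a ball, so they can be regarded as periodic functions on the convex set $\R^n\times B$.
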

\begin{proof}
    We refer to~\cite{Hor76} and~\cite{Sca22} for the proof. 
\end{proof}

The following is a quantitative version of the classical Implicit Function Theorem (see, e.g., \cite{ChiAM2}). 

\begin{theorem}
\label{thm:QIFT}
    Let $(X, |\cdot |_X), (Y, |\cdot|_Y)$, $(Z, |\cdot|_Z)$ be Banach spaces and $U \subseteq X,$ $V \subseteq Y$ be open subsets containing $0$. Let $F:U \times V \subseteq X \times Y \to Z$ be a $C^1$ map such that $F(0, 0) = 0$ and $D_2 F(0, 0): Y 
    \to Z$ is invertible. 
    
    Suppose there exist $M, r, \rho > 0$ satisfying the following.
    \begin{enumerate}
        \item   $B_X(r) \times B_Y(\rho) \subseteq U \times V$.
        \item $\| D_2 F(0, 0)^{-1}\|_{L(Z, Y)} \leq M$.
        \item $|F(x, 0)|_Y \leq \tfrac{\rho}{2M}$, for all $x \in B_X(r) $.
        \item $\|D_2 F(x, y) - D_2F(0, 0)\|_{L(Y, Z)} \leq \frac{1}{2M}$, for all $x \in B_X(r)$ and all $y \in B_Y(\rho)$.
    \end{enumerate}
    Then, there exists a $C^1$ map $g: B_X(r) \subseteq X \to B_Y(\rho) \subseteq Y$ such that
    \[ F(x, g(x)) = 0, \qquad \text{ for all } x \in B_X(r).  \]
\end{theorem}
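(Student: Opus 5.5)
The statement is the quantitative Implicit Function Theorem (Theorem \ref{thm:QIFT}). I would prove it by a contraction argument for each fixed $x$, followed by a regularity argument. Set $L := D_2F(0,0)$, which is invertible with $\|L^{-1}\| \le M$. For fixed $x \in B_X(r)$ define the Newton-type map
\[
\Phi_x(y) = y - L^{-1}F(x,y), \qquad y \in B_Y(\rho).
\]
Its fixed points are exactly the zeros of $F(x,\cdot)$. (In hypothesis (3) the bound should read $|F(x,0)|_Z$, since $F$ takes values in $Z$.)

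First I would show that $\Phi_x$ is a contraction. We have $D\Phi_x(y) = L^{-1}\bigl(L - D_2F(x,y)\bigr)$, so hypotheses (2) and (4) give $\|D\Phi_x(y)\| \le M \cdot \tfrac{1}{2M} = \tfrac12$ on the convex set $B_Y(\rho)$. The mean value inequality then yields $|\Phi_x(y)-\Phi_x(y')| \le \tfrac12|y-y'|$.

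Next I would check that $\Phi_x$ maps the ball into itself:
\[
|\Phi_x(y)| \le |\Phi_x(y)-\Phi_x(0)| + |\Phi_x(0)| \le \tfrac12|y| + M|F(x,0)|_Z \le \tfrac12|y| + \tfrac{\rho}{2} < \rho .
\]
Invariance of the open ball is delicate, because it is not complete. I would instead work on the closed ball $\overline{B}_Y(\rho')$ for $\rho'<\rho$ close to $\rho$. This requires hypothesis (3) with a strict inequality, or else a slightly shrunken radius, which one can arrange since $x$ ranges over an open ball. On that closed ball Banach's fixed point theorem gives a unique $g(x)$ with $F(x,g(x))=0$ and $|g(x)|<\rho$. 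This boundary bookkeeping is the only step I expect to be delicate.

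For regularity, continuity of $g$ follows from the uniform contraction constant $\tfrac12$ together with the continuity of $x\mapsto \Phi_x(y)$. Differentiability comes from the invertibility of $D_2F(x,g(x)) = L\bigl(I - L^{-1}(L - D_2F(x,g(x)))\bigr)$. Here the Neumann series applies because the perturbation has norm at most $\tfrac12$, so $\|D_2F(x,g(x))^{-1}\|\le 2M$. Locally, $g$ then coincides with the $C^1$ map given by the classical Implicit Function Theorem at the point $(x,g(x))$; this uses uniqueness of the fixed point in the ball. Hence $g$ is $C^1$, with
\[
Dg(x) = -D_2F(x,g(x))^{-1}D_1F(x,g(x)).
\]
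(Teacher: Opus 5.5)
The paper gives no proof of this statement; it only refers to the literature. Your Newton/contraction scheme is the standard argument behind such results. The contraction estimate, the continuity of $g$, the Neumann-series bound $\|D_2F(x,g(x))^{-1}\|\le 2M$ and the local identification with the classical implicit function are all correct (and you are right that (3) should read $|F(x,0)|_Z$).

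The gap is exactly at the step you flagged, and your second proposed repair (a slightly shrunken radius, justified by $B_X(r)$ being open) does not work.

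With the non-strict bound in (3) you only get $M|F(x,0)|_Z\le\rho/2$. Hence on $\overline{B}_Y(\rho')$ you have $|\Phi_x(y)|\le\tfrac12\rho'+\tfrac12\rho$, which in general is $\le\rho'$ only when $\rho'\ge\rho$. Equivalently, the Picard iterates from $0$ converge to a point with $|y^*|\le 2M|F(x,0)|_Z\le\rho$. That point may lie on the sphere $|y|=\rho$, where $F$ need not even be defined.

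Openness of $B_X(r)$ does not help, because the bound in (3) can be attained at interior points. In fact the statement as written is false. Take $X=Y=Z=\R$, $M=r=\rho=1$, $U=\R$, $V=(-1,1)$, and $F(x,y)=y-\tfrac12\phi(x)(1+y)$, where $\phi$ is smooth with $0\le\phi\le 1$, $\phi(0)=0$ and $\phi\equiv 1$ for $|x|\ge\tfrac12$. Then:
\begin{itemize}
\item $F(0,0)=0$ and $D_2F(0,0)=1$;
\item $|F(x,0)|=\tfrac12\phi(x)\le\tfrac12$;
\item $|D_2F(x,y)-1|=\tfrac12\phi(x)\le\tfrac12$.
\end{itemize}
So (1)--(4) all hold. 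Yet for $\tfrac12\le|x|<1$ one has $F(x,y)=\tfrac12(y-1)$, which has no zero in $V=B_Y(1)$.

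The theorem therefore needs $|F(x,0)|_Z<\rho/(2M)$ for every $x\in B_X(r)$; strictness pointwise in $x$ is enough. Under that hypothesis your proof closes as follows. For fixed $x$, pick $\rho_x\in[2M|F(x,0)|_Z,\rho)$. Then $\Phi_x$ maps $\overline{B}_Y(\rho_x)$ into itself, since $|\Phi_x(y)|\le\tfrac12\rho_x+M|F(x,0)|_Z\le\rho_x$, and so it has a fixed point there. By your $\tfrac12$-contraction estimate on the whole open ball $B_Y(\rho)$, this fixed point is the unique zero of $F(x,\cdot)$ in $B_Y(\rho)$. Hence $g(x)$ does not depend on the choice of $\rho_x$, and the rest of your argument goes through unchanged, including the local identification with the classical implicit function via uniqueness in $B_Y(\rho)$.

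The strict version is all the paper needs. In its applications one can always take $\rho$ slightly larger than $2C\bar C r$, e.g.\ $3C\bar C r$, because hypothesis (4) is then recovered simply by enlarging $T$. For $\mathcal{F}^T$, strictness even holds directly, since $\mathcal{F}^T(P,0)=X_P\circ\varphi_0$ is linear in $P$ and $|P|<r$.
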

\section*{Acknowledgments} D.S. have been partially supported by the grant PID2024-158570NB-I00  funded by \\
MCIN/AEI/10.13039/501100011033 and “ERDF A way of making Europe”.  D.S. also acknowledges partial support from the ICREA Acadèmia 2023 grant awarded to Dr. Marcel Guardia Munàrriz.

\bibliographystyle{amsalpha}
\bibliography{ref}
\end{document}